\newcommand{\enma}[1]   {\ensuremath{#1}}
\newcommand{\beq}{\begin{equation}}
\newcommand{\eeq}{\end{equation}}
\newcommand{\bseq}{\begin{subequations}}
\newcommand{\eseq}{\end{subequations}}
\newcommand{\beqn}{\begin{eqnarray}}
\newcommand{\eeqn}{\end{eqnarray}}
\newcommand{\ba}{\begin{array}}
\newcommand{\ea}{\end{array}}
\newcommand{\bct}{\begin{center}}
\newcommand{\ect}{\end{center}}
\newcommand{\btmz}{\begin{itemize}}
\newcommand{\etmz}{\end{itemize}}
\newcommand{\benum}{\begin{enumerate}}
\newcommand{\eenum}{\end{enumerate}}
\newcommand{\norm}[1]{\| #1 \|}                 
\newcommand{\diag}      {\enma{\mathrm{diag}}}
\newcommand{\trace}     {\enma{\mathrm{trace}}}
\newcommand{\matbegin}{
        \left[
}
\newcommand{\matend}{
        \right]
}
\newcommand{\tbo}[2]{
  \matbegin \begin{array}{c}
       #1 \\ #2
       \end{array} \matend }
\newcommand{\obt}[2]{
  \matbegin \begin{array}{cc}
       #1 & #2
       \end{array} \matend }
\newcommand{\tbt}[4]{
  \matbegin \begin{array}{cc}
       #1 & #2 \\ #3 & #4
       \end{array} \matend }
\newcommand{\tbth}[6]{
  \matbegin \begin{array}{ccc}
       #1 & #2 & #3\\ #4 & #5 & #6
       \end{array} \matend }
\newcommand{\be}{\begin{equation}}
\newcommand{\ee}{\end{equation}}
\newcommand{\cplxs}{ C\kern -.35em \rule{0.03 em}{.7 ex}~   }
\def\complex{\hbox{C\kern -.45em \rule{0.03 em}{1.5 ex}}~}
\newcommand{\bi}{\begin{itemize}}
\newcommand{\ei}{\end{itemize}}
\newtheorem{mythm}{Theorem}
\newtheorem{myprop}{Proposition}
\newtheorem{mylem}{Lemma}
\newtheorem{myrem}{Remark}
\newcommand{\one}{\mathds{1}}
\newcommand{\R}{\mathbb{R}}
\newcommand{\x}{\mathbf{x}}
\newcommand{\N}{\mathbb{N}}
\newcommand{\sign}{\mathrm{sign}}
\newcommand{\bbR}{\mathbb{R}}
\newcommand{\non}{\nonumber}
\newcommand{\ds}{\displaystyle}
\newcommand{\mrd}{\mathrm{d}}
\newcommand{\tc}{\textcolor}
\newcommand{\vsp}{\vspace*{0.15cm}}
\newcommand{\bk}{\bar{\kappa}}
\newcommand{\muo}{\hat{\lambda}_{i}}
\newcommand{\mut}{\hat{\lambda}'_{i}}
\newcommand{\DefinedAs}[0]{\mathrel{\mathop:}=}
\DeclareMathOperator*{\argmin}{argmin}
\DeclareMathOperator*{\minimize}{minimize}
\DeclareMathOperator*{\subject}{subject~to}
\DeclareMathOperator*{\Null}{{\cal N}}
\newcommand{\Lap}{\mathbf{L}}
\DeclareMathOperator{\EX}{\mathbb{E}}
\newcommand{\Z}{\mathbb{Z}}
\newcommand{\T}{\mathbb{T}}
\newcommand{\gd}{\mathrm{gd}}
\newcommand{\na}{\mathrm{na}}
\newcommand{\hb}{\mathrm{hb}}
\newcommand{\binx}{i}
\newcommand{\Lf}{L}
\newcommand{\mf}{m}
\newcommand{\m}{n_0}
\tikzset{
	block/.style = {draw, rectangle, 
		minimum height=0.5cm, 
		minimum width=1cm},
	input/.style = {coordinate,node distance=2.5cm},
	output/.style = {coordinate,node distance=2.5cm},
	arrow/.style={draw, -latex,node distance=2cm},
	pinstyle/.style = {pin edge={latex-, black,node distance=2cm}},
	sum/.style = {draw, circle, node distance=1cm}
}
\begin{document}
	
		\title{Robustness of accelerated first-order algorithms 
		\\[-0.35cm]
		for strongly convex optimization problems
	}
	
	\author{
		\mbox{Hesameddin Mohammadi,
		Meisam Razaviyayn, and
		Mihailo R.\ Jovanovi\'c}
		\thanks{Financial support from the National Science Foundation under Awards ECCS~1708906 and ECCS~1809833, and the Air Force Office of Scientific Research under Award FA9550-16-1-0009 is gratefully acknowledged.
		}
		\thanks{H. Mohammadi and M. R.\ Jovanovi\'c are with the Ming Hsieh Department of Electrical and Computer Engineering, and M. Razaviyayn is with  the Daniel J. Epstein Department of Industrial and Systems Engineering, University of Southern California, Los Angeles, CA.
			E-mails: hesamedm@usc.edu,  razaviya@usc.edu, and mihailo@usc.edu.
		}
	}

	\maketitle
		
	\vspace*{-8ex}
	\begin{abstract}
			We study the robustness of accelerated first-order algorithms to stochastic uncertainties in gradient evaluation.  Specifically, for unconstrained, smooth, strongly convex optimization problems, we examine the mean-squared error in the optimization variable when the iterates are perturbed by additive white noise. This type of uncertainty may arise in situations where an approximation of the gradient is sought through measurements of a real system or in a distributed computation over a network. Even though the underlying dynamics of first-order algorithms for this class of problems are nonlinear, we establish upper bounds on the mean-squared deviation from the optimal solution that are tight up to constant factors. Our analysis quantifies fundamental trade-offs between noise amplification and convergence rates obtained via {\em any\/} acceleration scheme similar to Nesterov's or heavy-ball methods. To gain additional analytical insight, for strongly convex quadratic problems, we explicitly evaluate the steady-state variance of the optimization variable in terms of the eigenvalues of the Hessian of the objective function. We demonstrate that the entire spectrum of the Hessian, rather than just the extreme eigenvalues, influence robustness of noisy algorithms. We specialize this result to the problem of distributed averaging over undirected networks and examine the role of network size and topology on the robustness of noisy accelerated algorithms.  
	\end{abstract}
		
	\vspace*{-2ex}
	\begin{keywords}
		Accelerated first-order algorithms, consensus networks, control for optimization, convex optimization, integral quadratic constraints, linear matrix inequalities, noise amplification, second-order moments, semidefinite programming. 
	\end{keywords}
	
	\vspace*{-6ex}
	\section{Introduction}
	
First-order algorithms are well-suited for solving a broad range of optimization problems that arise in statistics, signal and image processing, control, and machine learning~\cite{botcun05,becteb09,honrazluopan15,botcurnoc18,nes13}. Among these algorithms, accelerated methods enjoy the optimal rate of convergence and they are popular because of their low per-iteration complexity. There is a large body of literature dedicated to the convergence analysis of these methods under different stepsize selection rules~\cite{susmardahhin13,pol64,nes83,nes13,becteb09,nes18book}. In many applications, however, the exact value of the gradient is not fully available, e.g., when the objective function is obtained via costly simulations (e.g., tuning of hyper-parameters in supervised/unsupervised learning~\cite{macduvada15,ben00,beirazshatar17} \tc{black}{ and model-free optimal control~\cite{fazgekakmes18,mohzarsoljovTAC19,mohsoljovACC20}}), when evaluation of the objective function relies on noisy measurements (e.g., real-time and embedded applications), or when the noise is introduced via communication between different agents (e.g., distributed computation over networks). Another related application arises in the context of (batch) stochastic gradient,  where at each iteration the gradient of the objective function is computed from a small batch of data points. Such a batch gradient is known to be a noisy unbiased estimator for the gradient of the training loss. Moreover, additive noise may be introduced deliberately in the context of non-convex optimization to help the iterates escape  saddle points and improve generalization~\cite{gehuajinyua15,jingenetkakjor17}.

In all above situations, first-order algorithms only have access to noisy estimates of the gradient. This observation has motivated the robustness analysis of first-order algorithms under different types of noisy/inexact gradient oracles\tc{black}{~\cite{luotse93,robmon51,nemjudlansha09,dev-phd13,devglines14,dvugas16}.} For example, in a deterministic noise scenario, an upper bound on the error in iterates for accelerated proximal gradient methods was established in~\cite{schroubac11}. This study showed that both proximal gradient and its accelerated variant can maintain their convergence rates provided that the noise is bounded and that it vanishes fast enough. \tc{black}{Moreover, it has been shown that in the presence of random noise,  with the proper diminishing stepsize,  acceleration  can be achieved for general convex problems. However, in this case optimal  rates are {\em sub-linear\/}~\cite{dev11}.}   

In the context of stochastic approximation, while early results suggest to use a stepsize that is inversely proportional to the iteration number~\cite{robmon51}, a more robust behavior can be obtained by combining larger stepsizes with averaging~\cite{nemjudlansha09,bac14,pol90,poljud92}. Utility of these averaging schemes and their modifications for solving quadratic optimization and manifold problems has been examined thoroughly in recent years~\cite{dieflabac17,moubac11,tripuraneni2018averaging}. Moreover, several studies have suggested that accelerated first-order algorithms are more susceptible to errors in the gradient \tc{black}{compared to} their non-accelerated counterparts\tc{black}{~\cite{bae09,das08,schroubac11,dev-phd13,devglines14,aujdos15}.}

One of the basic sources of error that arises in computing the gradient can be modeled by additive  white stochastic noise. This source of error is typical for problems in which the gradient is being sought through measurements of a real system~\cite{pol87} and it has a rich history in analysis of stochastic dynamical systems and control theory~\cite{kwasiv72}. Moreover, in many applications including distributed computing over networks~\cite{xiaboykim07,bamjovmitpat12}, coordination in vehicular formations~\cite{linfarjovTAC12platoons}, and control of power systems~\cite{dorjovchebulTPS14}, additive white noise is a convenient abstraction for the robustness analysis of distributed control strategies~\cite{bamjovmitpat12} and of first-order optimization algorithms~\cite{sim16,simkammon16}. Motivated by this observation, in this paper we consider the scenario in which a white stochastic noise with zero mean and identity covariance is added to the iterates of standard first-order algorithms: gradient descent, Polyak's heavy-ball method, and Nesterov's accelerated algorithm. By confining our attention to smooth strongly convex problems, we provide a tight quantitative characterization for the mean-squared error of the optimization variable. Since this quantity provides a measure of how noise gets amplified by the dynamics resulting from optimization algorithms, we also refer to  it as {\em noise\/} (or {\em variance\/}) {\em amplification.\/} We demonstrate that our quantitative characterization allows us to identify fundamental trade-offs between the noise amplification and the rate of convergence obtained via acceleration. 

This work builds on our recent conference papers~\cite{mohrazjovCDC18,mohrazjovACC19}. \tc{black}{In a concurrent work~\cite{aybfalgurozd19}, a similar approach was taken} to analyze the robustness of gradient descent and Nesterov's accelerated method. Therein, \tc{black}{it was shown that for a given convergence rate, one can select the algorithmic parameters such that the steady-state mean-squared error in the {\em objective value\/} of a Nesterov-like method becomes smaller than that of gradient descent.} This is not surprising because gradient descent can be viewed as a special case of Nesterov's method with a zero momentum parameter. \tc{black}{Using this argument, similar assertions have been made about the variance amplification of the {\em iterates\/}. This observation has been used to design an optimal multi-stage algorithm that does not require any information about the variance of the noise~\cite{aybfalgurozd19b}. On the contrary, we demonstrate  that there are fundamental differences between these two robustness measures, i.e., {\em \/objective values} and {\em iterates\/}, as the former does not capture the negative impact of acceleration in the presence of noise.}

\tc{black}{ Focusing on the error in the iterates, we show that} any choice of parameters for Nesterov's or heavy-ball methods that yields an accelerated convergence rate increases variance amplification relative to gradient descent. More precisely, {\em for the problem with the condition number $\kappa$, an algorithm with accelerated convergence rate of at least $1-c/\sqrt{\kappa}$, where $c$ is a positive constant, increases the variance amplification in the iterates by a factor of $\sqrt{\kappa}$.\/} The robustness problem was also studied in~\cite{yuayinsay16} where the authors show a similar behavior of Nesterov's method and gradient descent in an asymptotic regime in which the stepsize goes to zero. In contrast, we focus on the non-asymptotic stepsize regime and establish fundamental differences between gradient descent and its accelerated variants in terms of noise amplification. 

\tc{black}{More recently, the problem of finding upper bounds on the variance amplification was cast as a semidefinite program~\cite{micschebe19}. This formulation provided numerical results that are consistent with our theoretical upper bounds in terms of the condition number. In~\cite{micschebe19}, structured objective functions (e.g., diagonal Hessians) that arise in distributed optimization were also studied and the problem of designing robust algorithms were formulated as a bilinear matrix inequality (which, in general, is not convex).}

\subsubsection*{Contributions}

The effect of imperfections on the performance and robustness of first-order algorithms has been studied in~\cite{moubac11,devglines14} but the influence of acceleration on stochastic gradient perturbations has not been precisely characterized. We employ  \tc{black}{control-theoretic tools suitable for analyzing} stochastic dynamical systems to quantify such influence and \tc{black}{identify} fundamental trade-offs between acceleration and noise amplification. The main contributions of this paper are:
\begin{enumerate}
	\item 
	 We start our analysis by examining strongly convex quadratic optimization problems for which we can explicitly characterize variance amplification of first-order algorithms and obtain analytical insight. In contrast to convergence rates, which solely depend on the extreme eigenvalues of the Hessian matrix, we demonstrate that the {\em variance amplification is influenced by the entire spectrum.}
	
	\item We establish the relation between the noise amplification of accelerated algorithms and gradient descent for parameters that provide the optimal convergence rate for strongly convex quadratic problems. We also explain how the distribution of the eigenvalues of the Hessian influences these relations and provide examples to show that {\em acceleration can significantly increase \tc{black}{ the noise amplification}.} 
	
	\item We address the problem of tuning the algorithmic parameters and demonstrate the existence of a fundamental trade-off between the rate of convergence and noise amplification: for problems with condition number $\kappa$ and bounded dimension $n$, we show that any choice of parameters in accelerated methods that yields the linear convergence rate of at least $1 - {c}/\sqrt{\kappa}$, where $c$ is a positive constant, {\em increases noise amplification in the iterates relative to gradient descent\/} by a factor of at least~$\sqrt{\kappa}$.  
	
	\item We extend our analysis from quadratic objective functions to general strongly convex problems. We borrow an approach based on linear matrix inequalities from control theory to establish upper bounds on the noise amplification of both gradient descent and Nesterov's accelerated algorithm. Furthermore, for any given condition number, we demonstrate that {\em these bounds are tight up to constant factors.}
	
	\item We apply our results to distributed averaging over large-scale undirected networks. We examine the role of network size and topology on noise amplification and further illustrate \tc{black}{the subtle influence} of the entire spectrum of the Hessian matrix on the robustness of noisy optimization algorithms. In particular, {\em we identify a class of large-scale problems for which accelerated Nesterov's method achieves the same order-wise noise amplification\/} (in terms of condition number) {\em as gradient descent.} 	
\end{enumerate}

\subsubsection*{Paper structure}
The rest of our presentation is organized as follows. In Section~\ref{sec.PerilimBack}, we formulate the problem and provide background material. In Section~\ref{sec.Quadratic}, we explicitly evaluate the variance amplification (in terms of the algorithmic parameters and problem data) for strongly convex quadratic problems, derive lower and upper bounds, and provide a comparison between the accelerated methods and gradient descent. In Section~\ref{sec.general}, we extend our analysis to general strongly convex problems. In Section~\ref{sec.compTuning}, we establish fundamental trade-offs between the rate of convergence and noise amplification. In Section~\ref{sec:distributed}, we apply our results to the problem of distributed averaging over noisy undirected networks. We highlight the subtle influence of the distribution of the eigenvalues of the Laplacian matrix on variance amplification and discuss the roles of network size and topology. We provide concluding remarks in Section~\ref{sec: conclusion} and technical details in appendices. 
		
	\vspace*{-2ex}
\section{Preliminaries and background}
	\label{sec.PerilimBack}

In this paper, we quantify the effect of stochastic uncertainties in gradient evaluation on the performance of first-order algorithms for unconstrained optimization problems
\begin{align}\label{eq.f}
\minimize_{x}\quad f(x)
\end{align} 
where $f$: $\R^n\rightarrow\R$ is strongly convex with Lipschitz continuous gradient $\nabla f$. Specifically, we examine how gradient descent, 
\begin{subequations}
	\label{eq.1st}
\be
x^{t+1}
\; = \;
x^{t}
\; - \;
\tc{black}{\alpha} \nabla f(x^t)
\; + \;
\tc{black}{\sigma w^t}
\label{alg.GD}
\ee
Polyak's heavy-ball method,
\be
x^{t+2}
\; = \;
x^{t+1}
\; + \;
\tc{black}{\beta}
(
x^{t+1}
\, - \,
x^{t}
)
\; - \;
\tc{black}{\alpha }
\nabla f( x^{t+1} )
\; + \;
\tc{black}{\sigma w^t}
\label{alg.HB}
\ee	
and Nesterov's accelerated  method,
\be
\ba{rcl}
x^{t+2} 
& \!\!\! = \!\!\! &
x^{t+1}
\; + \;
\tc{black}{\beta}
(
x^{t+1}
\, - \,  
x^{t}
)
\, - \,
\tc{black}{\alpha }
\nabla 
f
\!
\left(
x^{t+1}
\, + \,
\tc{black}{\beta}
(
x^{t+1}
\, - \,  
x^{t})
\right)	
\; + \;
\tc{black}{\sigma w^t}
\ea
\label{alg.NA}
\ee
\end{subequations}
amplify the additive white stochastic noise $w^t$ with zero mean and identity covariance matrix, $
\EX 
\left[
w^t
\right]
= 
0,
$
$	
\EX 
\left[
w^t
(w^{\tau})^T
\right]
= 
I\,\delta (t - \tau)
$.
Here, $t$ is the iteration index, $x^t$ is the optimization variable, \tc{black}{ $ \alpha $ is the stepsize, $ \beta $ is an extrapolation parameter} used for acceleration, \tc{black}{$\sigma$ is the noise magnitude,} $\delta$ is the Kronecker delta, and $\EX$ is the expected value. \tc{black}{When the only source of uncertainty is a noisy gradient, we set $\sigma=\alpha$ in~\eqref{eq.1st}.}

The set of functions $f$ that are $\mf$-strongly convex and $\Lf$-smooth is denoted by $\mathcal{F}_{\mf}^{\Lf}$; $f \in \mathcal{F}_{\mf}^{\Lf}$ means that $f(x)-\frac{\mf}{2}\norm{x}^2$ is convex and that the gradient $\nabla f$ is $\Lf$-Lipschitz continuous. In particular, for a twice continuously differentiable function $f$ with the Hessian matrix $\nabla^2f$, we have
\begin{equation*}
f
\, \in \,
\mathcal{F}_{\mf}^{\Lf}
~\Leftrightarrow~
\mf I
\;\preceq\;
\nabla^2 f(x)
\;\preceq\;
\Lf I,
\quad \forall \,
x \, \in \, \R^n.
\end{equation*} 
In the absence of noise \tc{black}{(i.e., for $\sigma=0$)}, for $f\in\mathcal{F}_{\mf}^{\Lf}$, the parameters \tc{black}{$\alpha$ and $\beta$} can be selected such that gradient descent and Nesterov's accelerated method converge to the global minimum $x^\star$ of~\eqref{eq.f} with a linear rate $\rho<1$, i.e., 
	\[
	\norm{x^t \, - \, x^\star}
	\le
	c\, \rho^t \, \norm{x^0 \, - \, x^\star}
	\]
for all $t$ and some $c>0$. Table~\ref{tab:ratesGeneral} provides the conventional values of these parameters and the corresponding guaranteed convergence rates~\cite{nes18book}. Nesterov's method with the parameters provided in Table~\ref{tab:ratesGeneral} enjoys \tc{black}{the convergence rate $\rho_{\na}= \sqrt{1-{1}/{\sqrt{\kappa}}}\le 1 - 1/{(2\sqrt{\kappa})}$, where $\kappa \DefinedAs \Lf/\mf$ is the condition number associated with $\mathcal{F}_{\mf}^{\Lf}$. This rate  is {\em orderwise optimal\/} in the sense that no first-order algorithm can optimize all $f\in\mathcal{F}_{\mf}^{\Lf}$ with the rate $\rho_{\mathrm{lb}}=(\sqrt{\kappa} - 1)/(\sqrt{\kappa} + 1)$~\cite[Theorem~2.1.13]{nes18book}. Note that $1-\rho_{\mathrm{lb}}= O(1/\sqrt{\kappa})$ and $1-\rho_{\na}=\Omega(1/\sqrt{\kappa})$.} In contrast to Nesterov's method, the heavy-ball method does not offer any acceleration guarantees for all $f\in\mathcal{F}_{\mf}^{\Lf}$. However, for strongly convex quadratic $f$, parameters can be selected to guarantee linear convergence of the heavy-ball method with a rate that outperforms the one achieved by Nesterov's method~\cite{lesrecpac16}; see Table~\ref{tab:rates}.

\begin{table}
	\centering
	\begin{tabular}{ |l|l|l| } 
		\hline 
		& &
		\\[-.30cm]
		Method & Parameters & \tc{black}{Linear rate }
		\\
		\hline
		\hline
		& &
		\\[-.25cm]
		Gradient 
		& 
		$\alpha \, = \, \tfrac{1}{\Lf}$
		& 
		\tc{black}{$\rho \, = \sqrt{\, 1 \, - \, \tfrac{2}{\kappa+1}}$  } 
		\\[0.1cm] 
		Nesterov 
		& 
		$ \alpha \, = \, \tfrac{1}{\Lf}$,
		$\beta \, = \, \tfrac{\sqrt{\kappa} \, - \, 1}{\sqrt{\kappa} \, + \, 1} $ 
		& 
		\tc{black}{$\rho \, = \, \sqrt{1-\tfrac{1}{\sqrt{\kappa}}}   $}
		\\[-.35cm]
		& 
		& 
		\\
		\hline
	\end{tabular}		
	\caption{Conventional values of parameters and the corresponding \tc{black}{ rates} for $f\in\mathcal{F}_{\mf}^{\Lf}$, 
		$
		\norm{x^t - x^\star}
		\le
		c\,\rho^t\,\norm{x^0 - x^\star},
		$
where $\kappa\DefinedAs \Lf/\mf$  and $c>0$ is a constant\tc{black}{~\cite[Theorems~2.1.15,~2.2.1]{nes18book}}. The heavy-ball method does not offer acceleration guarantees for all $f\in\mathcal{F}_{\mf}^{\Lf}$.}
	\label{tab:ratesGeneral}
\end{table} 

To provide a quantitative characterization for the robustness of algorithms~\eqref{eq.1st}  to the noise $w^t$, we examine the performance measure,
	\be
	J
	\; \DefinedAs \;
	\limsup_{t \, \to \, \infty} 
	\, 
	\dfrac{1}{t}
	\sum_{k \, = \, 0}^{t}
	\EX
	\left( \norm{x^k \, - \, x^\star}^2 \right).
	\label{eq.Jnew}
	\ee
For quadratic objective functions, algorithms~\eqref{eq.1st} are linear dynamical systems. In this case, $J$ quantifies the steady-state variance amplification and it can be computed from the solution of the algebraic Lyapunov equation; see Section~\ref{sec.Quadratic}. For general strongly convex problems, there is no explicit characterization for $J$ but techniques from control theory can be utilized to compute an upper bound; see Section~\ref{sec.general}.

\subsubsection*{Notation}
	We write $g = \Omega(h)$ (or, equivalently, $h = O(g)$) to denote the existence of positive constants $c_i$ such that,  for any $x > c_2$, the functions $g$ and $h$:~$\R\rightarrow\R$ satisfy $g(x) \geq c_1 h(x)$. We write $g = \Theta(h)$, or more informally $g\approx h$, if both $g = \Omega(h)$ and $g = O(h)$. 
		
		\vspace*{-2ex}
	\section{Strongly convex quadratic problems}
	\label{sec.Quadratic}

Consider a strongly convex quadratic objective function,
	\begin{align}
		f(x)  
		\; = \;
		\tfrac{1}{2}
		\,
		x^T Q \, x
		\; - \; 
		q^T x
		\label{eq.quadraticObjective}
	\end{align}
where $ Q $ is a symmetric positive definite matrix and $q$ is a vector. Let $f\in\mathcal{F}_{\mf}^{\Lf}$ and let the eigenvalues $\lambda_i$ of $ Q $ satisfy 
\[
\Lf 
\; = \;
\lambda_1 
\; \geq \; 
\lambda_2 
\; \geq \; 
\ldots 
\; \geq \;
\lambda_n 
\; = \; 
\mf
\; > \; 
0.
\] 
In the absence of noise, the constant values of parameters $\alpha$ and $\beta$ provided in Table~\ref{tab:rates} yield linear convergence (with optimal decay rates) to the globally optimal point $x^\star = Q^{-1} q$ for all three algorithms~\cite{lesrecpac16}. In the presence of additive white noise $w^t$, we derive analytical expressions for  the variance amplification $J$ of algorithms~\eqref{eq.1st} and demonstrate that $J$ depends not only on the algorithmic parameters $\alpha$ and $\beta$ but also on all eigenvalues of the Hessian matrix $Q$. This should be compared and contrasted to the optimal rate of linear convergence which only depends on $ \kappa \DefinedAs \Lf / \mf$, i.e., the ratio of the largest and smallest eigenvalues of $Q$.
	 		
For constant $\alpha$ and $\beta$, algorithms~\eqref{eq.1st} can be described by a linear time-invariant (LTI) first-order recursion
	\be
	\ba{rcl}
	\psi^{t+1} 
	& \!\!\! = \!\!\! &
	A \,\psi^{t} 
	\; + \; 
	\tc{black}{\sigma} B \, w^t
	\\[0.cm]
	z^t
	& \!\!\! = \!\!\! &
	C \, \psi^t
	\label{eq.ss}
	\ea
	\ee
where $\psi^t$ is the state, $z^t \DefinedAs x^t-x^\star$ is the performance output, and $w^t$ is a white stochastic input. In particular, choosing 
	$
	\psi^t 
	\DefinedAs 
	x^t-x^\star
	$
for gradient descent and
	$
	\tc{black}{\psi^t}
	\DefinedAs 
	[ \, (x^t-x^\star)^T \; (x^{t+1}-x^\star)^T \, ]^T
	$	
for accelerated algorithms yields state-space model~\eqref{eq.ss} with
	\begin{align*}
	A 
	\; = \; 
	I \, - \, \alpha \, Q\,, 
	~~
	B 
	\; = \;
	C
	\;=\;
	I
	\end{align*}
for gradient descent and
\[
A
\; = \;  
\tbt{0}{I}{-\beta I}{(1+\beta)I-\alpha Q},
~~
A
\; = \;
\tbt{0}{I}{-\beta(I-\alpha Q)}{(1+\beta)(I-\alpha Q)}
\]
for the heavy-ball  and Nesterov's methods, respectively, with 
	\begin{align*}
	B^T
	\; = \;
	\obt{0}{ I},
	~~
	C
	\; = \; 
	\obt{I}{0}.
	\end{align*}
Since $w^t$ is zero mean, we have $ \EX \left( \psi^{t+1} \right) = A  \EX \left( \psi^t \right)$. Thus, $\EX \left( \psi^t \right) = A^t  \EX \left( \psi^0 \right)$ and, for any stabilizing parameters $\alpha$ and $\beta$, $\lim_{t \, \to \, \infty} \EX \left( \psi^t \right) = 0$, with the same linear rate as in the absence of noise. Furthermore, it is well-known that the covariance matrix 
	$
	P^t
	\DefinedAs  
	\EX 
	\left(
	\psi^t
	(\psi^t)^T
	\right)
	$
of the state vector satisfies the linear recursion
	\begin{subequations}
	\be
	P^{t+1} 
	\; = \; 
	A \, P^t A^T 
	\; + \; 
	\tc{black}{\sigma^2}B B^T
	\label{eq.LyapPt}
	\ee
and that its steady-state limit 
	\be
	P
	\; \DefinedAs \; 
	\lim_{t \, \to \, \infty} 
	\,
	\EX 
	\left(
	\psi^t
	(\psi^t)^T
	\right)
	\label{eq.P}
	\ee 
is the unique solution to the algebraic Lyapunov equation~\cite{kwasiv72}
\be
P
\; = \; 
A \, P A^T 
\; + \; 
\tc{black}{\sigma^2}B B^T.
\label{eq.Lyap}
\ee
For stable LTI systems, performance measure~\eqref{eq.Jnew} simplifies to the steady-state variance of the error in the optimization variable $z^t \DefinedAs x^t - x^\star$,
	\be
	J
	\; = \;
	\lim_{t \, \to \, \infty} 
	\, 
	\dfrac{1}{t}
	\sum_{k \, = \, 0}^{t}
	\EX
	\left( \norm{z^k}^2 \right)
	\; = \; 
	\lim_{t \, \to \, \infty} \EX
	\left( \norm{z^t}^2 \right)
	\label{eq.Jus}
	\ee
and it can be computed using either of the following two equivalent expressions
	\be
	J
	\; = \;
	\lim_{t \, \to \, \infty} 
	\, 
	\dfrac{1}{t}
	\sum_{k \, = \, 0}^{t}
	\trace
	\left( Z^k \right)
	\; = \;
	\trace
	\left( Z \right)
	\label{eq.Jus-P}
	\ee
\end{subequations} 	
where 
	$
	Z = C P C^T	
	$	
is the steady-state limit of the output covariance matrix 
	$
	Z^t
	\DefinedAs  
	\EX 
	\left(
	z^t
	(z^t)^T
	\right)
	=
	C P^t C^T.
	$

We next provide analytical solution $P$ to~\eqref{eq.Lyap} that depends on the parameters $\alpha$ and $\beta$ as well as on the spectrum of the Hessian matrix $Q$. This allows us to explicitly characterize the variance amplification~$J$ and quantify the impact of additive white noise on the performance of first-order optimization algorithms. 

		\begin{table}
		\centering
			\begin{tabular}{ |{l}|{l}|{l}| } 
				\hline 
				& &
				\\[-.30cm]
				Method & Optimal parameters & Rate of linear convergence
				\\
				\hline
				\hline
				& &
				\\[-.25cm]
				Gradient 
				& 
				$\alpha = \dfrac{2}{\Lf+\mf}$ 
				& 
				$\rho \, =\,\dfrac{\kappa-1}{\kappa +1}$  
				\\[0.25cm] 
				Nesterov 
				& 
				$ \alpha = \dfrac{4}{3\Lf+\mf}$,
				$\beta = \dfrac{\sqrt{3\kappa+1}-2}{\sqrt{3\kappa+1}+2}$ 
				& 
				$\rho \, =\,\dfrac{\sqrt{3\kappa+1} - 2}{\sqrt{3\kappa+1}}  \!\!\!$ 
				\\[0.25cm] 
				Heavy-ball
				& 
				$ \alpha = \dfrac{4}{(\sqrt{\Lf}+\sqrt{\mf})^2},\;\beta = \dfrac{( \sqrt{\kappa} - 1 )^2}{ ( \sqrt{\kappa} + 1 )^2} $ & 
				$ \rho \, =\,\dfrac{\sqrt{\kappa}-1}{\sqrt{\kappa} + 1}$ 
				\\[-.35cm]
				& 
				& 
				\\
				\hline
			\end{tabular}		
			\caption{Optimal parameters and the corresponding convergence rates for a strongly convex quadratic objective function $f\in\mathcal{F}_{\mf}^{\Lf}$ with $\lambda_{\max}(\nabla^2f)=\Lf$ and $\lambda_{\min}(\nabla^2f)=\mf$, and $ \kappa \DefinedAs \Lf / \mf$\tc{black}{~\cite[Proposition~1]{lesrecpac16}.}}
			\label{tab:rates}
		\end{table}
	
	\vspace*{-2ex}		
\subsection{Influence of the eigenvalues of the Hessian matrix}
	\label{subsec.Influence}

	We use the modal decomposition of the symmetric matrix $Q = V \Lambda V^T$ to bring $A$, $B$, and $C$ in~\eqref{eq.ss} into a block diagonal form,
	$
	\hat{A}
	= 
	\diag 
	\,
	(
	\hat{A}_i
	),
	$
	$
	\hat{B}
	= 
	\diag 
	\,
	(
	\hat{B}_i
	),
	$
	$
	\hat{C}
	= 
	\diag 
	\,
	(
	\hat{C}_i
	),
	$
with
	$
	i = 1, \ldots, n.
	$ Here, $\Lambda=\diag \, (\lambda_i) $ is the diagonal matrix of the eigenvalues and $V$ is the orthogonal matrix of the eigenvectors of $Q$. More specifically, the unitary coordinate transformation
	\be
	\hat{x}^t
	\,\DefinedAs\,
	V^T x^t,
	~~
	\hat{x}^\star
	\,\DefinedAs\,
	V^T x^\star,
	~~
	\hat{w}^t 
	\,\DefinedAs\,
	V^T w^t
	\label{eq:coorTrans}
	\ee
	brings the state-space model of  gradient descent into a  diagonal form with
	\begin{subequations}
	\label{eq.Ahat}
	\be
	\hat{\psi}_i^t
	\, = \,
	\hat{x}_i^t \,-\, \hat{x}_i^\star,
	~~
	\hat{A}_i
	\, = \,
	1 \, - \, \alpha \lambda_i,
	~~
	\hat{B}_i
	\,=\,
	\hat{C}_i
	\, = \,
	1.
	\label{eq.Ahat-gd}
	\ee
Similarly, for Polyak's heavy-ball and Nesterov's accelerated methods, change of coordinates~\eqref{eq:coorTrans} in conjunction with a permutation of variables, 
	$
	\tc{black}{\hat{\psi}_i^t}
	= 
	[ \, \hat{x}_i^t - \hat{x}_i^\star ~~ \hat{x}_i^{t+1} - \hat{x}_i^\star  \,]^T,
	$	
respectively yield
	\begin{align}
	\hat{A}_i
	\; = \;
	\tbt{0}{1}{-\beta}{1+\beta-\alpha \lambda_i},
	~~
	&
	\hat{B}_i
	\; = \,
	\left[
	\ba{c} {0} \\[-0.1cm] {1} \ea
	\right],
	~~	
	\hat{C}_i
	\; = \; 
	\obt{1}{0}
	\\[0.15cm]
	\hat{A}_i
	\; = \;
	\tbt{0}{1}{-\beta(1-\alpha \lambda_i)}{(1+\beta)(1-\alpha \lambda_i)},
	~~
	&
	\hat{B}_i
	\; = \,
	\left[
	\ba{c} {0} \\[-0.1cm] {1} \ea
	\right],
	~~	
	\hat{C}_i
	\; = \; 
	\obt{1}{0}.
	\end{align}
\end{subequations}	
  	This block diagonal structure allows us to explicitly solve Lyapunov equation~\eqref{eq.Lyap} for $ P $ and derive an analytical expression for $J$ in terms of the eigenvalues $\lambda_i$ of the Hessian matrix $Q$ and the algorithmic parameters $\alpha$ and $ \beta$. Namely, under coordinate transformation~\eqref{eq:coorTrans} and a suitable permutation of variables, equation~\eqref{eq.Lyap} can be brought into an equivalent set of equations,
	\be
	\hat{P}_i
	\; = \; 
	\hat{A}_i \, \hat{P}_i \, \hat{A}_i^T 
	\; + \; 
	\tc{black}{\sigma^2}\hat{B}_i \hat{B}_i^T,
	\quad 
	i \, = \, 1, \ldots, n
	\label{eq.LyapDiag}
	\ee
	where $ \hat{P}_i $ is a scalar for the gradient descent method and a $ 2\times 2 $ matrix for the accelerated algorithms. In Theorem~\ref{th.varianceJhat}, we use the solution to these decoupled Lyapunov equations to express the variance amplification~as
	\be
	J
	\; = \,
	\sum_{i \, = \, 1}^{n}
	\,
	\hat{J} ( \lambda_i )
	\; \DefinedAs \;
	\sum_{i \, = \, 1}^{n}
	\,
	\trace \, (\hat{C}_i \hat{P}_i \hat{C}_i^T )
	\non
	\ee
where $\hat{J} ( \lambda_i )$ determines the contribution of the eigenvalue $\lambda_i$ of the matrix $Q$ to the variance amplification. 
In what follows, we use subscripts $\gd$, $\hb$, and $\na$ (e.g., $J_\gd$, $J_\hb$, and $J_\na$) to denote quantities that correspond to gradient descent~\eqref{alg.GD}, heavy-ball method~\eqref{alg.HB}, and Nesterov's accelerated method~\eqref{alg.NA}.
	
	\begin{mythm}
		\label{th.varianceJhat}
		For strongly convex quadratic problems, the variance amplification of noisy first-order algorithms~\eqref{eq.1st} with any constant stabilizing parameters $\alpha$ and $\beta$ is determined by $J = \sum_{i \, = \, 1}^n \hat{J} ( \lambda_i )$, where $\lambda_i$ is the $i$th eigenvalue of $Q = Q^T \succ 0$ and the modal contribution to the variance amplification~$\hat{J}(\lambda)$ is given by
		\be
		\ba{rl}
		\mbox{Gradient:}
		&
		\hat{J}_\gd ( \lambda )
		\; = \;		
		\dfrac{\tc{black}{\sigma^2}}
		{\alpha \lambda \left(2 \, - \, \alpha \lambda \right)}
		\\[0.2cm]
		\mbox{Polyak:} 
		&
		\hat{J}_\hb(\lambda) 
		\; = \;
		\dfrac{\tc{black}{\sigma^2}(1 \, + \, \beta)}
		{\alpha \lambda\left(1\, - \, \beta\right) 
			\left(
			2(1\, + \, \beta) \, - \, \alpha\lambda
			\right)
		} 
		\\[0.25cm]
		\mbox{Nesterov:}
		&
		\hat{J}_\na ( \lambda )
		\; = \;
		\dfrac{\tc{black}{\sigma^2}(1 \, + \, \beta(1 \, - \, \alpha\lambda))}
		{
			\alpha \lambda 
			\left( 
			1 \, - \, \beta(1 \, - \, \alpha\lambda)
			\right)
			\left(
			2(1 \, + \, \beta) \, - \, (2\beta \, + \, 1)\alpha\lambda
			\right)}.
		\ea
		\non
		\ee		
	\end{mythm}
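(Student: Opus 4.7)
The plan is to exploit the diagonal decomposition~\eqref{eq.Ahat} already established. Under the orthogonal change of coordinates~\eqref{eq:coorTrans}, the Lyapunov equation~\eqref{eq.Lyap} splits into the $n$ independent equations~\eqref{eq.LyapDiag}, so the performance measure decomposes as $J = \sum_{i=1}^{n} \trace(\hat{C}_i \hat{P}_i \hat{C}_i^T)$. It therefore suffices to solve each decoupled Lyapunov equation in closed form and identify $\hat{J}(\lambda_i) \DefinedAs \trace(\hat{C}_i \hat{P}_i \hat{C}_i^T)$ with the announced expression. For gradient descent, $\hat{A}_i = 1-\alpha\lambda_i$ and $\hat{B}_i = \hat{C}_i = 1$ are scalars, so $(1 - (1-\alpha\lambda_i)^2)\hat{P}_i = \sigma^2$ immediately yields $\hat{J}_\gd(\lambda_i) = \sigma^2/(\alpha\lambda_i(2-\alpha\lambda_i))$, which matches the claimed formula.

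For the two accelerated algorithms I would observe that their reduced-order system matrices share the common companion-like structure
\[
\hat{A}_i \; = \; \begin{pmatrix} 0 & 1 \\ -\beta' & \gamma \end{pmatrix}, \qquad \hat{B}_i \; = \; \begin{pmatrix} 0 \\ 1 \end{pmatrix}, \qquad \hat{C}_i \; = \; \begin{pmatrix} 1 & 0 \end{pmatrix},
\]
with $(\beta',\gamma) = (\beta,\,1+\beta-\alpha\lambda_i)$ for the heavy-ball method and $(\beta',\gamma) = (\beta(1-\alpha\lambda_i),\,(1+\beta)(1-\alpha\lambda_i))$ for Nesterov's method. This permits a unified treatment: parametrize the symmetric unknown as $\hat{P}_i = \left(\begin{smallmatrix} p & q \\ q & r \end{smallmatrix}\right)$, compute $\hat{A}_i \hat{P}_i \hat{A}_i^T$ entrywise, and equate the result plus $\sigma^2 \hat{B}_i \hat{B}_i^T$ to $\hat{P}_i$. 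This produces the three scalar conditions $p = r$, $(1+\beta')q = \gamma r$, and $(1-\gamma^2)r - \beta'^2 p + 2\beta'\gamma q = \sigma^2$. Substituting the first two into the third and clearing the denominator $(1+\beta')$ yields, after the grouping $(1+\beta') - (1+\beta')\gamma^2 - (1+\beta')\beta'^2 + 2\beta'\gamma^2 = (1-\beta')[(1+\beta')^2-\gamma^2]$, the single relation
\[
r \, (1-\beta') \, \big( (1+\beta')^2 \, - \, \gamma^2 \big) \; = \; \sigma^2 \, (1+\beta').
\]

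Since $\hat{J}(\lambda_i) = \hat{C}_i \hat{P}_i \hat{C}_i^T = p = r$, the theorem follows by factoring the difference of squares as $\big((1+\beta')-\gamma\big)\big((1+\beta')+\gamma\big)$ and substituting the values of $(\beta',\gamma)$ appropriate to each algorithm. The main obstacle is the algebraic bookkeeping in the Nesterov case, where both $\beta'$ and $\gamma$ depend on $\alpha\lambda_i$, and one must verify the cancellations $(1+\beta')-\gamma = \alpha\lambda_i$ and $(1+\beta')+\gamma = 2(1+\beta) - (2\beta+1)\alpha\lambda_i$; for heavy-ball the same factors collapse more directly to $\alpha\lambda_i$ and $2(1+\beta)-\alpha\lambda_i$. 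As a consistency check, Schur stability of $\hat{A}_i$ is equivalent to the strict positivity of the three factors $(1-\beta')$, $(1+\beta')-\gamma$, and $(1+\beta')+\gamma$, so the derived $\hat{J}(\lambda_i)$ is well defined and positive whenever $\alpha$ and $\beta$ stabilize the $i$th mode, which is precisely the theorem's hypothesis.
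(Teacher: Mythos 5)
Your proposal is correct and follows essentially the same route as the paper: exploit the modal decomposition to reduce the Lyapunov equation to $n$ decoupled scalar or $2\times 2$ equations, and solve each in closed form (the paper simply states the solution $p_i = (a_i-1)/\bigl((a_i+1)(b_i+a_i-1)(b_i-a_i+1)\bigr)$ for the companion-form pair, which coincides with your $r = \sigma^2(1+\beta')/\bigl((1-\beta')((1+\beta')-\gamma)((1+\beta')+\gamma)\bigr)$ under $a_i=-\beta'$, $b_i=\gamma$). Your entrywise derivation of that formula and the Jury-criterion consistency check are correct but do not change the substance of the argument.
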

	\begin{proof}
		See Appendix~\ref{app.Quadratic}.
	\end{proof}
For strongly convex quadratic problems, Theorem~\ref{th.varianceJhat} provides {\em exact expressions\/} for variance amplification of the first-order algorithms. These expressions not only quantify the dependence of $J$ on the algorithmic parameters $\alpha$ and $\beta$ and the impact of the largest and smallest eigenvalues, but also capture the effect of all other eigenvalues of the Hessian matrix $Q$. \tc{black}{We also observe that the variance amplification $J$ is proportional to $\sigma^2$. Apart from Section~\ref{sec.compTuning}, where we examine the role of parameters $\alpha$ and $\beta$ on acceleration/robustness tradeoff and allow the dependence of $\sigma$ on $\alpha$, without loss of generality we choose $\sigma = 1$ in the rest of the paper.}

	\begin{myrem}
The performance measure $J$ in~\eqref{eq.Jus} quantifies the steady-state variance of the iterates of first-order algorithms. Robustness of noisy algorithms can be also evaluated using alternative performance measures, e.g., the mean value of the error in the objective function~\cite{aybfalgurozd19},  
	\be
J'
\;=\;
\lim_{t \, \to \, \infty} \EX
\left( ( x^t-x^\star )^T Q \, ( x^t-x^\star ) \right).
\label{eq.Jthem}
\ee
This measure of variance amplification can be characterized using our approach by defining
	$
	C = Q^{1/2}
	$
for gradient descent and
	$
	C = [ \, Q^{1/2} \;\, 0 \, ]
	$	
for accelerated algorithms in state-space model~\eqref{eq.ss}. Furthermore, repeating the above procedure for the modified performance output $z^t$ yields
	$
	J' 
	= 
	\sum_{i \, = \, 1}^n  
	\lambda_i \hat{J} ( \lambda_i ), 
	$
where the respective expressions for $\hat{J} ( \lambda_i )$ are given in Theorem~\ref{th.varianceJhat}.
\end{myrem}

	\vspace*{-2ex}
\subsection{Comparison for the parameters that optimize the convergence rate} 
	\label{subsec.ComparisionOptimal}
	
We next examine the robustness of first-order algorithms applied to strongly convex quadratic problems for the parameters that optimize the linear convergence rate; see Table~\ref{tab:rates}. For these parameters, the eigenvalues of the matrix $A$ are inside the open unit disk, implying exponential stability of system~\eqref{eq.ss}. We first use the expressions presented in Theorem~\ref{th.varianceJhat} to compare the variance amplification of the heavy-ball method to gradient descent. 

\begin{mythm}
	\label{thm.RelationJhbgd}
Let the strongly convex quadratic objective function $f$ in~\eqref{eq.quadraticObjective} satisfy $\lambda_{\max} (Q) = \Lf$, $\lambda_{\min} (Q) = \mf>0$, and let $\kappa \DefinedAs \Lf/\mf$ be the condition number. For the optimal parameters provided in Table~\ref{tab:rates}, the ratio between the variance amplification of the heavy-ball method and gradient descent \tc{black}{with equal values of $\sigma$} is given by 
	\begin{align}\label{eq.JHBtoJGD}		
	\dfrac{J_{\hb}}{J_{\gd}}
	\; = \; 
	\frac{(\sqrt{\kappa} \, + \, 1)^4}{8\sqrt{\kappa} \, (\kappa \, + \, 1)}.
	\end{align}
\end{mythm}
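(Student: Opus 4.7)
The plan is to exploit the modal decomposition from Theorem~\ref{th.varianceJhat}, which expresses both $J_{\gd}$ and $J_{\hb}$ as sums $\sum_i \hat{J}(\lambda_i)$ over the eigenvalues of $Q$. If I can show that for the optimal choices of $(\alpha,\beta)$ in Table~\ref{tab:rates} the pointwise ratio $\hat{J}_{\hb}(\lambda)/\hat{J}_{\gd}(\lambda)$ is \emph{independent of $\lambda$}, then the same ratio will pass through the sum and give the claimed formula without any knowledge of the spectrum beyond $L$ and $m$.

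First I would substitute the optimal gradient-descent stepsize $\alpha_{\gd}=2/(L+m)$ into $\hat{J}_{\gd}(\lambda)$. A direct computation gives $\alpha_{\gd}\lambda(2-\alpha_{\gd}\lambda) = 4\lambda(L+m-\lambda)/(L+m)^2$, so
\[
\hat{J}_{\gd}(\lambda) \;=\; \frac{\sigma^{2}(L+m)^{2}}{4\lambda(L+m-\lambda)}.
\]
Next, writing $r\DefinedAs \sqrt{\kappa}$, I would expand the heavy-ball parameters from Table~\ref{tab:rates} in the convenient forms $1+\beta_{\hb}=2(\kappa+1)/(r+1)^{2}$, $1-\beta_{\hb}=4r/(r+1)^{2}$, and $\alpha_{\hb}=4/[m(r+1)^{2}]$ (using $(\sqrt{L}+\sqrt{m})^{2}=m(r+1)^{2}$). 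The key algebraic step is to observe that these choices make
\[
2(1+\beta_{\hb}) \,-\, \alpha_{\hb}\lambda \;=\; \frac{4}{m(r+1)^{2}}\,(L+m-\lambda),
\]
which exhibits the same $(L+m-\lambda)$ factor as in $\hat{J}_{\gd}(\lambda)$. Multiplying out the denominator of $\hat{J}_{\hb}(\lambda)$ then yields $\alpha_{\hb}\lambda(1-\beta_{\hb})\bigl(2(1+\beta_{\hb})-\alpha_{\hb}\lambda\bigr) = 64\,r\,\lambda(L+m-\lambda)/[m^{2}(r+1)^{6}]$, so
\[
\hat{J}_{\hb}(\lambda) \;=\; \frac{\sigma^{2}\,m^{2}(\kappa+1)(r+1)^{4}}{32\,r\,\lambda(L+m-\lambda)}.
\]

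Dividing the two expressions, the $\lambda$-dependent factor $1/[\lambda(L+m-\lambda)]$ and the $m^{2}(\kappa+1)$ piece cancel (using $(L+m)^{2}=m^{2}(\kappa+1)^{2}$), leaving
\[
\frac{\hat{J}_{\hb}(\lambda)}{\hat{J}_{\gd}(\lambda)} \;=\; \frac{(r+1)^{4}}{8\,r\,(\kappa+1)} \;=\; \frac{(\sqrt{\kappa}+1)^{4}}{8\sqrt{\kappa}\,(\kappa+1)},
\]
independent of $\lambda$. Since the same value of $\sigma$ is used for both algorithms and $J = \sum_{i=1}^{n}\hat{J}(\lambda_{i})$ by Theorem~\ref{th.varianceJhat}, pulling this constant out of the sum delivers~\eqref{eq.JHBtoJGD}.

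The only real obstacle is the algebraic bookkeeping in the second step, in particular recognizing the identity $2(1+\beta_{\hb})-\alpha_{\hb}\lambda \propto L+m-\lambda$. This is the structural reason why the heavy-ball-to-gradient noise-amplification ratio is uniform across the spectrum of $Q$ at the rate-optimal tuning, and it is what lets the per-mode formulas of Theorem~\ref{th.varianceJhat} collapse to the compact, spectrum-free expression in the statement.
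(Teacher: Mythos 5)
Your proposal is correct and follows essentially the same route as the paper: both arguments show that the per-mode ratio $\hat{J}_{\hb}(\lambda)/\hat{J}_{\gd}(\lambda)$ is independent of $\lambda$ at the rate-optimal tuning and then pull the constant through the sum $J=\sum_i \hat{J}(\lambda_i)$. The paper packages the algebra slightly more compactly via the identity $\alpha_{\hb}=(1+\beta)\alpha_{\gd}$, which immediately gives the ratio as $1/(1-\beta^2)$, but this is the same cancellation your explicit computation of $2(1+\beta_{\hb})-\alpha_{\hb}\lambda \propto L+m-\lambda$ exhibits.
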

\begin{proof}
	For the parameters provided in Table~\ref{tab:rates} we have $\alpha_{\hb} = (1 + \beta) \alpha_{\gd}$, where $\beta = ( \sqrt{\kappa} - 1 )^2/( \sqrt{\kappa} + 1 )^2$ is the momentum parameter for the heavy-ball method. It is now straightforward to show that the modal contributions $\hat{J}_\hb$ and $\hat{J}_\gd$ to the variance amplification of the iterates given in Theorem~\ref{th.varianceJhat} satisfy
	\begin{align}\label{eq.PolToGD}		
	\dfrac{\hat{J}_{\hb}(\lambda)}{\hat{J}_{\gd}(\lambda)}
	\; = \; 
	\dfrac{1}{1 \, - \, \beta^2}
	\; = \; 
	\frac{(\sqrt{\kappa} \, + \, 1)^4}{8\sqrt{\kappa} \, (\kappa \, + \, 1)},
	\quad
	\forall \, \lambda \, \in \, [\mf,\Lf].
	\end{align}
	Thus, {\em the ratio $\hat{J}_{\hb}(\lambda)/\hat{J}_{\gd}(\lambda)$ does not depend on $\lambda$ and is only a function of the condition number $\kappa$.\/} Substitution of~\eqref{eq.PolToGD} into $J=\sum_{i} \hat{J}(\lambda_i)$ yields relation~\eqref{eq.JHBtoJGD}.
\end{proof}

Theorem~\ref{thm.RelationJhbgd} establishes the linear relation between the variance amplification of the heavy-ball algorithm $J_{\hb}$ and the gradient descent $J_{\gd}$. We observe that the ratio $J_{\hb}/J_{\gd}$ {\em only\/} depends on the condition number $\kappa$ and that {\em acceleration increases variance amplification\/}: for $\kappa \gg 1$, $J_{\hb}$ is larger than $J_{\gd}$  by a factor of $\sqrt{\kappa}$. We next study the ratio between the variance amplification of Nesterov's accelerated method and gradient descent. In contrast to the heavy-ball method, this ratio depends on the entire spectrum of the Hessian matrix $Q$. The following proposition, which examines the modal contributions $\hat{J}_{\na} (\lambda)$ and $\hat{J}_{\gd} (\lambda)$ of Nesterov's accelerated method and gradient descent, is the key technical result that allows us to establish the largest and smallest values that the ratio $J_\na/J_\gd$ can take for a given pair of extreme eigenvalues $\mf$ and $\Lf$ of $Q$ in Theorem~\ref{thm.RelationJnagd}. 
\begin{myprop}
	\label{prop.relationJhat}
	Let the strongly convex quadratic objective function $f$ in~\eqref{eq.quadraticObjective} satisfy $\lambda_{\max} (Q) = \Lf$, $\lambda_{\min} (Q) = \mf>0$, and let $\kappa \DefinedAs \Lf/\mf$ be the condition number. For the optimal parameters provided in Table~\ref{tab:rates}, the ratio $\hat{J}_{\na}(\lambda)/\hat{J}_{\gd}(\lambda)$ of modal contributions to variance amplification of Nesterov's method and gradient descent is a decreasing function of $\lambda\in[\mf,\Lf]$. Furthermore, \tc{black}{for $\sigma=1$}, the function $\hat{J}_\gd(\lambda)$ satisfies
	\begin{subequations}
	\be
	\ba{rclcl}
	\max\limits_{\lambda \, \in \, [\mf, \Lf]}
	\;
	\hat{J}_{\gd}(\lambda)
	& \!\!\! = \!\!\! &  
	\hat{J}_{\gd}(\mf) 
	& \!\!\! = \!\!\! &  
	\hat{J}_{\gd}(\Lf) 
	\; = \; 
	\dfrac{(\kappa \, + \, 1)^2}{4 \kappa}
	\\[0.25cm]
	\min\limits_{\lambda \, \in \, [\mf, \Lf]}
	\;
	\hat{J}_{\gd}(\lambda)
	& \!\!\! = \!\!\! &  
	\hat{J}_{\gd}({1}/{\alpha})  
	&\!\!\! = \!\!\! &  
	1
	\label{eq.GDExtrem}
	\ea
	\ee
	and the function $\hat{J}_\na(\lambda)$ satisfies
	\be
	\ba{rclcl}
	&&  
	\hat{J}_\na(\Lf) 
	& \!\!\! =\!\!\! &  
	\dfrac{9 \, \bk^2 \! \left(\bk \, + \, 2 \sqrt{\bk} \, -2 \, \right)}
	{32 \left(\bk \, - \, 1 \right) \! \left( \bk \, - \, \sqrt{\bk} \, + \, 1 \right) \! \left(2\sqrt{\bk} \, - \, 1\right)} 
	\\[0.35cm]
	\max\limits_{\lambda \, \in \, [\mf, \Lf]}
	\;
	\hat{J}_{\na}(\lambda)
	& \!\!\! = \!\!\! &  
	\hat{J}_{\na}(\mf)
	& \!\!\! = \!\!\! &  
	\dfrac{\bk^2 \! \left(\bk \, - \, 2 \sqrt{\bk} \, + \, 2 \right)}
	{32 \! \left(\sqrt{\bk} \, - \, 1 \right)^3}
	\\[0.15cm] 
	\min\limits_{\lambda \, \in \, [\mf, \Lf]} 
	\;
	\hat{J}_{\na}(\lambda)
	& \!\!\! = \!\!\! &  
	\hat{J}_{\na}({1}/{\alpha}) 
	& \!\!\! = \!\!\! &
	1
	\ea
	\label{eq.Jmax}
	\ee
	\end{subequations}
where $\bk \DefinedAs 3 \kappa + 1$.	
	\end{myprop}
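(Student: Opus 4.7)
The plan is to substitute the optimal parameters from Table~\ref{tab:rates} into the formulas of Theorem~\ref{th.varianceJhat} and then analyze the resulting single-variable rational functions directly. A useful preliminary observation is that $\lambda = 1/\alpha$ satisfies $1 - \alpha\lambda = 0$; substituting this into both $\hat{J}_\gd(\lambda)$ and $\hat{J}_\na(\lambda)$ (with $\sigma = 1$) immediately yields the value $1$. This takes care of the ``$\hat{J}_\gd(1/\alpha) = 1$'' statement in~\eqref{eq.GDExtrem} and the ``$\hat{J}_\na(1/\alpha) = 1$'' statement in~\eqref{eq.Jmax}, leaving only the question of whether these values are actually the minima on $[\mf,\Lf]$.

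For gradient descent with $\alpha_\gd = 2/(\Lf+\mf)$, the denominator $\alpha\lambda(2-\alpha\lambda)$ of $\hat{J}_\gd$ is a concave quadratic in $\alpha\lambda$ that is symmetric under $\alpha\lambda \mapsto 2-\alpha\lambda$, i.e., symmetric about $\lambda = 1/\alpha_\gd = (\Lf+\mf)/2 \in [\mf,\Lf]$. Hence the denominator is maximized at the midpoint (giving $\hat{J}_\gd = 1$) and minimized at the two endpoints, where it takes a common value. Direct substitution of $\lambda = \mf$ (or $\lambda = \Lf$) and rearranging in terms of $\kappa = \Lf/\mf$ gives $(\kappa+1)^2/(4\kappa)$, which proves~\eqref{eq.GDExtrem}.

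For Nesterov's method, the algebra is streamlined by the substitution $u \DefinedAs 1 - \alpha_\na\lambda$, under which
\[
\hat{J}_\na(u) \;=\; \frac{1 + \beta u}{(1-u)\,(1-\beta u)\,(1+(2\beta+1)u)},
\]
the endpoints become $u_\mf = 3(\kappa-1)/\bk$ and $u_\Lf = -(\kappa-1)/\bk$ with $\bk = 3\kappa+1$, and $\lambda = 1/\alpha_\na$ corresponds to $u = 0$. Taking the logarithmic derivative, combining the four partial fractions over a common denominator, and factoring out $u$ yields
\[
\frac{d \log \hat{J}_\na}{du} \;=\; \frac{2 u \, g(u)}{(1-\beta^2 u^2)(1-u)(1+(2\beta+1)u)},
\qquad
g(u) \;=\; (2\beta^2 + 2\beta + 1) \,+\, \beta(\beta^2 - 2\beta - 1) u \,-\, \beta^2(2\beta+1)u^2.
\]
The denominator is positive on $[u_\Lf, u_\mf]$ for every finite $\kappa$ (since $\beta \in (0,1)$ and $u_\mf < 1$, $(2\beta+1)|u_\Lf| < 1$), so verifying that the quadratic $g(u)$ is positive on this interval---an elementary check using $\beta < 1$---shows that $d\hat{J}_\na/du$ has the sign of $u$. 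Consequently $u = 0$ is the unique minimizer, $\hat{J}_\na$ is monotone on each of $[\mf, 1/\alpha_\na]$ and $[1/\alpha_\na, \Lf]$, and the relation $u_\mf = -3 u_\Lf$ together with a comparison of $\hat{J}_\na(u_\mf)$ and $\hat{J}_\na(u_\Lf)$ forces the global maximum at $\lambda = \mf$. Substituting the optimal $\beta = (\sqrt{\bk}-2)/(\sqrt{\bk}+2)$ and simplifying in terms of $\bk$ then produces the closed-form expressions for $\hat{J}_\na(\mf)$ and $\hat{J}_\na(\Lf)$ stated in~\eqref{eq.Jmax}.

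Finally, to show that $R(\lambda) \DefinedAs \hat{J}_\na(\lambda)/\hat{J}_\gd(\lambda)$ is strictly decreasing on $[\mf,\Lf]$, the approach is to compute $d\log R/d\lambda$ explicitly and reduce its sign to a polynomial sign check. The technical obstacle, in contrast to the heavy-ball case of Theorem~\ref{thm.RelationJhbgd}, is that the two algorithms use different stepsizes ($\alpha_\gd \neq \alpha_\na$), so the ratio does not collapse into a function of $\kappa$ alone. After clearing denominators and parametrizing everything through $\bk = 3\kappa+1$, the sign of the numerator of $d \log R/d\lambda$ reduces to a polynomial inequality in $\alpha_\na \lambda \in [4/\bk, 4\kappa/\bk]$ whose coefficients depend only on $\bk$; bounding these coefficients using $\kappa \ge 1$ gives the desired monotonicity. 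This last sign check is the main algebraic hurdle, but it is routine once the change of variables has isolated a single monotonicity question.
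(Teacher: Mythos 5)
Your proposal is correct in outline but takes a genuinely different route on the two nontrivial points, so it is worth comparing. For the quasi-convexity of $\hat{J}_\na$, the paper simply asserts quasi-convexity with minimizer $1/\alpha$ and then obtains the ordering $\hat{J}_\na(\Lf)\le\hat{J}_\na(\mf)$ \emph{as a consequence of} the monotonicity of the ratio $\hat{J}_\na/\hat{J}_\gd$ combined with $\hat{J}_\gd(\mf)=\hat{J}_\gd(\Lf)$; you instead prove quasi-convexity explicitly via the substitution $u=1-\alpha_\na\lambda$ and the factorization $\mrd\log\hat{J}_\na/\mrd u = 2u\,g(u)/\big((1-\beta^2u^2)(1-u)(1+(2\beta+1)u)\big)$, which I checked and is correct (and $g(u)\ge 1-\beta^2>0$ on $|u|\le 1$, so the sign argument goes through). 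This is a nice, self-contained replacement for the paper's unproved quasi-convexity claim. However, your assertion that ``$u_\mf=-3u_\Lf$ together with a comparison \ldots forces the global maximum at $\lambda=\mf$'' is not an argument: $\hat{J}_\na(u)$ is not even symmetric in $u$ (to first order $\hat{J}_\na(u)\approx\hat{J}_\na(-u)$), so being three times farther from $u=0$ proves nothing by itself, and the ``comparison'' is exactly the inequality $\hat{J}_\na(\mf)\ge\hat{J}_\na(\Lf)$ you are trying to establish. You must either carry out that comparison with the closed forms in $\bk$ (it does hold, with equality at $\kappa=1$), or do what the paper does and deduce it from the decreasing ratio plus $\hat{J}_\gd(\mf)=\hat{J}_\gd(\Lf)$ --- which is the cleaner order of operations.

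The second divergence is the monotonicity of $R=\hat{J}_\na/\hat{J}_\gd$ itself. The paper writes $R=\sigma_1+\sigma_2$ as a sum of two homographic functions of $\lambda$, each of whose derivative has a sign determined by a single $2\times 2$ determinant and whose pole lies outside $[\mf,\Lf]$; the whole step then reduces to four elementary sign checks. Your plan --- logarithmic differentiation of the quadratic-over-quadratic $R$ followed by a polynomial sign check over $[\mf,\Lf]$ --- would in principle work, but the resulting numerator is a cubic in $\lambda$ with $\kappa$-dependent coefficients, and establishing its sign uniformly in $\kappa>1$ is precisely the computation the paper's decomposition is designed to avoid. As written, this step is a promissory note rather than a proof; I would either execute the cubic sign analysis in full or adopt the paper's partial-fraction decomposition, which buys a genuinely shorter argument at the cost of having to guess the decomposition.
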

	\begin{proof}
	See Appendix~\ref{app.Quadratic}.
	\end{proof}

For all three algorithms, Proposition~\ref{prop.relationJhat} and Theorem~\ref{thm.RelationJhbgd} demonstrate that the modal contribution to the variance amplification of the iterates at the extreme eigenvalues of the Hessian matrix $\mf$ and $\Lf$ only depends on the condition number $\kappa \DefinedAs \Lf/\mf$. For gradient descent and the heavy-ball method, $\hat{J}$ achieves its largest value at $\mf$ and $\Lf$, i.e., 
	\begin{subequations}
	\label{eq.Jhat_m_L}
	\be
	\ba{rcl}
	\max\limits_{\lambda \, \in \, [\mf, \Lf]}
	\hat{J}_\gd (\lambda) 
	& \!\!\! =  \!\!\! &
	\hat{J}_\gd(\mf) 
	\; = \;
	\hat{J}_\gd(\Lf) 
	\; = \; 
	\Theta(\kappa)
	\\[0.15cm]
	\max\limits_{\lambda \, \in \, [\mf, \Lf]}
 	\hat{J}_\hb (\lambda) 
	& \!\!\! =  \!\!\! &
	\hat{J}_\hb (\mf) 
	\; = \; \hat{J}_\hb (\Lf) 
	\; = \; 
	\Theta(\kappa \sqrt{\kappa}).
	\ea
	\ee
On the other hand, for Nesterov's method,~\eqref{eq.Jmax} implies a gap of $\Theta(\kappa)$ between the boundary values 
	\be
	\max\limits_{\lambda \, \in \, [\mf, \Lf]}
 	\hat{J}_\na (\lambda) 
	\; = \;
	\hat{J}_\na (\mf)
	\; = \; 
	\Theta(\kappa\sqrt{\kappa}),
	~~
	\hat{J}_\na(\Lf) 
	\; = \; 
	\Theta(\sqrt{\kappa}).
	\ee
	\end{subequations} 
	\vspace*{-2ex}
	\begin{myrem}
Theorem~\ref{th.varianceJhat} provides explicit formulas for variance amplification of noisy algorithms~\eqref{eq.1st} in terms of the  eigenvalues $\lambda_i$ of the Hessian matrix $Q$. Similarly, we can represent the variance amplification in terms of the eigenvalues $\hat{\lambda}_i$ of the dynamic matrices $\hat{A}_i$ in~\eqref{eq.Ahat}. For gradient descent, $\hat{\lambda}_i = 1 - \alpha \lambda_i$ and it is straightforward to verify that $J_{\gd}$ is determined by the sum of reciprocals of distances of these eigenvalues to the stability boundary, $J_{\gd} = \sum_{i \, = \, 1}^n \tc{black}{\sigma^2}/(1 - \hat{\lambda}_i^2)$. \tc{black}{Similarly, for accelerated methods we have,
	\[
	J
	\;=\;
	\sum_{i \, = \, 1}^n
	\dfrac{ \sigma^2 (1+\muo \mut)}{(1-\muo\mut) (1-\muo) (1-\mut)(1+\muo) (1+\mut)}
	\]
where $\muo$ and $\mut$ are the eigenvalues of $\hat{A}_i$. For Nesterov's method with the parameters provided in Table~\ref{tab:rates}, the matrix $\hat{A}_n$, which corresponds to $\lambda_n = m$, admits a Jordan canonical form with repeated eigenvalues $\hat{\lambda}_n = \hat{\lambda}'_n = 1 - 2/\sqrt{3 \kappa + 1}$. In this case, $\hat{J}_{\na} (m) = \sigma^2 (1 + \hat{\lambda}_n^2)/(1 - \hat{\lambda}_n^2)^3$, which should be compared and contrasted to the above expression for gradient descent. Furthermore,} for both  $\lambda_1 = L$ and $\lambda_n = m$, the matrices $\hat{A}_1$ and $\hat{A}_n$ for the heavy-ball method with the parameters provided in Table~\ref{tab:rates} have eigenvalues with algebraic multiplicity two and incomplete sets of eigenvectors.  
\end{myrem}	
	
We next establish the range of values that the ratio $J_\na/J_\gd$ can take. 
\begin{mythm}
	\label{thm.RelationJnagd}
	For the strongly convex quadratic objective function $f$ in~\eqref{eq.quadraticObjective} with $x \in \bbR^n$, $\lambda_{\max} (Q) = \Lf$, and $\lambda_{\min} (Q) = \mf>0$, the ratio between the variance amplification of Nesterov's accelerated method and gradient descent, for the optimal parameters provided in Table~\ref{tab:rates} \tc{black}{and equal values of $\sigma$} satisfies
	\begin{align}\label{eq.JNAtoJGD}
\dfrac{\hat{J}_\na(\mf)
	\, + \,
	(n\,-\,1)
	\hat{J}_\na(\Lf)}
{\hat{J}_\gd(\mf)
	\, + \,
	(n\,-\,1)
	\hat{J}_\gd(\Lf)}
\;\le\;
\dfrac{J_\na}{J_\gd}
\; \le \;
\dfrac{\hat{J}_\na(\Lf)
	\, + \,
	(n\,-\,1)
	\hat{J}_\na(\mf)}
{\hat{J}_\gd(\Lf)
	\, + \,
	(n\,-\,1)
	\hat{J}_\gd(\mf)}.
\end{align} 
\end{mythm}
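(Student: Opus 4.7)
The plan is to express the ratio $J_\na/J_\gd = \big(\sum_i \hat{J}_\na(\lambda_i)\big)\big/\big(\sum_i \hat{J}_\gd(\lambda_i)\big)$ via Theorem~\ref{th.varianceJhat}, and then argue that over all admissible spectra with $\lambda_1=\Lf$ and $\lambda_n=\mf$ fixed, the ratio is maximized by setting every free eigenvalue $\lambda_2,\ldots,\lambda_{n-1}$ to $\mf$ (producing the right-hand side of the claimed inequality) and minimized by setting them all to $\Lf$ (producing the left-hand side). It therefore suffices to prove two one-coordinate replacement lemmas: freezing all eigenvalues except one $\lambda_i\in[\mf,\Lf]$, moving $\lambda_i$ to $\mf$ does not decrease $J_\na/J_\gd$, while moving it to $\Lf$ does not increase it. Iterating either replacement $n-2$ times then collapses the free spectrum to the corresponding extremal configuration.

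For the upper-bound replacement, write $f\DefinedAs\hat J_\na$, $g\DefinedAs\hat J_\gd$, $r\DefinedAs f/g$, and $R(t)\DefinedAs(A+f(t))/(B+g(t))$ where $A,B>0$ are the frozen partial sums. Cross-multiplying $R(\mf)\ge R(t)$ reduces the claim to
\[
A\,(g(t)-g(\mf)) \,+\, B\,(f(\mf)-f(t)) \,+\, g(\mf)\,g(t)\,(r(\mf)-r(t)) \;\ge\; 0,
\]
whose three summands have mixed signs. The key algebraic step I plan to use is the decomposition $f(\mf)-f(t) \,=\, r(\mf)\,(g(\mf)-g(t)) \,+\, g(t)\,(r(\mf)-r(t))$, which after substitution and collection rearranges the above into
\[
\big(g(\mf)-g(t)\big)\,\big(Br(\mf)-A\big) \;+\; \big(r(\mf)-r(t)\big)\,g(t)\,\big(B+g(\mf)\big) \;\ge\; 0.
\]
Every factor here is manifestly nonnegative: Proposition~\ref{prop.relationJhat} gives that $\hat J_\gd$ attains its maximum on $[\mf,\Lf]$ at the endpoints, hence $g(\mf)-g(t)\ge 0$, and that $r$ is decreasing on $[\mf,\Lf]$, so $r(\mf)-r(t)\ge 0$ and also $Br(\mf)-A = \sum_{j\neq i}(r(\mf)-r(\lambda_j))\,g(\lambda_j)\ge 0$.

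The lower bound follows from the same manipulation carried out at the endpoint $\Lf$: cross-multiplying $R(\Lf)\le R(t)$ and substituting the analogous identity for $f(\Lf)-f(t)$ yields
\[
\big(g(\Lf)-g(t)\big)\,\big(Br(\Lf)-A\big) \;+\; \big(r(\Lf)-r(t)\big)\,g(t)\,\big(B+g(\Lf)\big) \;\le\; 0,
\]
in which each product is nonpositive, since $g(\Lf)-g(t)\ge 0$ while $Br(\Lf)-A\le 0$ by the monotonicity of $r$, and $r(\Lf)-r(t)\le 0$ for the same reason. The main obstacle I anticipate is the mixed-sign behavior of the $B(f(\mf)-f(t))$ term in the cross-multiplied expression, which prevents the naive weighted-average estimate $r(\Lf)\le J_\na/J_\gd\le r(\mf)$ from producing the sharper endpoint bounds; the decomposition that separates the weight change $g(\mf)-g(t)$ from the ratio change $r(\mf)-r(t)$ is precisely what organizes the sign structure and delivers the desired inequality.
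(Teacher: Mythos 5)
Your argument is correct, and it reaches the result by a genuinely different route than the paper. The paper's proof is a direct two-step estimate: it first bounds each modal ratio $\hat{J}_\na(\lambda_i)/\hat{J}_\gd(\lambda_i)$ by its value at the appropriate endpoint (using the monotonicity of the ratio from Proposition~\ref{prop.relationJhat}), which homogenizes all but one term of the numerator, and then performs a single cross-multiplication using $\hat{J}_\gd(\mf)=\hat{J}_\gd(\Lf)=\max\hat{J}_\gd$ together with $\sum_{i}\hat{J}_\gd(\lambda_i)\le (n-1)\hat{J}_\gd(\mf)$ to push the aggregated denominator weight to its extreme. Your proof instead runs an exchange argument: a one-coordinate replacement lemma showing that the global ratio $R$ is monotone under moving any single interior eigenvalue to $\mf$ (resp.\ $\Lf$), iterated $n-2$ times. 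The two proofs consume exactly the same ingredients from Proposition~\ref{prop.relationJhat} --- the ratio $r=\hat{J}_\na/\hat{J}_\gd$ is decreasing and $\hat{J}_\gd$ attains its maximum at both endpoints --- and your sign bookkeeping in the decomposition $(g(\mf)-g(t))(Br(\mf)-A)+(r(\mf)-r(t))\,g(t)(B+g(\mf))$ checks out, including the key identity $Br(\mf)-A=\sum_{j\ne i}g(\lambda_j)(r(\mf)-r(\lambda_j))\ge 0$ and its mirror image for the lower bound. What your route buys is that it makes explicit a slightly stronger fact: the spectra with one eigenvalue at $\Lf$ and the rest at $\mf$ (resp.\ one at $\mf$ and the rest at $\Lf$) are exactly the maximizers (resp.\ minimizers) of $J_\na/J_\gd$ over all admissible spectra, which the paper only asserts in the discussion after the theorem; the cost is somewhat heavier algebra than the paper's one-shot estimate.
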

\begin{proof}
	See Appendix~\ref{app.Quadratic}.
\end{proof}
Theorem~\ref{thm.RelationJnagd} provides tight upper and lower bounds on the ratio between $J_{\na}$ and $J_{\gd}$ for strongly convex quadratic problems. As shown in Appendix~\ref{app.Quadratic}, the lower bound is achieved for a quadratic function in which the Hessian matrix $Q$ has one eigenvalue at $\mf$ and $n-1$ eigenvalues at $\Lf$, and the upper bound is achieved when $Q$ has one eigenvalue at $\Lf$ and the remaining ones at $\mf$. Theorem~\ref{thm.RelationJnagd} in conjunction with Proposition~\ref{prop.relationJhat} demonstrate that {\em for a fixed problem dimension $n$, $J_{\na}$ is larger than $J_{\gd}$  by a factor of $\sqrt{\kappa}$ for $\kappa \gg 1$\/}. 
 
 This trade-off is further highlighted in Theorem~\ref{thm.JBoundsQuad} which provides tight bounds on the variance amplification of iterates in terms of the problem dimension $n$ and the condition number $\kappa$ for all three algorithms. To simplify the presentation, we first use the explicit expressions for $\hat{J}_\na(\mf)$ and $\hat{J}_\na(\Lf)$ in Proposition~\ref{prop.relationJhat} to obtain the following upper and lower bounds on $\hat{J}_\na(\mf)$ and $\hat{J}_\na(\Lf)$ (see Appendix~\ref{app.Quadratic})
 	\begin{align}\label{eq.boundsLambdaLm}
 	\dfrac{(3\kappa \, + \, 1)^{\tfrac{3}{2}}}{32}
 	\;\le\;
 	\hat{J}_\na(\mf) 
 	\;\le\;
 	\dfrac{(3\kappa \, + \, 1)^{\tfrac{3}{2}}}{8},
	\qquad
 	\dfrac{9\sqrt{3\kappa \, + \, 1} }{64}
 	\;\le\;
 	\hat{J}_\na(\Lf) 
 	\;\le\;
 	\dfrac{9\sqrt{3\kappa \, + \, 1} }{8}.
 	\end{align} 

\begin{mythm}
	\label{thm.JBoundsQuad}
	For the strongly convex quadratic objective function $f$ in~\eqref{eq.quadraticObjective} with $x \in \bbR^n$, $\lambda_{\max} (Q) = \Lf$, $\lambda_{\min} (Q) = \mf>0$, and $\kappa \DefinedAs \Lf/\mf$, the variance amplification of the first-order optimization algorithms, with the parameters provided in Table~\ref{tab:rates} \tc{black}{and $\sigma=1$}, is bounded by
	\begin{align*}
	\dfrac{(\kappa \, - \, 1)^2}{2 \kappa}
	\, + \, 
	n
	&\;\le\;
	J_\gd
	\;\le\;
	\dfrac{n (\kappa \, + \, 1)^2}{4 \kappa}
	\\[0.cm]
	\dfrac{(\sqrt{\kappa} \, + \, 1)^4}{8\sqrt{\kappa}(\kappa \, + \, 1)}
	\left(\dfrac{(\kappa \, - \, 1)^2}{2 \kappa}
	\, + \, 
	n\right)
	&
	\;\le\;
	J_\hb 
	\;\le\;
	\dfrac{n (\kappa \, + \, 1)(\sqrt{\kappa} \, + \, 1)^4}{32 \kappa\sqrt{\kappa}}
	\\[0.cm]
	\dfrac{(3 \kappa \, + \, 1)^{\tfrac{3}{2}}}{32}\, +\, \dfrac{9\sqrt{3 \kappa \, + \, 1} }{64}  \, + \, n \, - \, 2
	&\;\le\;
	J_\na
	\;\le\;
	\dfrac{(n-1)(3 \kappa \, + \, 1)^{\tfrac{3}{2}} }{8} \,+\, \dfrac{9\sqrt{3 \kappa \, + \, 1} }{8}.
	\end{align*} 
\end{mythm}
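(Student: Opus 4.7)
The plan is to obtain each of the six bounds by direct substitution into the spectral decomposition $J = \sum_{i=1}^{n} \hat{J}(\lambda_i)$ of Theorem~\ref{th.varianceJhat}, using the endpoint values and extremal properties of $\hat{J}$ already recorded in Proposition~\ref{prop.relationJhat}, the modal proportionality of Theorem~\ref{thm.RelationJhbgd}, and the endpoint estimates~\eqref{eq.boundsLambdaLm} for Nesterov. Throughout I write $\lambda_1 = \Lf$, $\lambda_n = \mf$, and $\lambda_i \in [\mf,\Lf]$ for $2 \le i \le n-1$.

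For gradient descent, the upper bound is immediate: $J_\gd \le n \max_\lambda \hat{J}_\gd(\lambda) = n(\kappa+1)^2/(4\kappa)$ by Proposition~\ref{prop.relationJhat}. For the lower bound, I peel off the two endpoint contributions, which together equal $\hat{J}_\gd(\mf) + \hat{J}_\gd(\Lf) = (\kappa+1)^2/(2\kappa)$, and bound each of the remaining $n-2$ interior summands below by $\min_\lambda \hat{J}_\gd(\lambda) = 1$; the elementary rearrangement $(\kappa+1)^2/(2\kappa) - 2 = (\kappa-1)^2/(2\kappa)$ puts the result in the stated form. The heavy-ball bounds then follow without any further work: Theorem~\ref{thm.RelationJhbgd} asserts $J_\hb = J_\gd \cdot (\sqrt{\kappa}+1)^4/(8\sqrt{\kappa}(\kappa+1))$, so multiplying the two gradient-descent bounds through by this factor yields the heavy-ball bounds exactly.

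For Nesterov, I apply the same splitting together with the $\kappa$-dependent endpoint estimates~\eqref{eq.boundsLambdaLm}. The lower bound separates the contributions at $\lambda_1 = \Lf$ and $\lambda_n = \mf$, applies the lower halves $\hat{J}_\na(\Lf) \ge 9\sqrt{3\kappa+1}/64$ and $\hat{J}_\na(\mf) \ge (3\kappa+1)^{3/2}/32$, and bounds each of the remaining $n-2$ interior summands below by $\min_\lambda \hat{J}_\na(\lambda) = 1$ from Proposition~\ref{prop.relationJhat}. For the upper bound I exploit the asymmetry of the endpoint estimates: the single summand at $\lambda_1 = \Lf$ is bounded by $\hat{J}_\na(\Lf) \le 9\sqrt{3\kappa+1}/8$, while each of the other $n-1$ summands is bounded by the global maximum $\max_\lambda \hat{J}_\na(\lambda) = \hat{J}_\na(\mf) \le (3\kappa+1)^{3/2}/8$. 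Since every step reduces to a substitution into results already proved, no genuine obstacle arises; the only two points worth flagging are the small algebraic identity used in the gradient-descent lower bound and the choice, in the Nesterov upper bound, to use the tighter endpoint estimate at $\lambda_1 = \Lf$ rather than the global maximum, which is what produces the $9\sqrt{3\kappa+1}/8$ term in the stated bound.
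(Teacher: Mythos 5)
Your proposal is correct and follows essentially the same route as the paper's proof: both rely on the extremal values of $\hat{J}$ from Proposition~\ref{prop.relationJhat} (maximum at the endpoints, minimum $1$ at $\lambda = 1/\alpha$) to bound the $n-2$ interior modal contributions, obtain the heavy-ball bounds by multiplying the gradient-descent bounds by the constant ratio of Theorem~\ref{thm.RelationJhbgd}, and use the endpoint estimates~\eqref{eq.boundsLambdaLm} for Nesterov's method. The only difference is that you spell out the endpoint-peeling and the algebraic identity $(\kappa+1)^2/(2\kappa) - 2 = (\kappa-1)^2/(2\kappa)$ more explicitly than the paper does, which is harmless.
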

\begin{proof}
	As shown in Proposition~\ref{prop.relationJhat}, the functions $\hat{J}(\lambda)$ for gradient descent and Nesterov's algorithm attain their largest and smallest values over the interval $[\mf, \Lf]$ at $\lambda = \mf$ and $\lambda= {1}/{\alpha}$, respectively. Thus, fixing the smallest and largest eigenvalues, the variance amplification $J$ is maximized when the other $n-2$ eigenvalues are all equal to $\mf$ and is minimized when they are all equal to $1/\alpha$. This combined with the explicit expressions for $\hat{J}_\gd(\mf)$, $\hat{J}_\gd(\Lf)$, and $\hat{J}_\gd(1/\alpha)$ in~\eqref{eq.GDExtrem} leads to the tight upper and lower bounds for gradient descent. For the heavy-ball method, the bounds follow from Theorem~\ref{thm.RelationJhbgd} and for Nesterov's algorithm, the bounds follow from~\eqref{eq.boundsLambdaLm}. 	
	\end{proof}

For problems with a fixed dimension $n$ and a condition number $\kappa \gg n$, there is an $\Omega(\sqrt{\kappa})$ difference in both upper and lower bounds provided in Theorem~\ref{thm.JBoundsQuad} for the accelerated algorithms relative to gradient descent. Even though Theorem~\ref{thm.JBoundsQuad} considers only the values of $\alpha$ and $\beta$ that optimize the convergence rate, in Section~\ref{sec.compTuning} we demonstrate that this gap is fundamental in that it holds for any parameters that yield an accelerated convergence rate. It is worth noting that both the lower and upper bounds are influenced by the problem dimension $n$ and the condition number $\kappa$. For large-scale problems, there may be a subtle relation between $n$ and $\kappa$ and the established bounds may exhibit different scaling trends. In Section~\ref{sec:distributed}, we identify a class of quadratic optimization problems for which $J_\na$ scales in the same way as $J_\gd$ for $\kappa \gg 1$ and $n \gg 1$. 

Before we elaborate further on these issues, we provide two illustrative examples that highlight the importance of the choice of the performance metric in the robustness analysis of noisy algorithms. It is worth noting that an $O(\kappa)$ upper bound for gradient descent and an $O(\kappa^{2})$ upper bound for Nesterov's accelerated algorithm was established in~\cite{schroubac11}. Relative to this upper bound for Nesterov's method, the upper bound provided in Theorem~\ref{thm.JBoundsQuad} is tighter by a factor of $\sqrt{\kappa}$. Theorem~\ref{thm.JBoundsQuad} also provides lower bounds, reveals the influence of the problem dimension $n$, and identifies constants that multiply the leading terms in the condition number $\kappa$. Moreover, in Section~\ref{sec.general} we demonstrate that similar upper bounds can be obtained for general strongly convex objective functions with Lipschitz continuous gradients.
		
		\vspace*{-2ex}
	\subsection{Examples} We next provide illustrative examples to (i) demonstrate the agreement of our theoretical predictions with the results of stochastic simulations; and (ii) contrast two natural performance measures, namely the variance of the iterates $J$ in~\eqref{eq.Jus} and the mean objective error $J'$ in~\eqref{eq.Jthem}, for assessing robustness of noisy optimization algorithms.

\subsubsection*{Example~1} 

Let us consider the quadratic objective function in~\eqref{eq.quadraticObjective} with
	\be
	Q 
	\; = \;
	\tbt{\Lf}{0\\[-0.7cm]}{0}{\mf},
	~~
	q 
	\; = \;
	\tbo{0\\[-0.7cm]}{0}. 
	\label{eq.Qexample}
	\ee
For all three algorithms, the performance measures $J$ and $J'$ are given by
	\be
	\ba{rcl}
	J 
	& \!\!\! = \!\!\! &
	\hat{J} (\mf) \; + \; \hat{J} (\Lf)
	\\
	J' 
	& \!\!\! = \!\!\! &
	\mf \hat{J} (\mf) \; + \; \Lf \hat{J} (\Lf)
	\; = \;
	\Lf 
	\left(
	\tfrac{1}{\kappa} \, \hat{J} (\mf) \; + \; \hat{J} (\Lf)
	\right)
	\, = \;
	\mf 
	\left(
	\hat{J} (\mf) \; + \; \kappa \, \hat{J} (\Lf)
	\right).
	\ea
	\non
	\ee	
As shown in~\eqref{eq.Jhat_m_L}, $\hat{J} (\mf)$ and $\hat{J} (\Lf)$ only depend on the condition number $\kappa$ and the variance amplification of the iterates satisfies
	\begin{subequations}
	\label{eq.JJ'2x2}
	\be
	J_{\gd} 
	\; = \; 
	\Theta(\kappa),
	~~
	J_{\hb} 
	\; = \; 
	\Theta(\kappa \sqrt{\kappa}),
	~~
	J_{\na} 
	\; = \; 
	\Theta(\kappa \sqrt{\kappa}).
	\label{eq.J2x2}
	\ee
On the other hand, $J'$ also depends on $\mf$ and $\Lf$. In particular, it is easy to verify the following relations for two scenarios that yield $\kappa \gg 1$: 	
	\bi
	\item for $\mf \ll 1$ and $\Lf = O (1)$
	\ei	
	\be
	J'_{\gd} 
	\; = \; 
	\Theta(\kappa),
	~~
	J'_{\hb} 
	\; = \; 
	\Theta(\kappa \sqrt{\kappa}),
	~~
	J'_{\na} 
	\; = \; 
	\Theta(\sqrt{\kappa}).
	\label{eq.J'2x2a}
	\ee

	\bi
	\item for $\Lf \gg 1$ and $\mf = O (1)$	
	\ei
	\be
	J'_{\gd} 
	\; = \; 
	\Theta(\kappa^2),
	~~
	J'_{\hb} 
	\; = \; 
	\Theta(\kappa^2 \sqrt{\kappa}),
	~~
	J'_{\na} 
	\; = \; 
	\Theta(\kappa \sqrt{\kappa}).
	\label{eq.J'2x2b}
	\ee
	\end{subequations}	

Relation~\eqref{eq.J2x2} reveals the detrimental impact of acceleration on the variance of the optimization variable. On the other hand,~\eqref{eq.J'2x2a} and~\eqref{eq.J'2x2b} show that, relative to gradient descent, the heavy-ball method increases the mean error in the objective function while Nesterov's method reduces it. Thus, if the mean value of the error in the objective function is to be used to assess performance of noisy algorithms, one can conclude that Nesterov's method significantly outperforms gradient descent both in terms of convergence rate and robustness to noise. However, this performance metric fails to capture large variance of the mode associated with the smallest eigenvalue of the matrix $Q$ in Nesterov's algorithm. Theorem~\ref{thm.RelationJhbgd} and Proposition~\ref{prop.relationJhat} show that the modal contributions to the variance amplification of the iterates for gradient descent and the heavy-ball method are balanced at $\mf$ and $\Lf$, i.e., $\hat{J}_\gd(\mf) = \hat{J}_\gd(\Lf) = \Theta(\kappa)$ and $\hat{J}_\hb (\mf) = \hat{J}_\hb (\Lf) = \Theta(\kappa \sqrt{\kappa})$. On the other hand, for Nesterov's method there is a $\Theta(\kappa)$ gap between $\hat{J}_\na (\mf)=\Theta(\kappa\sqrt{\kappa})$ and $\hat{J}_\na(\Lf)=\Theta(\sqrt{\kappa})$. While the performance measure $J'$ reveals a superior performance of Nesterov's algorithm at large condition numbers, it fails to capture the negative impact of acceleration on the variance of the optimization variable; see Fig.~\ref{fig.ellips} for an illustration. 
		
	\begin{figure}[h]
		\centering
			\begin{tabular}{c}
			\begin{tikzpicture}
				\pgfmathsetmacro{\scl}{0.8}
				\pgfmathsetmacro{\x}{3.75*\scl}
				\pgfmathsetmacro{\y}{4*\scl}
	
	\pgfmathsetmacro{\h}{1.45*\scl}
	
	\pgfmathsetmacro{\r}{1*\scl}
	
	\pgfmathsetmacro{\k}{0.15}
	\pgfmathsetmacro{\kk}{0.3}
	\pgfmathsetmacro{\kkk}{1.3}
	
	\colorlet{hblue}{blue!60!white}
	\colorlet{hblack}{black!40!white}
	\colorlet{hred}{red!60!white}
	
	\draw[hblue, fill=hblue] (0, \y) ellipse (\kk*\r cm and \kk*\r cm) node [name=p11] {};
	\draw[hblack, fill=hblack] (\x, \y) ellipse (\kkk*\r cm and \kkk*\r cm) node [name=p12] {};
	\draw[hred, fill=hred] (2*\x, \y) ellipse (\k*\r cm and \kkk*\r cm) node [name=p13] {};
	\draw[hblue, fill=hblue] (0, 0) ellipse (\kk*\r cm and \k*\r cm) node [name=p21] {};
	\draw[hblack, fill=hblack](\x, 0) ellipse (\kkk*\r cm and \kk*\r cm) node [name=p22] {};
	\draw[hred, fill=hred] (2*\x,0) ellipse (\k*\r cm and \k*\r cm) node [name=p23] {};
	
	\draw[->,line width=0.3mm] (-\h, 0) -- (\h, 0) ;
	\draw[->,line width=0.3mm] (0, -\h) node[below](){Gradient descent} -- (0, \h);
	
	\draw[->,line width=0.3mm] (-\h + \x, 0) -- (\h + \x, 0);
	\draw[->,line width=0.3mm] (0 + \x, -\h) node[below](){Heavy-ball} -- (0 + \x, \h)node[above,yshift=0.cm](){Ellipsoids associated with the performance measure $J'$:};
	
	\draw[->,line width=0.3mm] (-\h + 2*\x, 0) -- (\h + 2*\x , 0);
	\draw[->,line width=0.3mm] (0 + 2*\x, -\h)node[below](){Nesterov} -- (0 + 2*\x, \h);
	
	\draw[->,line width=0.3mm] (-\h, 0 + \y ) -- (\h, 0 + \y);
	\draw[->,line width=0.3mm] (0, -\h + \y) -- (0, \h + \y);
	
	\draw[->,line width=0.3mm] (-\h + \x, 0 + \y) -- (\h + \x, 0 + \y);
	\draw[->,line width=0.3mm] (0 + \x, -\h + \y) --  (0 + \x, \h + \y)node[above,yshift=0.cm](){Ellipsoids associated with the performance measure $J$:};
	
	\draw[->,line width=0.3mm] (-\h + 2*\x, 0 + \y) -- (\h + 2*\x , 0 + \y);
				\draw[->,line width=0.3mm] (0 + 2*\x, -\h + \y) -- (0 + 2*\x, \h + \y);
	\end{tikzpicture}
\end{tabular}	
			\caption{Ellipsoids $\{z \, | \, z^T Z^{-1} z \le 1 \}$ associated with the steady-state covariance matrices $Z = C P C^T$ of the performance outputs $z^t = x^t - x^\star$ (top row) and $z^t = Q^{1/2} ( x^t - x^\star )$ (bottom row) for algorithms~\eqref{eq.1st} with the parameters provided in Table~\ref{tab:rates} for the matrix $Q$ given in~\eqref{eq.Qexample} with $\mf\ll\Lf=O(1)$. The horizontal and vertical axes show the eigenvectors $[\, 1 \;\, 0 \, ]^T$ and $[\, 0 \;\, 1 \, ]^T$ associated with the eigenvalues $\hat{J} (\Lf)$ and $\hat{J}(\mf)$ (top row) and $\hat{J}' (\Lf)$ and $\hat{J}' (\mf)$ (bottom row) of the respective output covariance matrices $Z$.}
	\label{fig.ellips}
\end{figure}
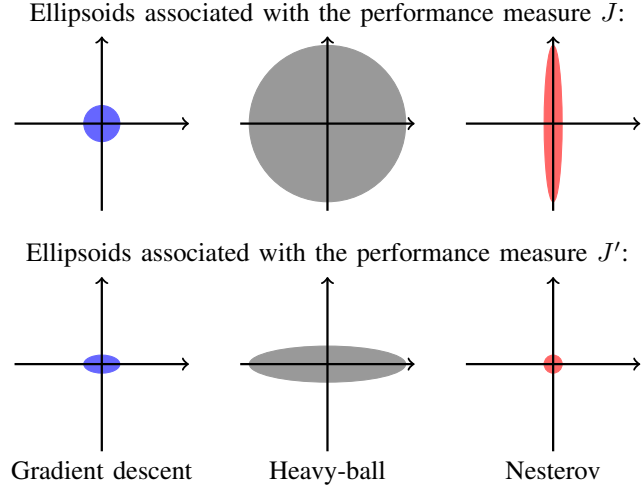
	
Figure~\ref{figScatter} shows the performance outputs $z^t=x^t$ and $z^t=Q^{1/2}x^t$ resulting from $10^5$ iterations of noisy first-order algorithms with the optimal parameters provided in Table~\ref{tab:rates} for the strongly convex objective function $f(x) = 0.5\, x_1^2 + 0.25\times10^{-4}\,x_2^2$ ($\kappa = 2\times 10^4$). Although Nesterov's method exhibits good performance with respect to the error in the objective function (performance measure $J'$), the plots in the first row illustrate detrimental impact of noise on both accelerated algorithms with respect to the variance of the iterates (performance measure $J$). In particular, we observe that: (i) for gradient descent and the heavy-ball method, the iterates $x^t$ are scattered uniformly along the eigen-directions of the Hessian matrix $Q$ and acceleration increases variance equally along all directions; and (ii) relative to gradient descent, Nesterov's method exhibits larger variance in the iterates $x^t$ along the direction that corresponds to the smallest eigenvalue $\lambda_{\min} (Q)$.

\subsubsection*{Example~2} Figure~\ref{fig.toeplitz-example} compares \tc{black}{the results} of twenty stochastic simulations for a strongly convex quadratic objective function~\eqref{eq.quadraticObjective} with $q = 0$ and a Toeplitz matrix $Q \in \bbR^{50 \times 50}$ with the first row $[ \, 2 \, - \! 1 ~\, 0 \; \cdots \; 0 \; \; 0 \, ]^T$. This figure shows the time-dependence of the variance of the performance outputs $z^t = x^t$ and $z^t = Q^{1/2} x^t$ for the algorithms subject to additive white noise with zero initial conditions. The plots further demonstrate that the mean error in the objective function does not capture detrimental impact of noise on the variance of the iterates for Nesterov's algorithm. The bottom row also compares variance obtained by averaging outcomes of twenty stochastic simulations with the corresponding theoretical values resulting from the Lyapunov equations. 

	\begin{figure*}[t!]
		\centering
		\begin{tabular}{r@{\hspace{-0.3 cm}}c@{\hspace{-0.3 cm}}c@{\hspace{-0.3 cm}}c}
			& & performance output $z^t=x^t$: &\\
			\hspace{-.0cm}
			\begin{tabular}{c}
				\rotatebox{90}{\hspace{0.75 cm} $z_2$ }
			\end{tabular}
			&
			\hspace{-.0cm}
			\begin{tabular}{c}
				\includegraphics[width=.24\textwidth]{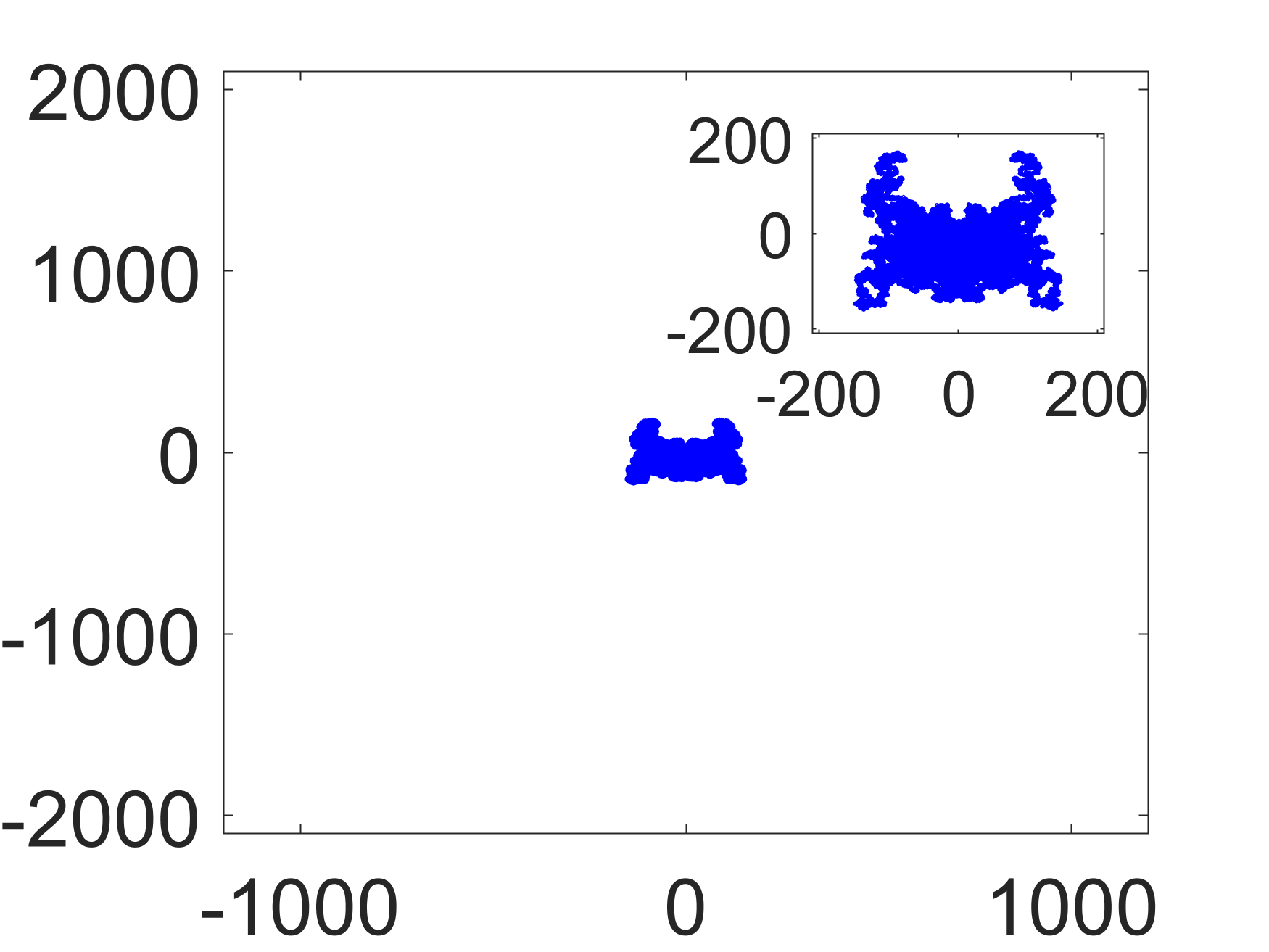}
				\\[-0cm]
				 {$z_1$}
			\end{tabular}
			&
			\hspace{0cm}
			\begin{tabular}{c}
				\includegraphics[width=.24\textwidth]{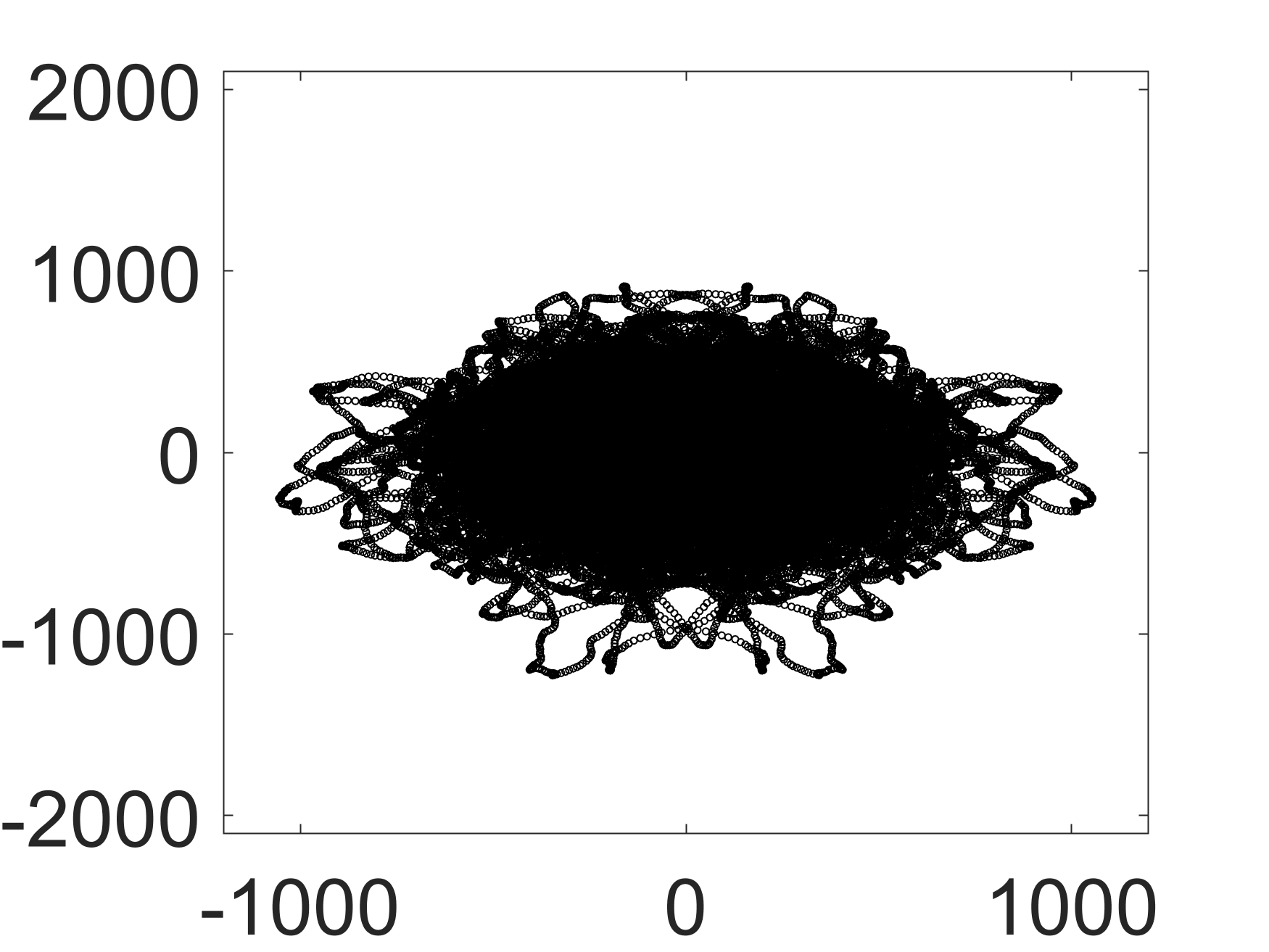}
				\\[-0cm]
				{$z_1$}
			\end{tabular}
			&
			\hspace{-0cm}
			\begin{tabular}{c}
				\includegraphics[width=.24\textwidth]{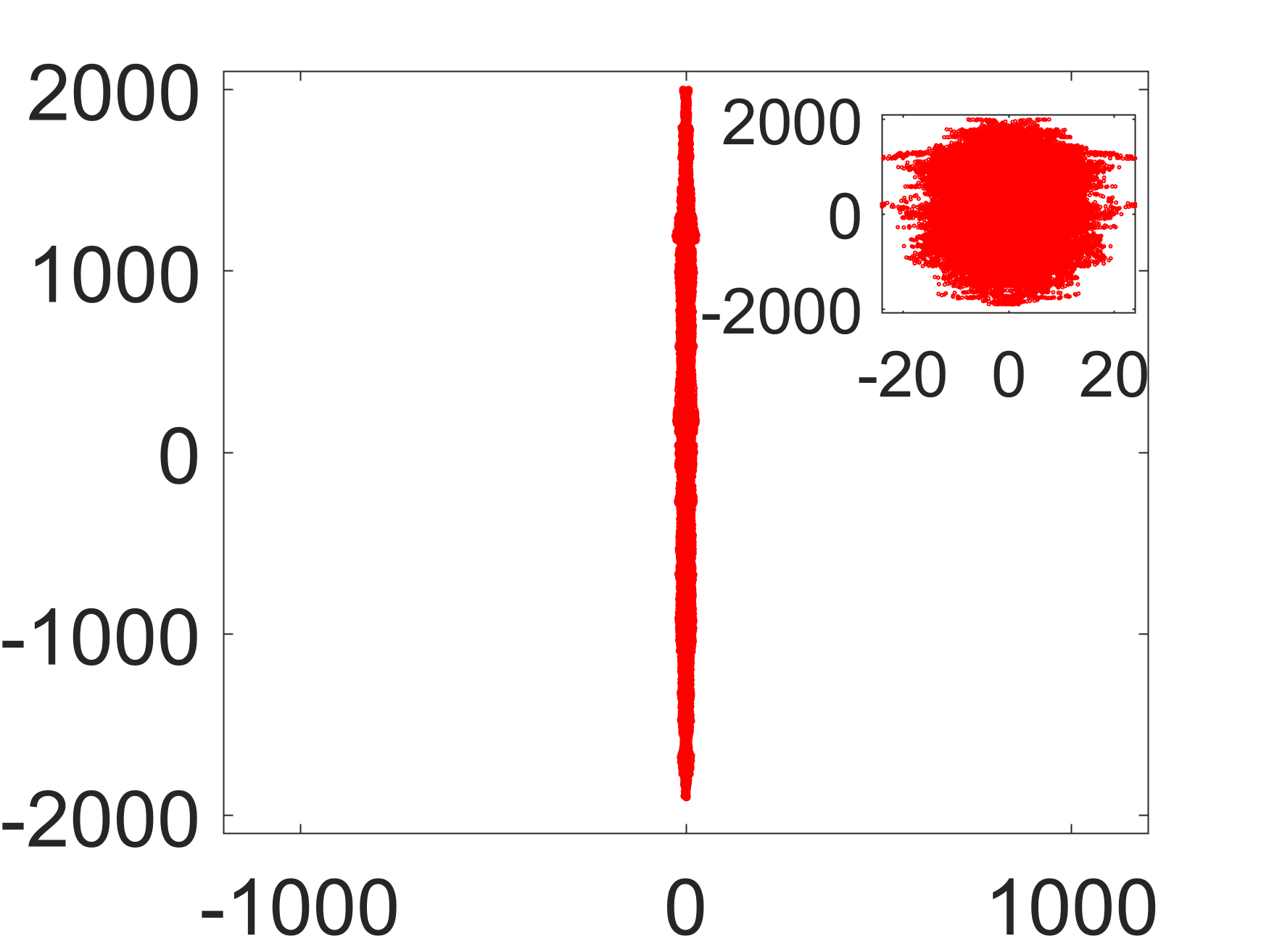}
				\\[-0cm]
				{$z_1$}
			\end{tabular}
			\vspace*{0.15cm}
			\\
						& & performance output $z^t=Q^{1/2}\,x^t$: & 
			\\
			\hspace{-.0cm}
			\begin{tabular}{c}
				\rotatebox{90}{\hspace{0.75 cm} $z_2$ }
			\end{tabular}
			&
			\hspace{-.0cm}
			\begin{tabular}{c}
				\includegraphics[width=.24\textwidth]{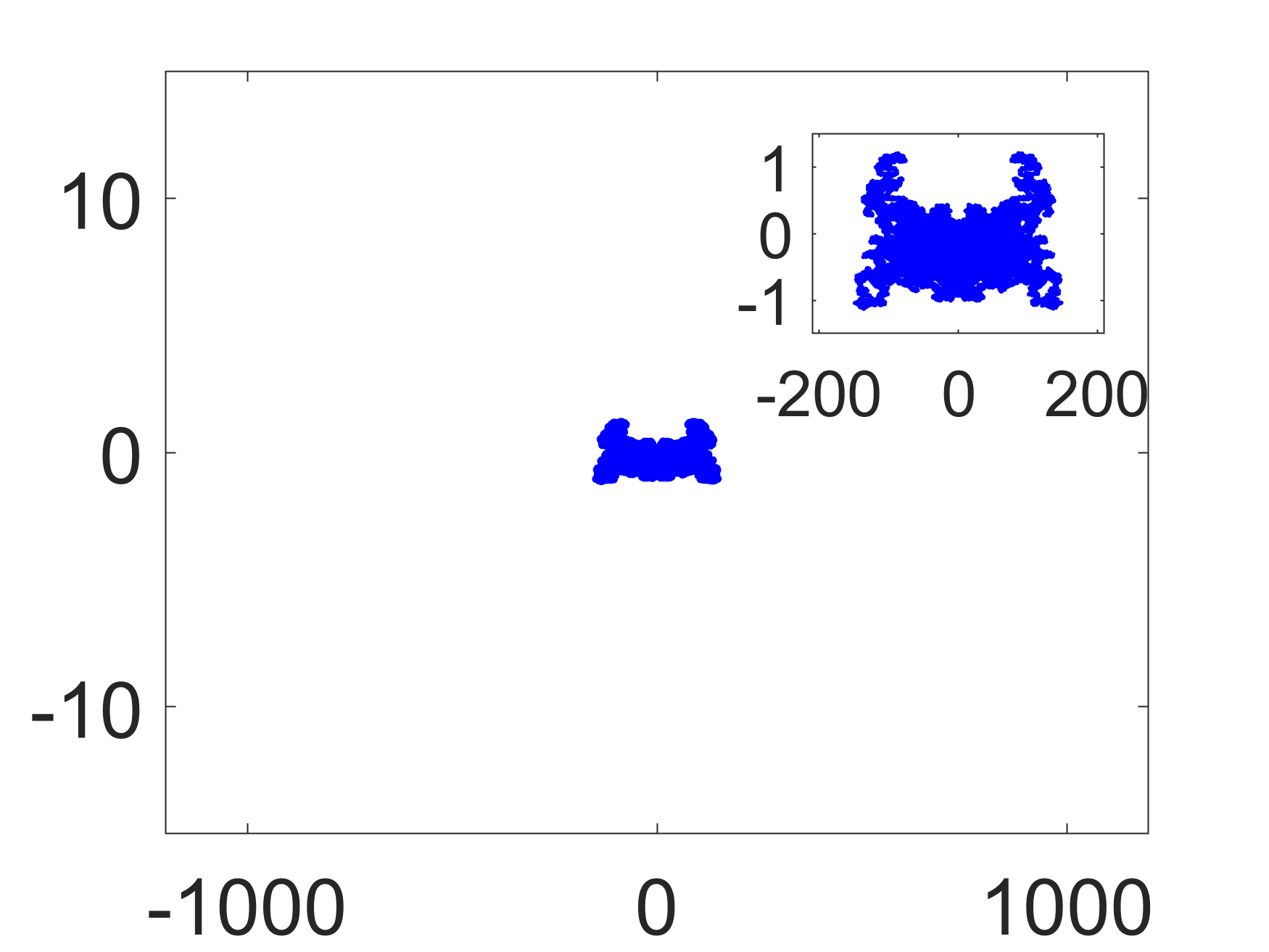}
				\\[-0cm]
				{$z_1$}
			\end{tabular}
			&
			\hspace{0cm}
			\begin{tabular}{c}
				\includegraphics[width=.24\textwidth]{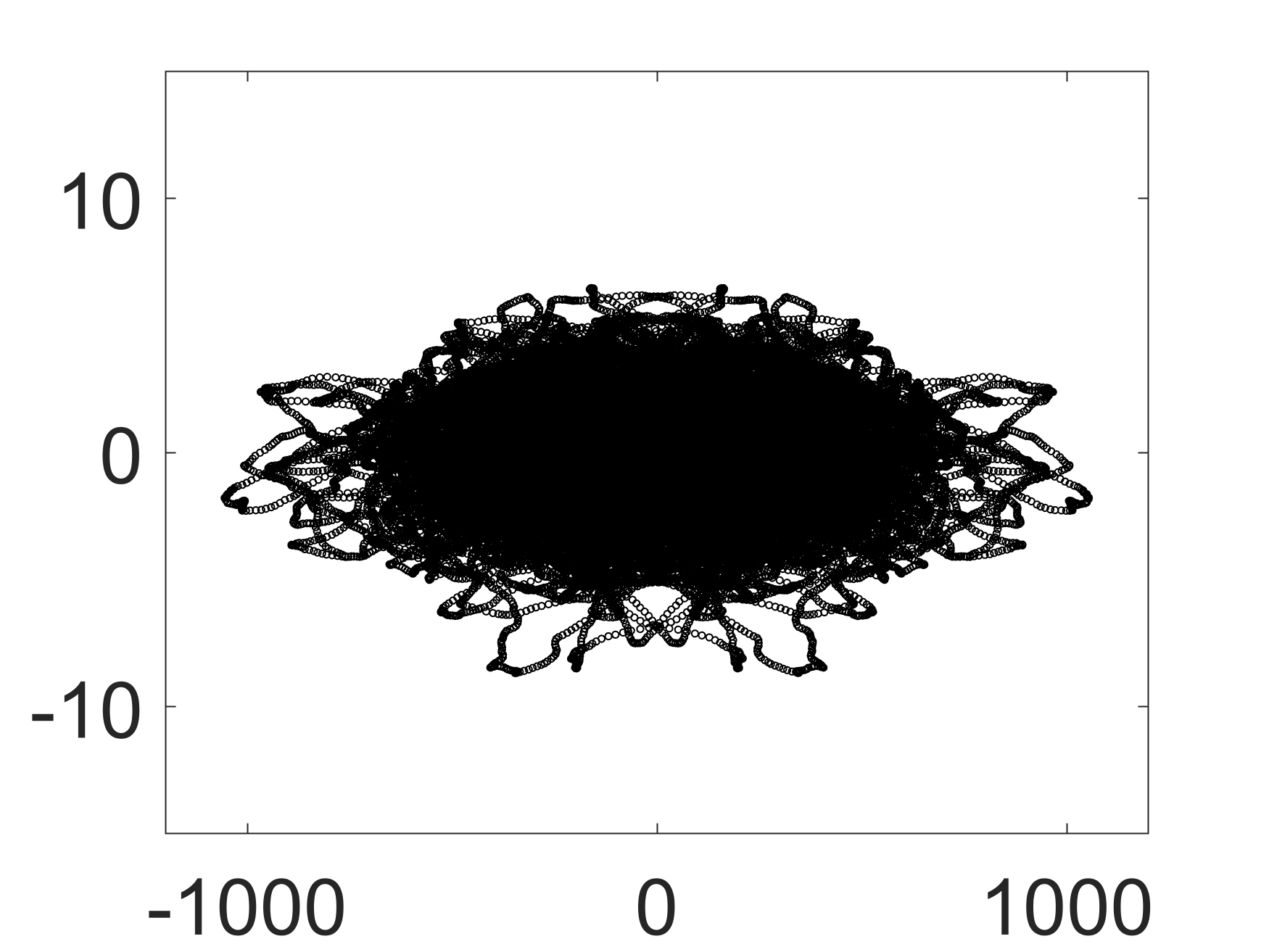}
				\\[-0cm]
				{$z_1$}
			\end{tabular}
			&
			\hspace{-0cm}
			\begin{tabular}{c}
				\includegraphics[width=.24\textwidth]{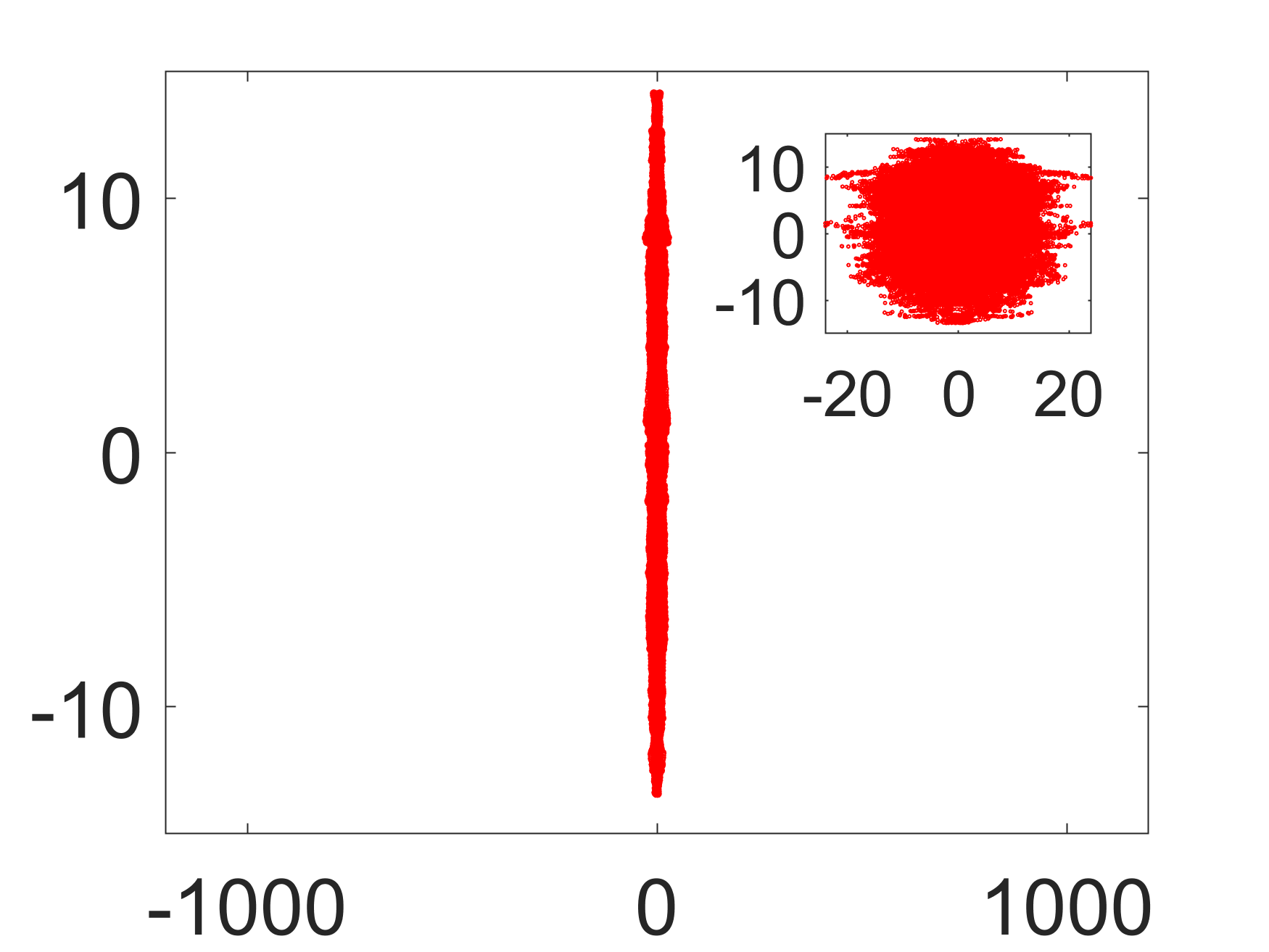}
				\\[-0cm]
				{$z_1$}
			\end{tabular}
			\\
			&
			\hspace{0cm}
			\begin{tabular}{cl}
				\begin{tabular}{c}
					\subfloat[\label{figScatter.1}]{}	

				\end{tabular} & \hspace{-0.5 cm}
				\begin{tabular}{c}\vspace{-0.43 cm}
					Gradient descent
				\end{tabular}
			\end{tabular}
			&
			\hspace{-0cm}
			\begin{tabular}{cl}
				\begin{tabular}{c}
					\subfloat[\label{figScatter.2}]{}	
				\end{tabular} & \hspace{-0.5 cm}
				\begin{tabular}{c}\vspace{-0.43 cm}
					Heavy-ball
				\end{tabular}
			\end{tabular}
			&
			\hspace{-0cm}
			\begin{tabular}{cl}
				\begin{tabular}{c}
					\subfloat[\label{figScatter.3}]{}	
				\end{tabular} & \hspace{-0.5 cm}
				\begin{tabular}{c}\vspace{-0.43 cm}
					Nesterov
				\end{tabular}
			\end{tabular}
		\end{tabular}
		\caption{Performance outputs $z^t=x^t$ (top row) and $z^t = Q^{1/2}x^t$ (bottom row) resulting from  $10^5$ iterations of  noisy first-order algorithms~\eqref{eq.1st} with the parameters provided in Table~\ref{tab:rates}. Strongly convex problem with $f(x) = 0.5\, x_1^2 + 0.25\times10^{-4}\,x_2^2$ ($\kappa =2\times 10^4$) is solved using algorithms with additive white noise and zero initial conditions.}
		\label{figScatter}
	\end{figure*}
	
	\begin{center}	
	\begin{figure*}[h]
		\centering
		\begin{tabular}{rc@{\hspace{0.75 cm}}rc}
			\hspace{-.0cm}
			\begin{tabular}{c}
				\vspace{.4cm}
				\rotatebox{90}{$\sum\limits_{k \, = \, 0}^t 
				\!
				\tfrac{1}{t} \, \norm{z^k}^2$}
			\end{tabular}
			&
			\hspace{-0.75cm}
			\begin{tabular}{c}
				\includegraphics[width=.31\textwidth]{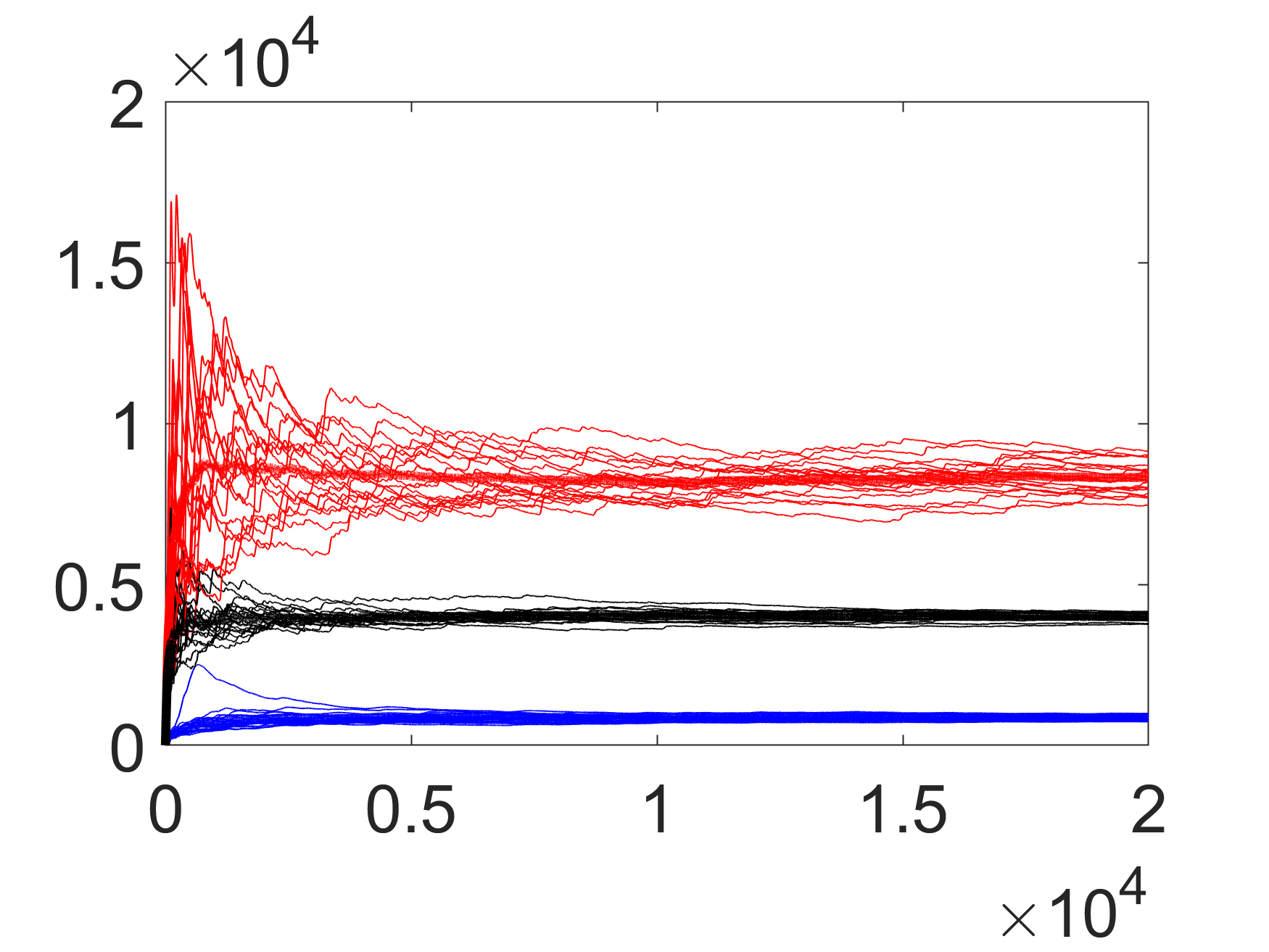}
			\end{tabular}
			&
			&
			\hspace{-0.75cm}
			\begin{tabular}{c}
				\includegraphics[width=.31\textwidth]{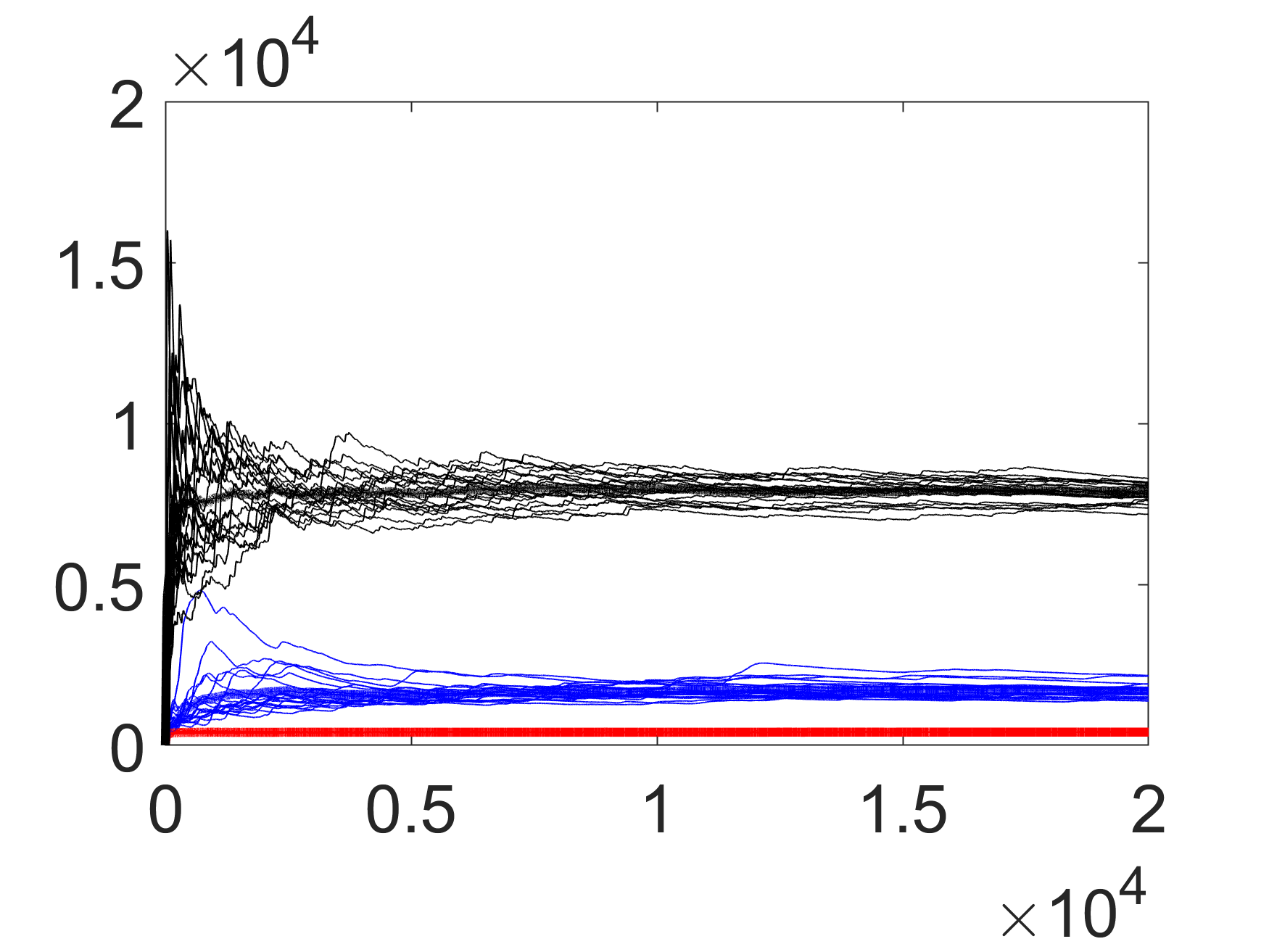}
							\end{tabular}
			\\[-.15cm]
			\begin{tabular}{c}
				\vspace{.4cm}
				\rotatebox{90}{$\sum\limits_{k \, = \, 0}^t 
					\!
					\tfrac{1}{t} \, \norm{z^k}^2$}
			\end{tabular}
			&
			\hspace{-0.75cm}
			\begin{tabular}{c}
				\includegraphics[width=.31\textwidth]{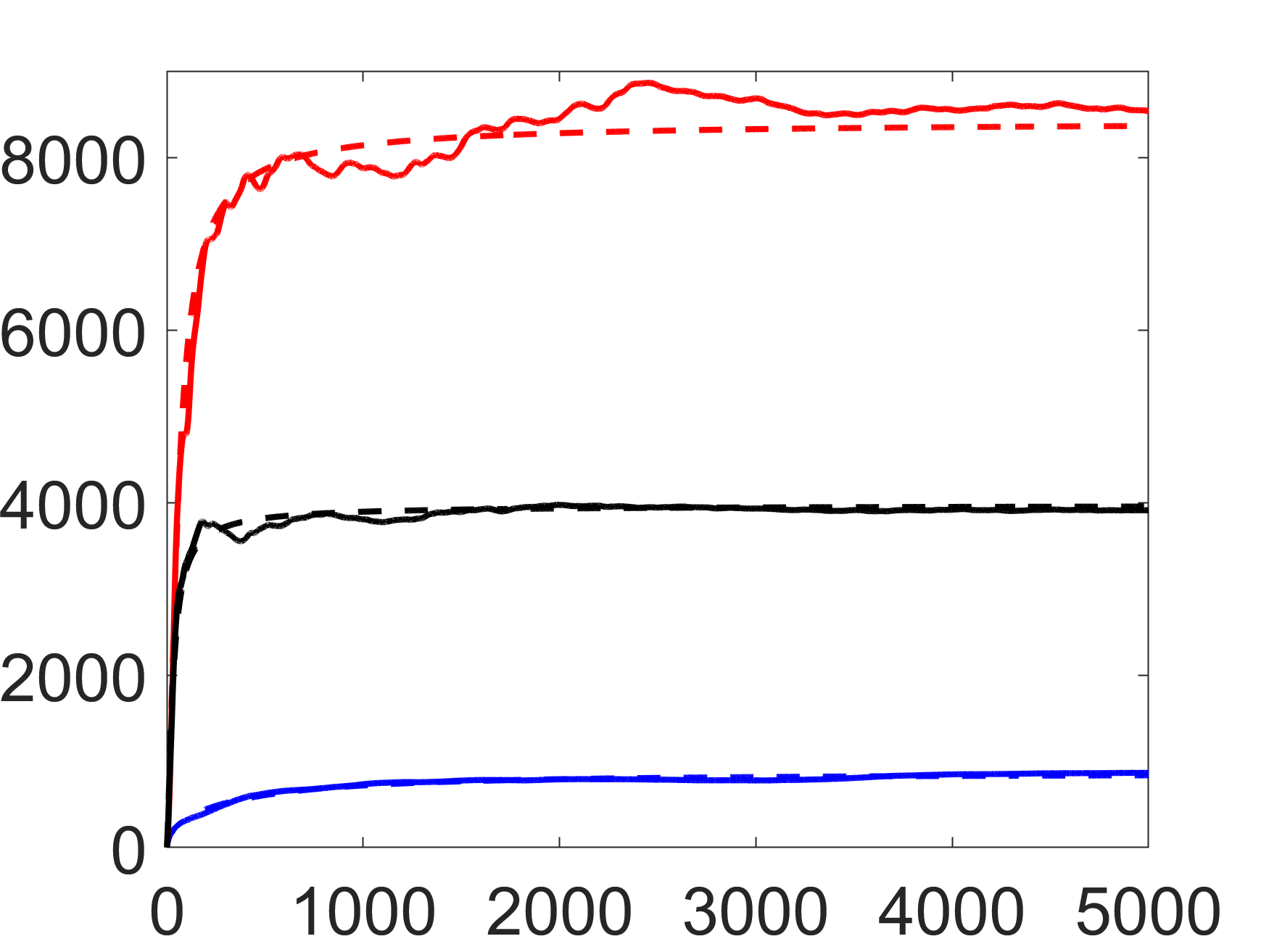}
				\\[-0.05 cm]
				{iteration number $t$}
			\end{tabular}
			&
			&
			\hspace{-0.75cm}
			\begin{tabular}{c}
				\includegraphics[width=.31\textwidth]{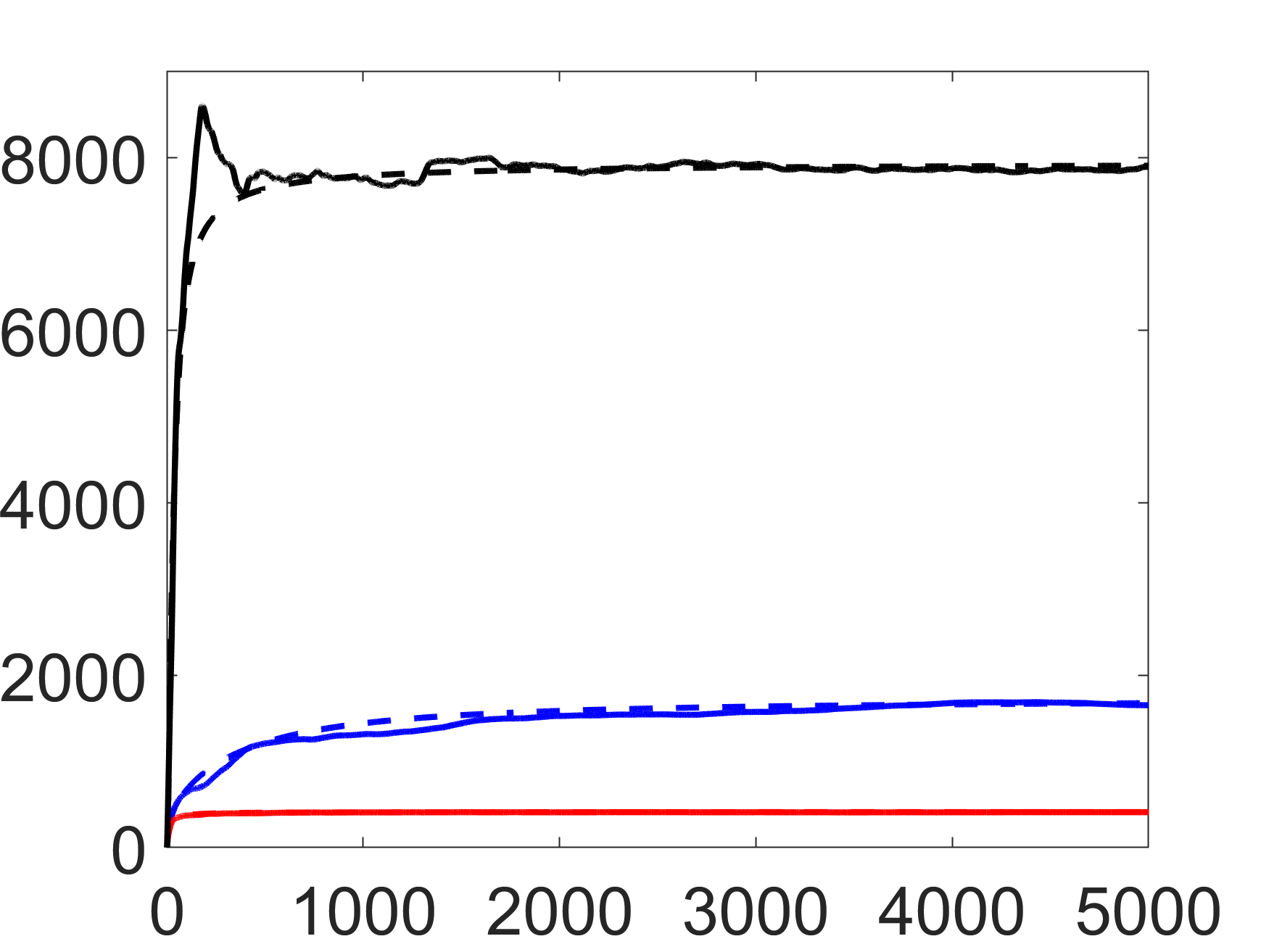}
				\\[-0.05cm]
				{iteration number $t$}
			\end{tabular}
			\\[-.3cm]
			&
			\hspace{-1.4 cm}
			\begin{tabular}{cl}
				\begin{tabular}{c}
					\subfloat[\label{figStoch.J}]{}	
				\end{tabular} & \hspace{-0.6 cm}
				\begin{tabular}{c}\vspace{-0.43 cm}
					performance output $z^t = x^t$
				\end{tabular}
			\end{tabular}
			&&
			\hspace{-1.58cm}
			\begin{tabular}{cl}
				\begin{tabular}{c}
					\subfloat[\label{figStoch.Jprim}]{}	
				\end{tabular} & \hspace{-0.6 cm}
				\begin{tabular}{c}\vspace{-0.43 cm}
					performance output $z^t = Q^{1/2} x^t$
				\end{tabular}
			\end{tabular}
		\end{tabular}
		\caption{$(1/t) \sum_{k \, = \, 0}^t 
					\norm{z^k}^2$ for the performance output $z^t$ in Example~2. Top row: the thick blue (gradient descent), black (heavy-ball), and red (Nesterov's method) lines mark variance  obtained by averaging results of twenty stochastic simulations. Bottom row: comparison between results obtained by averaging outcomes of twenty stochastic simulations (thick lines) with the corresponding theoretical values $ ({1}/{t}) \sum_{k \, = \, 0}^t \trace \, (C P^k C^T)$ (dashed lines) resulting from the Lyapunov equation~\eqref{eq.LyapPt}.}
		\label{fig.toeplitz-example}
	\end{figure*}
\end{center}

	 \vspace*{-8ex}
\section{General strongly convex problems}
\label{sec.general}

In this section, we extend our results to the class $\mathcal{F}_{\mf}^{\Lf}$ of $\mf$-strongly convex objective functions with $\Lf$-Lipschitz continuous gradients. While a precise characterization of noise amplification for general problems is challenging because of the nonlinear dynamics, we employ tools from robust control theory to obtain meaningful upper bounds. Our results utilize the theory of integral quadratic constraints~\cite{megran97}, a convex control-theoretic framework that was recently used to analyze optimization algorithms~\cite{lesrecpac16} and study convergence and robustness of the first-order methods~\cite{hules17,fazribmor18,cyrhuvanles18,dhikhojovTAC19}. We establish analytical upper bounds on the mean-squared error of the iterates~\eqref{eq.Jnew} for gradient descent~\eqref{alg.GD} and Nesterov's accelerated~\eqref{alg.NA} methods. Since there are no known accelerated convergence guarantees for the heavy-ball method when applied to general strongly convex functions, we do not consider it in this section. 

We first exploit structural properties of the gradient and employ quadratic Lyapunov functions to formulate a semidefinite programing problem (SDP) that provides upper bounds on $J$ in~\eqref{eq.Jnew}. While quadratic Lyapunov functions yield tight upper bounds for gradient descent, they fail to provide any upper bound for Nesterov's method for large condition numbers ($\kappa>100$). To overcome this challenge, we present a modified semidefinite program that uses more general Lyapunov functions which are obtained by augmenting standard quadratic terms with the objective function. This type of generalized Lyapunov functions has been introduced in~\cite{polshc17,fazribmor18} and used to study convergence of optimization algorithms for non-strongly convex problems. We employ a modified SDP to derive meaningful upper bounds on $J$ in~\eqref{eq.Jnew} for Nesterov's method as~well.  

We note that algorithms~\eqref{eq.1st} are invariant under translation, i.e., if we let $\tilde{x} \DefinedAs x - \bar{x}$ and $g(\tilde{x}) \DefinedAs f(\tilde{x} + \bar{x})$, then~\eqref{alg.NA}, for example, satisfies
\[
\ba{rcl}
\tilde{x}^{t+2} 
& \!\!\! = \!\!\! &
\tilde{x}^{t+1}
\; +\;
\tc{black}{\beta}
(
\tilde{x}^{t+1}
\, - \,  
\tilde{x}^{t}
)
\; -\;
\tc{black}{\alpha}
\nabla 
g
\!
\left(
\tilde{x}^{t+1}
\, + \,
\tc{black}{\beta}
(
\tilde{x}^{t+1}
\, - \,  
\tilde{x}^{t})
\right)	
+ \;
\tc{black}{\sigma w^t}.
\ea
\]
Thus, in what follows, without loss of generality, we assume that $x^\star = 0$ is the unique minimizer of~\eqref{eq.f}.

{\color{black}
	\subsection{An approach based on contraction mappings}
	Before we present our approach based on Linear Matrix Inequalities (LMIs), we provide a more intuitive approach that can be used to examine noise amplification of gradient descent. Let $\varphi$$:\R^n\rightarrow\R^n$ be a contraction mapping, i.e., there exists a positive scalar $\eta<1$ such that
	$
	\norm{ \varphi(x) - \varphi(y) } \le \eta \norm{ x - y }
	$
	for all $x$, $y\in\R^n$, and let $x^\star=0$ be the unique fixed point of $\varphi$, i.e, $\varphi(0)=0$. For the noisy recursion
	$
	x^{t+1} =\varphi(x^t) + \sigma w^t,
	$
	where $w^t$ is a zero-mean white noise with identity covariance and $\EX((w^t)^T \varphi (x^t)) = 0$, the contractiveness of $\varphi$ implies
	\begin{align*}
	\EX(\norm{x^{t+1}}^2)
	\, = \,
	\EX(\norm{\varphi(x^t) + \sigma w^t}^2)
	\, \le \,
	\eta^2\EX(\norm{x^t}^2) \,+\,n \sigma^2.
	\end{align*}
Since $\eta<1$, this relation yields
	$$
	\lim_{t \, \to \, \infty}\EX(\norm{x^t}^2)
	\;\le\;
	\dfrac{n\sigma^2}{1 \, - \, \eta^2}.
	$$ 
	If $\eta\DefinedAs\max\{|1-\alpha \mf|,|1-\alpha\Lf|\} < 1$, the map $\varphi(x) \DefinedAs x-\alpha \nabla f(x)$ is a contraction~\cite{ber15book}. Thus, for the conventional stepsize $\alpha=1/\Lf$ we have $\eta=1-1/\kappa$, and the bound becomes
	\begin{align*}
	\lim_{t \, \to \, \infty}\EX(\norm{x^t}^2)
	\, \le \,
	\dfrac{n\sigma^2}{1 \, - \, \eta^2}
	\, = \,
	\dfrac{n\sigma^2 \kappa^2}{2 \kappa \, - \, 1}
	\, = \,
	n \Theta(\kappa).
	\end{align*}
	In the next section, we show that this upper bound is indeed tight for the class of functions $\mathcal{F}_{\mf}^{\Lf}$. While this approach yields a tight upper bound for gradient descent, it cannot be used for Nesterov's method (because it is not a contraction).} 
	
\subsection{\tc{black}{An approach based on linear matrix inequalities}}
For any function $f\in\mathcal{F}_\mf^\Lf$, the nonlinear mapping $\Delta$$:\R^n\rightarrow\R^n$ 
{\color{black}
\beq
\Delta (y)
\; \DefinedAs \;
\nabla f(y) \; - \; \mf \, y
\non
\eeq }
satisfies the  quadratic inequality~\cite[Lemma 6]{lesrecpac16}
\be
\tbo{\!\!y \,-\, y_0\!\!}{\!\!\!\Delta (y) \,-\, \Delta(y_0)\!\!\!}^T
\!\Pi
\tbo{\!\! y \,-\, y_0\!\!}{\!\!\!\Delta (y) \,-\, \Delta(y_0)\!\!\!}
\;\geq\; 
0\label{eq.IQC}
\eeq
for all $y$, $y_0 \in \bbR^n$, where the matrix $\Pi$ is given by
\begin{align}\label{eq.PI}
\Pi 
\;\DefinedAs\;
\tbt{0}{(\Lf \,-\, \mf)I }{(\Lf \,-\, \mf)I}{-2I}.
\end{align}
We can bring algorithms~\eqref{eq.1st} with constant parameters into a time-invariant state-space form
\begin{subequations}
	\beq
	\ba{rcl}
	\psi^{t+1}
	& \!\!\! = \!\!\! &
	A \, \psi^t
	\; + \;
	\tc{black}{\sigma}B_w w^t
	\; + \;
	B_u u^t
	\\[0.1cm]
	\left[ 
	\ba{c}{z^{t}} \\[-0.15cm] {y^{t}} \ea
	\right]
	& \!\!\! = \!\!\! &
	\left[
	\ba{c}
	{C_z}
	\\[-0.15cm]
	{C_y} 
	\ea
	\right]
	\psi^t	
	\\[0.35cm]
	u^{t}
	& \!\!\! = \!\!\! &
	\Delta ( y^t )
	\ea
	\label{eq.ss-ns}
	\eeq
that contains a feedback interconnection of linear and nonlinear components. Figure~\ref{fig.blockDiagram} illustrates the block diagram of system~\eqref{eq.ss-ns}, where $\psi^t$ is the state, $w^t$ is a white stochastic noise, $z^t$ is the performance output, and $u^t$ is the output of the nonlinear term $\Delta (y^t)$. In particular, if we let
	\beq
	\psi^t 
	\, \DefinedAs \,
	\left[ 
	\ba{c} {x^t} \\[-0.cm]{x^{t+1}} \ea
	\right],
	\quad
	z^t
	\, \DefinedAs \,
	x^t,
	\quad
	y^t
	\, \DefinedAs \,
	- \beta x^t
	+
	(1 + \beta)  x^{t+1}
	\non
	\eeq
	and define the corresponding matrices  as
	\be
	\ba{rclrcl}
	A
	& \!\!\! = \!\!\! &
	\tbt{0}{I}{-\beta(1-\alpha\, \mf) I}{(1+\beta)(1-\alpha\, \mf) I},
	\quad
	B_w
	& \!\!\! = \!\!\! &
	\tbo{0}{ I},
	\quad
	B_u
	\, = \, 
	\tbo{0}{-\alpha\, I}
	\\[0.5cm]
	C_z
	& \!\!\! = \!\!\! &
	\obt{I}{0},
	\quad	
	C_y
	\, = \,
	\obt{-\beta \,I}{(1 + \beta)I} &&
	\label{eq.ss-na}
	\ea
	\ee
	then~\eqref{eq.ss-ns} represents Nesterov's method~\eqref{alg.NA}. For gradient descent~\eqref{alg.GD}, we can alternatively use 
	$ 
	\psi^t 
	=
	z^t 
	=
	y^t
	\DefinedAs
	x^t
	$
	with the corresponding matrices 
	\beq
	A 
	\;=\;  
	(1 \, - \, \alpha \,\mf) I,
	\quad
	B_w 
	\;=\; 
	I,
	\quad 
	B_u
	\;=\; 
	- \alpha I,
	\quad
	C_z
	\;=\; 
	C_y
	\;=\; 
	I.
	\label{eq.ss-gd}
		\eeq
\end{subequations}

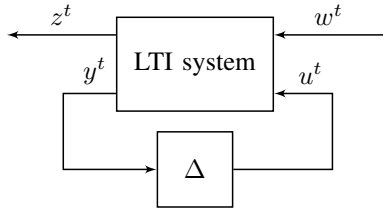
\begin{figure}[ht!]\begin{center}
		\begin{tikzpicture}[auto, node distance=2.5cm,>=latex',line width=0.2mm]
		\node[draw,rectangle,inner sep=0.25cm,minimum height = 1 cm,minimum width = 1cm] (nonlin) at (0,0) {$\Delta$};
		\matrix[matrix of nodes,nodes in empty cells,rectangle,draw] (sys) at (0,1.4)
		{
			$$\\
			LTI system	 \\
			$$\\
		};
		\draw[->] (nonlin.east)  -| ([xshift=1.7 cm] sys-3-1.east |- sys-3-1.east) node [name=u,above left] {$u^t$} |- (sys.east |- sys-3-1.east);
		\draw[->] (sys.west |- sys-3-1.west) node [name=y,above left] {$y^t$}-| ([xshift=-0.7 cm] sys.west |- sys-3-1.west) |- (nonlin.west);
		\node [input, name=input, right of=sys-1-1] {};
		\draw[->](input)--node [name=w,above] {$w^t$}(sys-1-1-|sys.east);
		\node[output,name=output, left of=sys-1-1]{};
		\draw[->](sys-1-1-|sys.west)--node [name=,above] {$z^t$}(output);
		\end{tikzpicture}
	\end{center}
	\caption{Block diagram of system~\eqref{eq.ss-ns}. }\label{fig.blockDiagram}
\end{figure}

In what follows, we demonstrate how property~\eqref{eq.IQC} of the nonlinear mapping $ \Delta $  allows us to obtain upper bounds on $J$ when  system~\eqref{eq.ss-ns} is driven by the white stochastic input $w^t$  with zero mean and identity covariance.  Lemma~\ref{lem:LMI-GD} uses a quadratic Lyapunov function of the form $V(\psi) =\psi^TX\psi$ and provides  upper bounds on the steady-state second-order moment of the performance output $z^t$ in terms of solutions to a certain LMI. This approach yields a tight upper bound for gradient descent.

\begin{mylem} \label{lem:LMI-GD}
	Let the nonlinear function $ u=\Delta(y)$ satisfy the quadratic inequality
	\be
	\label{eq:jointIneqApp11}
	\tbo{y}{u}^T\Pi\,\tbo{y}{u}
	\; \ge \; 
	0
	\ee
	for some matrix $ \Pi $, let $X$ be a positive semidefinite matrix, and let $\lambda$ be a nonnegative scalar such that system~\eqref{eq.ss-ns} satisfies
	\be
	\ba{rcl}
	\tbt{A^T X \,A - X + C_z^T\, C_z}{A^T X\, B_u}{B_u^T \,X\, A}{B_u^T\, X\, B_u}
	\; + \;
	\lambda 
	\tbt{C_y^T}{0}{0}{I} 
	\Pi
	\tbt{C_y}{0}{0}{I}
	& \!\!\! \preceq \!\!\! &
	0. \label{LMI}
	\ea
	\ee
	Then the steady-state second-order moment $J$ of the performance output $ z^t $ in~\eqref{eq.ss-ns} is bounded by 
	\[
	J
	\; \le \;
	\tc{black}{\sigma^2} \, \trace \, (B_w^T \, X \, B_w).
	\] 
\end{mylem}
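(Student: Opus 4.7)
The plan is to use the quadratic storage function $V(\psi) \DefinedAs \psi^T X \psi$ and derive a dissipation inequality by combining the LMI with the quadratic constraint on $\Delta$. Since $X\succeq 0$, the storage $V$ is nonnegative, and a standard telescoping argument will then produce the Ces\`aro-averaged bound on $\EX[\|z^t\|^2]$.

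First, I would substitute the dynamics $\psi^{t+1}=A\psi^t+\sigma B_w w^t + B_u u^t$ with $u^t=\Delta(C_y\psi^t)$ into $V(\psi^{t+1})$ and take expectations. Since $w^t$ has zero mean and identity covariance and is independent of $\psi^t$ and $u^t$ (both measurable with respect to $w^0,\ldots,w^{t-1}$), all cross-terms involving $w^t$ vanish and the quadratic-in-$w^t$ term contributes exactly $\sigma^2\,\trace(B_w^T X B_w)$. Using $\|z^t\|^2=(\psi^t)^T C_z^T C_z\,\psi^t$ and writing $M$ for the $2\times 2$ block matrix on the left of~\eqref{LMI}, this yields
\begin{equation*}
\EX[V(\psi^{t+1})]\,-\,\EX[V(\psi^t)]\,+\,\EX[\|z^t\|^2]\;=\;\EX\!\left[\tbo{\psi^t}{u^t}^{\!T}\!M\,\tbo{\psi^t}{u^t}\right]\,+\,\sigma^2\,\trace(B_w^T X B_w).
\end{equation*}

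Next, I would invoke the LMI in S-procedure fashion: it asserts that $M\preceq -\lambda\,\mathrm{diag}(C_y^T,I)\,\Pi\,\mathrm{diag}(C_y,I)$. Evaluating this on the stacked vector $[(\psi^t)^T\ (u^t)^T]^T$ and using $y^t=C_y\psi^t$ gives
\begin{equation*}
\tbo{\psi^t}{u^t}^{\!T}\!M\,\tbo{\psi^t}{u^t}\;\le\;-\lambda\,\tbo{y^t}{u^t}^{\!T}\!\Pi\,\tbo{y^t}{u^t}\;\le\;0,
\end{equation*}
where the last inequality follows because $\lambda\ge 0$ and~\eqref{eq:jointIneqApp11} ensures the inner quadratic form is nonnegative. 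Taking expectations of the previous identity then yields the dissipation inequality
\begin{equation*}
\EX[V(\psi^{t+1})]\,-\,\EX[V(\psi^t)]\,+\,\EX[\|z^t\|^2]\;\le\;\sigma^2\,\trace(B_w^T X B_w).
\end{equation*}

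Finally, I would telescope this bound from $k=0$ to $T-1$ and use $\EX[V(\psi^T)]\ge 0$, which follows from $X\succeq 0$, to conclude
\begin{equation*}
\frac{1}{T}\sum_{k=0}^{T-1}\EX[\|z^k\|^2]\;\le\;\sigma^2\,\trace(B_w^T X B_w)\,+\,\frac{\EX[V(\psi^0)]}{T}.
\end{equation*}
Letting $T\to\infty$ gives $J\le\sigma^2\,\trace(B_w^T X B_w)$, as claimed. The only delicate step is the bookkeeping of the stochastic cross-terms in the first step; once these vanish by independence and zero-mean of $w^t$, the remainder of the argument is the standard storage-function/S-procedure combination familiar from dissipativity theory.
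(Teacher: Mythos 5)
Your proposal is correct and follows essentially the same route as the paper's proof: the quadratic storage function $V(\psi)=\psi^T X\psi$, the S-procedure use of the LMI together with the quadratic constraint on $\Delta$ to obtain a dissipation inequality, vanishing of the stochastic cross-terms by zero-mean/independence of $w^t$, and a telescoping (Ces\`aro) argument with $X\succeq 0$. The only difference is cosmetic — the paper bounds $\|z^t\|^2$ pathwise before averaging, whereas you take expectations first — and this does not change the substance of the argument.
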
 
\begin{proof}
	See Appendix~\ref{app.General}.
\end{proof} 

For Nesterov's accelerated method with the parameters provided in Table~\ref{tab:ratesGeneral}, computational experiments show that LMI~\eqref{LMI} becomes infeasible for large values of the condition number $ \kappa $. Thus, Lemma~\ref{lem:LMI-GD} does not provide sensible upper bounds on $J$ for Nesterov's algorithm. This observation is consistent with the results of~\cite{lesrecpac16}, where it was suggested that analyzing the convergence rate requires the use of additional quadratic inequalities, apart from~\eqref{eq.IQC}, to further tighten the constraints on the gradient $\nabla f$ and reduce conservativeness. In what follows, we build on the results of~\cite{fazribmor18}  and present an alternative LMI in Lemma~\ref{lem.generalNester} that is obtained using  a Lyapunov function of the form $V(\psi)$$=\psi^TX\psi+f([\,0~\,I\,] \psi)$, where $X$ is a positive semidefinite matrix and $f$ is the objective function in~\eqref{eq.f}. Such Lyapunov functions have been used to study convergence of optimization algorithms in~\cite{polshc17}. The resulting approach allows us to establish an order-wise tight analytical upper bound on $J$ for Nesterov's accelerated method.

\begin{mylem}
	\label{lem.generalNester}
	Let the matrix $M(\mf,\Lf;\alpha,\beta) $ be defined as 
	\begin{align*}		
	M
	\,\DefinedAs\,
	N_1^T\tbt{\Lf\,I}{I}{I}{0}N_1 
	\,+\,
	N_2^T\tbt{-\mf\,I}{I}{I}{0}N_2
	\end{align*}
	where
	\be
	\ba{rclrcl}
	N_1
	&\!\!\! \DefinedAs \!\!\! &
	\tbth{\alpha\,\mf\,\beta\,I}{-\alpha\,\mf(1+\beta)\,I}{-\alpha\,I}
	{-\mf\,\beta\,I}{\mf(1+\beta)\,I}{I},
	\quad
	N_2
	&\!\!\! \DefinedAs \!\!\! &
	\tbth{-\beta\,I}{\beta\,I}{0}
	{-\mf\,\beta\,I}{\mf(1+\beta)\,I}{I}.
	\ea
	\non
	\ee
	Consider state-space model~\eqref{eq.ss-ns}-\eqref{eq.ss-na} for algorithm~\eqref{alg.NA} and let $\Pi$ be given by~\eqref{eq.PI}. Then, for any positive semidefinite matrix $ X $ and scalars $ \lambda_1\ge0 $ and $\lambda_2\ge0$ that satisfy
	\be
	\ba{rcl}
	\tbt{A^T X\, A - X + C_z^T\, C_z}{A^T X\, B_u}{B_u^T\, X\, A}{B_u^T\, X\, B_u}
	\; + \;
	\lambda_1 
	\tbt{C_y^T}{0}{0}{I} 
	\Pi
	\tbt{C_y}{0}{0}{I}
	\;+\;
	\lambda_2\,
	M
	&\!\!\! \preceq \!\!\! &
	0
	\ea
	\label{LMI2}
	\ee
	the steady-state second-order moment $J$ of the performance output $ z^t $ in~\eqref{eq.ss-ns} is bounded by 
		\be
		\label{eq.UpperBoundNesterov}
		J
		\; \le \;
		\tc{black}{\sigma^2}
		\left(n\, \Lf\,\lambda_2
		\, + \,
		\trace \, (B_w^T \, X \, B_w)\right).
		\ee	
\end{mylem}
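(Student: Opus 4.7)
The plan is to augment the quadratic Lyapunov approach of Lemma~\ref{lem:LMI-GD} with an objective-value term and use the candidate $V(\psi) \DefinedAs \psi^T X \psi + 2\lambda_2 f(E\psi)$, where $E \DefinedAs [\, 0 \;\; I \,]$ so that $E\psi^t = x^{t+1}$. The goal is to establish the one-step drift inequality
\[
\EX \, \|z^t\|^2 \, + \, \EX \, V(\psi^{t+1}) \, - \, \EX \, V(\psi^t) \; \le \; \sigma^2 \bigl( n \Lf \lambda_2 \, + \, \trace(B_w^T X B_w) \bigr),
\]
then telescope it over $t = 0, \ldots, T-1$, divide by $T$, and send $T \to \infty$. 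Since $X \succeq 0$ and $f$ is bounded below by $f^\star$, $V$ is bounded below, so the boundary contributions vanish in the limit and the asserted upper bound on $J$ follows.

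The quadratic portion of the drift is handled exactly as in Lemma~\ref{lem:LMI-GD}: substituting $\psi^{t+1} = A\psi^t + B_u u^t + \sigma B_w w^t$ and using $\EX w^t = 0$ together with the identity covariance of $w^t$ yields the usual quadratic form in $(\psi^t, u^t)$ plus the noise term $\sigma^2 \trace(B_w^T X B_w)$. The genuinely new piece is the increment $f(x^{t+2}) - f(x^{t+1})$, which I would split through the Nesterov lookahead point $y^t = C_y \psi^t = -\beta x^t + (1+\beta) x^{t+1}$. The crucial algebraic identity, verified by direct substitution using the definitions in~\eqref{eq.ss-na} and $u^t = \nabla f(y^t) - \mf \, y^t$, is that
\[
N_1 \tbo{\psi^t}{u^t} \, = \, \tbo{\hat{x}^{t+2} - y^t}{\nabla f(y^t)},
\qquad
N_2 \tbo{\psi^t}{u^t} \, = \, \tbo{y^t - x^{t+1}}{\nabla f(y^t)},
\]
where $\hat{x}^{t+2}$ is the noise-free update. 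Combined with smoothness, $f(\hat{x}^{t+2}) - f(y^t) \le \nabla f(y^t)^T (\hat{x}^{t+2} - y^t) + \tfrac{\Lf}{2} \|\hat{x}^{t+2} - y^t\|^2$, and strong convexity, $f(y^t) - f(x^{t+1}) \le \nabla f(y^t)^T (y^t - x^{t+1}) - \tfrac{\mf}{2}\|y^t - x^{t+1}\|^2$, these identities collapse to $\tbo{\psi^t}{u^t}^T M \tbo{\psi^t}{u^t} \ge 2\bigl(f(\hat{x}^{t+2}) - f(x^{t+1})\bigr)$, which is the deterministic inequality underpinning the convergence analyses of~\cite{fazribmor18,polshc17}.

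The main obstacle, and where the stochastic analysis must depart from the deterministic one, is passing from $\hat{x}^{t+2}$ to the actual perturbed iterate $x^{t+2} = \hat{x}^{t+2} + \sigma w^t$. Applying the smoothness inequality at the noisy point and taking expectation over $w^t$ conditioned on $\psi^t$, the cross term $\sigma \nabla f(\hat{x}^{t+2})^T w^t$ vanishes while $\tfrac{\Lf}{2} \EX \|\sigma w^t\|^2 = \tfrac{\Lf \sigma^2 n}{2}$ contributes an additive correction. This yields the stochastic analogue $\EX\bigl[ f(x^{t+2}) - f(x^{t+1}) \bigm| \psi^t\bigr] \le \tfrac{1}{2} \tbo{\psi^t}{u^t}^T M \tbo{\psi^t}{u^t} + \tfrac{\Lf \sigma^2 n}{2}$. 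Substituting this bound, the quadratic-part identity, and the IQC lower bound $\lambda_1 \tbo{y^t}{u^t}^T \Pi \tbo{y^t}{u^t} \ge 0$ into LMI~\eqref{LMI2} left- and right-multiplied by $\tbo{\psi^t}{u^t}$, and taking expectation, produces precisely the one-step drift inequality displayed above; the extra $\lambda_2 \Lf n \sigma^2$ contribution is exactly what accounts for the $n\Lf\lambda_2$ summand in the final bound~\eqref{eq.UpperBoundNesterov}.
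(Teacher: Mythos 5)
Your proposal is correct and follows essentially the same route as the paper: the augmented Lyapunov function $V(\psi)=\psi^TX\psi + \mathrm{const}\cdot f([\,0~\,I\,]\psi)$, the identities expressing $(\hat{x}^{t+2}-y^t,\nabla f(y^t))$ and $(y^t-x^{t+1},\nabla f(y^t))$ via $N_1$ and $N_2$, the smoothness/strong-convexity bounds yielding $\eta^T M\eta \ge 2(f(\hat{x}^{t+2})-f(x^{t+1}))$, and the telescoping drift argument. The only cosmetic difference is that you apply the descent lemma a second time to pass from the noise-free update $\hat{x}^{t+2}$ to $x^{t+2}=\hat{x}^{t+2}+\sigma w^t$, whereas the paper applies it once directly at the noisy point and lets the cross terms vanish in expectation; both produce the identical $n\Lf\lambda_2\sigma^2$ correction.
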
 
\begin{proof}
	See Appendix~\ref{app.General}.
\end{proof} 
\begin{myrem}
	Since LMI~\eqref{LMI2} simplifies to~\eqref{LMI} by setting $\lambda_2=0$, Lemma~\ref{lem.generalNester} represents a relaxed version of Lemma~\ref{lem:LMI-GD}. This modification is the key enabler to establishing tight upper bound on $J$ for Nesterov's method. 
\end{myrem}

{\color{black}
		The upper bounds provided in Lemmas~\ref{lem:LMI-GD} and~\ref{lem.generalNester} are proportional to $\sigma^2$. In what follows, to make a  connection between these bounds and  our analytical expressions for the variance amplification in the quadratic case (Section~\ref{sec.Quadratic}), we again set $\sigma =1$. } 
The best upper bound on $J$ that can be obtained using Lemma~\ref{lem.generalNester} is given by the optimal objective value of the semidefinite program
\begin{align}\label{prob.LMI2}
\ds{\minimize_{X, \, \lambda_1, \, \lambda_2}}
& 
\quad
n\, \Lf\,\lambda_2
\;+\;
\trace \, (B_w^T \, X \, B_w)
\\[.0cm]
	\nonumber
\subject
&
\quad
\mbox{LMI}~\eqref{LMI2},
~~
X \, \succeq \, 0,
~
\lambda_1 \, \ge \, 0,
~
\lambda_2 \, \ge \, 0.	
\end{align}
	For system matrices~\eqref{eq.ss-na}, LMI~\eqref{LMI2} is of size $3 n \times 3 n$ where $x^t \in \bbR^n$. However, if we impose the additional constraint that the matrix $X$ has the same block structure as $A$, 
	\[
	X \; = \, \tbt{x_1 I}{x_0 I}{x_0 I}{x_2 I}
	\] 
for some scalars $x_1$, $x_2$, and $x_0$, then using appropriate permutation matrices, we can simplify~\eqref{LMI} into an LMI of size $3 \times 3$. Furthermore, imposing this constraint comes without loss of generality. In particular, the optimal objective value of problem~\eqref{prob.LMI2} does not change if we require $X$ to have this structure; see~\cite[Section 4.2]{lesrecpac16} for a discussion of this lossless dimensionality reduction for LMI constraints with similar structure. 
	
In Theorem~\ref{thm.summary}, we use Lemmas~\ref{lem:LMI-GD} and~\ref{lem.generalNester} to establish tight upper bounds on $J_\gd$ and $J_\na$ for all $f \in \mathcal{F}_{\mf}^{\Lf}$. 
\begin{mythm} 
	\label{thm.summary}
	For gradient descent and Nesterov's accelerated method with the parameters provided in Table~\ref{tab:ratesGeneral} \tc{black}{and $\sigma=1$},  the performance measures $J_\gd$ and $J_\na$ of the error $x^t-x^\star\in\R^n$ satisfy
	\begin{align*}
	\sup_{f \, \in \, \mathcal{F}_\mf^\Lf}  J_\gd
	\;=\;
	q_\gd,
	\qquad
	q_\na 
	\; \le \;
	\sup_{f \, \in \, \mathcal{F}_\mf^\Lf}  J_\na
	\; \le \;
	4.08\, q_\na
	\end{align*}
	where 
	\[
		q_\gd
		\; = \;
		\dfrac{n \kappa^2}{2 \kappa-1}
		\; = \;
		n\,\Theta(\kappa),
		\quad
		q_\na
		\; = \;
		\dfrac{n \kappa^2 \! \left( 2\kappa - 2\sqrt{\kappa} + 1 \right)}{{\left(2 \sqrt{\kappa}-1\right)}^3}
		\;=\; 
		n\,\Theta(\kappa^{\frac{3}{2}})
		\]
	and $\kappa \DefinedAs \Lf/\mf$ is the condition number of the set $\mathcal{F}_\mf^\Lf$.
	\end{mythm}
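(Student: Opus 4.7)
The proof establishes each of the three inequalities separately, combining the LMI-based upper bounds of Lemmas~\ref{lem:LMI-GD} and~\ref{lem.generalNester} with explicit worst-case quadratic examples obtained from Theorem~\ref{th.varianceJhat}. My starting point is the lower bounds, which are the easier ingredient. For both algorithms I would use the strongly convex quadratic $f(x) = (m/2)\|x\|^2$, which belongs to $\mathcal{F}_m^L$ because $\nabla^2 f = mI$ satisfies $mI \preceq mI \preceq LI$. Since all Hessian eigenvalues equal $m$, Theorem~\ref{th.varianceJhat} gives $J = n\,\hat{J}(m)$ for each method. For gradient descent with $\alpha = 1/L$, substituting $\alpha m = 1/\kappa$ into the formula for $\hat{J}_\gd$ yields $\kappa^2/(2\kappa-1)$, proving $\sup J_\gd \ge q_\gd$. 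For Nesterov's method with the parameters in Table~\ref{tab:ratesGeneral}, plugging $\alpha m = 1/\kappa$ and $\beta = (\sqrt{\kappa}-1)/(\sqrt{\kappa}+1)$ into $\hat{J}_\na$ and simplifying (using that $\beta(1-\alpha m) = (\sqrt{\kappa}-1)^2/\kappa$) reproduces exactly $q_\na/n$, proving $\sup J_\na \ge q_\na$.

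For the upper bound on $J_\gd$, I would apply Lemma~\ref{lem:LMI-GD} to the state-space model~\eqref{eq.ss-gd} with the ansatz $X = x I$ for a scalar $x$, and a scalar multiplier $\lambda$. Since every matrix in~\eqref{eq.ss-gd} is a scalar multiple of the identity, LMI~\eqref{LMI} block-diagonalizes into $n$ identical copies of a $2\times 2$ scalar inequality in $(x,\lambda)$. Setting $x = \kappa^2/(2\kappa-1)$, the $(1,1)$ entry of this $2\times 2$ block vanishes, and choosing $\lambda$ according to the Schur-complement condition on the remaining $(2,2)$ block makes the $2 \times 2$ matrix negative semidefinite, which can be verified by direct computation. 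Lemma~\ref{lem:LMI-GD} then yields $J_\gd \le \trace(x I) = n x = q_\gd$, which together with the lower bound gives $\sup J_\gd = q_\gd$.

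For the upper bound on $J_\na$, I would apply Lemma~\ref{lem.generalNester} to model~\eqref{eq.ss-na} restricting $X$ to the block form with diagonal blocks $x_1 I$, $x_2 I$ and off-diagonal blocks $x_0 I$, which is without loss of optimality by the lossless dimensionality reduction noted after Lemma~\ref{lem.generalNester}. Under this ansatz LMI~\eqref{LMI2} reduces to a single $3 \times 3$ scalar LMI in the five unknowns $x_0,x_1,x_2,\lambda_1,\lambda_2$, and the objective in~\eqref{eq.UpperBoundNesterov} becomes $n L \lambda_2 + n x_2$. The main obstacle is constructing a closed-form feasible point of this reduced LMI that attains objective at most $4.08\, q_\na$. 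Guided by the worst-case mode at $\lambda = m$ identified in Proposition~\ref{prop.relationJhat} and Theorem~\ref{thm.JBoundsQuad}, I would select the scalars so that $L\lambda_2$ and $x_2$ are both of order $\kappa^{3/2}$, with $x_0,x_1,\lambda_1$ tuned to annihilate the leading-order entries of the LMI. Verifying negative semidefiniteness of the resulting $3\times 3$ matrix reduces, via two successive Schur complements, to polynomial inequalities in $\sqrt{\kappa}$ whose dominant terms can be bounded explicitly. Tracking the leading coefficients through this algebra produces the numerical constant $4.08$; I expect most of the technical difficulty to sit in making these leading-order estimates sharp enough so that the constant does not blow up.
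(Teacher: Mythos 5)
Your overall architecture coincides with the paper's: lower bounds from the single quadratic $f(x)=\tfrac{\mf}{2}\norm{x}^2$ evaluated via Theorem~\ref{th.varianceJhat}, the gradient-descent upper bound from Lemma~\ref{lem:LMI-GD} with $X=xI$ and the observation that the $(1,1)$-block forces $x\ge\kappa^2/(2\kappa-1)$ (so the LMI bound is tight), and the Nesterov upper bound from a structured feasible point of the SDP~\eqref{prob.LMI2}. The lower-bound computations and the gradient-descent argument are correct and essentially identical to the paper's (your algebra $\beta(1-\alpha\mf)=(\sqrt{\kappa}-1)^2/\kappa$ does reproduce $q_\na/n$, and the choice of $\lambda$ for gradient descent is fixed by requiring the off-diagonal entry to vanish once the $(1,1)$ entry is zero, which is what your Schur-complement remark amounts to).

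The genuine gap is the Nesterov upper bound. Everything in that part of the theorem rests on actually exhibiting the feasible point, and you never produce it; the paper's proof writes down explicit rational functions of $\sqrt{\kappa}$ for $x_0,x_1,x_2$ (with a common denominator $s(\kappa)=8\kappa^2-6\kappa^{1.5}-2\kappa+3\kappa^{0.5}-1$) together with $\lambda_1=(\kappa/\Lf)^2/(2\kappa-1)$ and $\lambda_2=-x_0/(\Lf\, s(\kappa))$, chosen so that the left-hand side of LMI~\eqref{LMI2} collapses \emph{exactly} to $\diag(0,0,-\lambda_1 I)$ rather than being merely dominated at leading order. Your plan --- annihilate the leading-order entries and bound the dominant terms of the resulting polynomial inequalities --- has two problems. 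First, there is no a priori reason a point that only kills leading orders is feasible for the exact LMI at finite $\kappa$; feasibility is a hard constraint, not an asymptotic one. Second, the theorem claims $\sup J_\na\le 4.08\,q_\na$ for \emph{every} $\kappa\ge 1$, and the paper obtains $4.08$ by verifying the exact polynomial inequality $p(\kappa)\le 4.08\,q_\na(\kappa)$ on $[1,\infty)$, where $p(\kappa)=n\Lf\lambda_2+nx_2$; a leading-coefficient comparison would only certify the ratio in the limit $\kappa\to\infty$ and could not rule out a worse constant at moderate $\kappa$. So while the route is the right one, the step you defer is precisely the content of this part of the proof, and the method you propose for completing it would not, as stated, deliver the uniform constant.
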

	
\begin{proof}
	See Appendix~\ref{app.General}.
\end{proof}

The variance amplification of gradient descent and Nesterov's method for $f(x) = \tfrac{m}{2} \, x^T x$ in $\mathcal{F}_{\mf}^{\Lf}$ is determined by $q_\gd$ and $q_\na$, respectively, and these two quantities can be obtained using Theorem~\ref{th.varianceJhat}. In Theorem~\ref{thm.summary}, we use this strongly convex quadratic objective function to certify the accuracy of the upper bounds on $\sup J$ for all $f\in\mathcal{F}_{\mf}^{\Lf}$. In particular, we observe that the upper bound is exact for gradient descent and that  it is within a $4.08$ factor of the optimal for Nesterov's method.

For strongly convex objective functions with the condition number $\kappa$, Theorem~\ref{thm.summary} proves that gradient descent outperforms Nesterov's accelerated method in terms of the largest noise amplification by a factor of $\sqrt{\kappa}$. This uncovers the fundamental performance limitation of Nesterov's accelerated method when the gradient evaluation is subject to additive stochastic uncertainties.

	\vspace*{-2ex}
	\section{Tuning of algorithmic parameters}
	\label{sec.compTuning}
					
	The parameters provided in Table~\ref{tab:rates} yield the optimal convergence rate for strongly convex quadratic problems. For these specific values, Theorem~\ref{thm.JBoundsQuad} establishes upper and lower bounds on \tc{black}{the variance amplification} that reveal the negative impact of acceleration. However, it is relevant to examine whether the parameters can be designed to provide acceleration while reducing the variance amplification. 
	
	 While the convergence rate solely depends on the extreme eigenvalues $\mf=\lambda_{\min} (Q)$ and $\Lf=\lambda_{\max} (Q)$ of the Hessian matrix $Q$, variance amplification is influenced by the entire spectrum of $Q$ and its minimization is challenging as it requires the use of all eigenvalues. In this section, we first consider the special case of eigenvalues being symmetrically distributed over the interval $[\mf,\Lf]$ and demonstrate that for gradient descent and the heavy-ball method, the parameters provided in Table~\ref{tab:rates} yield a variance amplification that is within a constant factor of the optimal value. As we demonstrate in Section~\ref{sec:distributed}, symmetric distribution of the eigenvalues is encountered in distributed consensus over undirected torus networks. We also consider the problem of designing parameters for objective functions in which the problem size satisfies $n \ll \kappa$ and establish a trade-off between convergence rate and variance amplification. More specifically, we show that for any accelerating pair of parameters $\alpha$ and $\beta$ and bounded problem dimension $n$, the variance amplification of accelerated methods is larger than that of gradient descent by a factor of $\Omega(\sqrt{\kappa})$.
	 	
		\vspace*{-2ex}
\subsection{Tuning of parameters using the whole spectrum} Let $\Lf = \lambda_1 \ge \lambda_2\,\ge \cdots \ge \lambda_n = \mf > 0$ be the eigenvalues of the Hessian matrix $Q$ of the strongly convex quadratic objective function in~\eqref{eq.quadraticObjective}.  Algorithms~\eqref{eq.1st} converge linearly in expected value  to the optimizer $x^\star$ with the rate
\begin{align}\label{eq.rhoForm}
	\rho \; \DefinedAs \; \max_{i} \; \hat{\rho}(\lambda_i)
\end{align}
where $\hat{\rho}(\lambda_i)$ is the spectral radius of the matrix $\hat{A}_i$ given by~\eqref{eq.Ahat}. For any scalar $c>0$ \tc{black}{and fixed $\sigma$, let}
\begin{subequations}
	\label{eq.optimalSymDist}
 \be
 \label{eq.alphaStarBetaStarAcceleration}
 	\ba{rcll}
 	(\alpha^\star_{\hb}(c),\beta^\star_{\hb}(c))
 	&\!\!\! \DefinedAs \!\!\!&
 	\displaystyle{\argmin_{\alpha,\,\beta}}
	&
	J_\hb(\alpha,\beta)
	\\[0.15cm]
	&&
 	\subject 
	&
	\rho_\hb 
	\, \leq \,
	1 \, - \, \dfrac{c}{\sqrt{\kappa} }
	\ea
 	\ee
 for the heavy-ball method, and 
 	\be
	\label{eq.alphaStarBetaStarGD}
	\ba{rcll}
	 \alpha^\star_{\gd}(c)
 	&\!\!\!\DefinedAs \!\!\!&
 	\displaystyle{\argmin_{\alpha}} 
	&
	J_\gd(\alpha)
	\\[0.15cm]
	&&
	\subject
	& 
	\rho_\gd
	\, \leq \,
	1 \, - \, \dfrac{c}{\kappa} 
	\ea
 	\ee
	\end{subequations}
for gradient descent, where the expression for the variance amplification  $J$ is provided in Theorem~\ref{th.varianceJhat}. Here, the constraints enforce a standard rate of linear convergence for gradient descent and an accelerated rate of linear convergence for the heavy-ball method parametrized with the constant $c$. Obtaining a closed form solution to~\eqref{eq.optimalSymDist} is challenging because $J$ depends on all eigenvalues of the Hessian matrix $Q$. Herein, we focus on objective functions for which the spectrum of $Q$ is symmetric, i.e., for any eigenvalue $\lambda$, the corresponding mirror image $\lambda' := \Lf+\mf-\lambda$ with respect to $\frac{1}{2}(\Lf+\mf)$ is also an eigenvalue with the same algebraic multiplicity. For this class of problems, Theorem~\ref{thm.lowerboundBetaHeavyball} demonstrates that the parameters provided in Table~\ref{tab:rates} for gradient descent and the heavy-ball method yield variance amplification that is within a constant factor of the optimal.

\begin{mythm}
	\label{thm.lowerboundBetaHeavyball}	
	For any scalar $c>0$ \tc{black}{and fixed $\sigma$}, there exist constants $c_1\ge1$ and $c_2>0$ such that for any strongly convex quadratic objective function in which the spectrum of the Hessian matrix $Q$ is symmetrically distributed over the interval $[\mf,\Lf]$ with $\kappa \DefinedAs \Lf/\mf > c_1$, we have
	\begin{align*}
	J_\gd(\alpha^\star_\gd(c)) 
	\; \ge \; 
	\dfrac{1}{2} \,J_\gd(\alpha_\gd),
	\quad	
	J_\hb(\alpha^\star_\hb(c),\beta^\star_\hb(c)) 
	\; \ge \; 
	c_2\, J_\hb(\alpha_\hb,\beta_\hb)
	\end{align*} 
	where  parameters $\alpha_\gd$ and {\em ($\alpha_\hb,\beta_\hb$)} are provided in Table~\ref{tab:rates}, whereas $\alpha_\gd^\star(c)$ and {\em ($\alpha^\star_\hb(c),\beta^\star_\hb(c)$)} solve~\eqref{eq.optimalSymDist}.
\end{mythm}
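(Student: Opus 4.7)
The plan is to prove a per-pair lower bound on the modal contribution to variance amplification and then sum over the symmetric pairs of eigenvalues. Because the spectrum is symmetric about $(\mf+\Lf)/2$, every eigenvalue $\lambda$ of $Q$ has a mirror $\lambda'\DefinedAs\mf+\Lf-\lambda$ in the spectrum with the same multiplicity. Using $\hat{J}_\gd(\lambda;\alpha)=\sigma^2/[\alpha\lambda(2-\alpha\lambda)]$ from Theorem~\ref{th.varianceJhat} and the substitution $x\DefinedAs\alpha\lambda$, $y\DefinedAs\alpha\lambda'$, the pair sum becomes $\sigma^2[1/(x(2-x))+1/(y(2-y))]$, and at $\alpha_\gd=2/(\mf+\Lf)$ we have $x+y=2$, so the pair sum equals $\sigma^2(\mf+\Lf)^2/(2\lambda\lambda')$. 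The key per-pair inequality
\[
\frac{1}{x(2-x)}\,+\,\frac{1}{y(2-y)}\;\geq\;\frac{(x+y)^2}{4xy}\qquad\text{for all } x,y\in(0,2)
\]
reduces, after multiplication by $4xy(2-x)(2-y)>0$, to $2a(a-2)^2+b(8-a^2)\geq 0$ with $a\DefinedAs x+y$ and $b\DefinedAs xy$; both summands are manifestly nonnegative on $(0,2)$. This gives $\hat{J}_\gd(\lambda;\alpha)+\hat{J}_\gd(\lambda';\alpha)\geq \tfrac12[\hat{J}_\gd(\lambda;\alpha_\gd)+\hat{J}_\gd(\lambda';\alpha_\gd)]$, and summing over the symmetric pairs yields $J_\gd(\alpha)\geq\tfrac12 J_\gd(\alpha_\gd)$ for every stable $\alpha$, in particular for $\alpha^\star_\gd(c)$.

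\textbf{Heavy-ball via a change of variables.} The formula from Theorem~\ref{th.varianceJhat} admits the factorization
\[
\hat{J}_\hb(\lambda;\alpha,\beta)\;=\;\frac{\sigma^2}{(1-\beta^2)\,\tilde{x}(2-\tilde{x})},\qquad \tilde{x}\DefinedAs\frac{\alpha\lambda}{1+\beta},
\]
and the identity $\alpha_\hb(\mf+\Lf)=2(1+\beta_\hb)$ (verified directly from Table~\ref{tab:rates}) gives $\tilde{x}+\tilde{y}=2$ at the table parameters for every symmetric pair. Applying the inequality of the previous paragraph in the $\tilde{x},\tilde{y}$ variables yields the parameter-uniform lower bound
\[
\hat{J}_\hb(\lambda;\alpha,\beta)\,+\,\hat{J}_\hb(\lambda';\alpha,\beta)\;\geq\;\frac{\sigma^2(\mf+\Lf)^2}{4(1-\beta^2)\,\lambda\lambda'},
\]
which depends on $\beta$ but not on $\alpha$, and coincides with exactly half of the pair sum at $(\alpha_\hb,\beta_\hb)$ when $\beta=\beta_\hb$. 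Whenever $\beta\geq\beta_\hb$, the inequality $1-\beta^2\leq 1-\beta_\hb^2$ propagates the factor $\tfrac12$ to the per-pair bound relative to its value at the table parameters.

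\textbf{Main obstacle: restricting $\beta$ via the rate constraint.} The remaining and principal technical step is to show that, for $\kappa\geq c_1(c)$, the accelerated-rate constraint $\rho_\hb\leq 1-c/\sqrt{\kappa}$ forces $\beta\geq\beta_\hb$. I would analyze the spectrum of the $2\times 2$ block $\hat{A}_i$ for extreme $\lambda\in\{\mf,\Lf\}$: in the real-eigenvalue regime for $\lambda=\mf$ with $\alpha\mf\leq(1-\sqrt{\beta})^2$, the dominant root tends to $1$ as $\alpha\mf\to 0$; and in the real regime for $\lambda=\Lf$ with $\alpha\Lf\geq(1+\sqrt{\beta})^2$, the dominant root exceeds $\sqrt{\beta}$ and approaches $1$ toward the stability boundary. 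A short case analysis rules out both real branches for $\kappa$ sufficiently large in terms of $c$, so both discriminants must be nonpositive, i.e., $(1-\sqrt{\beta})^2\leq\alpha\mf$ and $\alpha\Lf\leq(1+\sqrt{\beta})^2$. Eliminating $\alpha$ gives $\sqrt{\beta}\geq(\sqrt{\kappa}-1)/(\sqrt{\kappa}+1)=\sqrt{\beta_\hb}$. Combined with the per-pair bound and summed over symmetric pairs, this yields $J_\hb(\alpha^\star_\hb(c),\beta^\star_\hb(c))\geq\tfrac12 J_\hb(\alpha_\hb,\beta_\hb)$, so $c_2=\tfrac12$ suffices.
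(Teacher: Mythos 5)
Your gradient-descent argument is correct and takes a genuinely different route from the paper's. The paper introduces $G \DefinedAs \sum_i \max\{\hat J(\lambda_i),\hat J(\lambda_i')\}$, sandwiches $J \le G \le 2J$, and uses quasi-convexity and symmetry of $\hat J$ in $\mu=\alpha\lambda$ to show that $\alpha_\gd$ minimizes $G$; you instead prove a direct per-pair inequality whose right-hand side is exactly half the pair sum at $\alpha_\gd$, which is arguably cleaner and delivers the constant $1/2$ without the $G$ detour. One caveat on the writeup: your justification of the key inequality is not right as stated, since $a=x+y$ ranges over $(0,4)$ and the term $b(8-a^2)$ is negative for $a>2\sqrt2$. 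The inequality is still true (for $8-a^2<0$ the expression is decreasing in $b$, and at the extreme $b=a^2/4$ it factors as $-4t(t-2)(t^2-2t+2)\ge0$ with $t=a/2$), and in your application stability gives $a=\alpha(\mf+\Lf)<2(1+1/\kappa)<2\sqrt2$ once $\kappa>c_1$, so either repair is immediate — but the "manifestly nonnegative" claim needs one of these fixes.

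The heavy-ball part has a genuine gap at precisely the step you flag as the main obstacle: the rate constraint $\rho_\hb\le 1-c/\sqrt\kappa$ does \emph{not} force $\beta\ge\beta_\hb$, and the real-eigenvalue branch at $\lambda=\mf$ cannot be ruled out. Concretely, take $\sqrt\beta=1-2.5/\sqrt\kappa$ (so $\beta<\beta_\hb$, whose square root is $1-2/(\sqrt\kappa+1)$) and $\alpha=2(1+\beta)/(\Lf+\mf)$: then $\alpha\mf<(1-\sqrt\beta)^2$, the mode at $\mf$ has real eigenvalues, and a direct computation of the rate formula gives $\rho\approx 1-1/\sqrt\kappa$, which is feasible whenever $c\le 1$. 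Your heuristic that the dominant real root at $\mf$ "tends to $1$ as $\alpha\mf\to0$" is true but does not rule out this branch: $\alpha\mf$ need only satisfy $\alpha\mf<(1-\sqrt\beta)^2=\Theta(1/\kappa)$, and then $1-\rho$ is of order $\alpha\mf/(1-\sqrt\beta)$, which can comfortably exceed $c/\sqrt\kappa$. Worse, your own per-pair bound shows the pair sum at the rate-optimal stepsize scales like $1/(1-\beta^2)$, so the optimizer actively prefers the smallest feasible $\beta$; hence $\beta^\star_\hb(c)<\beta_\hb$ for small $c$ and your claimed $c_2=1/2$ cannot hold uniformly in $c$. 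What is true — and what the paper proves in its lemma lower-bounding $\beta^\star_\hb(c)$ — is the weaker bound $\beta\ge 1-a/\sqrt\kappa$ for a constant $a=a(c)$, obtained by lower-bounding $\rho$ by its partial minimum over $\alpha$ and inverting the real-branch rate formula. Your framework does close with that weaker input, since $(1-\beta_\hb^2)/(1-\beta^2)\ge 4/a$ asymptotically, yielding a $c$-dependent constant $c_2$; but the step as you propose it would fail.
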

\begin{proof}
	See Appendix~\ref{sec.Tuning parameters using the whole spectrumApp}.
\end{proof}

For strongly convex quadratic objective functions with symmetric spectrum of the Hessian matrix over the interval $[\mf,\Lf]$, Theorem~\ref{thm.lowerboundBetaHeavyball} shows that the variance amplifications of gradient descent and the heavy-ball method with the parameters provided in Table~\ref{tab:rates} are within a constant factors of the optimal values. As we illustrate in Section~\ref{sec:distributed}, this class of problems is encountered in distributed averaging over noisy undirected networks. Combining this result with the lower bound on $J_{\hb}(\alpha_{\hb},\beta_{\hb})$ and the upper bound on $J_{\gd}(\alpha_{\gd})$ established in Theorem~\ref{thm.JBoundsQuad}, we see that regardless of the choice of parameters, there is a fundamental gap of $\Omega(\sqrt{\kappa})$ between $J_{\hb}$ and $J_{\gd}$ as long as we require an accelerated rate of convergence.  
		
			\vspace*{-2ex}
\subsection{Fundamental lower bounds}

We next establish lower bounds on the variance amplification of accelerated methods that hold for any pair of $\alpha$ and $\beta $ for strongly convex quadratic problems with $\kappa \gg 1$. In particular, we show that the variance amplification of accelerated algorithms is lower bounded by $\Omega(\kappa^{3/2})$ irrespective of the choice of $\alpha$ and $\beta$.

The next theorem establishes a fundamental tradeoff between the convergence rate and variance amplification for the heavy-ball method.
	\begin{mythm}\label{thm.tradeOffHB}
		For strongly convex quadratic problems with any stabilizing parameters $\alpha>0$ and $0 < \beta < 1$ \tc{black}{ and with a fixed noise magnitude $\sigma$,} the heavy-ball method with \tc{black}{the linear convergence rate $\rho$ satisfies}
		\begin{subequations}
			{\color{black}
		\begin{align}\label{eq.tradOffHBa}
		\dfrac{J_\hb}{1 \, - \, \rho} \; \ge \; \sigma^2\left(\dfrac{\kappa+1}{8}\right)^2.
		\end{align}
			Furthermore, if $\sigma=\alpha$, i.e., when the only source of uncertainty is a noisy gradient, we have
	\begin{align}\label{eq.tradOffHBb}
	\dfrac{J_\hb}{1 \, - \, \rho} \; \ge \;  \left(\dfrac{\kappa}{8 L}\right)^2.
\end{align}	}	
\end{subequations}
	\end{mythm}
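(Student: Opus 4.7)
The plan is to lower-bound $J_\hb$ via its modal contributions at the extreme eigenvalues of $Q$ and, separately, to upper-bound $1-\rho$ via the characteristic polynomial of the $2\times 2$ heavy-ball iteration matrix, then multiply. Since $\mf$ and $\Lf$ are in the spectrum of $Q$, Theorem~\ref{th.varianceJhat} gives $J_\hb\geq \hat J_\hb(\mf)+\hat J_\hb(\Lf)$. First I would substitute the closed form of $\hat J_\hb(\lambda)$ and use the bounds $2(1+\beta)-\alpha\mf\leq 2(1+\beta)$ and $\alpha\Lf\leq 2(1+\beta)$ to get $\hat J_\hb(\mf)\ge \sigma^2/[2\alpha\mf(1-\beta)]$ and $\hat J_\hb(\Lf)\ge \sigma^2/[2(1-\beta)(2(1+\beta)-\alpha\Lf)]$. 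A one-variable minimization of $1/(\alpha\mf)+1/(2(1+\beta)-\alpha\Lf)$ subject to $\alpha\Lf=\kappa\alpha\mf$ then gives
\[
\hat J_\hb(\mf)+\hat J_\hb(\Lf)\;\ge\;\frac{\sigma^2(\sqrt\kappa+1)^2}{4(1-\beta^2)}.
\]

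Next I would bound $1-\rho$. For each eigenmode $\lambda$, the roots $\mu_\pm$ of the $2\times 2$ iteration matrix satisfy $\mu_+\mu_-=\beta$, $(1-\mu_+)(1-\mu_-)=\alpha\lambda$, and $(1+\mu_+)(1+\mu_-)=2(1+\beta)-\alpha\lambda$. Stability combined with $\kappa\ge 2$ forces the mode at $\mf$ to be either real with $\mu_\pm\in(\sqrt\beta,1)$ or complex conjugate with modulus $\sqrt\beta$. In the real case, the identity $1-\mu_+=\alpha\mf\,\mu_+/(\mu_+-\beta)$ together with the monotonicity of $\mu/(\mu-\beta)$ on $(\sqrt\beta,1)$ yields $1-\mu_+<\alpha\mf/(1-\sqrt\beta)$; in the complex case the same bound is immediate since $\alpha\mf>(1-\sqrt\beta)^2$. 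Hence $(1-\rho)(1-\sqrt\beta)\le \alpha\mf$. An analogous argument at $\Lf$, using $(1+\mu_+)(1+\mu_-)=2(1+\beta)-\alpha\Lf$ in place of the product $(1-\mu_+)(1-\mu_-)$, yields $(1-\rho)(1-\sqrt\beta)\le 2(1+\beta)-\alpha\Lf$. Multiplying the former inequality by $\kappa$ and adding the latter delivers the key estimate
\[
(\kappa+1)(1-\rho)(1-\sqrt\beta)\;\le\;2(1+\beta).
\]

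Combining these two results and using the factorization $1-\beta^2=(1-\sqrt\beta)(1+\sqrt\beta)(1+\beta)$ to cancel the $(1-\sqrt\beta)$ factors gives
\[
\frac{J_\hb}{1-\rho}\;\ge\;\frac{\sigma^2(\sqrt\kappa+1)^2(\kappa+1)}{8(1+\sqrt\beta)(1+\beta)^2}.
\]
The elementary bounds $(1+\sqrt\beta)(1+\beta)^2\le 8$ for $\beta\in[0,1]$ and $(\sqrt\kappa+1)^2\ge \kappa+1$ then yield \eqref{eq.tradOffHBa}. For \eqref{eq.tradOffHBb}, with $\sigma=\alpha$, a shorter chain suffices: $\hat J_\hb(\mf)\ge \alpha/[2\mf(1-\beta)]$ combined with $1-\rho\le \alpha\mf/(1-\sqrt\beta)$ gives $J_\hb/(1-\rho)\ge 1/[2\mf^2(1+\sqrt\beta)]\ge 1/(4\mf^2)=\kappa^2/(4\Lf^2)\ge (\kappa/(8\Lf))^2$.

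The main obstacle is establishing the clean inequality $(1-\rho)(1-\sqrt\beta)\le \alpha\mf$, which requires the case analysis on the eigenvalue structure of the $2\times 2$ iteration matrix together with the manipulation of the three polynomial identities above; the edge regime $\kappa<2$, where the real-negative case at $\mf$ could in principle arise, is handled by a direct verification since the target lower bound degenerates to a small constant there.
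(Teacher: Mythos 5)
Your proposal is correct, and it takes a genuinely different route from the paper's. The paper first replaces $J_\hb$ by the surrogate $\hat{J}^\star=\max\{\hat J_\hb(\mf),\hat J_\hb(\Lf)\}$, invokes Lemma~\ref{lem.TuningWholeSpec1} to show that $\alpha=2(1+\beta)/(\Lf+\mf)$ simultaneously minimizes $\hat J^\star$ and $\rho$ over $\alpha$ (so the ratio is lower bounded by a one-variable function $\nu(\beta)$), and then minimizes $\nu$ over $\beta$ at $\beta=(\sqrt\kappa-1)^2/(\sqrt\kappa+1)^2$ --- a step justified only semi-formally, with reference to a figure; part~\eqref{eq.tradOffHBb} is then handled by a separate and fairly involved monotonicity analysis in $\beta$ of a function $p(\alpha,\beta)$ built from $\hat J_\hb(\mf)/(1-\hat\rho(\mf))$. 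You instead decouple the two factors: a closed-form minimization over $\alpha$ giving $\hat J_\hb(\mf)+\hat J_\hb(\Lf)\ge\sigma^2(\sqrt\kappa+1)^2/(4(1-\beta^2))$, and, via the root identities $\mu_+\mu_-=\beta$, $(1-\mu_+)(1-\mu_-)=\alpha\lambda$, $(1+\mu_+)(1+\mu_-)=2(1+\beta)-\alpha\lambda$, the rate estimate $(\kappa+1)(1-\rho)(1-\sqrt\beta)\le2(1+\beta)$; multiplying and using $(1+\sqrt\beta)(1+\beta)^2\le 8$ recovers exactly the paper's constants. What this buys is a fully algebraic argument with no appeal to numerically verified monotonicity, plus an intermediate bound on $1-\rho$ that is of independent interest and makes the $\sigma=\alpha$ case a two-line corollary; the price is the case analysis on the root configuration at $\mf$ and $\Lf$ and the separate treatment of $\kappa<2$, which you correctly flag (there the right-hand side of~\eqref{eq.tradOffHBa} is below $(3/8)^2\sigma^2$, while $\rho\ge\sqrt\beta$ and $\alpha\mf(2(1+\beta)-\alpha\mf)\le(1+\beta)^2$ give $J_\hb/(1-\rho)\ge\sigma^2/\bigl((1-\beta^2)(1-\sqrt\beta)\bigr)\ge\sigma^2$, with a similar direct estimate for~\eqref{eq.tradOffHBb}). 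One cosmetic slip: in the real-root case at $\mf$ only the larger root satisfies $\mu_+\in[\sqrt\beta,1)$, the smaller lying in $(\beta,\sqrt\beta]$; since your identity $1-\mu_+=\alpha\mf\,\mu_+/(\mu_+-\beta)$ and the monotonicity step use only $\mu_+$, nothing is affected.
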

	\begin{proof}
		See Appendix~\ref{sec.proofOflowerBoundJstar}.
	\end{proof}
To gain additional insight, let us consider two special cases: (i) for $\alpha=1/L$ and $\beta \rightarrow 0^{+}$, we obtain gradient descent algorithm for which $1-\rho  = \Theta({1}/{\kappa})$ and $J = \Theta(\kappa)$; (ii) for the heavy-ball method with the parameters provided in Table~\ref{tab:rates}, we have $1-\rho  = \Theta({1}/{\sqrt{\kappa}})$ and $J = \Theta(\kappa\sqrt{\kappa})$. Thus, in both cases, ${J_\hb}/(1-\rho) = \Omega(\kappa^2)$. Theorem~\ref{thm.tradeOffHB} shows that this lower bound is fundamental and it therefore quantifies the tradeoff between the convergence rate and the variance amplification of the heavy-ball method for any choice of parameters $\alpha$ and $\beta$. \tc{black}{It is also worth noting that the lower bound for $\sigma=\alpha$ depends on the largest eigenvalue $L$ of the Hessian matrix $Q$. Thus, this bound is meaningful when the value of $L$ is uniformly upper bounded. This scenario occurs in many applications including consensus over undirected tori networks; see Section~\ref{sec:distributed}. }
	
	While we are not able to show a similar lower bound for Nesterov's method, in the next theorem, we establish an asymptotic lower bound on the variance amplification that holds for any pair of accelerating parameters ($\alpha,\beta$) for both Nesterov's and heavy-ball methods.
	\begin{mythm}\label{th.lowerBoundJstar} 
		For a strongly convex quadratic objective function with condition number $\kappa$, let $c>0$ be a constant such that either Nesterov's algorithm or the heavy-ball method with some (possibly problem dependent) parameters $\alpha>0$ and $0 < \beta < 1$ converges linearly with a rate $\rho \le 1 - c/\sqrt{\kappa}$. Then, \tc{black}{for any fixed noise magnitude $\sigma$, the variance amplification  satisfies 
			\begin{subequations}
			\begin{align}\label{eq.lowerboundJStara}
			\dfrac{J}{\sigma^2} 
			\;=\;  \Omega(\kappa^{\frac{3}{2}}).
			\end{align}
Furthermore, if $\sigma=\alpha$, i.e., when the only source of uncertainty is a noisy gradient, we have			
	\begin{align}
	\label{eq.lowerboundJStarb}
			J
			\;=\;
			\Omega(\dfrac{\kappa^{\frac{3}{2}}}{L^2}).
			\end{align}
		\end{subequations}
		}
	\end{mythm}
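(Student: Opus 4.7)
The plan is to handle the heavy-ball and Nesterov cases separately. For the heavy-ball method the bound follows directly from \reth{tradeOffHB}: substituting $1-\rho\ge c/\sqrt{\kappa}$ into \reeq{tradOffHBa} yields $J_\hb/\sigma^2=\Omega(\kappa^{3/2})$, and similarly \reeq{tradOffHBb} gives $J_\hb=\Omega(\kappa^{3/2}/L^2)$ when $\sigma=\alpha$. For Nesterov's method no analogous tradeoff inequality is at hand, so I would lower-bound $J_\na$ by the single modal contribution $\hat{J}_\na(m)$ and analyze the explicit expression from \reth{varianceJhat}.

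The key algebraic tool is the identity
\[
(1-\mu_1(\lambda))(1-\mu_2(\lambda)) \;=\; \alpha\lambda,
\]
where $\mu_1(\lambda),\mu_2(\lambda)$ denote the roots of the characteristic polynomial of $\hat{A}_i$ associated with the eigenvalue $\lambda$ of $Q$; this is immediate from the coefficients $\mu_1+\mu_2=(1+\beta)(1-\alpha\lambda)$ and $\mu_1\mu_2=\beta(1-\alpha\lambda)$. Applied at $\lambda=L$ with $|\mu_i|\le\rho<1$, it gives $\alpha L\le(1+\rho)^2<4$, and hence $\alpha m\le 4/\kappa$. Combined with the bound $1-\beta(1-\alpha m)=O(1/\sqrt{\kappa})$ derived below and the elementary estimate $2(1+\beta)-(2\beta+1)\alpha m\le 4$, the denominator in the expression for $\hat{J}_\na(m)$ is at most $O(1/\kappa^{3/2})$, while the numerator $1+\beta(1-\alpha m)$ is bounded below by one. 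This yields $\hat{J}_\na(m)\ge\sigma^2\,\Omega(\kappa^{3/2})$, which establishes \reeq{lowerboundJStara}.

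To prove $1-\beta(1-\alpha m)=O(1/\sqrt{\kappa})$, I would split into real and complex eigenvalue cases at $\lambda=m$. In the real case, both roots are positive (their product and sum are), so the ordering $\mu_1\ge\mu_2$ together with the identity gives $1-\mu_1\le\sqrt{\alpha m}\le 2/\sqrt{\kappa}$; combined with $1-\mu_1\ge 1-\rho\ge c/\sqrt{\kappa}$, applying the identity once more produces $1-\mu_2=\alpha m/(1-\mu_1)\le 4/(c\sqrt{\kappa})$, and therefore $\beta(1-\alpha m)=\mu_1\mu_2\ge 1-O(1/\sqrt{\kappa})$. In the complex case, the condition for a complex conjugate pair of roots, $\alpha m>(1-\beta)^2/(1+\beta)^2$, combined with $\alpha m\le 4/\kappa$ forces $1-\beta\le 4/\sqrt{\kappa}$, from which $\beta(1-\alpha m)\ge(1-4/\sqrt{\kappa})(1-4/\kappa)\ge 1-O(1/\sqrt{\kappa})$ follows.

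For \reeq{lowerboundJStarb} with $\sigma=\alpha$, one needs the additional estimate $\alpha\ge\Omega(1/L)$, equivalently $\alpha m\ge\Omega(1/\kappa)$. In the real case this is already available from $\sqrt{\alpha m}\ge 1-\mu_1\ge c/\sqrt{\kappa}$; in the complex case the rate constraint $\beta(1-\alpha m)\le\rho^2$ combined with $\alpha m\le 4/\kappa$ forces $1-\beta\ge\Omega(1/\sqrt{\kappa})$, and the complex condition then gives $\alpha m\ge(1-\beta)^2/(1+\beta)^2\ge\Omega(1/\kappa)$. Multiplying the lower bound on $\hat{J}_\na(m)$ by $\sigma^2=\alpha^2\ge\Omega(1/L^2)$ completes the argument. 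The main obstacle throughout is the case split at $\lambda=m$: the two regimes certify the smallness of $1-\beta(1-\alpha m)$ by rather different mechanisms, and executing the analysis uniformly while tracking the dependence on the constant $c$ is the most delicate step.
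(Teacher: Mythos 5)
Your proposal is correct, and for the Nesterov case it takes a genuinely different route from the paper. The paper proves \reeq{lowerboundJStara} by combining two separate lemmas: one that lower-bounds $\hat{J}_\na(\mf)$ by $\kappa^2/(24(1-\beta)\kappa+32\beta)$ using quasi-convexity of $\hat{J}$ and an evaluation at the stability boundary $\lambda = \tfrac{2\beta+2}{\alpha\kappa(2\beta+1)}$, and another that shows any accelerating $\beta$ must satisfy $\beta \ge 1-c_4/\sqrt{\kappa}$. You instead work directly with the denominator of $\hat{J}_\na(\mf)$ from \reth{varianceJhat}, using the clean identity $(1-\mu_1(\lambda))(1-\mu_2(\lambda))=\alpha\lambda$ to get $\alpha\Lf<4$ and then bounding $1-\beta(1-\alpha\mf)=O(1/\sqrt{\kappa})$ via the real/complex case split; this parallels the paper's case split on whether $\alpha\mf$ exceeds $\left(\tfrac{1-\beta}{1+\beta}\right)^2$ but targets the exact quantity in the denominator rather than $\beta$ itself, which is slightly more economical. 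The more substantial divergence is in \reeq{lowerboundJStarb}: the paper handles $\alpha\le 1/\Lf$ through a dedicated and fairly heavy lemma establishing $J_\na/(1-\rho)\ge\tfrac{1}{8}(\kappa/\Lf)^2$ by partial minimization of $\hat{J}(\mf)/(1-\hat{\rho}(\mf))$ over $\beta$, whereas you observe that the acceleration assumption already forces $\alpha\mf=\Omega(1/\kappa)$ (from $1-\mu_1\le\sqrt{\alpha\mf}$ in the real case, and from $\beta(1-\alpha\mf)\le\rho^2$ together with the complex-root condition otherwise), so that $\sigma^2=\alpha^2=\Omega(1/\Lf^2)$ and part (a) yields part (b) with no new tradeoff inequality. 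Your route buys a shorter and more unified argument; the paper's route yields the sharper intermediate tradeoff bound $J_\na/(1-\rho)\ge\tfrac{1}{8}(\kappa/\Lf)^2$, which is of independent interest as the Nesterov analogue of \reth{tradeOffHB}. The heavy-ball part of your argument is identical to the paper's.
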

\begin{proof}
	For the heavy-ball method, the result follows from combining Theorem~\ref{thm.tradeOffHB} with the inequality $1-\rho\ge{c}/{\sqrt{\kappa}}$. For Nesterov's method, the proof is provided in Appendix~\ref{sec.proofOflowerBoundJstar}.
\end{proof}

For problems with $n \ll \kappa$, we recall that the variance amplification of gradient descent with conventional values of parameters scales as $O(\kappa)$; see Theorem~\ref{thm.summary}. Irrespective of the choice of parameters $\alpha$ and $\beta$, this result in conjunction with Theorem~\ref{th.lowerBoundJstar} demonstrates that acceleration cannot be achieved without increasing the variance amplification $J$ by a factor of $\Omega(\sqrt{\kappa})$.

		\vspace*{-2ex}
\section{Application to distributed computation over undirected networks}
	\label{sec:distributed}
	
	Distributed computation over networks has received significant attention in optimization, control systems, signal processing, communications, and machine learning communities. In this problem, the goal is to optimize an objective function (e.g., for the purpose of training a model) using multiple processing units that are connected over a network. Clearly, the structure of the network (e.g., node dynamics and network topology) may impact the performance (e.g., convergence rate and noise amplification) of any optimization algorithm. As a first step toward understanding the impact of the network structure on performance of noisy first-order optimization algorithms, in this section, we examine the standard distributed consensus problem.

The consensus problem arises in applications ranging from social networks, to distributed computing networks, to cooperative control in multi-agent systems. In the simplest setup, each node updates a scalar value using the values of its neighbors such that they all agree on a single consensus value. Simple updating strategies of this kind can be obtained by applying a first-order algorithm to the convex quadratic problem
	\be 	
	\minimize\limits_x
	~
	\dfrac{1}{2}
	\, 
	x^T \Lap \, x
	\label{eq.Lmin}
	\ee 
where $ \Lap= \Lap^T\in\R^{n\times n}$ is the Laplacian matrix of the graph associated with the underlying undirected network and $x \in \bbR^n$ is the vector of node values. 

The graph Laplacian matrix $\Lap\succeq 0$  has a nontrivial null space that consists of the minimizers of problem~\eqref{eq.Lmin}. In the absence of noise, for gradient descent and both of its accelerated variants, it is straightforward to verify that the projections $v^t$ of the iterates $x^t$ onto the null space of $\Lap$ remain constant ($v^t=v^0$, for all $t$) and also that $x^t$ converges linearly to $v^0$. In the presence of additive noise, however, $v^t$ experiences a random walk which leads to an unbounded variance of $x^t$ as $t \to \infty$. Instead, as described in~\cite{xiaboykim07}, the performance of algorithms in this case can be quantified by examining
	$
	\bar{J}\DefinedAs\lim_{t \, \to \, \infty}\EX \left( \norm{x^t-v^t}^2 \right).
	$
For connected networks, the null space of $\Lap$  is given by $\Null(\Lap) =  \{c\one\,|\, c \in \R\}$ and 
	\be
	\bar{J} \; = \; \lim_{t \, \to \, \infty}\EX\left( \norm{x^t-(\one^Tx^t/n)\one}^2 \right)
	\label{eq.Jbar}
	\ee
quantifies the mean-squared deviation from the network average, where $\one$ denotes the vector of all ones, i.e., $ \one \DefinedAs [ \, 1 \; \cdots \;1 \, ]^T $. Finally, it is straightforward to show that $\bar{J}$ can also be computed using the formulae in Theorem~\ref{th.varianceJhat}  by summing over the non-zero eigenvalues of $\Lap$.  

In what follows, we consider a class of networks whose structure allows for the explicit evaluation of the eigenvalues of the Laplacian matrix $\Lap$. For $d$-dimensional torus networks, fundamental performance limitations of standard consensus algorithms in continuous time were established in~\cite{bamjovmitpat12}, but it remains an open question if gradient descent and its accelerated variants suffer from these limitations. We utilize such torus networks to demonstrate that standard gradient descent exhibits the same scaling trends as consensus algorithms studied in~\cite{bamjovmitpat12} and that, in lower spatial dimensions, acceleration always increases variance amplification. 	
	
	\vspace*{-2ex}
	\subsection{Explicit formulae for $d$-dimensional torus networks}
	
	We next examine the asymptotic scaling trends of the performance metric $\bar{J}$ given by~\eqref{eq.Jbar} for large problem dimensions $n \gg 1$ and highlight the subtle influence of the distribution of the eigenvalues of $\Lap$ on the variance amplification for $d$-dimensional torus networks. Tori with nearest neighbor interactions generalize one-dimensional rings to higher spatial dimensions. Let $ \Z_{{\m}} $ denote the group of integers modulo $ {\m} $. A $d$-dimensional torus $ \T^d_{\m} $ consists of $ n \DefinedAs \m^d $ nodes denoted by $ v_a $ where $ a \in \Z_{\m}^d$ and the set of edges 	
	$ 
	\{\{v_a ~ v_b\} \, | \, \norm{ a - b } = 1 \!\! \mod {\m} \};
	$
nodes $ v_a $ and $ v_b $ are neighbors if and only if $ a $ and $ b $ differ exactly at a single entry by one. For example, $ \T_{\m}^1 $ denotes a ring with $ n = {\m} $ nodes and $ \T_{\m}^5 $ denotes a five dimensional torus with $ n = \m^5 $ nodes.
	
	The multidimensional discrete Fourier transform can be used to determine the eigenvalues of the Laplacian matrix $\Lap$ of a $d$-dimensional torus $ \T^d_{\m}$,
	\begin{align}
	\label{eq:eigTorusExpression}
	\lambda_i
	\; = \; 
	\sum_{l \, = \, 1}^{d}
	\, 
	2 \left(
	1 \, - \, \cos \tfrac{2 \pi i_l}{{\m}} 
	\right),
	~~
	i_l \, \in \, \Z_{\m}
	\end{align}
	where $i \DefinedAs (i_1, \ldots, i_d ) \in  \Z_{\m}^d$. We note that $ \lambda_0=0 $ is the only zero eigenvalue of $\Lap$ with the eigenvector $\one$ and that all other eigenvalues are positive. Let $\kappa \DefinedAs \lambda_{\max}/\lambda_{\min}$ be the ratio of the largest and smallest nonzero eigenvalues of $\Lap$. A key observation is that, for ${\m} \gg 1$,  
	\be
\kappa 
\; = \; 
\Theta( \dfrac{2}{1 \, - \, \cos \tfrac{2 \pi}{{\m}}} )
\; = \;
\Theta (\m^2)
\; = \;
\Theta (n^{2/d}). 
\label{eq.kappa_d}
\ee
  This is because  
	$
	\lambda_{\min} 
	= 
	2 d
	\,
	(
	1 - \cos \, ( {2\pi}/{{\m}} )
	)
	$
goes to zero as $ {\m} \to \infty $, and the largest eigenvalue of $ \Lap $,
	$
	\lambda_{\max} 
	= 
	2d
	\,
	(
	1 - \cos\, ( 2\pi \lfloor \tfrac{{\m}}{2} \rfloor/{{\m}} )
	),
	$
	is equal to $4\,d$ for even ${\m}$ and it approaches $ 4\,d $ from below for odd ${\m}$.  
	
	As aforementioned, the performance metric $\bar{J}$ can be obtained by 
	\[
	\bar{J}
	\; = \;
	\sum_{0 \, \neq \, i \, \in \, \Z_{\m}^d} 
	\;
	\hat{J} (\lambda_i) 
	\]
	where $\hat{J}(\lambda)$ for each algorithm is  determined in Theorem~\ref{th.varianceJhat} and $\lambda_i$ are the non-zero eigenvalues of $\Lap$. The next theorem characterizes the asymptotic value of the network-size normalized mean-squared deviation from the network average, $\bar{J}/n$, for a fixed spatial dimension $d$ and condition number $\kappa \gg 1$. This result is obtained using analytical expression~\eqref{eq:eigTorusExpression} for the eigenvalues of the Laplacian matrix $ \Lap$. 
	
	\begin{mythm}
	\label{cor:orders}
		Let $ \Lap \in \R^{n\times n} $ be the graph Laplacian of the $d$-dimensional undirected torus $ \T_{\m}^d $ with $n = \m^d \gg 1$ nodes. For convex quadratic optimization problem~\eqref{eq.Lmin}, the network-size normalized performance metric $\bar{J}/n$ of noisy first-order algorithms with the parameters provided in Table~\ref{tab:rates} \tc{black}{ and $\sigma=1$,} is determined by
		\vsp 
		\begin{center}
			\begin{tabular}{l|@{\;}cccccc}
				& & {$ d=1 $} & {$ d=2 $} & {$ d=3 $} & {$ d=4 $} & {$ d=5 $}
				\\[0.0cm]\hline\\&&&&&&\\[-1.15 cm]
				{Gradient}  & & {$ \Theta (\sqrt{\kappa}) $}& {$ \Theta (\log \, \kappa ) $} & {$ \Theta ( 1 ) $} & {$ \Theta ( 1 ) $} & {$ \Theta ( 1 ) $}
				\\[0.15cm]
				{Nesterov} & & {$ \Theta ( \kappa )$}& {$ \Theta ( \sqrt{\kappa}\log\,\kappa )$} & {$ \Theta ( \kappa^{\frac{1}{4}} )$} & {$ \Theta ( \log \, \kappa ) $} & {$ \Theta ( 1 ) $}
                \\[0.15cm]
				{Polyak} & & {$ \Theta ( \kappa  )$}& {$ \Theta ( \sqrt{\kappa}\,\log\,\kappa )$} & {$ \Theta ( \sqrt{\kappa} )$} & {$ \Theta ( \sqrt{\kappa} ) $} & {$ \Theta ( \sqrt{\kappa} ) $}
			\end{tabular}
		\end{center}
		\vsp 
		where $ \kappa = \Theta (n^{2/d})$ is the condition number of $\Lap$ given in \eqref{eq.kappa_d}.
	\end{mythm}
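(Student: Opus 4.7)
Combining Theorem~\ref{th.varianceJhat} with the Fourier formula~\eqref{eq:eigTorusExpression} for the Laplacian spectrum expresses
\[
\frac{\bar{J}}{n} \, = \, \frac{1}{n}\sum_{0 \, \neq \, i \, \in \, \Z_{n_0}^d}\hat{J}(\phi(i/n_0)),
\qquad
\phi(\theta) \, \DefinedAs \, \sum_{l \, = \, 1}^{d}\,2(1 - \cos(2\pi\theta_l)).
\]
Recognizing the right-hand side as a Riemann sum with mesh $1/n_0$, I would pass to the large-$n_0$ limit and analyze
\[
\frac{\bar{J}}{n} \, \sim \, \int_{[0,1]^d \, \setminus \, B_{1/n_0}(0)}\hat{J}(\phi(\theta))\,\mrd\theta,
\]
where the excised ball of radius $\sim 1/n_0 = 1/\sqrt{\kappa}$ accounts for the omission of the zero mode. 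For the heavy-ball row, the modal identity~\eqref{eq.PolToGD} implies $\bar{J}_\hb = \Theta(\sqrt{\kappa})\,\bar{J}_\gd$, so it suffices to analyze gradient descent and Nesterov separately. Throughout, $L = \Theta(1)$ and $m \sim 1/n_0^2 \sim 1/\kappa$.

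\textbf{Gradient descent.}
Two regions drive the scaling. Near $\theta = 0$, one has $\phi \approx 4\pi^2 |\theta|^2$ and $\hat{J}_\gd(\phi) \sim L/(4\phi) \sim 1/\phi$, contributing (in radial coordinates) $\int_{1/\sqrt{\kappa}}^{\Theta(1)} r^{d-3}\,\mrd r$. Near the antipode $(1/2,\ldots,1/2)$, one has $L - \phi \approx 4\pi^2|\eta|^2$ and $2 - \alpha\phi = \Theta(1/\kappa) + \Theta(L - \phi)$, contributing $\int_0^{\Theta(1)} r^{d-1}/(\kappa^{-1} + r^2)\,\mrd r$. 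Both integrals evaluate to $\Theta(\sqrt{\kappa})$ for $d = 1$, $\Theta(\log\kappa)$ for $d = 2$, and $\Theta(1)$ for $d \geq 3$, while the bulk of the spectrum contributes $\Theta(1)$ uniformly in $\kappa$ because $\hat{J}_\gd$ is bounded there. This yields the gradient-descent row, and the heavy-ball row follows by the $\Theta(\sqrt{\kappa})$ multiplication.

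\textbf{Nesterov and main obstacle.}
Using $\alpha \approx 4/(3L)$ and $1 - \beta \approx 4/\sqrt{3\kappa}$, the factor $1 - \beta(1 - \alpha\phi) = (1 - \beta) + \beta\alpha\phi$ appearing in $\hat{J}_\na(\phi)$ identifies a crossover at $\phi_\star \sim L/\sqrt{\kappa}$, i.e., at radial scale $r_\star \sim \kappa^{-1/4}$. In the inner regime $r \in [\kappa^{-1/2},\kappa^{-1/4}]$, this factor is $\Theta(1/\sqrt{\kappa})$, so $\hat{J}_\na(\phi) \sim \sqrt{\kappa}/\phi$; in the outer regime $r \in [\kappa^{-1/4}, \Theta(1)]$, the same factor is $\Theta(\alpha\phi)$, so $\hat{J}_\na(\phi) \sim 1/\phi^2$. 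Integrating $\sqrt{\kappa}\,r^{d-3}$ over the inner range and $r^{d-5}$ over the outer range, together with the bounded near-antipodal contribution $\int r^{d-1}/(\kappa^{-1/2} + r^2)\,\mrd r$ governed by the vanishing factor $2(1+\beta) - (2\beta+1)\alpha\phi$ at $\phi = L$, recovers the claimed orders $\Theta(\kappa),\Theta(\sqrt{\kappa}\log\kappa),\Theta(\kappa^{1/4}),\Theta(\log\kappa),\Theta(1)$ for $d = 1,\ldots,5$. The main difficulty lies precisely in this step: the $\kappa$-dependent crossover scale $r_\star \sim \kappa^{-1/4}$ forces the Riemann-to-integral passage to be made uniform across a shifting scale, and the three regional contributions must be tracked with matching constants so that their sum cleanly produces the stated dominant order in each spatial dimension.
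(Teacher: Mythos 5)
Your proposal is correct — all the regional exponents you list do sum to the table entries — but it is organized quite differently from the paper's proof. The paper goes through Proposition~\ref{prop:orders}: for gradient descent it uses the partial-fraction identity $\hat{J}_\gd(\lambda)=\tfrac{1}{2\alpha}\bigl(\tfrac{1}{\lambda}+\tfrac{1}{\lambda_{\max}+\lambda_{\min}-\lambda}\bigr)$ together with the asymptotic symmetry of the torus spectrum about $2d$ to reduce $\bar{J}_\gd$ to $\sum_i 1/\lambda_i$, which is then read off from the known network-coherence asymptotics (Lemma~\ref{lem:sumRecip}, imported from~\cite{bamjovmitpat12}); for Nesterov it writes $\hat{J}_\na=(\sigma_1+\sigma_2)\hat{J}_\gd$ (Lemma~\ref{th.variance-asymptotic}), evaluates the dominant piece $D=c\sum_i 1/(\lambda_i^2+\mu\lambda_i)$ by a single radial integral (Lemmas~\ref{lem.polarInt} and~\ref{lem:sumSecPol}), and only sandwiches the remainder between $\Omega(\bar{J}_\gd/\sqrt{\kappa})$ and $O(\bar{J}_\gd)$ — a deliberately crude bound that suffices because $D$ dominates in every dimension $d\le 5$. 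Your route instead does the Laplace-type region splitting directly on $\hat{J}\circ\phi$; note that your crossover scale $r_\star\sim\kappa^{-1/4}$ is exactly the paper's $\mu=(1-\beta)/(\alpha_\na\beta)=\Theta(\kappa^{-1/2})$ entering $1/(\lambda(\lambda+\mu))$, so the two arguments ultimately evaluate the same integral; the difference is that you resolve the near-antipodal and bulk contributions explicitly while the paper absorbs them into the $O(\bar{J}_\gd)$ remainder (and handles the antipode for gradient descent by spectral symmetry rather than a second local expansion). What each buys: yours is self-contained and makes visible which spectral region produces each order; the paper's reuses the prior $\sum 1/\lambda_i$ result and needs only one new integral lemma. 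As for the difficulty you flag — making the Riemann-to-integral passage uniform across the $\kappa$-dependent scale — the paper's Lemma~\ref{lem.polarInt} is precisely the tool: after sandwiching $1-\cos x$ between $2x^2/\pi^2$ and $x^2/2$, the comparison functions are radially monotone, so upper and lower Riemann sums pin the sum to within constants regardless of where the crossover sits; applying that device region by region closes your sketch.
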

	\begin{proof}
		See Appendix~\ref{app.networks}.
	\end{proof}

Theorem~\ref{cor:orders} demonstrates that the variance amplification of  gradient descent is equivalent to that of the standard consensus algorithm studied in~\cite{bamjovmitpat12} and that, in lower spatial dimensions, acceleration always negatively impacts the performance of noisy algorithms. Our results also highlight the subtle influence of the distribution of the eigenvalues of $\Lap$ on the variance amplification. For rings (i.e., $d=1$), lower bounds provided in Theorem~\ref{thm.JBoundsQuad} capture the trends that our detailed analysis based on the distribution of the entire spectrum of $\Lap$ reveals. In higher spatial dimensions, however, the lower bounds that are obtained using only the extreme eigenvalues of $\Lap$ are conservative. Similar conclusion can be made about the upper bounds provided in Theorem~\ref{thm.JBoundsQuad}. This observation demonstrates that the na\"{i}ve bounds that result only from the use of the extreme eigenvalues can be overly conservative.

We also note that gradient descent significantly outperforms Nesterov's accelerated algorithm in lower spatial dimensions. In particular, while $\bar{J}/n$ becomes network-size-independent for $d = 3$ for gradient descent, Nesterov's algorithm reaches ``critical connectivity'' only for $d = 5$. On the other hand, in any spatial dimension, there is no network-size independent upper bound on $\bar{J}/n$  for the heavy-ball method. These conclusions could not have been reached without performing an in-depth analysis of the impact of all eigenvalues on performance of noisy networks with $n \gg 1$ and $\kappa \gg 1$. 	
	
		\vspace*{-3ex}
	\section{Concluding remarks}
	\label{sec: conclusion}
	
	We study the robustness of noisy first-order algorithms for smooth, unconstrained, strongly convex optimization problems. Even though the underlying dynamics of these algorithms are in general nonlinear, we establish upper bounds on noise amplification that are accurate up to constant factors. For quadratic objective functions, we provide analytical expressions that quantify the effect of all eigenvalues of the Hessian matrix on variance amplification. We use these expressions to establish lower bounds demonstrating that although the acceleration techniques improve the convergence rate they significantly amplify noise for problems with large condition numbers. In problems of bounded dimension $n \ll \kappa$, the noise amplification increases from $O(\kappa)$ to $\Omega(\kappa^{{3}/{2}})$ when moving from standard gradient descent to accelerated algorithms. We specialize our results to the problem of distributed averaging over noisy undirected networks and also study the role of network size and topology on robustness of accelerated algorithms. Future research directions include (i) extension of our analysis to multiplicative and correlated noise; and (ii) robustness analysis of broader classes of optimization algorithms.  

	\vspace*{-2ex}	
	\appendix
	
	\vspace*{-2ex}
	\subsection{Quadratic problems}
	\label{app.Quadratic}
	\begin{proof}[Proof of Theorem~\ref{th.varianceJhat}]
		For gradient descent,  $\hat{A}_i = 1 - \alpha \lambda_i$ and $\hat{B}_i =1$ are  scalars and the solution to~\eqref{eq.LyapDiag} is given by
			\[
			\hat{P}_i
			\, \DefinedAs \,
			\tc{black}{\sigma^2} p_i
			\, = \,
			\dfrac{\tc{black}{\sigma^2}}{1 \, - \, (1 \, - \, \alpha \lambda_i)^2}
			\, = \, 
			\dfrac{\tc{black}{\sigma^2}}
			{\alpha \lambda_i (2 \, - \, \alpha \lambda_i )}.
			\]
		For the accelerated methods, we note that for any $ \hat{A}_i $ and $ \hat{B}_i $ of the form 
			\[
			\hat{A}_i
			\, = \,
			\tbt{0}{1}{a_i}{b_i},
			~
			\hat{B}_i
			\,=\,
			\tbo{0}{1}
			\]
		the solution $\hat{P}_i$ to Lyapunov equation~\eqref{eq.LyapDiag} is given by
			\[
			\hat{P}_i
			\, = \,
			\tc{black}{\sigma^2}\tbt{p_i}{b_i p_i/(1 - a_i)}{b_i p_i/(1 - a_i)}{p_i}
			\]	
		where 
		\be
		p_i
		\, \DefinedAs \,
		\dfrac{a_i \, - \, 1}
		{
			(a_i \, + \, 1)
			(b_i \, + \, a_i \, - \, 1)
			(b_i \, - \, a_i \, + \, 1)
		}.
		\label{eq:pi}
		\ee
		The parameters $a_i$ and $b_i$ for Nesterov's algorithm are 
		$
		\{
		a_i 
		= 
		-\beta( 1 - \alpha \lambda_i)
		$;
		$
		b_i 
		= 
		(1 + \beta)(1 - \alpha \lambda_i)
		\}
		$        
		and for the heavy-ball method we have
		$
		\{
		a_i 
		=
		-\beta
		$;
		$
		b_i 
		= 
		1 + \beta - \alpha \lambda_i
		\}.
		$
		Now, since $ \hat{C_i}=1 $ for gradient descent and 
		$
		\hat{C}_i
		=
		[ \, 1 \;\, 0 \, ]
		$
		for the accelerated algorithms, it follows that  for all three algorithms we have
			$
			\hat{J}(\lambda_i)
			\DefinedAs  
			\trace \, (\hat{C}_i \hat{P}_i \hat{C}_i^T ) 
			= 
			\tc{black}{\sigma^2} p_i.
			$		
		Finally, if we use the expression for $p_i$ for gradient descent and substitute for $ a_i $ and $ b_i $ in~\eqref{eq:pi} for the accelerated algorithms, we obtain the expressions for $\hat{J}$ in the statement of the theorem. 
	\end{proof}
	\begin{proof}[Proof of Proposition~\ref{prop.relationJhat}]
		To show that ${\hat{J}_{\na}(\lambda)}/{\hat{J}_\gd(\lambda)}$ is a decreasing function of $\lambda\in[\mf,\Lf]$, we split this ratio into the sum of two homographic functions
		$
		{\hat{J}_{\na}(\lambda)}/{\hat{J}_\gd(\lambda)}= \sigma_1(\lambda) + \sigma_2(\lambda),
		$
where
		\begin{align}\label{eq.Sigma1Sigma2}
		\sigma_1(\lambda) 
		\;\DefinedAs\;
		\dfrac{
			4\alpha_\gd\beta	
		}{\alpha_\na(3\beta+1)(1-\beta)}
		\dfrac{1-\tfrac{\alpha_\gd}{2}\lambda}{1 + \tfrac{\alpha_\na\beta}{1-\beta}\lambda},
		\quad
		\sigma_2(\lambda) 
		\;\DefinedAs\;
		\dfrac{\alpha_\gd}{\alpha_\na(3\beta+1)}
		\dfrac{1-\tfrac{\alpha_\gd}{2}\lambda}{1- \tfrac{\alpha_\na(2\beta+1)}{2+2\beta}\lambda}.
		\end{align}
		Now, if we substitute the parameters provided in Table~\ref{tab:rates} into~\eqref{eq.Sigma1Sigma2}, it follows that the signs of the derivatives $\mrd\sigma_1/\mrd\lambda$ and $\mrd\sigma_2/\mrd\lambda$ satisfy 
		\begin{align*}
		\sign \, (\dfrac{\mrd\,\sigma_1}{\mrd\lambda}) 
		&\;=\;
		\sign \, (-\tfrac{\alpha_\na\beta}{1-\beta}-\tfrac{\alpha_\gd}{2})
		\;=\;
		\sign \, (-\frac{\kappa+\kappa\,\sqrt{3\,\kappa+1}+\sqrt{3\,\kappa+1}-1}{\mf \left(3\,\kappa+1\right) \left(\kappa+1\right)}
		)
		\; < \;
		0, 
		~~
		\forall \, \kappa > 1\\
		\sign \, (\dfrac{\mrd\,\sigma_2}{\mrd\lambda}) 
		&\;=\;
		\sign \, (\tfrac{\alpha_\na(2\beta+1)}{2+2\beta}-\tfrac{\alpha_\gd}{2})
		\;=\;
		\sign \, (
		-\frac{2 \left(\kappa-\sqrt{3\,\kappa+1}+1\right)}{\mf \left(3\,\kappa+1\right)^{3/2} \left(\kappa+1\right)})
		\; < \;
		0, 
		~~
		\forall \, \kappa > 1.
		\end{align*}
Furthermore, since the critical points of the functions $\sigma_1 (\lambda)$ and $\sigma_2 (\lambda)$ are outside the interval $[\mf, \Lf]$,
		\begin{align*}
		\lambda_{\text{crt}1}
		&\;=\;
		-\frac{\mf(3\kappa+1)}{\sqrt{3\kappa+1}-2}
		\; < \;
		0
		\; < \;
		\mf,
		~~
		\lambda_{\text{crt}2}
		\;=\;
		\frac{\mf\left(3\kappa+1\right) \sqrt{3\kappa+1}}{3\,\sqrt{3\kappa+1}-2}
		\; > \;
		\mf \,\kappa
		\; =\;
		\Lf
		\end{align*}
we conclude that both $\sigma_1$ and $\sigma_2$ are decreasing functions over the interval $[\mf,\Lf]$.
We next prove~\eqref{eq.GDExtrem} and~\eqref{eq.Jmax}.

It is straightforward to verify that both $\hat{J}_\gd(\lambda)$ and $\hat{J}_\na(\lambda)$ are quasi-convex functions over the interval $[\mf,\Lf]$ and that the respective minima are attained at the critical point $\lambda=1/\alpha$. Quasi-convexity also implies
\begin{align}\label{eq.quasiConvexity}
	\max\limits_{\lambda \, \in \, [\mf, \Lf]} \hat{J}(\lambda) 
	\;=\;
	 \max \, \{\hat{J}(\mf), \hat{J}(\Lf)\}.
\end{align}
Now, letting $\alpha=2/(\Lf+\mf)$ in the expression for $\hat{J}_\gd$ gives $\hat{J}_\gd(\mf)=\hat{J}_\gd(\Lf)=(\kappa+1)^2/(4\kappa)$ which in conjunction with~\eqref{eq.quasiConvexity} complete the proof for~\eqref{eq.GDExtrem}. Finally, since the ratio $\hat{J}_\na(\lambda)/\hat{J}_\gd(\lambda)$ is decreasing, we have 
	$
	{\hat{J}_\na(\Lf)}/{\hat{J}_\gd(\Lf)} \le {\hat{J}_\na(\mf)}/{\hat{J}_\gd(\mf)}.
	$
Combining this inequality with $\hat{J}_\gd(\mf)=\hat{J}_\gd(\Lf)$ and~\eqref{eq.quasiConvexity} completes the proof of~\eqref{eq.Jmax}. 
\end{proof}

\begin{proof}[Proof of Theorem~\ref{thm.RelationJnagd}]
	From Proposition~\ref{prop.relationJhat}, it follows that 
	\begin{subequations}
		\begin{align}\label{eq.NesToGD}	
		\dfrac{\hat{J}_{\na}(\Lf)}{\hat{J}_{\gd}(\Lf) 	}
		\;\le\;
		\dfrac{\hat{J}_{\na}(\lambda_i)}{\hat{J}_{\gd} (\lambda_i)}
		\;\le \;
		\dfrac{\hat{J}_{\na}(\mf)}{\hat{J}_{\gd}(\mf) }
		\end{align}
		for all $\lambda_i$ and 
		\begin{align}\label{eq.sumGDLes}	
		\sum_{ i=1 }^{n-1} \hat{J}_\gd(\lambda_i)
		\;\le\;
		(n-1
		)\hat{J}_\gd(\mf) = (n-1
		)\hat{J}_\gd(\Lf).
		\end{align}
	\end{subequations}
	For the upper bound, we have
	\be
	\begin{array}{rcccccl}
		\dfrac{J_\na}{J_\gd}
		&\!\!\!=\!\!\!&
		\dfrac{\sum_{ i=1 }^n \hat{J}_\na(\lambda_i)}{\sum_{ i=1 }^n\hat{J}_\gd(\lambda_i)}
		&\!\!\!\le\!\!\!&
		\dfrac{\hat{J}_\na(\Lf) 
			\;+\;
			\tfrac{\hat{J}_\na(\mf)}{\hat{J}_\gd(\mf)}
			\sum_{ i=1 }^{n-1} \hat{J}_\gd(\lambda_i)}{\hat{J}_\gd(\Lf) \;+\;\sum_{ i=1 }^{n-1}\hat{J}_\gd(\lambda_i)}
		&\!\!\!\le\!\!\!&
		\dfrac{\hat{J}_\na(\Lf)
			\; +\;
			(n\,-\,1)\hat{J}_\na(\mf)}
		{\hat{J}_\gd(\Lf)
			\; +\;
			(n\,-\,1)\hat{J}_\gd(\mf) }
		\non
	\end{array}
	\ee
	where the first inequality follows from~\eqref{eq.NesToGD}. The second inequality can be verified by multiplying both sides with the product of the denominators and using $\hat{J}_\gd(\mf)=\hat{J}_\gd(\Lf)$, $\hat{J}_\na(\mf)\ge \hat{J}_\na(\Lf)$, and~\eqref{eq.sumGDLes}.
	Similarly, for the lower bound we can write
	\be
	\begin{array}{rcccccl}
		\dfrac{J_\na}{J_\gd}
		&\!\!\!=\!\!\!&
		\dfrac{\sum_{ i=1 }^n \hat{J}_\na(\lambda_i)}{\sum_{ i=1 }^n\hat{J}_\gd(\lambda_i)}
		&\!\!\!\ge\!\!\!&
		\dfrac{\hat{J}_\na(\mf) 
			\;+\;
			\tfrac{\hat{J}_\na(\Lf)}{\hat{J}_\gd(\Lf)}
			\sum_{ i=2 }^n \hat{J}_\gd(\lambda_i)}{\hat{J}_\gd(\mf) \;+\;\sum_{ i=2 }^n\hat{J}_\gd(\lambda_i)}
		&\!\!\!\ge\!\!\!&
		\dfrac{\hat{J}_\na(\mf)
			\; +\;
			(n\,-\,1)\hat{J}_\na(\Lf)}
		{\hat{J}_\gd(\mf)
			\; +\;
			(n\,-\,1)\hat{J}_\gd(\Lf) }.
			\non
	\end{array}
	\ee
	Again, the first inequality follows from~\eqref{eq.NesToGD} and  the second inequality can be verified by multiplying both sides with the product of the denominators and using $\hat{J}_\gd(\mf)=\hat{J}_\gd(\Lf)$, $\hat{J}_\na(\mf)\ge \hat{J}_\na(\Lf)$, and~\eqref{eq.sumGDLes}.
\end{proof}

\begin{proof}[Proof of the bounds in~\eqref{eq.boundsLambdaLm}]
	From Proposition~\ref{prop.relationJhat}, we have
\begin{align*}
\hat{J}_\na(\mf) 
\;=\;
\frac{b^4 \! \left(b^2-2\,b+2\right)}{32\,{\left(b-1\right)}^3}
,\quad
\hat{J}_\na(\Lf) 
\;=\; 
\frac{9\,b^4 \! \left(b^2+2\,b-2\right)}{32\,\left(b^2-1\right)\left(2\,b-1\right)\left(b^2-b+1\right)}
\end{align*}
where $b\DefinedAs \sqrt{3\,\kappa+1} > 2$. The upper and lower bounds on $\hat{J}_\na(\mf)$ are obtained as follows
\begin{align*}
\dfrac{b^3}{32} 
\; \leq \;
\dfrac{b^4 ((b - 1)^2+1)}{32 \left( b - 1\right)^3} 
\;=\;
\hat{J}_\na(\mf)
\;\le\;
\frac{b^3 \! \left( b+c_1(b) \right) \! \left(b^2-2\,b+2+c_2 (b)\right)}{32\,{\left(b-1\right)}^3}= \dfrac{b^3}{8}
\end{align*}
where the positive quantities $c_1(b) \DefinedAs b-2 $ and $c_2 (b) \DefinedAs b^2-2b$ are added to yield a simple upper bound. Similarly, for $\hat{J}_\na(\Lf)$ we have
\begin{align*}
\dfrac{9b}{64}
&\;=\;
\dfrac{(9/32)\,b^4 (b^2+2\,b-2)}{ ((b^2-1)+1)\,((2\,b-1)+1)\left(b^2-b+1 +c_3 (b) \right)}
\;\le\; 
\hat{J}_\na(\Lf)\\[0.15 cm]
\dfrac{9b}{8}
&\;=\;
\frac{(9/32)\,b^4 \! \left(b^2+2\,b-2 + c_4 (b) \right)}{\left(b^2-1\right) \! \left(2\,b-1 -c_5(b) \right) \! \left(b^2-b+1 - c_6 (b) \right)}
\;\ge\;
\hat{J}_\na(\Lf)
\end{align*}
where the positive quantities $c_3 (b) \DefinedAs 3b-3$, $c_4 (b) \DefinedAs b^2-2 b$, $c_5 (b) \DefinedAs b-1$, and $c_6 (b)\DefinedAs (1/2) b^2-b+1$ are introduced to obtain tractable bounds. 
\end{proof}
	
	\vspace*{-2ex}
	\subsection{General strongly convex problems}
	\label{app.General}
	
	\begin{proof}[Proof of Lemma~\ref{lem:LMI-GD}]
		Let us define the positive semidefinite function $ V(\psi) \DefinedAs \psi^TX\psi $ and let $\eta \DefinedAs [ \, \psi^T \; u^T \, ]^T$. Using LMI~\eqref{LMI} and~\eqref{eq:jointIneqApp11}, we can write
		\begin{align*}
		\norm{z^t}^2
		&\;=\;
		(\eta^t)^{T}\tbt{C_z^T C_z}{0}{0}{0}\eta^t
		\\[0.cm]
		&\;\le\;
		-(\eta^t)^{T}
		\tbt{A^T X\, A - X}{A^T X\, B_u}{B_u^T\, X\, A}{B_u^T\, X \,B_u}\eta^t
		\;-\;
		\lambda\,
		(\eta^t)^{T} 
		\tbt{C_y^T}{0}{0}{I} 
		\Pi
		\tbt{C_y}{0}{0}{I}\eta^t
		\\[0.cm]
		&\;=\; (\eta^t)^{T}\bigg(\!\tbt{X}{0}{0}{0}-\tbo{A^T}{B_u^T}X \tbo{ A^T}{B_u^T}^T\bigg)\eta^t
		\;-\;
		\lambda \tbo{y^t}{u^t}^T\!\Pi\tbo{y^t}{u^t}
		\\[.cm]
		&\;\le\;
		V(\psi^t)\;-\;V(\psi^{t+1}) \;+\; 2\tc{black}{\sigma}(\psi^t)^TA^TX\,B_w\,w^t
		\;+\;
		\tc{black}{\sigma^2}(w^t)^TB_w^T\,X\,B_w\,w^t \;+\; 2\tc{black}{\sigma}(u^t)^TB_u^T\,X\,B_w\,w^t.
		\end{align*}
Since $w^t $ is a zero-mean white input with identity covariance which is independent of $ u^t $ and $ x^t $, if we take the average of the above inequality over $ t $ and expectation over different realizations of $w^t$, we obtain
		\be
		\ba{rcl}
		\ds \dfrac{1}{\bar{T}} \,  \sum_{t \, = \, 1}^{\bar{T}}\EX \left(\norm{z^t}^2 \right)
		& \!\!\! \le \!\!\! &
		\dfrac{1}{\bar{T}} 
		\,
		\EX 
		\left( 
		V(\psi^1) \,-\, V(\psi^{\bar{T}+1}) 
		\right)
		\, + \;
		\tc{black}{\sigma^2}\trace \, (B_w^TXB_w)
		\ea
		\non
		\ee
Therefore, letting $ \bar{T} \rightarrow \infty $ and using $ X\succeq0 $ lead to
		$
		J
		\le
		\tc{black}{\sigma^2}\trace  \,(B_w^T\,X\,B_w)$,
		which completes the proof.
	\end{proof}
	
	In order to prove Lemma~\ref{lem.generalNester}, we present a technical lemma which along the lines of results of~\cite{fazribmor18} provides us with an upper bound on the difference between the objective value at two consecutive iterations.
	\begin{mylem}
		\label{lem.Mahyar}
		Let $f\in\mathcal{F}_{\mf}^{\Lf}$ and $\kappa\DefinedAs\Lf/\mf.$ Then, Nesterov's accelerated method, with the notation introduced in Section~\ref{sec.general}, satisfies
		\begin{align*}
		f(x^{t+2}) \; - \; f(x^{t+1}) 
		\; \le \;
		&
		~
		\dfrac{1}{2}\left(  N_1
		\tbo{\psi^t}{u^t} \!+\! \tc{black}{\tbo{\sigma w^t}{0}} \right)^T\tbt{\Lf\,I}{I}{I}{0}\left(  N_1
		\tbo{\psi^t}{u^t} \!+\! \tc{black}{\tbo{\sigma w^t}{0}} \right) 
		\; +
		\\
		&
		~
		\dfrac{1}{2}\left(   N_2 \tbo{\psi^t}{u^t} \right)^T \,\tbt{-\mf\,I}{I}{I}{0}\, \left(   N_2 \tbo{\psi^t}{u^t} \right)
		\end{align*}
		where $N_1$ and $N_2$ are defined in Lemma~\ref{lem.generalNester}.
		
	\end{mylem}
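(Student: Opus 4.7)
The plan is to combine two classical inequalities for $f\in\mathcal{F}_\mf^\Lf$ at the auxiliary extrapolated point $y^t := -\beta x^t + (1+\beta)x^{t+1}$ (the argument of $\nabla f$ in Nesterov's noisy recursion), and then verify that the resulting bound is exactly the quadratic-form expression stated in the lemma by a block-by-block identification. Specifically, I would first invoke $\Lf$-smoothness of $\nabla f$ to write $f(x^{t+2}) \le f(y^t) + \nabla f(y^t)^T(x^{t+2}-y^t) + \tfrac{\Lf}{2}\|x^{t+2}-y^t\|^2$, and then $\mf$-strong convexity of $f$ to write $f(x^{t+1}) \ge f(y^t) + \nabla f(y^t)^T(x^{t+1}-y^t) + \tfrac{\mf}{2}\|x^{t+1}-y^t\|^2$. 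Subtracting these and using $(x^{t+2}-y^t) - (x^{t+1}-y^t) = x^{t+2}-x^{t+1}$ produces the scalar upper bound
\begin{equation*}
f(x^{t+2}) \, - \, f(x^{t+1}) \;\le\; \nabla f(y^t)^T(x^{t+2}-x^{t+1}) \;+\; \tfrac{\Lf}{2}\|x^{t+2}-y^t\|^2 \;-\; \tfrac{\mf}{2}\|x^{t+1}-y^t\|^2.
\end{equation*}

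The second step of the plan is to show that the right-hand side of the stated inequality equals the scalar bound above. Using the identity $u^t = \Delta(y^t) = \nabla f(y^t) - \mf\, y^t$, a direct computation with $N_1$ would reveal that its lower block evaluated at $[\psi^t;u^t]$ equals $\mf\, y^t + u^t = \nabla f(y^t)$, while its upper block equals $-\alpha\,\nabla f(y^t)$. Adding the disturbance $\tbo{\sigma w^t}{0}$ and invoking the noisy Nesterov update $x^{t+2} = y^t - \alpha\,\nabla f(y^t) + \sigma w^t$ converts the upper block into $x^{t+2}-y^t$, so
$N_1\tbo{\psi^t}{u^t} + \tbo{\sigma w^t}{0} = \tbo{x^{t+2}-y^t}{\nabla f(y^t)}$. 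Pairing against $\tbt{\Lf I}{I}{I}{0}$ then reproduces $\tfrac{\Lf}{2}\|x^{t+2}-y^t\|^2 + \nabla f(y^t)^T(x^{t+2}-y^t)$. Analogously, $N_2\tbo{\psi^t}{u^t} = \tbo{\beta(x^{t+1}-x^t)}{\nabla f(y^t)} = \tbo{y^t-x^{t+1}}{\nabla f(y^t)}$, and pairing against $\tbt{-\mf I}{I}{I}{0}$ yields $-\tfrac{\mf}{2}\|y^t-x^{t+1}\|^2 + \nabla f(y^t)^T(y^t-x^{t+1})$. Summing the two contributions recombines the cross terms into $\nabla f(y^t)^T(x^{t+2}-x^{t+1})$ and recovers precisely the scalar bound from smoothness and strong convexity.

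The argument is therefore conceptually simple: the scalar inequality is just the standard smoothness-minus-strong-convexity estimate evaluated at the extrapolated iterate $y^t$, and the matrices $N_1$ and $N_2$ serve as IQC-style multipliers (analogous to $\Pi$ in~\eqref{eq.PI}, but expressed in terms of the full augmented vector $[\psi^t;u^t]$ rather than $[y^t;u^t]$) that encode these two inequalities in quadratic form. The main hurdle is purely bookkeeping in the second step; one must trust that the rows of $N_1$ and $N_2$ have been engineered so that substituting $u^t = \nabla f(y^t)-\mf\, y^t$ and the noisy Nesterov recursion yields exactly the gradient $\nabla f(y^t)$, the displacement $x^{t+2}-y^t$, and the momentum term $y^t-x^{t+1}$ that appear in the two standard inequalities. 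Once this algebraic matching is verified, the lemma follows immediately.
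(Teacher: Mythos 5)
Your proposal is correct and follows essentially the same route as the paper: the paper's proof also combines the $\Lf$-smoothness bound on $f(x^{t+2})-f(y^t)$ with the $\mf$-strong-convexity bound on $f(y^t)-f(x^{t+1})$ (written directly as quadratic forms in $[\,x^{t+2}-y^t;\,\nabla f(y^t)\,]$ and $[\,y^t-x^{t+1};\,\nabla f(y^t)\,]$), and then identifies these vectors with $N_1[\psi^t;u^t]+[\sigma w^t;0]$ and $N_2[\psi^t;u^t]$ via the state-space relations. Your block-by-block verification of that identification is accurate, so the argument is sound.
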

	\begin{proof}
		For any $f\in\mathcal{F}_\mf^\Lf$, the Lipschitz continuity of $\nabla f$ implies
		\be
		\label{ineqTemp1}
		\ba{l}
		f(x^{t+2}) \,-\, f(y^t) 
		\; \le\;
		\dfrac{1}{2}\tbo{x^{t+2}\,-\,y^t}{\nabla f(y^t)}^T
		\tbt{\Lf\,I}{I}{I}{0}
		\tbo{x^{t+2}\,-\,y^t}{\nabla f(y^t)}
		\ea
		\ee
and the strong convexity of $f$ yields
		\be
		\label{ineqTemp2}
		\ba{l}
		f(y^{t}) \,-\, f(x^{t+1})
		\;\le\;
		\dfrac{1}{2}\tbo{y^{t}\,-\,x^{t+1}}{\nabla f(y^t)}^T
		\tbt{-\mf\,I}{I}{I}{0} \tbo{y^{t}\,-\,x^{t+1}}{\nabla f(y^t)}.
		\ea
		\ee
		Moreover,  the state and output equations in~\eqref{eq.ss}  lead to
			\begin{align}\label{eq.SubstutyteTemp3}
			\tbo{x^{t+2}\,-\,y^t}{\nabla f(y^t)} 
			&\;=\;
			 N_1
			\tbo{\psi^t}{u^t} \,+\, \tc{black}{\tbo{\sigma w^t}{0}}, 
			~~
			\tbo{y^{t}\,-\,x^{t+1}}{\nabla f(y^t)} 
			\;=\;
			 N_2 \tbo{\psi^t}{u^t}.
			\end{align}
		Summing up inequalities~\eqref{ineqTemp1} and~\eqref{ineqTemp2} and substituting for $\tbo{x^{t+2}\,-\,y^t}{\nabla f(y^t)}$ and $\tbo{x^{t+2}\,-\,y^t}{\nabla f(y^t)}$ from~\eqref{eq.SubstutyteTemp3} completes the proof.
	\end{proof}
	\begin{proof}[Proof of Lemma~\ref{lem.generalNester}]
		Let us define the positive semidefinite function $ V(\psi) \DefinedAs \psi^TX\psi $ and let  $\eta \DefinedAs [ \, \psi^T \;\, u^T \, ]^T$. Similar to the first part of the proof of Lemma~\ref{lem:LMI-GD}, we can use LMI~\eqref{LMI2} and inequality~\eqref{eq.IQC} to write
		\begin{align}\nonumber
		\norm{z^t}^2
		\;\le\;
		& 
		~
		V(\psi^t)\,-\,V(\psi^{t+1}) \,+\, 2\tc{black}{\sigma}(\psi^t)^TA^TX\,B_w\,w^t
		\;+\;
		\tc{black}{\sigma^2}(w^t)^TB_w^T\,X\,B_w\,w^t 
		\;+\; 2\tc{black}{\sigma}(u^t)^TB_u^T\,X\,B_w\,w^t
		~ - 
		\\[0.15cm]\label{ineqTemp5}
		&
		~
		(\eta^t)^T M \, \eta^t.
		\end{align}
From Lemma~\ref{lem.Mahyar}, it follows that
		\begin{equation}
		\label{eq.lowerBoundOnetaTMeta}
		(\eta^t)^T M \, \eta^t 
		\;\ge\;
		2\left(f(x^{t+2}) \,-\, f(x^{t+1})\right) \,-\, \tc{black}{\sigma^2} \Lf\,\norm{w^t}^2
		\;-\;
		2 \tbo{\tc{black}{\sigma w^t}}{0}^T\tbt{\Lf\,I}{I}{I}{0}N_1 \eta^t.
		\end{equation}	
Now, combining inequalities~\eqref{ineqTemp5} and~\eqref{eq.lowerBoundOnetaTMeta} yields
		\begin{align}\nonumber
		\norm{z^t}^2
		\;\le\;&
		~
		V(\psi^t)\,-\,V(\psi^{t+1}) \,+\, 2 \tc{black}{\sigma}(\psi^t)^TA^TX\,B_w\,w^t
		\,+\, \tc{black}{\sigma^2}(w^t)^TB_w^T\,X\,B_w\,w^t \, + \, 2 \tc{black}{\sigma}(u^t)^TB_u^T\,X\,B_w\,w^t
		\; -
		\\[-0.05cm]\label{eq.scalability}
		&
		~
		2\,\lambda_2\left(f(x^{t+2}) \,-\, f(x^{t+1})\right) 
		\,+\, 
		\lambda_2 \tc{black}{\sigma^2} \Lf\norm{w^t}^2
		\,+\, 
		2 \lambda_2 \tbo{\tc{black}{\sigma w^t}}{0}^T\tbt{\Lf\,I}{I}{I}{0}N_1 \eta^t.
		\end{align}
Since $w^t $ is a zero-mean white input with identity covariance which is independent of $ u^t $ and $ x^t $, taking the expectation of the last inequality yields
		\begin{equation*}
		\EX
		\left( \norm{z^t}^2 \right)
		\;\le\;
		\EX \left( V(\psi^t)-V(\psi^{t+1}) \right) 
		+ \, 
		 \tc{black}{\sigma^2}\trace \, (B_w^T\,X\,B_w) 
		\, + \, 
		2 \, \lambda_2 
		\EX \left( f(x^{t+1}) \,-\, f(x^{t+2}) \right) 
		+ \,
		n\,\tc{black}{\sigma^2}\Lf\,\lambda_2
		\end{equation*}
and taking the average over the first $\bar{T}$ iterations results in
		\begin{equation*}
		\dfrac{1}{\bar{T}}
		\sum_{t \, = \, 1}^{\bar{T}} \EX
		\left( \norm{z^t}^2 \right)
		\, \le \; 
		\dfrac{1}{\bar{T}} \EX 
		\!
		\left( V(\psi^1)\,-\,V(\psi^{\bar{T}+1}) \right)
		\;+\;
		 \tc{black}{\sigma^2}\trace \, (B_w^T\,X\,B_w)
		\;+\;
		\dfrac{2\,\lambda_2}{\bar{T}}
		\EX
		\left(
		f(x^2)\,-\,f(x^{\bar{T}+2})
		\right)
		\,+\;
		\tc{black}{n\,\sigma^2\Lf\,\lambda_2.}
		\end{equation*}
		Finally, using positive definiteness of the function $V$, strong convexity of the function $f$, and letting $\bar{T}\rightarrow\infty$,  it follows that
			$
			J
			\le \tc{black}{\sigma^2}( 
			n  \Lf \lambda_2
			+ 
			\trace \, (B_w^TX\,B_w) )
			$
as required.
	\end{proof}

	\begin{proof}[Proof of Theorem~\ref{thm.summary}]	
		Using Theorem~\eqref{th.varianceJhat}, it is straightforward to show  that for gradient descent and Nesterov's method with the parameters provided in Table~\ref{tab:ratesGeneral}, the function $f(x)\DefinedAs \tfrac{m}{2} \norm{x}^2$ leads to the largest variance amplification $J$ among the quadratic objective functions within $\mathcal{F}_{\mf}^{\Lf}$. This yields the lower bounds
		\begin{align*}
			q_\gd \, = \, J_\gd \;\le\; J^\star_\gd, \quad q_\na \, = \, J_\na \; \le \; J^\star_\na
		\end{align*}
with $J_\gd$ and $J_\na$ corresponding to $f(x)= \tfrac{m}{2} \norm{x}^2$. We next show that $J_\gd \le q_\gd$.
	 
	  To obtain the best upper bound on $J_\gd$ using Lemma~\ref{lem:LMI-GD}, we minimize $ \trace \, (B_w^TXB_w) $ subject to LMI~\eqref{LMI}, $ X \succeq 0$, and $ \lambda \geq 0$. For gradient descent, if we use representation~\eqref{eq.ss-gd}, then the negative definiteness of the $(1,1)$-block of LMI~\eqref{LMI} implies that
		\be
		X
		\; \succeq \; 
		\dfrac{1}{\alpha\, \mf (2  \, - \, \alpha\, \mf )}
		\, 
		I 
		\; = \; 
		\frac{\kappa^2}{2\kappa-1} \, 
		I.
		\label{eq.Temp1}
		\ee
It is straightforward to show that  the pair 
		\be
		X
		\;=\; 
		\dfrac{\kappa^2}{2\kappa-1} \, I,
		~~
		\lambda
		\;=\; 
		\frac{1-\alpha\,\mf}{\mf(2-\alpha\,\mf)(\Lf-\mf)}
		\label{eq.Xlambda}
		\ee
		is feasible as the LMI~\eqref{LMI} becomes
		\begin{equation*}
		\tbt{0}{0}{0}{\tfrac{- 1}{m^2(2\kappa \, - \, 1)} \, I}
		\;\preceq\;
		0.
		\end{equation*}
		Thus, $X$ and $\lambda$ given by~\eqref{eq.Xlambda} provide a solution to LMI~\eqref{LMI}. Therefore, inequality~\eqref{eq.Temp1} is tight and it provides the best achievable upper bound 
		$$
		J_\gd
		\;\le\;
		\trace \, (B_w^T \, X \, B_w) 
		\;=\;
		\frac{n \kappa^2}{2\,\kappa \, - \, 1}.
		$$		
		Finally, we show
		$
		J_\na
		\le
		4.08 q_\na$
		by finding a sub-optimal feasible point for~\eqref{prob.LMI2}. Let
		$
		X
		\DefinedAs
		\tbt{x_1 I}{x_0 I}{x_0 I}{x_2 I}
		$
		with
		\begin{align*}
		x_{1} 
		&\;\DefinedAs\;
		\dfrac{1}{s(\kappa)}
		\left(
		2\,\kappa^{3.5}
		-
		8\,\kappa^{3}
		+
		11\,\kappa^{2.5}
		+
		5\,\kappa^{2}
		-
		14\,\kappa^{1.5}
		+
		8\,\kappa
		-
		2\,\kappa^{0.5}
		\right)
		\\[.1cm]
		x_{0}
		&\;\DefinedAs\;
		\dfrac{-1}{s(\kappa)}
		\left( 2\,\kappa^{1.5}
		\left( \kappa^{0.5}-1\right)^3
		\left( \kappa^{0.5}+1\right)
		\right)
		\\[.1cm]
		x_{2} 
		&\;\DefinedAs\;
		\dfrac{\kappa^{1.5}}{s(\kappa)}
		\left(
		2\,\kappa^{2}
		-3\,\kappa
		+5\,\kappa^{0.5}
		-2
		\right),
		~~
		s(\kappa) 
		\; \DefinedAs \;
		8\,\kappa^{2}
		-6\,\kappa^{1.5}
		-2\,\kappa
		+3\,\kappa^{0.5}
		-1
		\end{align*}
		and let
		$
		\lambda_1
		\DefinedAs
			(\kappa/\Lf)^2 / 
			(2\kappa - 1 ) 
		$
and
		$
		\lambda_2
		\DefinedAs
		{-x_{0}}/(\Lf s(\kappa)).
		$
		We first show  that  $(\lambda_1,\lambda_2,X)$ is feasible for problem~\eqref{prob.LMI2}. It is straightforward to verify that $s(\kappa)$, $x_{1} s(\kappa)$, $x_{2} s(\kappa)$, and $-x_{0}s(\kappa)$  (which are polynomials of degree less than $7$ in $\sqrt{\kappa}$) are all positive for any $\kappa\ge 1$. Hence, $x_{1}>0$, $x_{2}>0$ and $\lambda_2>0$. It is also easy to see that $\lambda_1>0$ and that the determinant of $X$ satisfies
		\begin{equation*}
		\det(X) 
		\;=\;
		\dfrac{\kappa^{2n}}{s^{2n}(\kappa)}\big(
		28\,\kappa^{3.5}
		- 65\,\kappa^3
		+ 56\,\kappa^{2.5}
		+ 25\,\kappa^{2}
		- 88\,\kappa^{1.5}
		+ 70\,\kappa
		- 26\,\kappa^{0.5}
		+ 4
		\big)^{n}
		> 0,
		~  
		\forall \, \kappa \, \ge \, 1
		\end{equation*}
which yields $X\succeq 0$. Moreover, it can be shown that the left-hand-side of LMI~\eqref{LMI2} becomes
		\begin{equation*}
		\begin{bmatrix}
		\;0 \,&\, 0 \,&\, 0 \\[-0.15cm]
		\;0 \,&\, 0 \,&\, 0 \\[-0.15cm]
		\;0 \,&\, 0 \,&\, -\lambda_1 I 
		\end{bmatrix}
		\;\preceq\;
		0.
		\end{equation*}
		Therefore, the point $(\lambda_1,\lambda_2,X)$ is feasible to problem~\eqref{prob.LMI2}
		and 
		\begin{equation*}
		J_\na
		\;\le\;
		p(\kappa)
		\;\DefinedAs\;
		n\,\Lf\lambda_2 + n\,x_{2} \\[.15 cm]
		\;=\; \dfrac{n}{s(\kappa)}\left(
		4\,\kappa^{3.5}
		- 4\,\kappa^{3}
		- 3\,\kappa^{2.5}
		+ 9\,\kappa^{2}
		- 4\,\kappa^{1.5}
		\right).
		\end{equation*}
		Comparing  $p$ with $q_\na$, it can be verified that, for all $\kappa\ge1$, $
		4.08 q_\na(\kappa) \ge p(\kappa),
		$
which completes the proof.
	\end{proof}

	\subsection{Proof of Theorem~\ref{thm.lowerboundBetaHeavyball}}
\label{sec.Tuning parameters using the whole spectrumApp}
\tc{black}{Without loss of generality, let $\sigma=1$ and }
\begin{align}\label{eq.Gdefintion}
	G
	\,\DefinedAs\,
	\ds{\sum_{i \, = \, 1}^n} \, \max\{\hat{J}(\lambda_i) , \hat{J}(\lambda_i')\}
\end{align} 
where $\lambda_i$  are the eigenvalues of the Hessian of the objective function $f$ and  $\lambda_i'= \mf+\Lf-\lambda_i$ is the mirror image of $\lambda_i$ with respect to $(\mf+\Lf)/2$. Since $J = \sum_i \hat{J}(\lambda_i)$, if $\lambda_i$ are symmetrically distributed over the interval $[\mf,\Lf]$ i.e., $(\lambda_1,\cdots,\lambda_n) = (\lambda_n',\cdots,\lambda_1')$, then for any parameters $\alpha$ and $\beta$ we have
\begin{align}\label{eq.JGJ}
	J 
	\;\le\; G
	\;\le\;
	2J.
\end{align}
Equation~\eqref{eq.JGJ} implies that any bound on $G$ simply carries over to $J$ within an accuracy of constant factors. Thus, we focus on $G$ and establish one of its useful properties in the next lemma   that allows us to prove Theorem~\ref{thm.lowerboundBetaHeavyball}.
\begin{mylem}\label{lem.TuningWholeSpec1}
 	The heavy-ball method with any stabilizing parameter $\beta$ satisfies
	\begin{align}
	\dfrac{2(1+\beta)}{\Lf+\mf}
	\;=\;
	\argmin_\alpha \, \rho(\alpha,\beta)
	\end{align} 
	where $\rho$ is the rate of linear convergence. Furthermore, if the Hessian of the quadratic objective function $f$ has a symmetric spectrum over the interval $[\lambda_1,\lambda_n] = [\mf,\Lf]$, then 
	\begin{align*}
	\dfrac{2(1+\beta)}{\Lf+\mf}
	\;= \;
	\argmin_\alpha \,G(\alpha,\,\beta).
	\end{align*}
\end{mylem}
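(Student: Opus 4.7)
The plan is to treat the two claims separately, exploiting the observation that $\alpha^*\DefinedAs 2(1+\beta)/(\Lf+\mf)$ is the unique value of $\alpha$ that symmetrizes the roles of $\lambda$ and its mirror image $\lambda'=\Lf+\mf-\lambda$ in the relevant one-dimensional quantities.

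For the claim on $\rho$, I would start from the characteristic polynomial of $\hat{A}_i$ in~\eqref{eq.Ahat}, $\mu^2-(1+\beta-\alpha\lambda)\mu+\beta$, whose roots have product $\beta$. Hence the per-mode spectral radius satisfies $\hat{\rho}(\lambda)=\sqrt{\beta}$ whenever $|1+\beta-\alpha\lambda|\le 2\sqrt{\beta}$ and $\hat{\rho}(\lambda)=\phi(|1+\beta-\alpha\lambda|)$ otherwise, where $\phi(s)\DefinedAs(s+\sqrt{s^2-4\beta})/2$ is strictly increasing on $[2\sqrt{\beta},\infty)$. As a function of $\lambda$, this makes $\hat{\rho}$ non-increasing on $[0,(1-\sqrt{\beta})^2/\alpha]$, constantly equal to $\sqrt{\beta}$ on $[(1-\sqrt{\beta})^2/\alpha,(1+\sqrt{\beta})^2/\alpha]$, and non-decreasing thereafter; so its maximum over $[\mf,\Lf]$ is attained at an endpoint and $\rho(\alpha,\beta)=\max(\hat{\rho}(\mf),\hat{\rho}(\Lf))$. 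Since $\phi$ is increasing, $\alpha^*$---defined as the unique minimizer of the piecewise-linear function $\alpha\mapsto\max(|1+\beta-\alpha\mf|,|1+\beta-\alpha\Lf|)$, which is attained exactly when the two absolute values coincide, i.e., when $\alpha(\Lf+\mf)=2(1+\beta)$---is a minimizer of $\rho$.

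For the claim on $G$, I would rewrite the heavy-ball expression from Theorem~\ref{th.varianceJhat} as $\hat{J}(\lambda)=c(\beta)/g(\alpha,\lambda)$, with $c(\beta)\DefinedAs(1+\beta)/(1-\beta)$ and $g(\alpha,\lambda)\DefinedAs\alpha\lambda(2(1+\beta)-\alpha\lambda)$. Each summand of $G$ depends only on one pair $(\lambda_i,\lambda_i')$ with $\lambda_i+\lambda_i'=\Lf+\mf$, so it suffices to show that $\alpha^*$ maximizes $\min(g(\cdot,\lambda),g(\cdot,\lambda'))$ for every such pair. Direct computation yields $g(\alpha^*,\lambda)=g(\alpha^*,\lambda')=[2(1+\beta)/(\Lf+\mf)]^2\lambda\lambda'$, while $\partial_\alpha g(\alpha^*,\lambda)$ and $\partial_\alpha g(\alpha^*,\lambda')$ have opposite signs whenever $\lambda\ne\lambda'$. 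Combining this with the identity $g(\alpha,\lambda)-g(\alpha,\lambda')=\alpha(\lambda-\lambda')(2(1+\beta)-\alpha(\lambda+\lambda'))$, which vanishes precisely at $\alpha=0$ and $\alpha=\alpha^*$, and comparing the monotonicities of $g(\cdot,\lambda)$ and $g(\cdot,\lambda')$ on either side of $\alpha^*$, one obtains $\min(g(\alpha,\lambda),g(\alpha,\lambda'))<g(\alpha^*,\lambda)$ for every $\alpha\ne\alpha^*$ in the stable range. The degenerate case $\lambda=\lambda'=(\Lf+\mf)/2$ is immediate since $g(\alpha,(\Lf+\mf)/2)$ is itself a downward parabola in $\alpha$ whose unique maximum is at $\alpha^*$. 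Summing over $i$ completes the proof.

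The main obstacle I anticipate is the piecewise bookkeeping in the first claim: establishing that the per-mode spectral radius $\hat{\rho}(\lambda)$ is maximized over $[\mf,\Lf]$ at one of the endpoints requires tracking the three regions where $\hat{A}_i$ has, respectively, two real eigenvalues above the complex disk, complex conjugate eigenvalues of modulus $\sqrt{\beta}$, and two real eigenvalues below. The second claim is comparatively mechanical once $g$ is identified as the natural quantity to study; the pairing $\lambda\leftrightarrow\lambda'$ built into the definition of $G$ already carries the symmetry, so the hypothesis that the spectrum of the Hessian is symmetric is not used for this minimization step (it is invoked elsewhere, via~\eqref{eq.JGJ}, to transfer conclusions from $G$ back to $J$).
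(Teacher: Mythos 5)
Your proposal is correct and follows essentially the same route as the paper: both arguments reduce $\rho$ and $G$ to per-mode quantities, observe that $\alpha^*=2(1+\beta)/(\Lf+\mf)$ makes $\alpha^*\lambda$ and $\alpha^*\lambda'$ symmetric about $1+\beta$, and conclude via quasi-convexity/monotonicity in $\mu=\alpha\lambda$ (your explicit sign analysis of $g(\alpha,\lambda)=\alpha\lambda(2(1+\beta)-\alpha\lambda)$ is just a hands-on rendering of the paper's ``quasi-convex and symmetric in $\mu$'' claim for $\hat{J}$). Your closing observation that the symmetric-spectrum hypothesis is not needed for the $G$-minimization step itself, only for transferring the conclusion back to $J$ via~\eqref{eq.JGJ}, also matches how the paper uses the lemma.
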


\begin{proof}
	The linear convergence rate $\rho$  is given by 
	$\rho = \max_{1 \, \le \, i \, \le \, n} \hat{\rho}(\lambda_i)$, 
	where $\hat{\rho}(\lambda)$ is the largest absolute value of the roots of the characteristic polynomial 
	\begin{align*}
	\det (zI-\hat{A}) = z^2+(\alpha\lambda-1-\beta)z + \beta
	\end{align*}
	 associated with the heavy-ball method and the eigenvalue $\lambda$ of the Hessian of the objective function $f$; \tc{black}{See~\eqref{eq.Ahat} for the form of $\hat{A}$. Thus, we have}
	\begin{align*}
	\hat{\rho}(\lambda) 
	\; = \; 
	\left\{
	\ba{ll}
	\sqrt{\beta} & \text{if} \; \Delta<0
	\\[0.1cm]
	\frac{1}{2}|1+\beta-\alpha\lambda|+\frac{1}{2}\sqrt{\Delta} & \text{otherwise}
	\ea
	\right.
	\end{align*}
where $\Delta \DefinedAs (1+\beta-\alpha\lambda)^2-4\beta.$ This can be simplified to
	\begin{align*}
	\hat{\rho} \; = \; 
	\left\{
	\ba{ll}
	\sqrt{\beta}
	&\text{if}\; (1-\sqrt{\beta})^2\le\alpha\lambda\le(1+\sqrt{\beta})^2
	\\[0.1cm]
	\frac{1}{2}|1+\beta-\alpha\lambda|+\frac{1}{2}\sqrt{\Delta}&\text{otherwise.}
	\ea
	\right.
	\end{align*}
	
	It is straightforward to show that $\hat{\rho}$ and $\hat{J}$ \tc{black}{with $\sigma=1$} are explicit quasi-convex functions of $\mu\DefinedAs\alpha\lambda$ which are symmetric with respect to $\mu=1+\beta$. Quasi-convexity of $\hat{\rho}$ yields 
	\begin{align*}
		\rho 
		\; = \; 
		\max \, \{\hat{\rho}(\lambda_1),\hat{\rho}(\lambda_n)\} 
		\; = \; 
		\max \, \{\hat{\rho}(\lambda_1),\hat{\rho}(\lambda_1') \}.
	\end{align*}
	 Let $\alpha^\sharp(\beta) = 2(1+\beta)/(\Lf+\mf)$. For any eigenvalue $\lambda_i$, from the symmetry of the spectrum, we have
	$$
	\alpha^\sharp(\beta)\lambda_i 
	\, - \, 
	(1+\beta) 
	\; = \; 
	(1+\beta) 
	\, - \, 
	\alpha^\sharp(\beta)\lambda_i'
$$
	meaning that $\alpha^\sharp(\beta)\lambda_i$ and $\alpha^\sharp(\beta)\lambda'_i$ are the mirror images with respect to the middle point $1+\beta$. Thus, from the quasi-convexity and symmetry of the functions $\hat{\rho}$ and $\hat{J}$, it follows that $\alpha^\sharp(\beta)$ minimizes $\rho$ as well as $\max \, \{\hat{J}(\lambda_i),\hat{J}(\lambda_i')\}$ for all $i$, which completes the proof.
\end{proof}

Since gradient descent is obtained from the heavy-ball method by letting $\beta=0$, from Lemma~\ref{lem.TuningWholeSpec1} it immediately follows that $\alpha_\gd = 2/(\Lf+\mf)$ given in Table~\ref{tab:rates} optimizes both $G_\gd$ and the convergence rate $\rho_\gd$. This fact combined with~\eqref{eq.JGJ} yields
	\begin{align}
	2\,J_\gd(\alpha^\star_\gd(c)) 
	\,\ge\, 
	G_\gd(\alpha^\star_\gd(c))
	\,\ge\, 
	G_\gd(\alpha_\gd) 
	\,\ge\,
 	J_\gd(\alpha_\gd)
	\end{align}
where $\alpha^\star_\gd(c)$ is given by~\eqref{eq.alphaStarBetaStarGD}. This completes the proof for gradient descent. 
	
We next use Lemma~\ref{lem.TuningWholeSpec1} to establish a bound on the parameter $\beta^\star_\hb(c)$ that allows us to prove the result for the heavy-ball method as well.
\begin{mylem}\label{lem.lowerboundBetaHB}
	There exists a positive constant $a$ such that 
	\begin{align}\label{eq.lowerboundBetaHB}
	\beta_\hb^\star(c) 
	\; \ge \; 
	1 \, - \, \frac{a}{\sqrt{\kappa}}
	\end{align} 
	where $\beta^\star_\hb(c)$ is given by~\eqref{eq.alphaStarBetaStarAcceleration}.
\end{mylem}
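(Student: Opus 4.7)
The plan is to lower bound every $\beta$ that is \emph{feasible} for~\eqref{eq.alphaStarBetaStarAcceleration}, i.e., every $\beta$ admitting some $\alpha$ with $\rho_\hb(\alpha,\beta)\le 1-c/\sqrt\kappa$; since $\beta_\hb^\star(c)$ is by construction feasible, this will yield~\eqref{eq.lowerboundBetaHB}. The first part of Lemma~\ref{lem.TuningWholeSpec1} gives $\min_\alpha\rho_\hb(\alpha,\beta) = \rho_\hb(\alpha^\sharp(\beta),\beta)$ with $\alpha^\sharp(\beta)=2(1+\beta)/(\Lf+\mf)$, so $\beta$ is feasible iff $F(\beta)\DefinedAs\rho_\hb(\alpha^\sharp(\beta),\beta)\le 1-c/\sqrt\kappa$. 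It therefore suffices to lower bound the smallest root $\beta_{\mathrm{low}}$ of $F(\beta)=1-c/\sqrt\kappa$.

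Next I would derive an explicit expression for $F$. Substituting $\alpha^\sharp(\beta)$ into the characteristic polynomial $z^2-(1+\beta-\alpha\lambda)z+\beta$ of $\hat A_i$, the extreme eigenvalues $\lambda\in\{\mf,\Lf\}$ both yield $|1+\beta-\alpha^\sharp\lambda|=\bar u(\beta)\DefinedAs(1+\beta)(\kappa-1)/(\kappa+1)$, and quasi-convexity of $\hat\rho(\lambda)$ (as used in the proof of Lemma~\ref{lem.TuningWholeSpec1}) shows that the maximum of $\hat\rho$ over the spectrum is attained there. This gives the piecewise description
\[
F(\beta)\;=\;\tfrac{1}{2}\bigl(\bar u(\beta)+\sqrt{\bar u(\beta)^2-4\beta}\bigr)\text{ for }\beta\le\beta_\hb,\qquad F(\beta)\;=\;\sqrt{\beta}\text{ for }\beta\ge\beta_\hb,
\]
where $\beta_\hb=((\sqrt{\kappa}-1)/(\sqrt{\kappa}+1))^2$ is the Table~\ref{tab:rates} value at which $\bar u=2\sqrt\beta$. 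I would also observe that $F$ is continuous, strictly decreasing from $1-2/(\kappa+1)$ at $\beta=0$ down to $\sqrt{\beta_\hb}=1-2/(\sqrt\kappa+1)$ at $\beta_\hb$, then increasing as $\sqrt\beta$.

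The final step is to solve $F(\beta_{\mathrm{low}})=r\DefinedAs 1-c/\sqrt\kappa$. If $\beta_{\mathrm{low}}\ge\beta_\hb$, then $\beta_{\mathrm{low}}\ge\beta_\hb=1-4/\sqrt\kappa+O(1/\kappa)$ already has the desired form. Otherwise $\beta_{\mathrm{low}}$ lies in the decreasing branch, and squaring $\sqrt{\bar u^2-4\beta_{\mathrm{low}}}=2r-\bar u$ produces the \emph{linear} relation $\beta_{\mathrm{low}}=r(\bar u(\beta_{\mathrm{low}})-r)$. Substituting $\bar u(\beta_{\mathrm{low}})=(1+\beta_{\mathrm{low}})(\kappa-1)/(\kappa+1)$ yields a closed-form rational expression for $\beta_{\mathrm{low}}$, and a direct Taylor expansion in $1/\sqrt{\kappa}$ delivers $1-\beta_{\mathrm{low}}=(c^2+4)/(c\sqrt\kappa)+O(1/\kappa)$. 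Taking any $a$ strictly larger than $(c^2+4)/c$ and $\kappa$ sufficiently large then gives $\beta_\hb^\star(c)\ge\beta_{\mathrm{low}}\ge 1-a/\sqrt\kappa$.

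The main obstacle is the piecewise nature of $F$ and the side condition $\bar u(\beta_{\mathrm{low}})\le 2r$ required to justify the squaring step. Monotonicity of $\bar u$ in $\beta$ together with $\bar u(\beta_\hb)=2\sqrt{\beta_\hb}$ reduces this to $\sqrt{\beta_\hb}\le r$, equivalently $c(\sqrt\kappa+1)\le 2\sqrt\kappa$; this is precisely the condition that the feasibility constraint can be met for large $\kappa$, so it holds whenever $\beta_\hb^\star(c)$ is well-defined.
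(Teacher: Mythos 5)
Your proposal is correct and follows essentially the same route as the paper's proof: both reduce to the rate-optimal stepsize $\alpha=2(1+\beta)/(\Lf+\mf)$ via Lemma~\ref{lem.TuningWholeSpec1}, use the same piecewise formula for the resulting rate, dispose of the branch $\beta\ge(\sqrt{\kappa}-1)^2/(\sqrt{\kappa}+1)^2$ immediately, and on the real-root branch square $2\rho-\bar u\ge\sqrt{\bar u^2-4\beta}$ (with $\bar u=(1+\beta)(\Lf-\mf)/(\Lf+\mf)$) to invert the quadratic relation between $\beta$ and $\rho$. The only difference is in the last step: the paper bounds $\beta\ge v(\rho)$ for the unknown achieved rate and minimizes $v$ over $\rho\in[(\tfrac{\sqrt{\kappa}-1}{\sqrt{\kappa}+1})^2,\,1-c/\sqrt{\kappa}]$ by checking its critical point and endpoints, whereas you use monotonicity of $\beta\mapsto\min_\alpha\rho(\alpha,\beta)$ to evaluate the same homographic function exactly at $\rho=1-c/\sqrt{\kappa}$, which yields the explicit constant $(c^2+4)/c$.
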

\begin{proof}
	We first show that for any parameters $\alpha$ and $\beta$, the convergence rate $\rho$ of the heavy-ball method given by~\eqref{eq.rhoForm} is lower bounded by 
	\begin{align}\label{eq.lowerBforRho}
	\rho 
	\;\ge\;
	\left\{
	\ba{ll}
	\sqrt{\beta}
	&\text{if}\;                           \beta\ge(\frac{\sqrt{\kappa}-1}{\sqrt{\kappa}+1})^2\\
	\frac{(1+\beta)(\Lf-\mf)+\sqrt{(1+\beta)^2(\Lf-\mf)^2-4\beta(\Lf+\mf)^2}}{2(\Lf+\mf)}&\text{otherwise.}
	\ea
	\right.
	\end{align}
	The convergence rate satisfies
	\begin{align*}
	\rho
	\;=\;
	\max_{1 \, \le \, i \, \le \, n}\; \hat{\rho}(\lambda_i)
	\;=\;
	\max_{\lambda \, \in \, \{\mf,\Lf\}} \hat{\rho}(\lambda)
	\end{align*}
	where the function $\hat{\rho}(\lambda)$ is given by (see proof of Lemma~\ref{lem.TuningWholeSpec1} for the proof of this statement) 
	\begin{align*}
	\hat{\rho}(\lambda) = 
	\left\{
	\ba{ll}
	\sqrt{\beta}
	&\text{if}\; (1-\sqrt{\beta})^2\le\alpha\lambda\le(1+\sqrt{\beta})^2
	\\[0.1cm]
	\frac{1}{2}|1+\beta-\alpha\lambda|+\frac{1}{2}\sqrt{\Delta}&\text{otherwise}
	\ea
	\right.
	\end{align*}
	\tc{black}{ and $\Delta \DefinedAs (1+\beta-\alpha\lambda)^2-4\beta.$}
	According to Lemma~\ref{lem.TuningWholeSpec1},  $\alpha  = 2(1+\beta)/(\Lf+\mf)$ optimizes  the rate $\rho$. This value of $\alpha$ yields
	\begin{align*}
	\hat{\rho}(\mf) \; = \; \hat{\rho}(\Lf) \; = \; 
	\left\{
	\ba{ll}
	\sqrt{\beta}
	&\text{if}\; \kappa\le\frac{(1+\sqrt{\beta})^2}{(1                                                                                                                                                                                                                                                                                                                                                                                                                                                       -\sqrt{\beta})^2}
	\\[-0.15cm]
	\frac{1}{2}|1+\beta-\alpha^\star\lambda|+\frac{1}{2}\sqrt{\Delta}\biggr\rvert_{\tc{black}{\lambda=\mf}}&\text{otherwise}
	\ea
	\right.
	\end{align*}
	or equivalently
	\begin{align}\label{eq.ratePartialOptimized}
	\hat{\rho}(\mf) \; = \; \hat{\rho}(\Lf) \; = \; 
	\left\{
	\ba{ll}
	\sqrt{\beta}
	&\text{if}\;                                        \beta\ge(\frac{\sqrt{\kappa}-1}{\sqrt{\kappa}+1})^2
	\\[0.1cm]
	\frac{(1+\beta)(\Lf-\mf)+\sqrt{(1+\beta)^2(\Lf-\mf)^2-4\beta(\Lf+\mf)^2}}{2(\Lf+\mf)}&\text{otherwise}
	\ea
	\right.
	\end{align}
\tc{black}{ which completes} the proof of inequality~\eqref{eq.lowerBforRho}. Now, if $\beta \ge (\sqrt{\kappa}-1)^2/(\sqrt{\kappa}+1)^2$, then~\eqref{eq.lowerboundBetaHB} with $a=2$ follows immediately. Otherwise, from~\eqref{eq.lowerBforRho} we obtain 
	\begin{align*}
	\rho\ge\frac{(1+\beta)(\Lf-\mf)+\sqrt{(1+\beta)^2(\Lf-\mf)^2-4\beta(\Lf+\mf)^2}}{2(\Lf+\mf)}
	\end{align*}
	which yields
	\begin{align}
	\beta
	 \;\ge\;
	v(\rho)	
	\;\DefinedAs\;
	{\rho\,(\tfrac{\Lf-\mf}{\Lf+\mf}\,-\,\rho)}/({1 \,-\, \tfrac{\Lf-\mf}{\Lf+\mf}\,\rho}).
	\end{align}
The convergence rate $\rho$ satisfies $(\sqrt{\kappa}-1)^2/(\sqrt{\kappa}+1)^2\le\rho\le 1-c/\sqrt{\kappa}$, where the lower bound follows from the optimal rate provided in Table~\ref{tab:rates} and the upper bound follows from the definition in~\eqref{eq.alphaStarBetaStarAcceleration}. Moreover, the derivative $\frac{\mrd v}{\mrd \rho} = 0$ vanishes only at $ \rho = (\sqrt{\kappa}-1)/(\sqrt{\kappa}+1)$. Thus, we obtain a lower bound on $\beta$ as
	\begin{align}\label{eq.lowerboundBeta2}
	\beta
	\;\ge\;
	v(\rho)
	\;\ge\;
	\min 
	\, 
	\{v((\frac{\sqrt{\kappa}-1}{\sqrt{\kappa}+1})^2),\;v(1-c/\sqrt{\kappa}),\;v(\frac{\sqrt{\kappa}-1}{\sqrt{\kappa}+1})\}.
	\end{align}
	A simple manipulation of~\eqref{eq.lowerboundBeta2} allows us to find a constant $a$ that satisfies~\eqref{eq.lowerboundBetaHB}, which completes the proof.
\end{proof}
	Let $(\hat{\alpha},\hat{\beta} )$ be the optimal solution of the optimization problem
	\[
	\ba{rl}
	\minimize\limits_{\alpha,\,\beta} & G(\alpha,\beta)
	\\[0.15cm]
	\subject & \rho \, \le \, 1 \, - \, c/\sqrt{\kappa}
	\non
	\ea
	\]
where $G$ is defined in~\eqref{eq.Gdefintion}. We next show that there exists a scalar $c'>0$ such that
	\begin{align}\label{eq.lowerboundBetaHeavyballObjectiveG}
	G(\hat{\alpha},\hat{\beta}) \; \ge \; c'J(\alpha_\hb,\beta_\hb)
	\end{align}
	where $\alpha_\hb$ and $\beta_\hb$ are provided in Table~\ref{tab:rates}. 
	Let $\hat{\alpha}(\beta)\DefinedAs 2(1+\beta)/(\Lf+\mf)$. It is straightforward to verify that 
	\begin{align}\label{eq.helperSimp1}
		J(\hat{\alpha}(\beta),\beta) \; = \; \dfrac{1-\beta_\hb^2}{1-\beta^2} \, J(\alpha_\hb,\beta_\hb)
	\end{align}
which allows us to write 
	\begin{align}\label{eq.chainineq1}
	G(\hat{\alpha},\hat{\beta})
	\;\overset{\text{(i)}}{=}\;
	&\min_\beta\;\; G(\hat{\alpha}(\beta),\beta) \\[-0.15cm]
	&\subject\;\;\; \rho\le 1-c/\sqrt{\kappa}\nonumber \\[-0.15cm]
	\;\overset{\text{(ii)}}{\ge}\;
	&\min_\beta\;\; J(\hat{\alpha}(\beta),\beta)\nonumber \\[-0.15cm]
	&\subject\;\;\; \rho \le 1-c/\sqrt{\kappa}\nonumber \\[-0.15cm]
	\;\overset{\text{(iii)}}{=}\;
	&\min_\beta\;\; \dfrac{1-\beta_\hb^2}{1-\beta^2} J(\alpha_\hb,\beta_\hb)\nonumber \\[-0.15cm]
	&\subject\;\;\; \rho \le 1-c/\sqrt{\kappa}\nonumber \\[-0.05cm]
	\;\overset{\text{(iv)}}{\ge}\;
	&\dfrac{1-\beta_\hb^2}{1-(1-\frac{a}{\sqrt{\kappa}})^2} \, J(\alpha_\hb,\beta_\hb).\nonumber
	\end{align}
	Here, (i) determines partial minimization with respect to $\alpha$ which follows from Lemma~\ref{lem.TuningWholeSpec1};
	(ii) follows from~\eqref{eq.JGJ}; (iii) follows from~\eqref{eq.helperSimp1}, and (iv) follows from Lemma~\ref{lem.lowerboundBetaHB}. Furthermore, it is easy to show the existence of a constant scalar $c'$ such that 
	\begin{align}\label{eq.lowrboundTemp1}
		\dfrac{1-\beta_\hb^2}{1-(1-\frac{a}{\sqrt{\kappa}})^2} 
		\;\ge\;
		c'.
	\end{align}
	Inequality~\eqref{eq.lowerboundBetaHeavyballObjectiveG} follows from combining~\eqref{eq.lowrboundTemp1} and~\eqref{eq.chainineq1}.		
	Finally, we obtain that
\begin{align*}
J(\alpha^\star_\gd,\beta^\star_\gd)
\; \ge \; 
\frac{1}{2} G(\alpha^\star_\gd,\beta^\star_\gd)
\; \ge \; 
\frac{1}{2} G(\hat{\alpha},\hat{\beta}) \; \ge \;
\dfrac{c'}{2}J(\alpha_\gd,\beta_\gd) 
\end{align*}
where  the first inequality follows from~\eqref{eq.JGJ}, the second inequality follows from the definition of $(\hat{\alpha},\hat{\beta})$, and the last inequality is given by~\eqref{eq.lowerboundBetaHeavyballObjectiveG}.
This completes the proof of Theorem~\ref{thm.lowerboundBetaHeavyball} for the heavy-ball method. 

\subsection{Fundamental lower bounds}
\label{sec.proofOflowerBoundJstar}

\begin{proof}[Proof of Theorem~\ref{thm.tradeOffHB}]
	{\color{black}
	We first prove~\eqref{eq.tradOffHBa}. Without loss of generality, let the noise magnitude  $\sigma=1$. We define the trivial lower bound
	\begin{align}\label{eq.jhatstar}
	J 
	\;\ge\;
	\hat{J}^\star 
	\;\DefinedAs\;
	\max \, \{\hat{J}(\mf),\hat{J}(\Lf)\}
	\end{align}
	 and  show that
		$
		\dfrac{\hat{J}^\star}{1-\rho} \;\ge\; (\dfrac{\kappa+1}{8})^2.
		$}
Let $\tilde{f}(x_1,x_2) \DefinedAs \tfrac{1}{2} \, ( \mf\, x_1^2 + \Lf \, x_2^2 )$. The eigenvalues of the Hessian matrix $\nabla^2 \tilde{f}$ are given by $\mf$ and $\Lf$ which are clearly symmetric over the interval $[\mf,\Lf]$. Thus,  for any given value of $\beta$, $\mf$, and $\Lf$, we can use Lemma~\ref{lem.TuningWholeSpec1} with the objective function $\tilde{f}$ to obtain 
	\begin{align*}
		\hat{\alpha}(\beta) \;\DefinedAs\;  \dfrac{2(1+\beta)}{\Lf+\mf} \;=\; \argmin_\alpha \,\hat{J}^\star(\alpha,\,\beta) =  \argmin_\alpha \,\rho(\alpha,\,\beta).
	\end{align*} 
	For the stepsize $\hat{\alpha}(\beta)$, the rate of convergence $\rho$ is given by~\eqref{eq.ratePartialOptimized}, i.e.,
	\begin{align}\label{eq.rhotemp1}
 \rho \;=\; 
 \left\{
 \ba{ll}
 \sqrt{\beta}
 &\text{if}\;                                        \beta\ge(\frac{\sqrt{\kappa}-1}{\sqrt{\kappa}+1})^2\\
 \frac{(1+\beta)(\Lf-\mf)+\sqrt{(1+\beta)^2(\Lf-\mf)^2-4\beta(\Lf+\mf)^2}}{2(\Lf+\mf)}&\text{otherwise}
 \ea
 \right.
 \end{align}
 and the lower bound $\hat{J}^\star$ is given by
 \begin{align}\label{eq.jhatstartemp1}
 \hat{J}^\star = \hat{J}(\mf)=\hat{J}(\Lf) 
 =\dfrac{(\Lf+\mf)^2}{4\,\Lf\,\mf(1-\beta^2)}.
 \end{align}
 Therefore, we obtain a lower bound on $\hat{J}^\star/(1-\rho)$ as
 \begin{align}\nonumber
 	\dfrac{\hat{J}^\star(\alpha,\beta)}{1-\rho(\alpha,\beta)}
 	&\;\ge\; 
 	\nu(\beta) 
 	\;\DefinedAs\;
 	\dfrac{\hat{J}^\star(\hat{\alpha}(\beta),\beta)}{1-\rho(\hat{\alpha}(\beta),\beta)}
	\\[0.cm]
	\label{eq.vBetaDef}
 	&\;=\;
 	\left\{
 	\ba{ll}
 \frac{(\Lf+\mf)^2}{4\,\Lf\,\mf(1-\beta^2)(1-\sqrt{\beta})}
 &\text{if}\;                                    \beta\ge(\frac{\sqrt{\kappa}-1}{\sqrt{\kappa}+1})^2\\
 \frac{(\Lf+\mf)^3}{2\,\Lf\,\mf(1-\beta^2)\left((1-\beta)\Lf+(3+\beta)\mf-\sqrt{(1+\beta)^2(\Lf-\mf)^2-4\beta(\Lf+\mf)^2}\right)}&\text{otherwise}
 \ea
 \right.
 \end{align}
 where the last equality follows from~\eqref{eq.rhotemp1} and~\eqref{eq.jhatstartemp1}. It can be shown that $v(\beta)$ attains its minimum at $\beta =(\sqrt{\kappa}-1)^2 / (\sqrt{\kappa}+1)^2 $; see Figure~\ref{fig.vbeta} for an illustration.
 \begin{figure}[h!]
 	\centering
 	\begin{tabular}{rc}
 		\hspace{0 cm}
 		\begin{tabular}{c}
 			\vspace{.45cm}
 			\rotatebox{90}{$v$}
 		\end{tabular}
 		&
 		\hspace{-0.5cm}
 		\begin{tabular}{c}			\includegraphics[width=.3\textwidth]{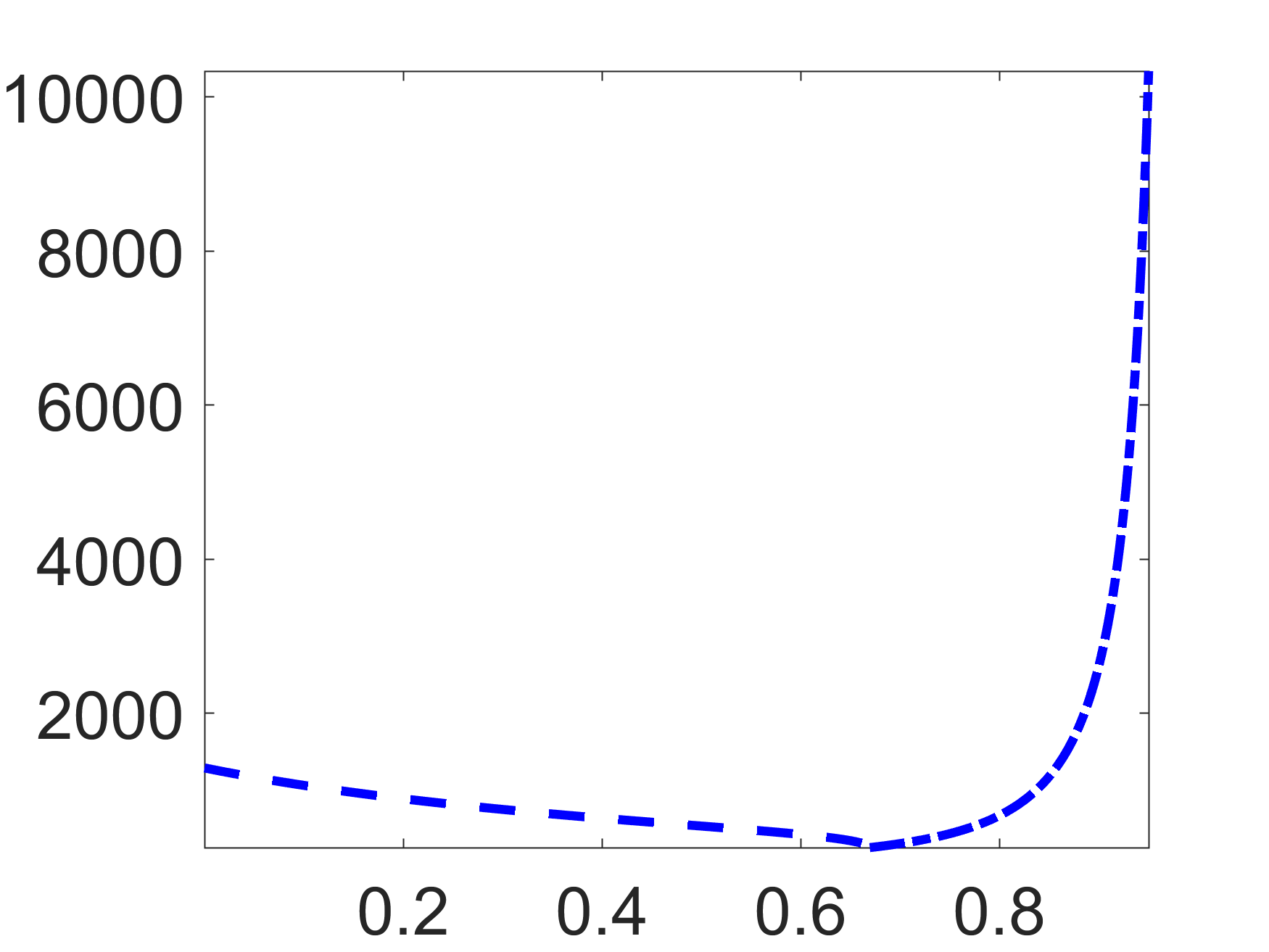}
 			\\[-0cm]
 			{\small  $\beta$}
 		\end{tabular}
 	\end{tabular}
 	\caption{The $\beta$-dependence of the function $v$ in~\eqref{eq.vBetaDef} for $\Lf=100$ and $\mf=1$.}
 	\label{fig.vbeta}
 \end{figure}
  Therefore,
 \begin{align*}
\nu(\beta)
&\;\ge\;
\dfrac{(\Lf+\mf)^2}{4\,\Lf\,\mf(1-\beta^2)(1-\sqrt{\beta})}\biggr\vert_{\beta=(\frac{\sqrt{\kappa}-1}{\sqrt{\kappa}+1})^2}
\;=\;
\dfrac{(\Lf+\mf)^2}{4\,\Lf\,\mf(1+\beta)(1+\sqrt{\beta})(1-\sqrt{\beta})^2}\biggr\vert_{\beta=(\frac{\sqrt{\kappa}-1}{\sqrt{\kappa}+1})^2}\\[0.cm]
&\;\ge\;
\dfrac{(\Lf+\mf)^2}{16\,\Lf\,\mf(1-\sqrt{\beta})^2}\biggr\vert_{\beta=(\frac{\sqrt{\kappa}-1}{\sqrt{\kappa}+1})^2}
\;=\;
\dfrac{(\kappa+1)^2 (\sqrt{\kappa}+1)^2}{64\kappa}
\;\ge\; \left(\dfrac{\kappa+1}{8}\right)^2
 \end{align*}
 {\color{black} which completes the proof of~\eqref{eq.tradOffHBa}. We next prove~\eqref{eq.tradOffHBb} for $\sigma = \alpha$. 
 	
 	We analyze the two cases $\alpha> 1/L$ and $\alpha\le 1/L$ separately. If $\alpha>1/L$, inequality~\eqref{eq.tradOffHBb} directly follows from inequality~\eqref{eq.tradOffHBa}
\begin{align*}
	\dfrac{J_\hb}{1 \, - \, \rho} \; \ge \; \sigma^2\left(\dfrac{\kappa+1}{8}\right)^2
	\;=\;
	\alpha^2\left(\dfrac{\kappa+1}{8}\right)^2
	\;\ge\; \left(\dfrac{\kappa}{8L}\right)^2.
\end{align*} 
Here, the first inequality is given by~\eqref{eq.tradOffHBa} and the second inequality holds since $\alpha> 1/L$.

Now suppose $\alpha\le 1/L$. The convergence rate of Polyak's method is given by $\max_i\hat{\rho}(\lambda_i)$, where
\begin{align*}
\hat{\rho}(\lambda) \; = \; 
\left\{
\ba{ll}
\sqrt{\beta}
&\text{if}\; (1-\sqrt{\beta})^2\le\alpha\lambda\le(1+\sqrt{\beta})^2
\\[0.1cm]
\frac{1}{2}|1+\beta-\alpha\lambda|+\frac{1}{2}\sqrt{\Delta}&\text{otherwise}
\ea
\right.
\end{align*}
and $\Delta \DefinedAs (1+\beta-\alpha\lambda)^2-4\beta$ (see the proof of Lemma~\ref{lem.TuningWholeSpec1}). Thus, for $\sigma=\alpha$, we have the trivial lower bound
\begin{align*}
\dfrac{ J}{1-\rho}
&\;\ge\;
\dfrac{ \hat{J}(\mf)}{1-\hat{\rho}(\mf)}
\;=\;
\dfrac{\alpha(1 \, + \, \beta)}
{ \mf\left(1\, - \, \beta\right) 
	\left(
	2(1 +  \beta) \, - \, \alpha\mf
	\right) \left(1-\hat{\rho}(\mf)\right)
} 
\nonumber\\[0.15 cm]
&\;\ge\;
p(\alpha,\beta)
\;\DefinedAs\;
\dfrac{\alpha}
{ 2\mf\left(1\, - \, \beta\right) 
	\left(1-\hat{\rho}(\mf)\right)
} 
\nonumber\\[0.15 cm]
&\; = \; 
\left\{
\ba{ll}
\dfrac{\alpha}
{ 2\mf\left(1\, - \, \beta\right) 
	\left(1-\sqrt{\beta}\right)
}, 
&\; \beta\in [(1-\sqrt{\alpha\mf})^2,\;1)
\\[0.45cm]
\dfrac{\alpha}
{ \mf\left(1\, - \, \beta\right) 
	\left(1-\beta+\alpha\mf-\sqrt{\Delta}\right)
} ,
&\; \beta\in [0,\;(1-\sqrt{\alpha\mf})^2).
\ea
\right. 
\end{align*}
Here, the first inequality follows from combining $J=\sum_{i} \hat{J}(\lambda_i)$ and $\max_i\hat{\rho}(\lambda_i)$, and the second inequality follows from $\alpha \mf \le \alpha \Lf\le 1$.
We next show that for any fixed $\alpha$, the function $p(\alpha,\cdot)$ attains its minimum at $\beta = (1-\sqrt{\alpha\mf})^2$. Before we do so, note that this fact allows us to use partial minimization with respect to $\beta $ and obtain
\begin{align*}
p(\alpha,\beta)
\;\ge\;
p(\alpha, (1-\sqrt{\alpha\mf})^2)
\;=\;
\dfrac{1}
{ 2\mf^2\left(2-\sqrt{\alpha\mf}\right) 
}
\;\ge\;
\dfrac{1}{4\mf^2} 
\;\ge\;
(\dfrac{\kappa}{2 L})^2
\end{align*}
which completes the proof of~\eqref{eq.tradOffHBb}.

For any fixed $\alpha$, it is straightforward to verify that  $p(\alpha,\beta)$ is increasing with respect to $\beta$ over  $ [(1-\sqrt{\alpha\mf})^2,\;1)$. Thus, it suffices to show that 
$p(\alpha,\beta)$ is decreasing with respect to $\beta$ over  $ [0,(1-\sqrt{\alpha\mf})^2)$. To simplify the presentation, let us define the new set of parameters
\begin{align*}
q
&\;\DefinedAs\;
s
\left(s+x-\delta\right)
,\quad
s
\;\DefinedAs\;
1-\beta
,\quad
x\;\DefinedAs\;
\alpha m
\\[0.15 cm]
\delta
&\;\DefinedAs\;
\sqrt{\Delta} \;=\; \sqrt{(1+\beta-\alpha\mf)^2-4\beta} \;=\; \sqrt{(s+x)^2-4x}.
\end{align*}
It is now  straightforward to verify that $p(\alpha,\beta) = \alpha/(mq)$ for $\beta\in [(1-\sqrt{\alpha\mf})^2,\;1)$. It thus follows that  $p(\alpha,\beta)$ is decreasing with respect to $\beta$ over  $ [0,(1-\sqrt{\alpha\mf})^2)$ if and only if
$q' = \mrd q/\mrd s\le0$ for $s \in (\sqrt{x}(2-\sqrt{x}),1]$. The derivative is given by
\begin{align*}
q'
\;=\;
\dfrac{1}{\delta}\left( (2s+x)\delta - 2s^2-3sx-x^2+4x \right).
\end{align*}
Thus, we have
\begin{align}\label{eq.tempPolSup2}
q'\;\le\; 0
&\;\iff\;
(2s+x)\delta\;\le\; 2s^2+3sx+x^2-4x.
\end{align}
It is easy to verify that both sides of the inequality in~\eqref{eq.tempPolSup2}, namely, $(2s+x)\delta$ and $2s^2+3sx+x^2-4x$ are positive for the specified range of $s\in (\sqrt{x}(2-\sqrt{x}),1]$. Thus, we can square both sides and obtain that
\begin{align*}
q'\;\le\; 0
&\;\iff\;
(2s+x)^2\delta^2\;\le\; (2s^2+3sx+x^2-4x)^2
\\[0.15 cm]
&\;\overset{(i)}{\iff}\;
(2s+x)^2\left((s+x)^2-4x\right) \;\le\; (2s^2+3sx+x^2-4x)^2
\\[0.15 cm]
&\;\overset{(ii)}{\iff}\;
8sx^2+4x^3\;\le\;16 x^2 
\;\iff\;
8s + 4x \;\le\; 16.
\end{align*}
where $(i)$ follows from the definition of $\delta$ and $(ii)$ is obtained by expanding both sides and rearranging the terms.
Finally, the inequality $8s + 4x \;\le\; 16$ clearly holds since $s\le 1$ and $x\le1$. This proves that  $p(\alpha,\cdot)$ attains its minimum at $\beta = (1-\sqrt{\alpha\mf})^2$.
}
 \end{proof}

	\begin{proof}[Proof of Theorem~\ref{th.lowerBoundJstar}]
		{\color{black}For the heavy-ball method, the result follows from combining Theorem~\ref{thm.tradeOffHB} and the inequality $1-\rho>{c}/{\sqrt{\kappa}}$. Next, we present three additional  lemmas that allow us to prove the result for Nesterov's method.
			
		The following lemma provides a lower bound on the function $\hat{J}(\mf)$ associated with Nesterov's method which depends on $\kappa$ and $\beta$.}

\begin{mylem}\label{lem.lowerBoundJhatm}
	For any strongly convex quadratic problem with condition number $\kappa>2$ and the smallest eigenvalue of the Hessian $\mf$, the  function $\hat{J}$  associated with Nesterov's accelerated method with any stabilizing pair of parameters $0<\alpha$, $0<\beta<1$, and $\sigma=1$ satisfies
	\begin{align}\label{eq.lowerBoundJhatm}
	\hat{J}(\mf) \ge \dfrac{\kappa^2}{24(1-\beta)\kappa+32\beta}.
	\end{align}
\end{mylem}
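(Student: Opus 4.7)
The plan is to work directly from the closed-form expression for $\hat{J}_\na(\lambda)$ in Theorem~\ref{th.varianceJhat}, evaluated at $\lambda=\mf$ and with $\sigma=1$:
\[
\hat{J}_\na(\mf) \;=\; \dfrac{1+\beta(1-\alpha\mf)}{\alpha\mf\bigl(1-\beta(1-\alpha\mf)\bigr)\bigl(2(1+\beta)-(2\beta+1)\alpha\mf\bigr)},
\]
and to lower bound this quantity uniformly over all stabilizing $\alpha$. Setting $x:=\alpha\mf$, the first step is to show that stability of Nesterov's recursion at the largest eigenvalue $\Lf$ (checked, e.g., via Jury's criterion applied to the characteristic polynomial of $\hat{A}_i$ in~\eqref{eq.Ahat}) forces $\alpha\Lf < 2(1+\beta)/(1+2\beta)$, which in terms of $x$ reads
\[
x \;\in\; \bigl(0,\, x^\star\bigr],
\qquad
x^\star \;\DefinedAs\; \dfrac{2(1+\beta)}{\kappa(1+2\beta)}.
\]
For $\kappa>2$ one checks that $x^\star<1$, since $2(1+\beta)/(1+2\beta)\le 2$ for $\beta\in[0,1]$. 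This already gives the crude but crucial lower bound $1+\beta(1-x)\ge 1$ on the numerator.

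Next I would upper bound the denominator $D(x):=x(1-\beta+\beta x)(2(1+\beta)-(2\beta+1)x)$ on the interval $(0,x^\star]$. The key observation is that the second factor is increasing and the third factor is decreasing in $x$, so each factor can be independently bounded by its value at a convenient endpoint:
\[
D(x) \;\le\; x^\star\bigl(1-\beta+\beta x^\star\bigr)\cdot 2(1+\beta).
\]
Substituting $x^\star=2(1+\beta)/(\kappa(1+2\beta))$ and expanding yields
\[
D(x) \;\le\; \dfrac{4(1+\beta)^2(1-\beta)}{\kappa(1+2\beta)} \;+\; \dfrac{8\beta(1+\beta)^3}{\kappa^2(1+2\beta)^2}.
\]

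The final step reduces to checking two elementary one-variable inequalities on $\beta\in[0,1]$ that match each of the displayed summands against the corresponding term of the target bound $(24(1-\beta)\kappa+32\beta)/\kappa^2$: namely $(1+\beta)^2\le 6(1+2\beta)$, which is equivalent to $\beta^2-10\beta-5\le 0$, and $(1+\beta)^3\le 4(1+2\beta)^2$, which is equivalent to $\beta^3-13\beta^2-13\beta-3\le 0$. Both are immediately verified on $[0,1]$ by a sign check of the polynomial and its derivative at the endpoints. Combining these gives $D(x)\le (24(1-\beta)\kappa+32\beta)/\kappa^2$, and since $\hat{J}_\na(\mf)\ge 1/D(x)$ the claim~\eqref{eq.lowerBoundJhatm} follows.

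I expect the main obstacle to be calibrating the loose numerator bound $1+\beta-\beta x\ge 1$ and the factor-wise upper bound on $D$ so that the resulting coefficients match the advertised constants $24$ and $32$; aggressive simplification too early (for instance using $x^\star\le 2/\kappa$ or $(1-\beta+\beta x^\star)\le 1$) produces a bound whose $\beta$-dependence is too weak and fails in the regime $\beta\to 1$, where both numerator and first factor of the denominator become small simultaneously. Keeping the two-term decomposition on the right-hand side and only then invoking the polynomial inequalities in $\beta$ is what makes the estimate sharp enough.
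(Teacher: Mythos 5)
Your proof is correct. The first half coincides with the paper's: both arguments hinge on the same stability characterization $\alpha\Lf<2(1+\beta)/(2\beta+1)$ (the paper's equation~\eqref{eq.StabilityNesterov}), which caps $x=\alpha\mf$ at exactly your $x^\star$. Where you diverge is in how the cap is exploited. The paper asserts that $\hat{J}_\na(\lambda)$ is quasi-convex on $[0,\tfrac{2\beta+2}{\alpha(2\beta+1)}]$ with minimum at $\lambda=1/\alpha$, uses $\kappa>2$ to place both $\mf$ and the boundary value below $1/\alpha$, replaces $\mf$ by the boundary value, and then evaluates the exact rational expression there before bounding it by the target (that last step is stated without detail, ``follows from $\beta\in(0,1)$''). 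You instead bound the numerator below by $1$ and the denominator above factor by factor, using only the sign and monotonicity of each of the three factors of $D(x)$ on $(0,x^\star]$; the two-term expansion in $x^\star$ then reduces everything to the explicit polynomial inequalities $(1+\beta)^2\le 6(1+2\beta)$ and $(1+\beta)^3\le 4(1+2\beta)^2$, both of which I checked and which do hold on $[0,1]$, yielding exactly the constants $24$ and $32$. Your route is somewhat more elementary and self-contained --- it avoids having to verify quasi-convexity of $\hat{J}_\na$ in $\lambda$ --- at the cost of a slightly cruder intermediate bound (you take the third factor at $x=0$ rather than at $x^\star$), which turns out not to matter. Your closing remark about why the naive simplifications $x^\star\le 2/\kappa$ or $1-\beta+\beta x^\star\le 1$ fail as $\beta\to 1$ is also accurate: the $(1-\beta)\kappa$ term in the denominator of the target is precisely what must be preserved.
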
	
\begin{proof}
	We first show that Nesterov's  method with $0<\alpha$ and $0<\beta<1$ is stable if and only if 
	\begin{align}\label{eq.StabilityNesterov}
	\mf<\frac{2\beta+2}{\alpha\,\kappa\,(2\beta+1)}.
	\end{align}  
	The rate of linear convergence is given by 
	$\rho = \max_{1\le i\le n}\; \hat{\rho}(\lambda_i)$, 
	where $\hat{\rho}(\lambda)$ is the largest absolute value of the roots of the characteristic polynomial 
	\begin{align*}
	\det(zI-\hat{A}) = z^2-(1+\beta)(1-\alpha\lambda)z + \beta(1-\alpha\lambda)
	\end{align*}
	associated with Nesterov's method and the eigenvalue $\lambda$ of the Hessian of the objective function $f$; \tc{black}{See~\eqref{eq.Ahat} for the form of $\hat{A}$.} For $\alpha>0$ and $0<\beta<1$, it can be shown that
	\begin{align}\label{eq.rhohatNester}
	\hat{\rho}(\lambda) = 
	\left\{
	\ba{ll}
	\sqrt{\beta(1-\alpha\lambda)}&\text{if}\quad \alpha\lambda\in((\frac{1-\beta}{1+\beta})^2, 1)\\
	\frac{1}{2}|(1+\beta)(1-\alpha\lambda)|+\frac{1}{2}\sqrt{(1+\beta)^2(1-\alpha\lambda)^2-4\beta(1-\alpha\lambda)}&\text{otherwise.}
	\ea
	\right.
	\end{align}
	The stability of the algorithm is equivalent to  $\hat{\rho}(\lambda_i)<1$ for all eigenvalues $\lambda_i$. For any positive stepsize $\alpha$ and parameter $\beta \in (0,1)$, it can be shown that the function $\hat{\rho}(\lambda)$ is quasi-convex and $\hat{\rho}(\lambda)=1$ if and only if $\lambda\in\{0,\frac{2\beta+2}{\alpha(2\beta+1)}\}$. This fact along with $0<\mf\le\lambda_i\le\Lf=\kappa\,\mf$ imply that $\hat{\rho}(\lambda_i)<1$ for all $\lambda_i\in[\mf,\Lf]$ if and only if  
	$\kappa\,\mf\le \frac{2\beta+2}{\alpha(2\beta+1)}$ which completes the proof of~\eqref{eq.StabilityNesterov}.

	For Nesterov's method, it is straightforward to show that the function $\hat{J}(\lambda)$ is quasi-convex over the interval $[0,\frac{2\beta+2}{\alpha(2\beta+1)}]$ and that it attains its minimum at $\lambda=1/\alpha$. Also, from~\eqref{eq.StabilityNesterov}, for $\kappa>2$ we obtain 
	\begin{align*}
		\mf
		\;\le\;
		\frac{2\beta+2}{\alpha\kappa(2\beta+1)}
		\;\le\;
		\dfrac{1}{\alpha}
	\end{align*}
and thus,
	\begin{align*}	
	\hat{J}(\mf) \, \ge \, \hat{J}(\frac{2\beta+2}{\alpha\kappa(2\beta+1)}) & \, = \,    \frac{\left(2\,\beta+1\right)\kappa^2\,\left(\kappa-2\,\beta+2\,\beta\,\kappa\right)}{4\,\left(\beta+1\right)\,\left(\kappa-1\right)\,\left(2\,\beta+\kappa+\beta\,\kappa-2\,\beta^2\,\kappa+2\,\beta^2\right)}
	\, \ge \, \frac{\kappa^2}{24\,(1-\beta) \kappa+ 32\beta}
	\end{align*}
	where the last inequality follows from the fact that $\beta \in (0, 1)$.
\end{proof}

The following lemma presents a lower bound on any accelerating parameter $\beta$ for Nesterov's method.
\begin{mylem}\label{lem.lowerboundBeta}
	For Nesterov's method,  under the conditions of Theorem~\ref{th.lowerBoundJstar},   there exist positive constants $c_3$ and $c_4$ such that for any $\kappa>c_3$, 
	\begin{align}\label{eq.upperBoundBeta}
	\beta \, > \, 1 \, - \, \frac{c_4}{\sqrt{\kappa}}.
	\end{align}
\end{mylem}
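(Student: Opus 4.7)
The plan is to extract a lower bound on $\beta$ from the constraint $\hat{\rho}(\mf)\le\rho\le 1-c/\sqrt{\kappa}$, combined with the explicit formula~\eqref{eq.rhohatNester} for $\hat{\rho}(\mf)$ and the stability upper bound $\alpha\mf\le 2(1+\beta)/(\kappa(1+2\beta))\le 2/\kappa$ derived in the proof of Lemma~\ref{lem.lowerBoundJhatm}. Writing $u\DefinedAs \alpha\mf$ and $\delta\DefinedAs 1-\beta$, the argument splits into two cases according to which branch of~\eqref{eq.rhohatNester} governs $\hat{\rho}(\mf)$.

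In the first case, $u\in(((1-\beta)/(1+\beta))^2,1)$; combining this interval condition with the stability estimate gives $(1-\beta)^2/(1+\beta)^2 < 2(1+\beta)/(\kappa(1+2\beta))$, which after rearrangement yields $(1-\beta)^2\le 16/\kappa$ and therefore $\beta>1-4/\sqrt{\kappa}$.

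In the second case, $u\le((1-\beta)/(1+\beta))^2$, so $\hat{\rho}(\mf)=\tfrac{1}{2}(1+\beta)(1-u)+\tfrac{1}{2}\sqrt{D(u)}$ with $D(u)=(1-u)[\delta^2-(2-\delta)^2 u]$. The main technical step is to establish the uniform bound
\[
\hat{\rho}(\mf)\;\ge\; 1 - \frac{2\alpha\mf}{1-\beta}
\]
throughout this case. After rearranging, this reduces to $\tfrac{1}{2}\sqrt{D(u)}\ge R(u)\DefinedAs \tfrac{\delta(1-u)}{2}-\tfrac{(2-\delta)u}{\delta}$. When $R(u)\le 0$ the inequality is automatic; when $R(u)>0$, squaring both sides and cancelling a factor of $u$ reduces the problem to checking $u\le \delta^2/(\delta^2+(2-\delta)^2)$. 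Observing that $R(u)>0$ is itself equivalent to $u<\delta^2/(4-2\delta+\delta^2)$, and that $4-2\delta+\delta^2>\delta^2+(2-\delta)^2$ for every $\delta\in(0,2)$ (the difference equals $\delta(2-\delta)$), the squared inequality is automatically satisfied, closing the case.

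With this bound in hand, the hypothesis $\rho\le 1-c/\sqrt{\kappa}$ and the stability estimate $\alpha\mf\le 2/\kappa$ yield $c/(2\sqrt{\kappa})\le \alpha\mf/(1-\beta)\le 2/(\kappa(1-\beta))$, so $\beta\ge 1-4/(c\sqrt{\kappa})$. Setting $c_4\DefinedAs\max\{4,4/c\}$ and choosing $c_3$ large enough to absorb the lower-order terms gives~\eqref{eq.upperBoundBeta} in both cases. The main obstacle is the Case~2 lower bound on $\hat{\rho}(\mf)$: although the implicit derivative $\hat{\rho}'(0)=-1/\delta$ suggests the scaling, a naive Taylor estimate is insufficient because $|\hat{\rho}'(u)|$ blows up as $u$ approaches the boundary $((1-\beta)/(1+\beta))^2$, so the squaring-and-comparison argument outlined above is essential to obtain a bound that is uniform across the entire interval.
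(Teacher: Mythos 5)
Your proof is correct, and while Case~1 coincides with the paper's argument (interval membership for the $\sqrt{\beta(1-\alpha\lambda)}$ branch plus the stability bound $\alpha\mf<2/\kappa$ forces $(1-\beta)^2=O(1/\kappa)$), your Case~2 takes a genuinely different route. The paper converts $\rho\ge\hat{\rho}(\mf)$ into the quadratic inequality $\rho^2-\rho(1+\beta)(1-\alpha\mf)+\beta(1-\alpha\mf)\ge 0$, solves for $\beta\ge\nu(\rho,\alpha\mf)\DefinedAs\rho(1-\alpha\mf-\rho)/((1-\rho)(1-\alpha\mf))$, and then minimizes $\nu$ over the admissible range of $(\rho,\alpha\mf)$ — which requires locating the critical point $\rho_1=1-\sqrt{\alpha\mf}$, invoking both the upper bound $\rho\le 1-c/\sqrt{\kappa}$ and the lower bound $\rho\ge 1-2/\sqrt{3\kappa+1}$ from Table~\ref{tab:rates}, and checking monotonicity in $\alpha\mf$. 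You instead prove the uniform pointwise estimate $\hat{\rho}(\mf)\ge 1-2\alpha\mf/(1-\beta)$ on the entire second branch; I verified the reduction (the squared inequality does collapse, after dividing by $u$, to $u\le\delta^2/(\delta^2+(2-\delta)^2)$, and the positivity threshold $\delta^2/(4-2\delta+\delta^2)$ of $R$ is indeed smaller since the denominators differ by $\delta(2-\delta)$), so the bound holds and the conclusion $\beta\ge 1-4/(c\sqrt{\kappa})$ follows in one line from $\alpha\mf<2/\kappa$. Your route is more elementary and self-contained: it needs no lower bound on $\rho$, no two-variable minimization, and no evaluation of $\nu$ at three candidate points, at the cost of the one algebraic verification of the square-root inequality. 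Both arguments ultimately exploit the same structure of the characteristic polynomial in~\eqref{eq.rhohatNester}, and both deliver explicit constants, so this is a valid and arguably cleaner substitute for the paper's proof.
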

\begin{proof}
	For any $\alpha>0$ and $\beta \in (0,1)$, Nesterov's method converges with the rate $\rho = \max_{1 \, \le \, i \, \le \, n} \hat{\rho}(\lambda_i)$, where $\hat{\rho}(\lambda)$ is given by~\eqref{eq.rhohatNester}. We treat the two cases $(1-\beta)/(1+\beta)^2<\alpha \mf$ and $(1-\beta)/(1+\beta)^2 \ge \alpha \mf$ separately.
	For $(1-\beta)/(1+\beta)^2 < \alpha\mf$, we have
	\begin{align}
	(1-\beta)^2 
	\, \le \, 
	4(\dfrac{1-\beta}{1+\beta})^2
	\, < \, 4\alpha\mf 
	\, = \, 
	4\frac{\alpha\Lf}{\kappa}
	\, \le \, 
	\frac{8}{\kappa}
	\end{align}
	where the last inequality follows from~\eqref{eq.StabilityNesterov}. Therefore, we obtain $\beta \ge 1-\sqrt{8}/\sqrt{\kappa}$ as required. Now, suppose $(1-\beta)/(1+\beta)^2 \ge \alpha\mf$. The convergence rate $\rho$ satisfies
	\begin{align*}
	\rho \; \ge \; \frac{1}{2}(1+\beta)(1-\alpha\mf) \, + \, \frac{1}{2}\sqrt{(1+\beta)^2(1-\alpha\mf)^2-4\beta(1-\alpha\mf)} .
	\end{align*}
Thus, 
	\begin{align*}
	\rho^2 - \rho(1+\beta)(1-\alpha\mf) + \beta(1-\alpha\mf)>0
	\end{align*} 
	which yields a lower bound on $\beta$,
	\begin{align}
	\beta \, \ge \, \nu(\rho,\alpha\mf) \, \DefinedAs \, \frac{\rho(1-\alpha\mf-\rho)}{(1-\rho)(1-\alpha\mf)}.
	\end{align}
	In what follows, we establish a lower bound for $\nu$.
	For a fixed $\alpha\mf$, the critical point of $\nu(\rho)$ is given by $\rho_1 \DefinedAs 1-\sqrt{\alpha\mf}$, i.e., ${\partial \nu}/{\partial \rho} = 0$ for $\rho = \rho_1$. Furthermore,  the optimal rate from Table~\ref{tab:rates} and the condition on convergence rate in Theorem~\ref{th.lowerBoundJstar} for any $\kappa>c_1$ yield upper and lower bounds
	$\rho_3<\rho<\rho_2$,
where $\rho_2 \DefinedAs 1- {c_2}/{\sqrt{\kappa}}$ and $\rho_3 \DefinedAs 1- {2}/\sqrt{3\kappa+1}$.
	Thus, the lower bound on $\nu$ is given by
	\begin{align}\label{eq.boundalphamtemp1}
	\beta \; \ge \; \nu(\rho,\alpha\mf) \; \ge \; \min \, \{\nu(\rho_1,\alpha\mf),\nu(\rho_2,\alpha\mf), \nu(\rho_3,\alpha\mf)\}.
	\end{align}
	From the stability condition~\eqref{eq.StabilityNesterov}, we have 
	\begin{align}\label{eq.boundalphamtemp2}
		\alpha\mf \; < \; {2}/{\kappa}
	\end{align}
	 Furthermore, it can be shown that for any given $\rho \in (0,1)$ the function $\nu(\rho,\alpha\mf)$ is decreasing with respect to $\alpha\mf$. This fact combined with~\eqref{eq.boundalphamtemp1} and~\eqref{eq.boundalphamtemp2} yield
	\begin{align}
	\beta \; \ge \; \min \, \{\nu(\rho_1,\alpha\mf),\nu(\rho_2,{2}/{\kappa}),\nu(\rho_3,{2}/{\kappa})\}.
	\end{align}
	If we substitute for $\rho_1$. $\rho_2$, and $\rho_3$ their values as functions of $\kappa$ and use  $\alpha\mf<{2}/{\kappa}$, then the result follows immediately. In particular, 
	\begin{align*}
	\nu(\rho_1,\alpha\mf)  
	&\;=\;
	\dfrac{1-\sqrt{\alpha\mf}}{1+\sqrt{\alpha\mf}}
	\;\ge\;
	\dfrac{1-\sqrt{2/\kappa}}{1+\sqrt{2/\kappa}}
	\;=\; 
	\dfrac{\sqrt{\kappa}-\sqrt{2}}{\sqrt{\kappa}+\sqrt{2}}
	\ge 1- \dfrac{2\sqrt{2}}{\sqrt{\kappa}}\\[0.05cm]
	\nu(\rho_2, {2}/{\kappa}) 
	&\;=\; 
	1-\dfrac{(\frac{2}{c_2}+c_2)\sqrt{\kappa}-4 }{\kappa-2}
	\;\ge\;
	 1 - \dfrac{(\frac{2}{c_2}+c_2)}{\sqrt{\kappa}},
	 \quad\forall\kappa
	 \;\ge\;
	 (\frac{1}{c_2}+\frac{c_2}{2})^2\\[0.05cm]
	\nu(\rho_3, {2}/{\kappa})
	&\;=\;
	1-\dfrac{5\kappa-4\sqrt{3\kappa+1}+1}{(\kappa-2)\sqrt{3\kappa+1}}
	\;\ge\;
	 1-\dfrac{5}{\sqrt{\kappa}},
	 \quad\forall\kappa
	 \;\ge\;
	  9
	\end{align*}
	which completes the proof.
\end{proof}

{\color{black}
The next lemma provides a lower bound on $J_{\na}/(1-\rho)$ for Nesterov's method with $\sigma=\alpha\le 1/L$.
\begin{mylem}\label{lem.NesCorrection}
 Nesterov's accelerated method with any stabilizing pair of parameters $0<\alpha\le 1/L$ and $0<\beta<1$,  and $\sigma=\alpha$ satisfies
	\begin{align*}
	\dfrac{J_{\na}}{1-\rho}
	\;\ge\;
	\dfrac{1}{8}(\dfrac{\kappa}{L})^2.
	\end{align*}
\end{mylem}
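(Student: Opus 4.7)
The plan is to adapt the argument from Theorem~\ref{thm.tradeOffHB} to Nesterov's method and analyze the mode corresponding to the smallest Hessian eigenvalue $m$. Starting from the trivial bound
\[
\frac{J_{\na}}{1-\rho}\;\ge\;\frac{\hat J_{\na}(m)}{1-\hat\rho(m)},
\]
I substitute $\sigma=\alpha$ into the expression from Theorem~\ref{th.varianceJhat} and introduce the new variables $u\DefinedAs\sqrt{\alpha m}\in(0,1/\sqrt{\kappa}]$ (guaranteed by $\alpha\le 1/L$) and $t\DefinedAs\beta(1-\alpha m)$. A short algebraic manipulation gives
\[
\hat J_{\na}(m)\;=\;\frac{\alpha(1+t)}{m(1-t)(2(1+t)-u^2)},
\]
so that $\alpha/m=u^2/m^2$ factors out and the quantity of interest depends on $(u,t)$ times $1/m^2$.

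The characteristic polynomial $z^2-(1+\beta)(1-\alpha m)z+\beta(1-\alpha m)$ has a double root precisely when $t=(1-u)^2$, at which point $\hat\rho(m)=1-u$. I plan to show that, for every fixed $u$, the function $t\mapsto\hat J_{\na}(m)/(1-\hat\rho(m))$ attains its minimum at this critical value. In the complex-root regime $t\in[(1-u)^2,\,1-u^2)$ one has $\hat\rho(m)=\sqrt{t}$, and after writing $v\DefinedAs\sqrt{t}$ the logarithmic derivative in $v$ reduces monotonicity to the polynomial inequality
\[
(1+3v)(1+v^2)(2+2v^2-u^2)\;\ge\;2vu^2(1-v^2),
\]
which can be verified by expanding both sides on $(u,v)\in[0,1]^2$. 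In the real-root regime $t\in[0,(1-u)^2]$ I parametrize by the smaller root $z_2\in[0,1-u]$ of the characteristic polynomial and use the identity $(1-z_1)(1-z_2)=\alpha m$ to eliminate $z_1$; the ratio then becomes a rational function of $z_2$ that is strictly decreasing on $[0,1-u]$. Together these two facts imply that the infimum over $\beta$ occurs at $t=(1-u)^2$, where a direct computation yields
\[
\left.\frac{\hat J_{\na}(m)}{1-\hat\rho(m)}\right|_{t=(1-u)^2}\;=\;\frac{2-2u+u^2}{m^2(2-u)^3}.
\]

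To finish, I minimize over $u$. The derivative of $\phi(u)\DefinedAs(2-2u+u^2)/(2-u)^3$ simplifies to $\phi'(u)=(u^2+2)/(2-u)^4>0$, so $\phi$ is strictly increasing on $[0,1]$ and $\phi(u)\ge\phi(0)=1/4$. Since $\kappa/L=1/m$, combining the estimates gives
\[
\frac{J_{\na}}{1-\rho}\;\ge\;\frac{1}{4m^2}\;\ge\;\frac{1}{8}\Bigl(\frac{\kappa}{L}\Bigr)^2,
\]
which is the desired bound. The main obstacle is establishing the two-regime monotonicity in $t$; the real-root case in particular requires the $z_2$-parametrization to turn the appearance of the square root $D=\sqrt{(1-u^2+t)^2-4t}$ into a clean rational expression, and without this change of variables the derivative computation becomes unwieldy.
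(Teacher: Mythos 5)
Your proposal is correct and follows the same overall strategy as the paper's proof: bound $J_{\na}/(1-\rho)$ from below by the single mode at $\lambda=\mf$, show that the resulting function of $\beta$ is minimized at the critically damped configuration $\beta=(1-\sqrt{\alpha\mf})/(1+\sqrt{\alpha\mf})$ (your $t=(1-u)^2$) by proving monotonicity separately on the complex-root and real-root sides, evaluate there, and minimize over the stepsize. The differences are in execution and are worth recording. First, the paper does not work with the exact ratio: it discards the two benign factors $1+\beta(1-\alpha\mf)\ge 1$ and $2(1+\beta)-(2\beta+1)\alpha\mf\le 4$ to obtain a simpler surrogate $p(\alpha,\beta)$ before optimizing over $\beta$, and that relaxation is exactly where the constant degrades from your $1/(4\mf^2)$ to the stated $1/(8\mf^2)$; keeping the exact expression costs you a slightly heavier inequality in the complex regime (your $(1+3v)(1+v^2)(2+2v^2-u^2)\ge 2vu^2(1-v^2)$, which I checked does follow from your logarithmic-derivative computation and holds comfortably on $[0,1]^2$) but buys a factor of two in the final bound. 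Second, in the real-root regime the paper keeps the radical $\sqrt{\Delta}$, isolates it, verifies signs, and squares, reducing monotonicity to $4(x-1)^2(2s+x+1)\ge 0$; your parametrization by the smaller eigenvalue $z_2$ of $\hat A$ together with the identity $(1-z_1)(1-z_2)=\alpha\mf$ rationalizes the radical instead, which is arguably cleaner, but you still owe the explicit verification that the resulting rational function of $z_2$ is decreasing on $[0,1-u]$ — that computation is the precise analogue of the paper's squaring step and is the one piece of your argument not yet carried out (spot checks confirm it is true, so this is labor rather than a gap). Your endpoint evaluation $(2-2u+u^2)/(\mf^2(2-u)^3)$ and the monotonicity of $\phi$ via $\phi'(u)=(u^2+2)/(2-u)^4$ are both correct.
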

\begin{proof}
The convergence rate of Nesterov's method is given by $\max_i\hat{\rho}(\lambda_i)$, where
\begin{align*}
\hat{\rho}(\lambda) = 
\left\{
\ba{ll}
\sqrt{\beta(1-\alpha\lambda)}&\text{if}\quad \alpha\lambda\in((\frac{1-\beta}{1+\beta})^2, 1)\\
\frac{1}{2}|(1+\beta)(1-\alpha\lambda)|+\frac{1}{2}\sqrt{\Delta}&\text{otherwise}
\ea
\right.
\end{align*}
and $\Delta\DefinedAs (1+\beta)^2(1-\alpha\lambda)^2-4\beta(1-\alpha\lambda)$; see equation~\eqref{eq.rhohatNester}.
Thus, we have the trivial lower bound
\begin{align}
\dfrac{J}{1-\rho}
&\;\ge\;
\dfrac{\hat{J}(\mf)}{1-\hat{\rho}(\mf)}
\;=\;
\dfrac{\alpha\left(1 \, + \, \beta(1 \, - \, \alpha\mf)\right)}
{  \mf
	\left( 
	1 \, - \, \beta(1 \, - \, \alpha\mf)
	\right)
	\left(
	2(1 \, + \, \beta) \, - \, (2\beta \, + \, 1)\alpha\mf
	\right) \left(1-\hat{\rho}(\mf)\right)
} 
\nonumber\\[0.15 cm]
&\;\ge\;
p(\alpha,\beta)
\;\DefinedAs\;
\dfrac{\alpha}
{ 4\mf\left( 
	1 \, - \, \beta(1 \, - \, \alpha\mf)
	\right)
	\left(1-\hat{\rho}(\mf)\right)
} 
\nonumber\\[0.15 cm]
&\; = \; 
\left\{
\ba{ll}
\dfrac{\alpha}
{ 4\mf\left(1\, - \, \beta(1-\alpha\mf)\right) 
	\left(1-\sqrt{\beta(1-\alpha\mf)}\right)
}, 
&\; \beta\in [ \gamma  ,\;1)
\\[0.55cm]
\dfrac{\alpha}
{ 2\mf\left(1\, - \, \beta(1-\alpha\mf)\right) 
	\left( 2- (1+\beta)(1-\alpha\mf) - \sqrt{\Delta}\right)
}, 
&\; \beta\in [0,\;\gamma)
\ea
\right. 
\label{eq.tempNesSup1}
\end{align}
where $\gamma \DefinedAs \dfrac{1-\sqrt{\alpha\mf}}{1+\sqrt{\alpha\mf}}$. Here, the first inequality can be obtained by combining $J=\sum_{i} \hat{J}(\lambda_i)$ and $\max_i\hat{\rho}(\lambda_i)$, and the second inequality follows from the fact that  $0< \alpha \mf \le 1$ and $0\le \beta<1$.
We next show that for any fixed $\alpha$, the function $p(\alpha,\cdot)$ attains its minimum at $\beta = \gamma$.
Before we do so, note that this fact allows us to do partial minimization with respect to $\beta $ and obtain
\begin{align*}
p(\alpha,\beta)
\;\ge\;
p(\alpha, \gamma)
\;=\;
\dfrac{1}
{ 4\mf^2\left(2-\sqrt{\alpha\mf}\right) 
}
\;\ge\;
\dfrac{1}{8\mf^2} 
\;\ge\;
\dfrac{1}{8}(\dfrac{\kappa}{L})^2.
\end{align*}

For any fixed $\alpha$, it is straightforward to verify that  $p(\alpha,\beta)$ is increasing with respect to $\beta$ over  $ [\gamma,\;1)$. Thus, it suffices to show that 
$p(\alpha,\beta)$ is decreasing with respect to $\beta$ over  $ [0,\gamma)$. 
To simplify the presentation, let us define
\begin{align*}
q
&\;\DefinedAs\;
(1-s)(2-x-s - \delta)
,\quad
x\;\DefinedAs\;
1-\alpha m
,\quad
s\;\DefinedAs\;
\beta x
\\[0.15 cm]
\delta
&\;\DefinedAs\;
\sqrt{\Delta} \;=\; \sqrt{(1+\beta)^2(1-\alpha\mf)^2-4\beta(1-\alpha\mf)} \;=\; \sqrt{(x+s)^2-4s}.
\end{align*}
It is now straightforward to verify that $p(\alpha,\beta) = \alpha/(2mq)$ for $\beta \in  [0,\gamma)$. It thus follows that  $p(\alpha,\beta)$ is decreasing with respect to $\beta$ over  $ [0,\gamma)$ if and only if
$q' = \mrd q/\mrd s\ge0$ for $s \in [0,(1-\sqrt{1-x})^2)$. The derivative is given by
\begin{align*}
q'
\;=\;
\dfrac{1}{\delta}\left( (x+2s-3)\delta + (1-s)(2-x-s) + \delta^2\right).
\end{align*}
Thus, we have
\begin{align}\label{eq.tempNesSup2}
q'\;\ge\; 0
&\;\iff\;
(1-s)(2-x-s) + \delta^2\;\ge\; (3-x-2s)\delta.
\end{align}
It is easy to verify that both sides of the inequality in~\eqref{eq.tempNesSup2}, namely, $(1-s)(2-x-s) + \delta^2$ and $(3-x-2s)\delta$ are positive for the specified range of $s\in [0,(1-\sqrt{1-x})^2)$. Thus, we can square both sides and obtain that
\begin{align*}
q'\;\ge\; 0
&\;\iff\;
\left((1-s)(2-x-s) + \delta^2\right)^2\;\ge\; (3-x-2s)^2\delta^2
\\[0.15 cm]
&\;\overset{(i)}{\iff}\;
\left((1-s)(2-x-s) + (x+s)^2-4s\right)^2 \;\ge\; (3-x-2s)^2\left((x+s)^2-4s\right)
\\[0.15 cm]
&\;\overset{(ii)}{\iff}\;
4(x-1)^2(2s+x+1)\;\ge\;0.
\end{align*}
where $(i)$ follows from the definition of $\delta$ and $(ii)$ is obtained by expanding both sides and rearranging the terms.
Finally, the inequality $4(x-1)^2(2s+x+1)\;\ge\;0$ trivially holds which completes the proof.
\end{proof}	
}

We are now ready to prove Theorem~\ref{th.lowerBoundJstar} for Nesterov's method.
Inequality~\eqref{eq.lowerboundJStara} directly follows from combining~\eqref{eq.lowerBoundJhatm} in	Lemma~\ref{lem.lowerBoundJhatm} and~\eqref{eq.upperBoundBeta} in Lemma~\ref{lem.lowerboundBeta}. To show inequality~\eqref{eq.lowerboundJStarb}, we treat the two cases $\alpha>1/L$ and $\alpha\le 1/L$ separately. If $\alpha>1/L$, then~\eqref{eq.lowerboundJStarb} directly follows from~\eqref{eq.lowerboundJStara}
\begin{align*}
	J_{\na} 
	\;=\;
	\alpha^2
	\dfrac{J_{\na}}{\sigma^2} 
	\;=\;
	\Omega(\dfrac{\kappa^{\frac{3}{2}}}{L^2}). 
\end{align*}
Now suppose $\alpha\le 1/L$. We can use Lemma~\ref{lem.NesCorrection} to obtain
\begin{align*}
	J_{\na}
	\;\ge\;
	(1-\rho)\dfrac{k^2}{8L^2}
	\;\ge\;
	\dfrac{c}{\sqrt{\kappa}}\dfrac{k^2}{8L^2}
	\;=\;
	\Omega(\dfrac{\kappa^{\frac{3}{2}}}{L^2}).
\end{align*}
Here, the first inequality follows from Lemma~\ref{lem.NesCorrection} and the second inequality follows from the acceleration assumption $\rho\le 1- c/\sqrt{\kappa}$.
This completes the proof.
\end{proof}

	\vspace*{-2ex}
\subsection{Consensus over $d$-dimensional torus networks}
	\label{app.networks}
	
	The proof of Theorem~\ref{cor:orders} uses the explicit expression for the eigenvalues of torus in~\eqref{eq:eigTorusExpression} to compute the variance amplification $\bar{J}=\sum_{i\neq0}\hat{J}(\lambda_i)$ for all three algorithms. Several technical results that we use in the proof are presented next.
	
	We  borrow the following lemma, which provides tight bounds on the sum of reciprocals of the eigenvalues of a $d$-dimensional torus network, from~\cite[Appendix~B]{bamjovmitpat12}. 

	\begin{mylem}
		\label{lem:sumRecip}
		The eigenvalues $ \lambda_i $ of the graph Laplacian of the $d$-dimensional torus $ \T_{\m}^d $ with $ {\m} \gg 1$ satisfy
\[
\sum_{0 \, \neq \, i \, \in \, \Z_{\m}^d}
\;
\dfrac{1}{\lambda_i}
\; = \;
\Theta(B({\m}))
\]
where the function $ B $ is given by
		\[
		B({\m})
		\; = \;
		\left\{
		\ba{lrcl}
		\dfrac{1}{d-2}
		\, (\m^d-\m^2), & d & \!\!\! \ne \!\!\!  & 2 
		\\[0.15cm]
		\m^d \log \, {\m},& d & \!\!\! = \!\!\! & 2 .
		\ea
		\right.
		\]
	\end{mylem}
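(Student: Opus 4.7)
The plan is to start from the explicit eigenvalue formula~\eqref{eq:eigTorusExpression} and replace each cosine factor with a quadratic proxy, thereby reducing the sum in question to a classical lattice sum that can be analyzed by integral comparison.

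First, I would exploit the identity $2(1-\cos\theta)=4\sin^2(\theta/2)$ together with the elementary inequality $\tfrac{2}{\pi}|y|\le|\sin y|\le|y|$ on $[-\pi/2,\pi/2]$ to obtain the two-sided estimate $\tfrac{4}{\pi^2}\theta^2\le 2(1-\cos\theta)\le\theta^2$ for all $\theta\in[-\pi,\pi]$. By $2\pi$-periodicity, each coordinate index $i_l\in\Z_\m$ can be replaced by its centered representative $\tilde\imath_l\in\{-\lfloor\m/2\rfloor,\ldots,\lceil\m/2\rceil-1\}$ so that $2\pi\tilde\imath_l/\m\in[-\pi,\pi]$. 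Summing the bound over $l=1,\ldots,d$ then yields $\lambda_i=\Theta(\m^{-2}|\tilde\imath|^2)$, with $|\tilde\imath|^2\DefinedAs\sum_l\tilde\imath_l^2$. Consequently, up to a dimensional constant,
\[
\sum_{0\neq i\in\Z_\m^d}\,\frac{1}{\lambda_i}\;=\;\Theta(\m^2)\sum_{0\neq\tilde\imath\in C_\m}\,\frac{1}{|\tilde\imath|^2},
\]
where $C_\m\DefinedAs\{\tilde\imath\in\Z^d:\|\tilde\imath\|_\infty\le\m/2\}$. Since $C_\m$ is sandwiched between two Euclidean balls of radii proportional to $\m$, I can work interchangeably with boxes or balls at the cost of $d$-dependent constants.

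Next, I would estimate the lattice sum via integral comparison. The contribution of lattice points with $|\tilde\imath|\le 1$ is bounded independently of $\m$, and for $|\tilde\imath|\ge 1$ the monotonicity of $1/|x|^2$ allows me to sandwich the lattice sum between constant multiples of $\int 1/|x|^2\,\mrd x$ over Euclidean annuli $\{1\le|x|\le c\m\}$, by covering each lattice point with a unit cube. Passing to spherical coordinates reduces the integral to $\int_1^{c\m}\rho^{d-3}\,\mrd\rho$, which evaluates to a bounded constant for $d=1$, to $\Theta(\log\m)$ for $d=2$, and to $\Theta(\m^{d-2}/(d-2))$ for $d\ge 3$. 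Multiplying by the $\Theta(\m^2)$ prefactor produces $\Theta(\m^2)$, $\Theta(\m^2\log\m)$, and $\Theta(\m^d/(d-2))$, respectively, which matches $B(\m)$ in every case, once one notes that for $d=1$ the formula $B(\m)=\m^2-\m=\Theta(\m^2)$.

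The main obstacle I anticipate is ensuring the integral-comparison step is handled uniformly across dimensions, because the exponent $d-3$ changes sign at $d=3$ and the integral becomes logarithmic precisely at $d=2$: the near-origin cutoff and the outer radius must be chosen so that the implicit constants remain bounded in each regime. A secondary subtlety is the $d=1$ case, where the underlying lattice sum converges to a finite nonzero constant rather than growing, yet, when combined with the $\Theta(\m^2)$ prefactor, still yields the advertised $\Theta(\m^2-\m)$ scaling.
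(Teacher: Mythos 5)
Your argument is correct, but it is worth noting that the paper does not actually prove this lemma: it is imported verbatim from \cite[Appendix~B]{bamjovmitpat12}, so there is no in-paper proof to compare against. What you have written is a legitimate self-contained substitute, and it uses exactly the toolkit the paper deploys for the companion estimate $\sum_i 1/(\lambda_i^2+\mu\lambda_i)$ in Lemmas~\ref{lem.polarInt} and~\ref{lem:sumSecPol}: two-sided quadratic bounds on $1-\cos$ (your $\tfrac{4}{\pi^2}\theta^2\le 2(1-\cos\theta)\le\theta^2$ via Jordan's inequality is the same estimate the paper invokes as $x^2/\pi^2\le 1-\cos x\le x^2$), reduction to the lattice sum $\sum 1/\|\tilde\imath\|^2$ over a box of side $\Theta(\m)$ (you center the indices where the paper folds by evenness of $1-\cos(\cdot-\pi)$ onto $q=\lfloor\m/2\rfloor$ — these are equivalent), and a Riemann-sum/polar-coordinate comparison giving $\int_1^{c\m}\rho^{d-3}\,\mrd\rho$. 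Your case check is right in all dimensions, including the two you flag as delicate: at $d=2$ the integral is logarithmic and at $d=1$ the lattice sum is $\Theta(1)$ while the $\Theta(\m^2)$ prefactor still produces $B(\m)=\m^2-\m=\Theta(\m^2)$; since $d$ is fixed, the $d$-dependent constants you absorb into $\Theta(\cdot)$ are consistent with how the lemma is used downstream. The only thing I would make explicit if you write this out in full is the unit-cube covering step justifying the sandwich between the lattice sum and the integral for $\|\tilde\imath\|\ge 1$, but that is routine.
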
 
	We next use Lemma~\ref{lem:sumRecip} to establish an asymptotic expression for the variance amplification of the gradient descent algorithm for a $d$-dimensional torus. 
	
	\begin{mylem}
		\label{lem:GDboundTorus}
		For the consensus problem over a $d$-dimensional torus $ \T_{\m}^d $ with ${\m} \gg 1$, the performance metric $\bar{J}_{\gd}$ corresponding to gradient decent with the stepsize $\alpha = 2/(\Lf+\mf)$ satisfies
		\[
\bar{J}_{\gd} 
\; = \;
\Theta( B({\m}) )
\]
		where the function $ B $ is given in Lemma~\ref{lem:sumRecip}.
	\end{mylem}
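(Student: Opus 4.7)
The starting point is the closed form from Theorem~\ref{th.varianceJhat}: with noise magnitude $\sigma=1$ and the stepsize $\alpha = 2/(\Lf+\mf)$ from Table~\ref{tab:rates}, a direct calculation gives $\hat{J}_\gd(\lambda) = (\Lf+\mf)^2/\bigl(4\lambda(\Lf+\mf-\lambda)\bigr)$, so that $\bar{J}_\gd = \sum_{0 \ne i \in \Z_\m^d} \hat{J}_\gd(\lambda_i)$. Since $\Lf \le 4d$ and $\mf = \Theta(1/\m^2)$ on the torus, the prefactor $(\Lf+\mf)^2$ is $\Theta(1)$ in $\m$ and will be absorbed into the $\Theta$ throughout.

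The key algebraic simplification is the partial-fraction identity
$$\frac{1}{\lambda(\Lf+\mf-\lambda)} \;=\; \frac{1}{\Lf+\mf}\left(\frac{1}{\lambda} \,+\, \frac{1}{\Lf+\mf-\lambda}\right),$$
which yields
$$\bar{J}_\gd \;=\; \frac{\Lf+\mf}{4}\left(\sum_{i \ne 0}\frac{1}{\lambda_i} \;+\; \sum_{i \ne 0}\frac{1}{\Lf+\mf-\lambda_i}\right).$$
The first sum is $\Theta(B(\m))$ immediately from Lemma~\ref{lem:sumRecip}; the remaining task is to show the second sum is also $\Theta(B(\m))$.

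To control the second sum I plan to exploit a spectral reflection of the torus. Taking $\m$ even for transparency (the odd case will be handled with lower-order corrections since $\Lf$ only approaches $4d$ from below), the coordinate-wise involution $\varphi(i) \DefinedAs (\m/2)\mathbf{1} - i \bmod \m$ on $\Z_\m^d$ sends $\lambda_i$ to $\Lf - \lambda_i$, using the identity $1-\cos(\pi-\theta) = 1+\cos\theta$ termwise in~\eqref{eq:eigTorusExpression}. Reindexing by $\varphi$ then converts the second sum into
$$\sum_{i \ne 0}\frac{1}{\Lf+\mf-\lambda_i} \;=\; \sum_{i' \ne \varphi(0)}\frac{1}{\mf+\lambda_{i'}} \;=\; \frac{1}{\mf} \,+\, \sum_{i' \ne 0,\,\varphi(0)}\frac{1}{\mf+\lambda_{i'}}.$$
Since $\mf \le \lambda_{i'} \le \Lf$ for $i' \ne 0$, each remaining term satisfies $1/(2\lambda_{i'}) \le 1/(\mf+\lambda_{i'}) \le 1/\lambda_{i'}$, so a second invocation of Lemma~\ref{lem:sumRecip} bounds the residual sum by $\Theta(B(\m))$.

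The main (though minor) obstacle is the isolated $1/\mf = \Theta(\m^2)$ term, which corresponds to the anomalously large modal contribution $\hat{J}_\gd(\Lf)$ of the single unpaired eigenvalue at $\lambda = \Lf$. This term is harmless because $B(\m) \succeq \m^2$ in every spatial dimension; indeed $B(\m) = \Theta(\m^2)$, $\Theta(\m^2 \log \m)$, or $\Theta(\m^d)$ for $d = 1$, $d = 2$, and $d \ge 3$, respectively, so $1/\mf = O(B(\m))$ in all cases. Combining the two $\Theta(B(\m))$ sums with the $\Theta(1)$ prefactor $(\Lf+\mf)/4$ then delivers $\bar{J}_\gd = \Theta(B(\m))$ as claimed.
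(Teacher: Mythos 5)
Your proposal is correct and follows essentially the same route as the paper's proof: the partial-fraction split of $\hat{J}_{\gd}(\lambda)$ into $1/\lambda$ and $1/(\Lf+\mf-\lambda)$ terms, the reflection symmetry of the torus spectrum about $2d$ to identify the second sum with the first, and Lemma~\ref{lem:sumRecip} to conclude $\Theta(B(\m))$. Your version is in fact somewhat more careful than the paper's, which simply asserts the spectrum is ``asymptotically symmetric,'' whereas you make this precise via the explicit involution $i \mapsto (\m/2)\mathbf{1}-i$ and separately verify that the unpaired $1/\mf = \Theta(\m^2)$ term is dominated by $B(\m)$ in every spatial dimension.
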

	\begin{proof}
		Using the expression for the noise amplification of gradient descent from Theorem~\ref{th.varianceJhat}, we have
		\begin{align*}
		\bar{J}_{\gd} 
		&\,=\, 
		\sum_{ 0 \, \neq \, \binx \, \in \, \Z_{\m}^d}
		\dfrac{1}
		{\alpha \lambda_\binx (2 \, - \, \alpha \lambda_\binx )}\\
		&\,=\,
		\dfrac{1}{2\alpha}
		\sum_{ 0 \, \neq \, \binx \, \in \, \Z_{\m}^d}
		\dfrac{1}{\lambda_\binx} + \dfrac{1}
		{ \tfrac{2}{\alpha} \, - \,  \lambda_\binx}\\
		&\,=\,
		\dfrac{1}{2\alpha}
		\sum_{ 0 \, \neq \, \binx \, \in \, \Z_{\m}^d}
		\dfrac{1}{\lambda_\binx} + \dfrac{1}
		{ \lambda_{\max} \,+\, \lambda_{\min} \, - \,  \lambda_\binx}\\
		&\,\approx\,
		\dfrac{1}{\alpha}
		\sum_{ 0 \, \neq \, \binx \, \in \, \Z_{\m}^d}
		\dfrac{1}{\lambda_\binx} \,\approx\, 2d
		\sum_{ 0 \, \neq \, \binx \, \in \, \Z_{\m}^d}
		\dfrac{1}{\lambda_\binx}.
		\end{align*}
		The first approximation follows from the facts that the eigenvalues  satisfy 
		\[
		0 \; < \; 
		\lambda_\binx
		\; \le \; 
		\lambda_{\max} \, + \, \lambda_{\min}
		\; \approx \; 4d 
		\] 
		and that their distribution is asymptotically symmetric with respect to $ \lambda =2d $. The second approximation follows from 
		\[
		\alpha
		\; = \; 
		\dfrac{2}{\Lf \, + \, \mf}
		\; = \;
		\dfrac{2}{\lambda_{\max} \, + \, \lambda_{\min}}
		\; \approx \;
		\dfrac{1}{2d}. 
		\] 
		The bounds for the sum of reciprocals of $\lambda_i$ provided in Lemma~\ref{lem:sumRecip} can now be used to complete the proof. 
	\end{proof}

	The following lemma establishes a  relationship between the variance amplifications of Nesterov's method and gradient descent. This relationship allows us to compute tight bounds on $J_\na$ by splitting it into the sum of two terms. The first term  depends linearly on $J_\gd$ which is already computed in Lemma~\ref{lem:GDboundTorus} and the second term can be evaluated  separately using integral approximations for consensus problem on torus networks. This result holds in general for the scenarios in which the largest eigenvalue $\Lf=\Theta(1)$ is bounded and the smallest eigenvalue $\mf$ goes to zero causing the condition number $\kappa$ to go to infinity. 
	\begin{mylem}
		\label{th.variance-asymptotic}
		For a strongly convex quadratic problem with $\mf I \preceq Q \preceq \Lf I$ and condition number $\kappa \DefinedAs \Lf/\mf\ge\kappa_0$, the ratio between variance amplifications of Nesterov's algorithm and gradient descent with the parameters given in Table~\ref{tab:rates} satisfies the asymptotic bounds
\[
\dfrac{c_1}{\sqrt{\kappa}}
\;\le\;
\dfrac
{
	J_{\na} 
	\, - \,
	D
} 
{J_{\gd}}
\;\le\;
c_2, 
~~~~
D
\;\DefinedAs \;
\dfrac{2}{(3\beta+1)\,\alpha_\na^2}
\;\sum_{i\,=\,1}^n \;
\dfrac{1}{\lambda_i^2\,+\,\frac{1\,-\,\beta}{\alpha_\na\beta}\,\lambda_i}
\]
where $\kappa_0$, $c_1$, and $c_2$ are positive constants. Furthermore, depending on the distribution of  the eigenvalues of the Laplacian matrix, $D$ can take values between
	\begin{align}\label{eq.boundsOnDelta}
	\dfrac{c_3}{\kappa}
	\;\le\;
	\dfrac{D}{J_\gd}
	\;\le\;
	c_4\sqrt{\kappa}
	\end{align}
where $c_3$ and $c_4$ are positive constants.
	\end{mylem}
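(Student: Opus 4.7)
My approach will exploit the decomposition of the ratio $\hat{J}_\na(\lambda)/\hat{J}_\gd(\lambda)$ that was already introduced in the proof of Proposition~\ref{prop.relationJhat}. Recall from equations~\eqref{eq.Sigma1Sigma2} that
$$\hat{J}_\na(\lambda)/\hat{J}_\gd(\lambda) \; = \; \sigma_1(\lambda) \, + \, \sigma_2(\lambda)$$
where $\sigma_1$ and $\sigma_2$ are homographic functions of $\lambda$. Multiplying by $\hat{J}_\gd(\lambda)$ and summing over the eigenvalues yields
$$J_\na \;=\; \sum_{i=1}^n \sigma_1(\lambda_i)\,\hat{J}_\gd(\lambda_i) \;+\; \sum_{i=1}^n \sigma_2(\lambda_i)\,\hat{J}_\gd(\lambda_i).$$
The first step of my plan is to verify by direct algebraic manipulation that $\sigma_1(\lambda)\,\hat{J}_\gd(\lambda) = \frac{2}{(3\beta+1)\alpha_\na^2}\cdot \frac{1}{\lambda^2+\frac{1-\beta}{\alpha_\na\beta}\lambda}$, so that $\sum_i \sigma_1(\lambda_i)\hat{J}_\gd(\lambda_i)$ coincides exactly with the quantity $D$ in the statement. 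Once this identification is made, we obtain $J_\na - D = \sum_i \sigma_2(\lambda_i)\,\hat{J}_\gd(\lambda_i)$, which reduces the first bound to establishing $c_1/\sqrt{\kappa} \le \sigma_2(\lambda) \le c_2$ for all $\lambda \in [\mf,\Lf]$.

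Next, I will leverage the monotonicity result from the proof of Proposition~\ref{prop.relationJhat}, which shows that both $\sigma_1$ and $\sigma_2$ are decreasing functions on $[\mf,\Lf]$. This lets me reduce the bounds on the sums to estimating only the extreme values $\sigma_j(\mf)$ and $\sigma_j(\Lf)$. Substituting the values of $\alpha_\gd$, $\alpha_\na$, and $\beta$ from Table~\ref{tab:rates}, a direct computation yields, for $\kappa\gg 1$, $\sigma_2(\mf) = \Theta(1)$ (the limit is $3/8$) and $\sigma_2(\Lf) = \Theta(1/\sqrt{\kappa})$; the latter arises from the cancellation $1 - \frac{\alpha_\na(2\beta+1)}{2+2\beta}\Lf = \Theta(1/\sqrt{\kappa})$ together with $1 - \alpha_\gd\Lf/2 = \Theta(1/\kappa)$ in the numerator. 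Monotonicity of $\sigma_2$ then immediately produces the sandwich $\sigma_2(\Lf)\,J_\gd \le J_\na - D \le \sigma_2(\mf)\,J_\gd$, giving the first claimed bounds with suitable constants $c_1,c_2$.

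For the second pair of inequalities, I will perform the analogous extremal analysis for $\sigma_1$. Using $\beta/(1-\beta) = (\sqrt{3\kappa+1}-2)/4$ and the same asymptotic expansions, I expect to find $\sigma_1(\mf) = \Theta(\sqrt{\kappa})$ (driven by the $1/(1-\beta)$ factor in the prefactor, since the denominator tends to $1$) and $\sigma_1(\Lf) = \Theta(1/\kappa)$ (driven by $1-\alpha_\gd\Lf/2 = \Theta(1/\kappa)$ in the numerator combined with $\frac{\alpha_\na\beta}{1-\beta}\Lf = \Theta(\sqrt{\kappa})$ in the denominator). Combining with the monotonicity, this yields $\sigma_1(\Lf)\,J_\gd \le D \le \sigma_1(\mf)\,J_\gd$, which is exactly~\eqref{eq.boundsOnDelta} with constants $c_3,c_4$.

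The main obstacle I anticipate is keeping track of the precise $\kappa$-dependence of the constants that appear when $\beta \to 1^-$, because several factors in both $\sigma_1$ and $\sigma_2$ blow up or vanish simultaneously. In particular, the computation of $\sigma_2(\Lf)$ requires a careful Taylor-style expansion to identify the cancellation that yields a $\Theta(1/\sqrt{\kappa})$ (rather than a constant) lower bound; and the analysis of $\sigma_1(\Lf)$ requires balancing an $O(\sqrt{\kappa})$ prefactor against a $1/\kappa$ numerator and a $\sqrt{\kappa}$ denominator to land on the correct $\Theta(1/\kappa)$ scaling. All of these estimates are, however, routine manipulations once the decomposition $J_\na = \sum \sigma_1 \hat{J}_\gd + \sum \sigma_2 \hat{J}_\gd$ is in place.
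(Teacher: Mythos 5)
Your proposal is correct and follows essentially the same route as the paper's proof: the same decomposition $\hat{J}_\na/\hat{J}_\gd=\sigma_1+\sigma_2$ from Proposition~\ref{prop.relationJhat}, the same algebraic identification $\sum_i\sigma_1(\lambda_i)\hat{J}_\gd(\lambda_i)=D$, and the same extremal/monotonicity analysis yielding $\sigma_1(\mf)=\Theta(\sqrt{\kappa})$, $\sigma_1(\Lf)=\Theta(1/\kappa)$, $\sigma_2(\mf)=\Theta(1)$, $\sigma_2(\Lf)=\Theta(1/\sqrt{\kappa})$. Your asymptotic values agree with the paper's (e.g., $\sigma_2(\mf)\to 3/8$, $\sigma_1(\Lf)\approx 9/(8\kappa)$), so the remaining work is exactly the routine expansion you describe.
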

	\begin{proof}	
	We can split $\hat{J}_{\na} (\lambda)/\hat{J}_\gd (\lambda)$ into the sum of two decreasing homographic functions
	$
	\sigma_1(\lambda)\,+\,\sigma_2(\lambda),
	$ where $\sigma_1$ and $\sigma_2$ are defined in~\eqref{eq.Sigma1Sigma2}; see the proof of Proposition~\ref{prop.relationJhat}. Furthermore, for $\kappa \gg 1$, these functions attain their extrema over the interval $[\mf,\Lf]$ at 
	\be\label{eq.lowerUpperBoundsSigma}
	\sigma_1(\Lf)
	\;\approx\;
	\dfrac{9}{8\kappa}\,,
	~
	\sigma_1(\mf)
	\;\approx\;
	\dfrac{3\sqrt{3\kappa}}{8}\,,
	~~
	\sigma_2(\Lf)
	\;\approx\;
	\dfrac{9\sqrt{3}}{16\sqrt{\kappa}}\,,
	~
	\sigma_2(\mf)
	\;\approx\;
	\dfrac{3}{8}
	\ee	
where we have kept the leading terms.
		It is straightforward to verify that 
		\begin{align*}
		\sum_{ i=1 }^n \sigma_1(\lambda_i)\hat{J}_\gd(\lambda_i)
		\;=\;
		\tfrac{2}{(3\beta+1)\alpha_\na^2}
		\ds{\sum_{i\,=\,1}^n}
		\tfrac{1}{\lambda_i^2\,+\,\tfrac{1-\beta}{\alpha_\na\beta}\lambda_i}
		\;=\;
		D.
		\end{align*}
		This equation in conjunction with~\eqref{eq.lowerUpperBoundsSigma}, yield inequalities in~\eqref{eq.boundsOnDelta}. Moreover,  we obtain that 
		\begin{align*}
		\dfrac{J_\na - D}{J_\gd} 
		\;=\;
		\dfrac{\sum_{ i=1 }^n\sigma_2(\lambda_i)\hat{J}_\gd(\lambda_i)}{\sum_{ i=1 }^n\hat{J}_\gd(\lambda_i)}.
		\end{align*}
		This also implies that, asymptotically, 
		\begin{align*}
			\dfrac{J_\na - D}{J_\gd}
			&\;=\;
			O\left(
			\max_{\lambda\in[\mf,\Lf]}\sigma_2(\lambda)\right)
			\; = \;
			O(1)\\[0.15 cm]
			\dfrac{J_\na - D}{J_\gd}			
			&\;=\;
			\Omega\left(\min_{\lambda\in[\mf,\Lf]}\sigma_2(\lambda)\right)
			\;=\;
			\Omega(\dfrac{1}{\sqrt{\kappa}})
		\end{align*}
		which completes the proof.
\end{proof}

The next two lemmas provide us with asymptotic bounds on summations of the form $\sum_\binx {1}/{(\lambda_\binx^2 +\mu \lambda_\binx)}$, where $\lambda_i$ are the eigenvalues of the graph Laplacian matrix of a torus network. These bounds allow us to combine Lemma~\ref{lem:GDboundTorus} and Lemma~\ref{th.variance-asymptotic} to evaluate the variance amplification of Nesterov's accelerated algorithm.

\begin{mylem}  \label{lem.polarInt}
For an integer $q \gg 1$ and any positive $a=O(q^3)$,  we have
	\begin{align*}
		\sum_{0 \, \neq \, i \, \in \, \Z_q^d} \dfrac{1}{\norm{i}^4 + a\norm{i}^2}
		\;\approx\;
		q^{d-4}\int_{{1}/{q}}^{1}\dfrac{r^{d-1}}{r^4+wr^2}\,\mrd r
	\end{align*}
	where  $\omega={a}/{q^2}$.
\end{mylem}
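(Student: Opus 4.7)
The plan is to recognize the sum as a Riemann-type approximation of a continuous integral, and then reduce the $d$-dimensional integral to a one-dimensional one via polar coordinates. To this end, I would first choose representatives of $\Z_q^d$ in the centered fundamental domain $\{-\lfloor q/2\rfloor,\ldots,\lfloor q/2\rfloor\}^d$, so that $\|i\|$ denotes the usual Euclidean norm of an integer vector. I would then introduce the rescaled variable $x \DefinedAs i/q$, which takes values in the lattice $L \DefinedAs \tfrac{1}{q}\Z^d \cap [-\tfrac{1}{2},\tfrac{1}{2}]^d$ with spacing $1/q$. Under this change of variables,
\[
\dfrac{1}{\|i\|^4 \,+\, a\|i\|^2}
\;=\;
\dfrac{1}{q^4}\,\dfrac{1}{\|x\|^4 \,+\, \omega\,\|x\|^2}
\]
with $\omega = a/q^2$, so that the full sum becomes $q^{-4}\sum_{0 \neq x\in L} g(\|x\|)$ where $g(r) \DefinedAs 1/(r^4 + \omega r^2)$.

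Next I would apply the standard Riemann sum approximation $\sum_{x\in L} g(\|x\|)\,(1/q)^d \approx \int_{B} g(\|x\|)\,\mrd x$, where $B \DefinedAs [-\tfrac12,\tfrac12]^d$, which converts the sum into $q^{d-4}\int_B g(\|x\|)\,\mrd x$. Because the integrand is radially symmetric, passing to spherical coordinates gives $\int_B g(\|x\|)\,\mrd x \asymp \int_{1/q}^{1} g(r)\,r^{d-1}\,\mrd r$ up to a dimension-dependent multiplicative constant (the surface area of the unit sphere), where the lower limit $1/q$ reflects the exclusion of the origin (the minimum lattice norm $\|x\|$ is $1/q$) and the upper limit is a bounded constant of order unity absorbed into the $\approx$ symbol. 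Combining these two steps yields the claimed equivalence.

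The main obstacle is justifying the Riemann sum approximation uniformly in $\omega$, given that $\omega$ ranges up to $O(q)$ (since $a=O(q^3)$). The function $g$ has a singularity of order $2$ at the origin when $\omega>0$ and of order $4$ when $\omega\to 0$, which inflates the maximum value of $g$ over $L$ to roughly $q^{4-d}\cdot q^{d}/\omega = q^4/\omega$ or $q^4$. To handle this carefully, I would split $L$ into a near-origin shell of lattice points with $\|x\|\in[1/q, c/q]$ for some constant $c$ and the remainder $\|x\|\ge c/q$. On the latter, $g$ is monotone-decreasing in $\|x\|$ and one can compare the contribution of each lattice cell of volume $q^{-d}$ to the integral over that cell using $g(y)\asymp g(x)$ for $y$ in a cell centered at $x$ (since $g$ varies by a bounded factor over cells of size $1/q$ when $\|x\|\gtrsim 1/q$). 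On the near-origin shell, the finite number of lattice points (bounded by $c^d$) contribute $\Theta(g(1/q))$, which matches the contribution of the corresponding integral over $\|x\|\in[1/q,c/q]$.

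Finally, converting to polar coordinates on the integral side is routine: since $g$ depends only on $\|x\|$, integration over angles contributes a fixed dimensional constant that is absorbed into the $\approx$ notation, and the radial factor $r^{d-1}$ appears from the Jacobian, yielding the claimed one-dimensional integral $\int_{1/q}^{1} r^{d-1}/(r^4 + \omega r^2)\,\mrd r$.
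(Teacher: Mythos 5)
Your proposal is correct and follows essentially the same route as the paper's proof: rescale by $1/q$, interpret the sum as a Riemann approximation of $\int \mrd x/(\norm{x}^4+\omega\norm{x}^2)$ over the punctured cube, and pass to polar coordinates. The only differences are cosmetic---you use centered representatives of $\Z_q^d$ rather than the positive orthant, and you spell out the near-origin and uniformity-in-$\omega$ estimates that the paper dismisses as ``straightforward''---and these are all absorbed into the $\Theta$ constants.
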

\begin{proof}
The function $h(x) \DefinedAs \norm{x}^4 + \omega\norm{x}^2$ is strictly increasing over the positive orthant ($x\succ0$) and $h((1/q){\one})$ goes to 0 as $q$ goes to infinity where $\one\in\R^d$ is the vector of all ones. Therefore, using the lower and upper Riemann sum approximations, it is straightforward to show that
	$$
	\int \cdots\int_{\Delta\le \norm{x}\le 1}\dfrac{1}{h(x)}\mrd x_1\,\cdots\,\mrd x_d \approx \Delta^d
	\sum_{0\neq i\in\Z_q^d} \dfrac{1}{
	\left(\sum_{l=1}^{d}(\Delta i_l)^2\right)^2
	+
	\omega
	\sum_{l=1}^{d}(\Delta i_l)^2
	}
	$$
	where $\Delta = {1}/{q}$ is the incremental step in the Riemann approximation.
	Therefore, since $\omega = a\Delta^2$, we can write
	$$
	\sum_{0\neq i\in\Z_q^d} \dfrac{1}{\norm{i}^4 + a\norm{i}^2}
	\approx \Delta^{4-d}
	\int \cdots\int_{\Delta\le \norm{x}\le 1}\dfrac{1}{h(x)}\mrd x_1\,\cdots\,\mrd x_d.
	$$
	Finally,  we obtain the result by transforming the integral into a $d$-dimensional polar coordinate system, i.e.,
	\begin{align*}
		\int \cdots\int_{\Delta\le \norm{x}\le 1}\dfrac{1}{h(x)}\mrd x_1\,\cdots\,\mrd x_d
		&\,\approx\,
		\int_{\Delta}^{1}\dfrac{r^{d-1}}{r^4+\omega r^2}\,\mrd r.
	\end{align*}
\end{proof}	
	
	\begin{mylem}
	\label{lem:sumSecPol}
	Let $ \lambda_\binx $ be the eigenvalues of  the Laplacian matrix for the $d$-dimensional torus $ \T_{{\m}}^d $. In the limit of large $ {\m} $, for any $\mu=O({\m})$, we have
	\begin{align}\label{eq.lem.sumSecPol}
	\sum_{0 \, \neq \, \binx \, \in \, \Z_{{\m}}^d}
	\dfrac{1}{\lambda_\binx^2 + \mu\lambda_\binx}
	\; =\; 
	\Theta\left( \m^d \int_{\tfrac{1}{{\m}}}^{1}\dfrac{r^{d-1}}{r^4+\omega r^2}\,\mrd r\right)
	\end{align}
	where $\omega=\Theta(\mu)$.
\end{mylem}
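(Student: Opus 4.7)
The plan is to reduce the claimed sum to the one estimated in Lemma~\ref{lem.polarInt} by showing that the torus eigenvalues satisfy $\lambda_i \asymp \|i\|^2/\m^2$ when indices are identified with their canonical representatives in $\{-\lfloor\m/2\rfloor,\ldots,\lfloor\m/2\rfloor\}$. This is the bridge that converts a sum over Laplacian eigenvalues into the lattice sum $\sum 1/(\|i\|^4 + a\|i\|^2)$ handled by Lemma~\ref{lem.polarInt}.

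First, I would exploit the symmetry $\cos(2\pi i_l/\m) = \cos(2\pi(\m-i_l)/\m)$ to replace each index $i_l \in \Z_\m$ with its signed representative $\tilde{i}_l \in \{-\lfloor\m/2\rfloor,\ldots,\lfloor\m/2\rfloor\}$, so that $|\tilde{i}_l| \leq \m/2$. Writing $2(1-\cos(2\pi \tilde{i}_l/\m)) = 4\sin^2(\pi \tilde{i}_l/\m)$ and invoking Jordan's inequality $\frac{2}{\pi}|\theta| \leq |\sin\theta| \leq |\theta|$ on $[-\pi/2,\pi/2]$ gives the two-sided bound
$$
\dfrac{16}{\m^2}\,\tilde{i}_l^2
\;\le\;
4\sin^2(\pi \tilde{i}_l/\m)
\;\le\;
\dfrac{4\pi^2}{\m^2}\,\tilde{i}_l^2.
$$
Summing over $l=1,\ldots,d$, this yields $\lambda_i \asymp \|\tilde{i}\|^2/\m^2$ uniformly in $i\neq 0$, where $\|\tilde{i}\|$ is the Euclidean norm of the signed representative vector.

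Next, I would substitute this into the left-hand side of~\eqref{eq.lem.sumSecPol}. Since both numerator and denominator scale predictably, one obtains
$$
\sum_{0\neq i\in \Z_\m^d}\dfrac{1}{\lambda_i^2 + \mu\lambda_i}
\;\asymp\;
\sum_{0\neq \tilde{i}} \dfrac{1}{\bigl(\|\tilde{i}\|^2/\m^2\bigr)^2 + \mu\,\|\tilde{i}\|^2/\m^2}
\;=\;
\m^4 \sum_{0\neq \tilde{i}} \dfrac{1}{\|\tilde{i}\|^4 + \mu\m^2\,\|\tilde{i}\|^2}.
$$
Now set $a \DefinedAs \mu\m^2$. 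The hypothesis $\mu = O(\m)$ gives $a = O(\m^3)$, which is exactly the regime covered by Lemma~\ref{lem.polarInt}. Applying that lemma (with $q = \lfloor\m/2\rfloor$, which is $\Theta(\m)$) produces
$$
\sum_{0\neq \tilde{i}} \dfrac{1}{\|\tilde{i}\|^4 + a\,\|\tilde{i}\|^2}
\;\approx\;
\m^{d-4}\int_{1/\m}^{1}\dfrac{r^{d-1}}{r^4 + \omega r^2}\,\mrd r,
\quad \omega \,=\, a/\m^2 \,=\, \mu.
$$
Multiplying by the $\m^4$ prefactor yields the claimed $\Theta\bigl(\m^d\int_{1/\m}^1 r^{d-1}/(r^4+\omega r^2)\,\mrd r\bigr)$ with $\omega = \Theta(\mu)$.

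The main obstacle is step one: ensuring that the approximation $\lambda_i \asymp \|\tilde i\|^2/\m^2$ is uniform over all nonzero indices (including those with $|\tilde{i}_l|$ close to $\m/2$, where cosine is far from its Taylor expansion). This is why I use Jordan's inequality rather than a Taylor approximation — it provides matching upper and lower bounds of the same order on the full range $|\tilde i_l|\le \m/2$. A secondary subtlety is that the quadratic-in-$\lambda$ combination $\lambda_i^2 + \mu\lambda_i$ must be sandwiched between constants times $\|\tilde i\|^4/\m^4 + \mu\|\tilde i\|^2/\m^2$; this follows because for positive $x,y$ and constants $0<c_1\le c_2$, the bound $c_1 x \le y \le c_2 x$ implies $c_1^2(x^2+\mu x)\le y^2 + \mu y \le c_2^2(x^2 + \mu x)$ when $\mu\ge 0$, so no cross terms spoil the $\Theta$ estimate.
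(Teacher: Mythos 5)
Your proof is correct and follows essentially the same route as the paper's: both replace $1-\cos$ by matching quadratic bounds (your Jordan's inequality on $\sin$ is equivalent to the paper's $x^2/\pi^2 \le 1-\cos x \le x^2$ on $[-\pi,\pi]$), fold the torus indices onto $\Z_{\lfloor \m/2\rfloor}^d$, and reduce to Lemma~\ref{lem.polarInt} with $a=\Theta(\mu \m^2)$. One harmless slip in your closing remark: $c_1 x\le y$ gives $y^2+\mu y\ge c_1^2x^2+c_1\mu x\ge \min(c_1,c_1^2)(x^2+\mu x)$, so the stated lower constant $c_1^2$ is only valid for $c_1\le 1$ (here $c_1=16$); the $\Theta$ conclusion is unaffected.
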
 
	\begin{proof}
	Let $ \zeta \,\DefinedAs\, \sum_{0\neq \binx\in\Z_{\m}^d}\dfrac{1}{\lambda_\binx^2+\mu\lambda_\binx} $, where  $\lambda_\binx=2\sum_{l=1}^{d}\left(1-\cos(\binx_l\dfrac{2\pi}{{\m}})\right)$ are the eigenvalues of the graph Laplacian matrix.  Since $ 1-\cos(\cdot-\pi)$ is an even function, for large ${\m} $,
	\begin{align*}
	\zeta
	 \,\approx\, 
	2^d \sum_{0\neq \binx\in\Z_q^d}\dfrac{1}{\lambda_\binx^2 + \mu\lambda_\binx}
	\end{align*} 
	where $ q = \lfloor {{\m}}/{2}\rfloor $. It is well-known that the function $ 1-\cos(x) $ can be bounded by quadratic functions as ${x^2}/{\pi^2} \le 1-\cos(x) \le x^2$ for any $ x\in[-\pi,\pi] $. Now, since for any $ \binx\in\Z_q^d $,  $ \binx_l \tfrac{2\pi}{{\m}}\in[0,\pi]$ for all $ l $, we can use these quadratic bounds to obtain
	\begin{align}\label{eq:temp1}
	\zeta
	\,\approx\,
	\m^4 \sum_{0\neq \binx\in\Z_q^d}\dfrac{1}{\norm{\binx}^4\,+\,c\mu \m^2\norm{\binx}^2}
	\end{align}
	where $c$ is a bounded constant. Finally, equation~\eqref{eq.lem.sumSecPol} follows from Lemma~\ref{lem.polarInt} where we let $a=c\mu \m^2$ and $q\approx {{\m}}/{2}$.
\end{proof}
	
	The following proposition characterizes the network-size-normalized asymptotic variance amplification of noisy consensus algorithms for $d$-dimensional torus networks. This result is used to prove Theorem~\ref{cor:orders}. 
	
\begin{myprop}\label{prop:orders}
	Let $ \Lap \in \R^{n\times n} $ be the graph Laplacian of the $d$-dimensional undirected torus $ \T_{\m}^d $ with $n = \m^d \gg 1$ nodes. For convex quadratic optimization problem~\eqref{eq.Lmin},  the network-size-normalized asymptotic variance amplification $ \bar{J}/n $ of the first-order algorithms on the subspace $ \one^\perp $ is determined by
	\vsp
	\begin{center}
		\begin{tabular}{l|@{\;}cccccc}
			& & {$ d=1 $} & {$ d=2 $} & {$ d=3 $} & {$ d=4 $} & {$ d=5 $}\vspace{0.0 in}
			\\[0.00cm]\hline\\&&&&&&\\[-1.15 cm]
			{\em Gradient} & & {$ \Theta(n) $}& {$ \Theta(\log \, n) $} & {$ \Theta(1) $} & {$ \Theta(1) $} & {$ \Theta(1) $}
			\\[0.15cm]
			{\em Nesterov} & & {$ \Theta(n^2) $}& {$ \Theta(\sqrt{n}\log \, n) $} & {$ \Theta(n^{1/6}) $} & {$ \Theta(\log \, n) $} & {$ \Theta(1) $}
			\\[0.15cm]
			{\em Polyak} & & {$ \Theta(n^2) $}& {$ \Theta(\sqrt{n} \, \log \, n) $} & {$ \Theta(n^{1/3}) $} & {$ \Theta(n^{1/4}) $} & {$ \Theta(n^{1/5}) $}.
		\end{tabular}
	\end{center}
\end{myprop}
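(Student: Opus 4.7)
My plan is to prove the three rows of the table separately by combining the closed-form modal expressions of Theorem~\ref{th.varianceJhat} with the explicit eigenvalue structure~\eqref{eq:eigTorusExpression} of the torus Laplacian. Throughout I will use $n = \m^d$, the condition number $\kappa = \Theta(\m^2) = \Theta(n^{2/d})$ from~\eqref{eq.kappa_d}, and the fact that the extremal eigenvalues satisfy $\Lf = \Theta(1)$ and $\mf = \Theta(1/\m^2)$, which follow from Taylor-expanding the cosine terms in~\eqref{eq:eigTorusExpression} near $i = 0$ and $i \approx \m/2$.

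For the gradient descent row, I would invoke Lemma~\ref{lem:GDboundTorus} directly, yielding $\bar{J}_{\gd} = \Theta(B(\m))$. Dividing by $n = \m^d$ and using the definition of $B$ produces $\Theta(\m^2) = \Theta(n)$ when $d = 1$, $\Theta(\log\m) = \Theta(\log n)$ when $d = 2$, and $\Theta(1)$ when $d \in \{3,4,5\}$, exactly as stated. For the heavy-ball row, Theorem~\ref{thm.RelationJhbgd} establishes the eigenvalue-independent identity $J_{\hb}/J_{\gd} = (\sqrt{\kappa}+1)^4 / (8\sqrt{\kappa}(\kappa+1)) = \Theta(\sqrt{\kappa})$ for $\kappa \gg 1$, so multiplying the gradient descent scaling by $\sqrt{\kappa} = \Theta(n^{1/d})$ immediately gives the third row.

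For Nesterov's method, I would first apply Lemma~\ref{th.variance-asymptotic}, which decomposes $J_{\na} = D + X$ with $c_1 J_{\gd}/\sqrt{\kappa} \le X \le c_2 J_{\gd}$, so $J_{\na} = \Theta(D + J_{\gd})$. Substituting the parameters of Table~\ref{tab:rates} into the definition of $D$ yields $\alpha_{\na} = \Theta(1)$ and $\mu := (1-\beta)/(\alpha_{\na}\beta) = \Theta(\sqrt{\Lf\,\mf}) = \Theta(1/\m)$. Verifying the hypothesis $\mu = O(\m)$ then allows me to invoke Lemma~\ref{lem:sumSecPol}, which reduces $D$ to
\[
D \;=\; \Theta\!\left(\m^d \int_{1/\m}^{1} \frac{r^{d-3}}{r^2 \,+\, \omega}\,\mrd r\right), \qquad \omega \;=\; \Theta(1/\m).
\]
I would then evaluate this integral for each $d \in \{1,\dots,5\}$ by splitting the range at the crossover point $r = \sqrt{\omega} = \Theta(1/\sqrt{\m})$, where the integrand transitions from $r^{d-3}/\omega$ on $[1/\m,\sqrt{\omega}]$ to $r^{d-5}$ on $[\sqrt{\omega},1]$. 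The resulting integral values $\Theta(\m^2)$, $\Theta(\m\log\m)$, $\Theta(\sqrt{\m})$, $\Theta(\log\m)$, $\Theta(1)$ for $d = 1,\dots,5$ produce $D/n$ equal to $\Theta(n^2)$, $\Theta(\sqrt{n}\log n)$, $\Theta(n^{1/6})$, $\Theta(\log n)$, $\Theta(1)$, matching the second row. A direct comparison with Step~1 confirms that $D$ dominates $J_{\gd}$ in every case (up to a log factor at $d=4$), and the lower bound in Lemma~\ref{th.variance-asymptotic} guarantees $J_{\na} = \Omega(D)$, so the slack term $X$ is absorbed into $\Theta(D)$.

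The main obstacle is the final integral evaluation. The crossover at $r \sim \sqrt{\omega}$ must be handled carefully: for $d \in \{1,2,3\}$ the small-$r$ regime dominates and yields the fractional exponents, at $d = 4$ both regimes contribute matching logarithms that combine into $\log n$, and only at $d = 5$ does the bulk $r \sim 1$ regime take over, giving an $n$-independent constant. Identifying the right dominant regime at each $d$, and keeping track of which side of the crossover contributes the leading term, is the delicate bookkeeping that distinguishes Nesterov's $\Theta(n^{1/6})$ at $d=3$ from the heavy-ball $\Theta(n^{1/3})$ at the same dimension.
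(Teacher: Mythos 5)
Your proposal is correct and follows essentially the same route as the paper: Lemma~\ref{lem:GDboundTorus} for the gradient row, the factor-of-$\Theta(\sqrt{\kappa})$ identity of Theorem~\ref{thm.RelationJhbgd} for the Polyak row, and the decomposition of Lemma~\ref{th.variance-asymptotic} combined with the integral of Lemma~\ref{lem:sumSecPol} for the Nesterov row, the only cosmetic difference being that you evaluate $\int_{1/\m}^{1} r^{d-1}/(r^4+\omega r^2)\,\mrd r$ by splitting at the crossover $r\sim\sqrt{\omega}$ instead of via the explicit antiderivatives the paper lists. The one narrative slip---at $d=4$ the $\Theta(\log\m)$ term comes entirely from the bulk regime $r\in[\sqrt{\omega},1]$ while the small-$r$ piece is only $\Theta(1)$---does not affect any of the stated scalings.
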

\vsp
\begin{proof}
	We prove the result for the three algorithms separately.
	
\begin{enumerate}
	\item For gradient descent, the result follows from dividing the asymptotic bounds established in Lemma~\ref{lem:GDboundTorus} with the total number of nodes $ n=\m^d $.

	\item For  Nesterov's algorithm, we use the relation established in Lemma~\ref{th.variance-asymptotic} to write
\begin{subequations}\label{eq.tempCute1}
\begin{align}
\bar{J}_{\na}/n
\,-\,
\dfrac{c}{n}
\ds{\sum_{\binx}}
\;
\dfrac{1}{\lambda_\binx^2 +\mu \lambda_\binx} 
&\;=\;
O\left( \bar{J}_{\gd}/n\right)\\[0.15 cm]
\bar{J}_{\na}/n
\,-\,
\dfrac{c}{n}
\ds{\sum_{\binx}}
\;
\dfrac{1}{\lambda_\binx^2 +\mu \lambda_\binx}
&\;=\; 
\Omega\left(\, \bar{J}_{\gd}/(n\sqrt{\kappa})\right) 
\end{align}
\end{subequations} 
	where $c=2/\left((3\beta+1)\alpha_\na^2\right)\approx 9d^2/2$ and $ \mu = (1-\beta)/(\alpha_\na\beta) = \Theta({1}/{\sqrt{\kappa}}) = \Theta(\m^{-1})$; see equation~\eqref{eq.kappa_d}. 
	We can use Lemma~\ref{lem:sumSecPol} to compute the second term
	\begin{align}
	\dfrac{1}{n}\sum_{0 \, \neq \, \binx \, \in \, \Z_{\m}^d}
	\dfrac{1}{\lambda_\binx^2 + \mu\lambda_\binx}
	\; =\; 
	\Theta\left( \int_{\tfrac{1}{{\m}}}^{1}\dfrac{r^{d-1}}{r^4+ \omega r^2}\,\mrd r\right)
	\end{align}
	where $\omega=\Theta(\mu)=\Theta(\m^{-1})$. Evaluating the above integral for different values of $d\in \N$ and letting $\omega=\Theta(\m^{-1})$, it is straightforward to show that
	\begin{align*}
		\int_{\tfrac{1}{{\m}}}^{1}\dfrac{r^{d-1}}{r^4 \,+\,\omega r^2}\,\mrd r 
		\;=\;
		\left\{
		\ba{lrcl}
		\Theta(\m^2)& d&\!\!\!=\!\!\!&1\\[0.cm]
		\Theta({\m}\log\,{\m})& d&\!\!\!=\!\!\!&2\\[0.cm]
		\Theta(\sqrt{{\m}})& d&\!\!\!=\!\!\!&3\\[0.cm]
		\Theta(\log\,{\m})& d&\!\!\!=\!\!\!&4\\[0.cm]
		\Theta(1)& d&\!\!\!=\!\!\!&5.
		\ea
		\right.
	\end{align*}
	Finally, the result follows from the asymptotic values for $\bar{J}_\gd/n$ (shown in Part 1) and substituting for the second term on the left-hand-side of equation~\eqref{eq.tempCute1} from the above asymptotic values and using $n=\m^d$. We note that we used the following integrals to evaluate $\bar{J}_\na$,
	\begin{align*}
	\int\dfrac{1}{r^4+\omega r^2}\,\mrd r 
	&\,=\, -\dfrac{\tan^{-1}(\dfrac{r}{\sqrt{\omega}})}{\omega^{3/2}}\,-
	\,\dfrac{1}{r\omega}\\
	\int\dfrac{r}{r^4+\omega r^2}\,\mrd r 
	&\,=\,
	-\dfrac{\log{(r^2+\omega)}-2\log{(r)}}{2\omega}\\
	\int\dfrac{r^2}{r^4+\omega r^2}\,\mrd r 
	&\,=\, \dfrac{\tan^{-1}{(\dfrac{r}{\sqrt{\omega}})}}{\sqrt{\omega}}\\
	\int\dfrac{r^3}{r^4+\omega r^2}\,\mrd r
	&\,=\, \tfrac{1}{2}\log(r^2+\omega)\\
	\int\dfrac{r^4}{r^4+\omega r^2}\,\mrd r
	&\,=\, r-\sqrt{\omega}\tan^{-1}(\dfrac{r}{\sqrt{\omega}}).
	\end{align*}

	\item The result for the heavy-ball method directly follows from the first part of the proof, the relationship between variance amplifications of gradient descent and the heavy-ball method in Theorem~\ref{thm.RelationJhbgd}, and equation~\eqref{eq.kappa_d}.
\end{enumerate}
\end{proof}

We now use Proposition~\ref{prop:orders} to proof Theorem~\ref{cor:orders} as follows. 

\subsubsection*{Proof of Theorem~\ref{cor:orders}:}

As stated in~\eqref{eq.kappa_d}, the condition number satisfies
$
\kappa 
= 
\Theta(n^{2/d}) 
$
and the result follows from combining this asymptotic relation with those provided in Proposition~\ref{prop:orders}.

	\subsubsection*{Computational experiments}
\begin{figure*}
	\begin{tabular}{rcccc}
		\hspace{-.0cm}
		\begin{tabular}{c}
			\vspace{.4cm}
			\rotatebox{90}{$\bar{J}/n$}
		\end{tabular}
		&
		\hspace{-0.75cm}
		\begin{tabular}{c}
			\includegraphics[width=.23\textwidth]{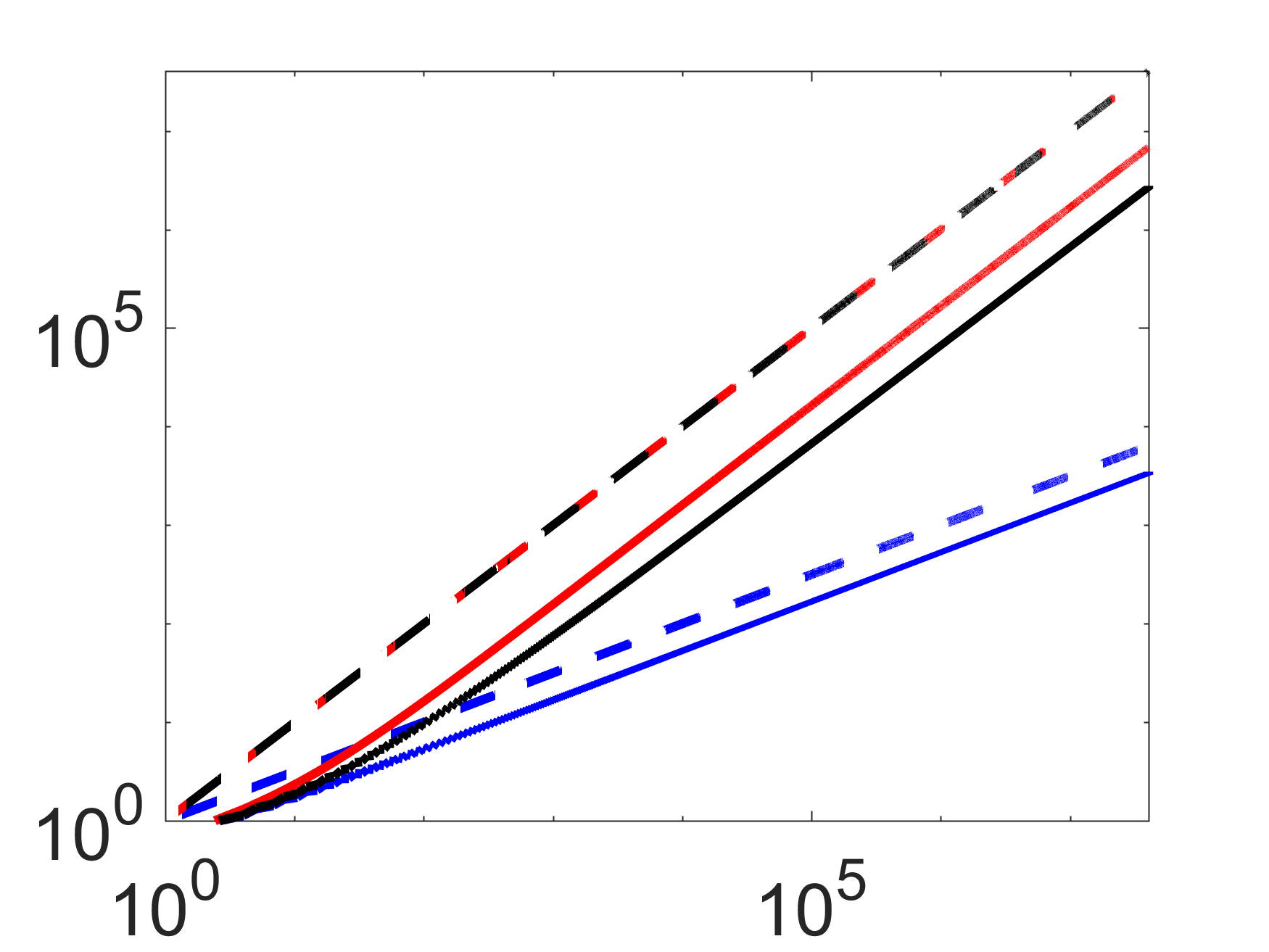}
			\\
			{$ \kappa $}
		\end{tabular}
		&
		\hspace{-1cm}
		\begin{tabular}{c}
			\includegraphics[width=.23\textwidth]{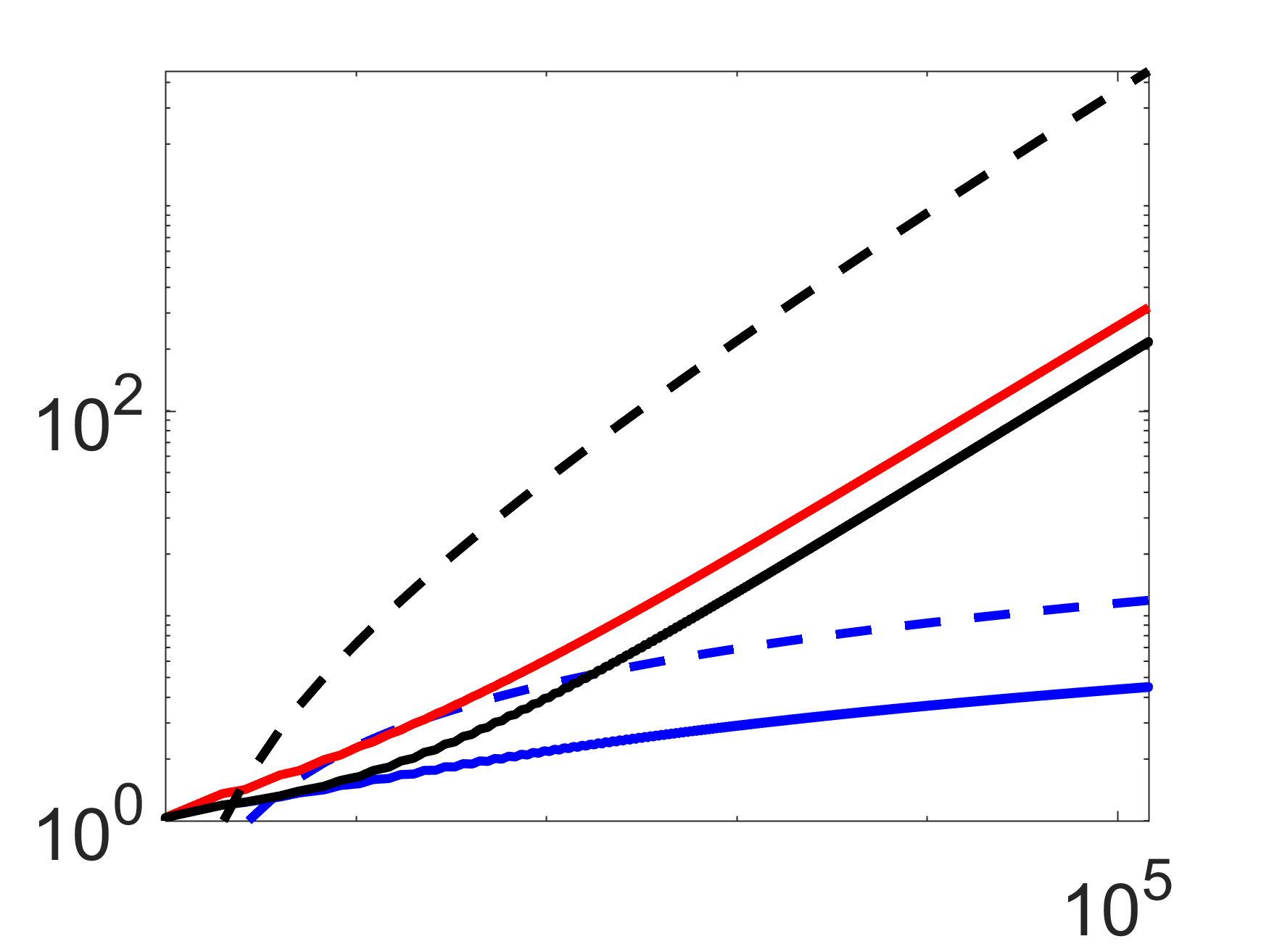}
			\\
			{$ \kappa $}
		\end{tabular}
		&
		\hspace{-1cm}
		\begin{tabular}{c}
			\includegraphics[width=.23\textwidth]{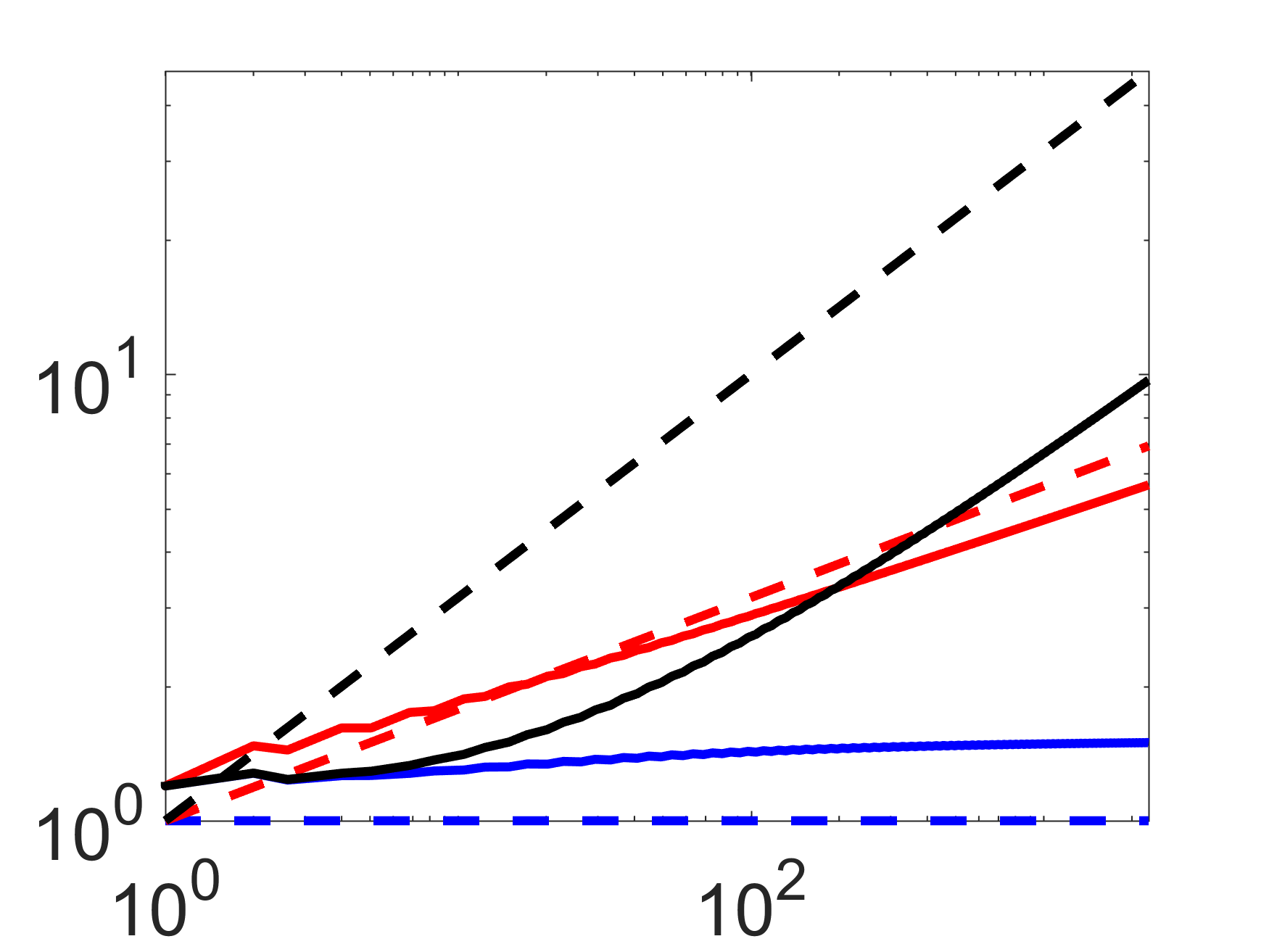}
			\\
			{$ \kappa $}
		\end{tabular}
		&
		\hspace{-1cm}
		\begin{tabular}{c}
			\includegraphics[width=.23\textwidth]{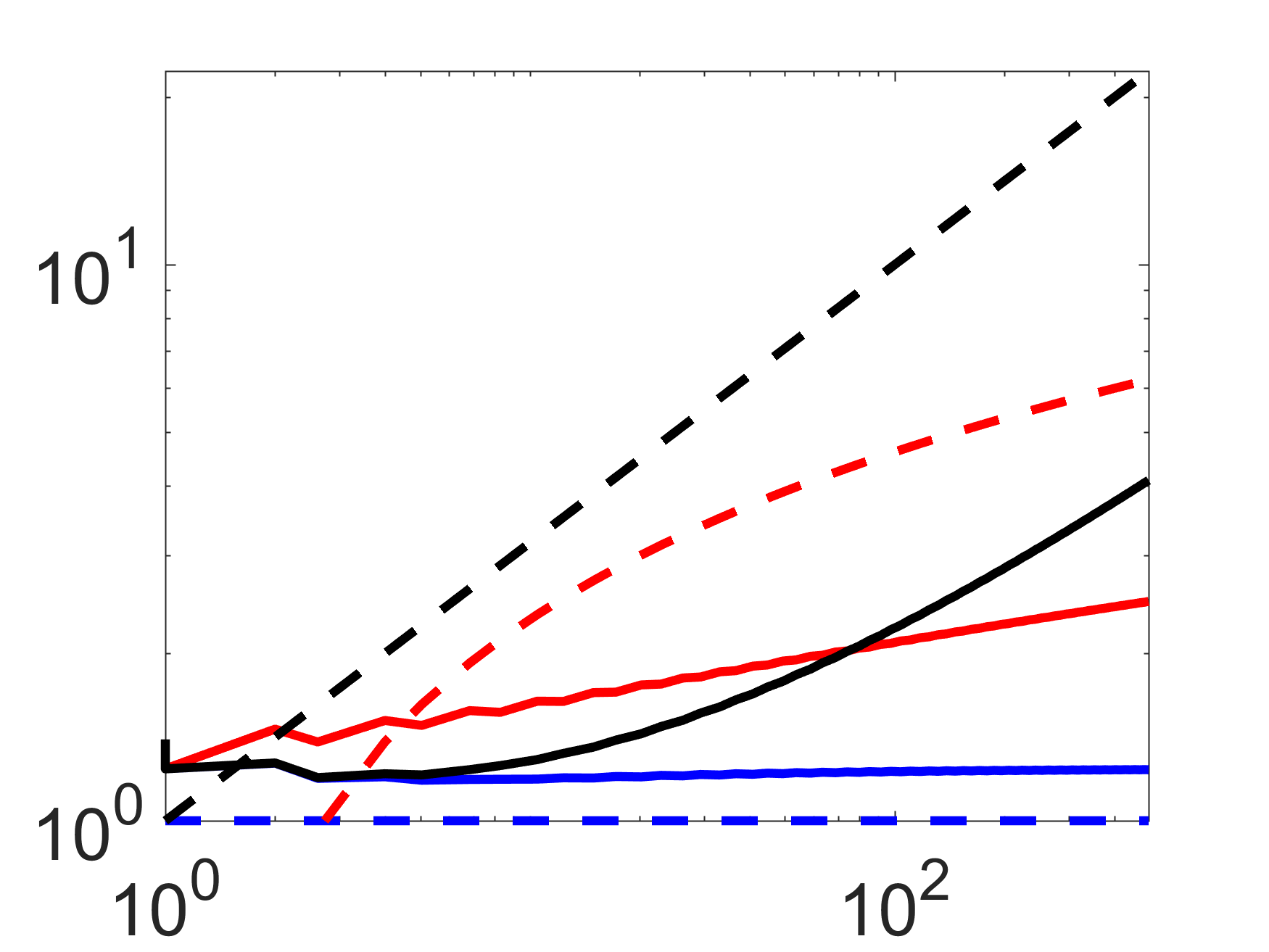}
			\\
			{$ \kappa $}
		\end{tabular}
		\\[-.3cm]
		&
		\hspace{-1.4 cm}
		\begin{tabular}{cl}
			\begin{tabular}{c}
				\subfloat[\label{fig3.1}]{}	
			\end{tabular} & \hspace{-0.6 cm}
			\begin{tabular}{c}\vspace{-0.43 cm}
				$ d=1 $
			\end{tabular}
		\end{tabular}
		&
		\hspace{-1.58cm}
		\begin{tabular}{cl}
			\begin{tabular}{c}
				\subfloat[\label{fig3.2}]{}	
			\end{tabular} & \hspace{-0.6 cm}
			\begin{tabular}{c}\vspace{-0.43 cm}
				$ d=2 $
			\end{tabular}
		\end{tabular}
		&
		\hspace{-1.58cm}
		\begin{tabular}{cl}
			\begin{tabular}{c}
				\subfloat[\label{fig3.3}]{}	
			\end{tabular} & \hspace{-0.6 cm}
			\begin{tabular}{c}\vspace{-0.43 cm}
				$ d=3 $
			\end{tabular}
		\end{tabular}
		&
		\hspace{-1.58cm}
		\begin{tabular}{cl}
			\begin{tabular}{c}
				\subfloat[\label{fig3.4}]{}	
			\end{tabular} & \hspace{-0.6 cm}
			\begin{tabular}{c}\vspace{-0.43 cm}
				$ d=4 $
			\end{tabular}
		\end{tabular}
	\end{tabular}
	\caption{The dependence of the network-size normalized performance measure $\bar{J}/n $ of the first-order  algorithms for $d$-dimensional torus $ \T_{\m}^d $ with $ n=\m^d $ nodes on condition number $ \kappa $. The blue, red, and black curves correspond to the gradient descent, Nesterov's method, and the heavy-ball method, respectively. Solid curves mark the actual values of $ \bar{J}/n $ obtained using the expressions in Theorem~\ref{th.varianceJhat} and the dashed curves mark the trends established in Theorem~\ref{cor:orders}.}
	\label{fig3}
\end{figure*}

To complement our asymptotic theoretical results, we compute the performance measure $\bar{J}$ in~\eqref{eq.Jbar} for the consensus problem over $d$-dimensional torus $ \T_{\m}^d $ with $n = \m^d$ nodes for different values of $ {\m} $ and $ d $. We use expression~\eqref{eq:eigTorusExpression} for the eigenvalues of the graph Laplacian $\Lap$ to evalute the formulae provided in Theorem~\ref{th.varianceJhat} for each algorithm. Figure~\ref{fig3} illustrates network-size normalized variance amplification $ \bar{J}/n $ vs. condition number $\kappa$ and verifies the asymptotic relations provided in Theorem~\ref{cor:orders}. It is noteworthy that, {\em even though our analysis is asymptotic in the condition number (i.e., it assumes that $ \kappa \gg 1 $), our computational experiments exhibit similar scaling trends for small values of $ \kappa $ as well.}

	\vspace*{-2ex}


\begin{thebibliography}{10}
\providecommand{\url}[1]{#1}
\csname url@rmstyle\endcsname
\providecommand{\newblock}{\relax}
\providecommand{\bibinfo}[2]{#2}
\providecommand\BIBentrySTDinterwordspacing{\spaceskip=0pt\relax}
\providecommand\BIBentryALTinterwordstretchfactor{4}
\providecommand\BIBentryALTinterwordspacing{\spaceskip=\fontdimen2\font plus
\BIBentryALTinterwordstretchfactor\fontdimen3\font minus
  \fontdimen4\font\relax}
\providecommand\BIBforeignlanguage[2]{{%
\expandafter\ifx\csname l@#1\endcsname\relax
\typeout{** WARNING: IEEEtran.bst: No hyphenation pattern has been}%
\typeout{** loaded for the language `#1'. Using the pattern for}%
\typeout{** the default language instead.}%
\else
\language=\csname l@#1\endcsname
\fi
#2}}

\bibitem{botcun05}
L.~Bottou and Y.~{Le Cun}, ``On-line learning for very large data sets,''
  \emph{Appl. Stoch. Models Bus. Ind.}, vol.~21, no.~2, pp. 137--151, 2005.

\bibitem{becteb09}
A.~Beck and M.~Teboulle, ``A fast iterative shrinkage-thresholding algorithm
  for linear inverse problems,'' \emph{SIAM J. Imaging Sci.}, vol.~2, no.~1,
  pp. 183--202, 2009.

\bibitem{honrazluopan15}
M.~Hong, M.~Razaviyayn, Z.-Q. Luo, and J.-S. Pang, ``A unified algorithmic
  framework for block-structured optimization involving big data: With
  applications in machine learning and signal processing,'' \emph{{IEEE}
  {S}ignal {P}rocess. {M}ag.}, vol.~33, no.~1, pp. 57--77, 2016.

\bibitem{botcurnoc18}
L.~Bottou, F.~Curtis, and J.~Nocedal, ``Optimization methods for large-scale
  machine learning,'' \emph{SIAM Rev.}, vol.~60, no.~2, pp. 223--311, 2018.

\bibitem{nes13}
Y.~Nesterov, ``Gradient methods for minimizing composite objective functions,''
  \emph{Math. Program.}, vol. 140, no.~1, pp. 125--161, 2013.

\bibitem{susmardahhin13}
I.~Sutskever, J.~Martens, G.~Dahl, and G.~Hinton, ``On the importance of
  initialization and momentum in deep learning,'' in \emph{Proc. ICML}, 2013,
  pp. 1139--1147.

\bibitem{pol64}
B.~T. Polyak, ``Some methods of speeding up the convergence of iteration
  methods,'' \emph{USSR Comput. Math. \& Math. Phys.}, vol.~4, no.~5, pp.
  1--17, 1964.

\bibitem{nes83}
Y.~Nesterov, ``A method for solving the convex programming problem with
  convergence rate {$O(1/k^2)$},'' in \emph{Dokl. Akad. Nauk SSSR}, vol.~27,
  1983, pp. 543--547.

\bibitem{nes18book}
Y.~Nesterov, \emph{Lectures on convex optimization}.\hskip 1em plus 0.5em minus
  0.4em\relax Springer Optimization and Its Applications, 2018, vol. 137.

\bibitem{macduvada15}
D.~Maclaurin, D.~Duvenaud, and R.~Adams, ``Gradient-based hyperparameter
  optimization through reversible learning,'' in \emph{Proc. ICML}, 2015, pp.
  2113--2122.

\bibitem{ben00}
Y.~Bengio, ``Gradient-based optimization of hyperparameters,'' \emph{Neural
  Comput.}, vol.~12, no.~8, pp. 1889--1900, 2000.

\bibitem{beirazshatar17}
A.~Beirami, M.~Razaviyayn, S.~Shahrampour, and V.~Tarokh, ``On optimal
  generalizability in parametric learning,'' in \emph{Proc. Neural Information
  Processing (NIPS)}, 2017, pp. 3458--3468.

\bibitem{fazgekakmes18}
M.~Fazel, R.~Ge, S.~M. Kakade, and M.~Mesbahi, ``Global convergence of policy
  gradient methods for the linear quadratic regulator,'' in \emph{Proc. Int'l
  Conf. Machine Learning}, 2018, pp. 1467--1476.

\bibitem{mohzarsoljovTAC19}
H.~Mohammadi, A.~Zare, M.~Soltanolkotabi, and M.~R. Jovanovi\'c, ``Convergence
  and sample complexity of gradient methods for the model-free linear quadratic
  regulator problem,'' \emph{IEEE Trans. Automat. Control}, 2019, submitted;
  also arXiv:1912.11899.

\bibitem{mohsoljovACC20}
H.~Mohammadi, M.~Soltanolkotabi, and M.~R. Jovanovi\'c, ``Random search for
  learning the linear quadratic regulator,'' in \emph{Proceedings of the 2020
  American Control Conference}, Denver, CO, 2020, to appear.

\bibitem{gehuajinyua15}
R.~Ge, F.~Huang, C.~Jin, and Y.~Yuan, ``Escaping from saddle points {--} online
  stochastic gradient for tensor decomposition,'' in \emph{Proceedings of The
  28th Conference on Learning Theory}, vol.~40, 2015, pp. 797--842.

\bibitem{jingenetkakjor17}
C.~Jin, R.~Ge, P.~Netrapalli, S.~M. Kakade, and M.~I. Jordan, ``How to escape
  saddle points efficiently,'' in \emph{Proc. ICML}, vol.~70, 2017, pp.
  1724--1732.

\bibitem{luotse93}
Z.-Q. Luo and P.~Tseng, ``Error bounds and convergence analysis of feasible
  descent methods: a general approach,'' \emph{Ann. Oper. Res.}, vol.~46,
  no.~1, pp. 157--178, 1993.

\bibitem{robmon51}
H.~Robbins and S.~Monro, ``A stochastic approximation method,'' \emph{Ann.
  Math. Statist.}, pp. 400--407, 1951.

\bibitem{nemjudlansha09}
A.~Nemirovski, A.~Juditsky, G.~Lan, and A.~Shapiro, ``Robust stochastic
  approximation approach to stochastic programming,'' \emph{{SIAM} {J}.
  Optim.}, vol.~19, no.~4, pp. 1574--1609, 2009.

\bibitem{dev-phd13}
O.~Devolder, ``Exactness, inexactness and stochasticity in first-order methods
  for large-scale convex optimization,'' Ph.D. dissertation, Louvain-la-Neuve,
  2013.

\bibitem{devglines14}
O.~Devolder, F.~Glineur, and Y.~Nesterov, ``First-order methods of smooth
  convex optimization with inexact oracle,'' \emph{Math. Program.}, vol. 146,
  no. 1-2, pp. 37--75, 2014.

\bibitem{dvugas16}
P.~Dvurechensky and A.~Gasnikov, ``Stochastic intermediate gradient method for
  convex problems with stochastic inexact oracle,'' \emph{J. Optimiz. Theory
  App.}, vol. 171, no.~1, pp. 121--145, 2016.

\bibitem{schroubac11}
M.~Schmidt, N.~L. Roux, and F.~R. Bach, ``Convergence rates of inexact
  proximal-gradient methods for convex optimization,'' in \emph{Proc. Neural
  Information Processing (NIPS)}, 2011, pp. 1458--1466.

\bibitem{dev11}
O.~Devolder, ``Stochastic first order methods in smooth convex optimization,''
  Catholic Univ. Louvain, Louvain-la-Neuve, Belgium, Tech. Rep., 2011.

\bibitem{bac14}
F.~Bach, ``Adaptivity of averaged stochastic gradient descent to local strong
  convexity for logistic regression.'' \emph{J. Mach. Learn. Res.}, vol.~15,
  no.~1, pp. 595--627, 2014.

\bibitem{pol90}
B.~T. Polyak, ``New stochastic approximation type procedures,'' \emph{Automat.
  i Telemekh}, vol.~7, no. 98-107, p.~2, 1990.

\bibitem{poljud92}
B.~T. Polyak and A.~B. Juditsky, ``Acceleration of stochastic approximation by
  averaging,'' \emph{SIAM J. Control Optim.}, vol.~30, no.~4, pp. 838--855,
  1992.

\bibitem{dieflabac17}
A.~Dieuleveut, N.~Flammarion, and F.~Bach, ``Harder, better, faster, stronger
  convergence rates for least-squares regression,'' \emph{J. Mach. Learn.
  Res.}, vol.~18, no.~1, pp. 3520--3570, 2017.

\bibitem{moubac11}
E.~Moulines and F.~Bach, ``Non-asymptotic analysis of stochastic approximation
  algorithms for machine learning,'' in \emph{Proc. Neural Information
  Processing (NIPS)}, 2011, pp. 451--459.

\bibitem{tripuraneni2018averaging}
N.~Tripuraneni, N.~Flammarion, F.~Bach, and M.~I. Jordan, ``Averaging
  stochastic gradient descent on {R}iemannian manifolds,'' 2018,
  arXiv:1802.09128.

\bibitem{bae09}
M.~Baes, ``Estimate sequence methods: extensions and approximations,''
  \emph{IFOR Internal report, ETH, Z{\"u}rich, Switzerland}, 2009.

\bibitem{das08}
A.~d'Aspremont, ``Smooth optimization with approximate gradient,'' \emph{{SIAM}
  {J}. Optim.}, vol.~19, no.~3, pp. 1171--1183, 2008.

\bibitem{aujdos15}
J.-F. Aujol and C.~Dossal, ``Stability of over-relaxations for the
  forward-backward algorithm, application to {FISTA},'' \emph{{SIAM} {J}.
  Optim.}, vol.~25, no.~4, pp. 2408--2433, 2015.

\bibitem{pol87}
B.~T. Polyak, ``Introduction to optimization. optimization software,''
  \emph{Inc., Publications Division, New York}, vol.~1, 1987.

\bibitem{kwasiv72}
H.~Kwakernaak and R.~Sivan, \emph{Linear optimal control systems}.\hskip 1em
  plus 0.5em minus 0.4em\relax Wiley-Interscience, 1972.

\bibitem{xiaboykim07}
L.~Xiao, S.~Boyd, and S.-J. Kim, ``Distributed average consensus with
  least-mean-square deviation,'' \emph{J. Parallel Distrib. Comput.}, vol.~67,
  no.~1, pp. 33--46, 2007.

\bibitem{bamjovmitpat12}
B.~Bamieh, M.~R. Jovanovi\'c, P.~Mitra, and S.~Patterson, ``Coherence in
  large-scale networks: dimension dependent limitations of local feedback,''
  \emph{IEEE Trans. Automat. Control}, vol.~57, no.~9, pp. 2235--2249, 2012.

\bibitem{linfarjovTAC12platoons}
F.~Lin, M.~Fardad, and M.~R. Jovanovi\'c, ``Optimal control of vehicular
  formations with nearest neighbor interactions,'' \emph{IEEE Trans. Automat.
  Control}, vol.~57, no.~9, pp. 2203--2218, September 2012.

\bibitem{dorjovchebulTPS14}
F.~D\"orfler, M.~R. Jovanovi\'c, M.~Chertkov, and F.~Bullo,
  ``Sparsity-promoting optimal wide-area control of power networks,''
  \emph{IEEE Trans. Power Syst.}, vol.~29, no.~5, pp. 2281--2291, September
  2014.

\bibitem{sim16}
J.~W. Simpson-Porco, ``Input/output analysis of primal-dual gradient
  algorithms,'' in \emph{Proc.\ 54th Annual Allerton Conference on
  Communication, Control, and Computing}, 2016, pp. 219--224.

\bibitem{simkammon16}
J.~W. Simpson-Porco, B.~K. Poolla, N.~Monshizadeh, and F.~Dörfler, ``Quadratic
  performance of primal-dual methods with application to secondary frequency
  control of power systems,'' \emph{in Proc.\ 55th IEEE Conf. Decision
  Control}, pp. 1840--1845, 2016.

\bibitem{mohrazjovCDC18}
H.~Mohammadi, M.~Razaviyayn, and M.~R. Jovanovi\'c, ``Variance amplification of
  accelerated first-order algorithms for strongly convex quadratic optimization
  problems,'' in \emph{Proceedings of the 57th IEEE Conference on Decision and
  Control}, Miami, FL, 2018, pp. 5753--5758.

\bibitem{mohrazjovACC19}
H.~Mohammadi, M.~Razaviyayn, and M.~R. Jovanovi\'c, ``Performance of noisy
  {N}esterov's accelerated method for strongly convex optimization problems,''
  in \emph{Proceedings of the 2019 American Control Conference}, Philadelphia,
  PA, 2019, pp. 3426--3431.

\bibitem{aybfalgurozd19}
N.~S. Aybat, A.~Fallah, M.~M. G\"{u}rb\"{u}zbalaban, and A.~Ozdaglar, ``Robust
  accelerated gradient methods for smooth strongly convex functions,'' 2019,
  arXiv:1805.10579v3.

\bibitem{aybfalgurozd19b}
N.~S. Aybat, A.~Fallah, M.~G\"{u}rb\"{u}zbalaban, and A.~Ozdaglar, ``A
  universally optimal multistage accelerated stochastic gradient method,''
  2019, arXiv:1901.08022.

\bibitem{yuayinsay16}
K.~Yuan, B.~Ying, and A.~H. Sayed, ``On the influence of momentum acceleration
  on online learning,'' \emph{J. Mach. Learn. Res.}, vol.~17, no.~1, pp.
  6602--6667, 2016.

\bibitem{micschebe19}
S.~Michalowsky, C.~Scherer, and C.~Ebenbauer, ``Robust and structure exploiting
  optimization algorithms: An integral quadratic constraint approach,'' 2019,
  arXiv:1905.00279.

\bibitem{lesrecpac16}
L.~Lessard, B.~Recht, and A.~Packard, ``Analysis and design of optimization
  algorithms via integral quadratic constraints,'' \emph{{SIAM} {J}. Optim.},
  vol.~26, no.~1, pp. 57--95, 2016.

\bibitem{megran97}
A.~Megretski and A.~Rantzer, ``System analysis via integral quadratic
  constraints,'' \emph{IEEE Trans. Autom. Control}, vol.~42, no.~6, pp.
  819--830, 1997.

\bibitem{hules17}
B.~Hu and L.~Lessard, ``Dissipativity theory for {N}esterov's accelerated
  method,'' in \emph{Proc. ICML}, vol.~70, 2017, pp. 1549--1557.

\bibitem{fazribmor18}
M.~Fazlyab, A.~Ribeiro, M.~Morari, and V.~M. Preciado, ``Analysis of
  optimization algorithms via integral quadratic constraints: Nonstrongly
  convex problems,'' \emph{{SIAM} {J}. Optim.}, vol.~28, no.~3, pp. 2654--2689,
  2018.

\bibitem{cyrhuvanles18}
S.~Cyrus, B.~Hu, B.~\mbox{Van~Scoy}, and L.~Lessard, ``A robust accelerated
  optimization algorithm for strongly convex functions,'' in \emph{Proceedings
  of the 2018 American Control Conference}, 2018, pp. 1376--1381.

\bibitem{dhikhojovTAC19}
N.~K. Dhingra, S.~Z. Khong, and M.~R. Jovanovi\'c, ``The proximal augmented
  {L}agrangian method for nonsmooth composite optimization,'' \emph{IEEE Trans.
  Automat. Control}, vol.~64, no.~7, pp. 2861--2868, July 2019.

\bibitem{polshc17}
B.~T. Polyak and P.~Shcherbakov, ``Lyapunov functions: An optimization theory
  perspective,'' \emph{IFAC-PapersOnLine}, vol.~50, no.~1, pp. 7456--7461,
  2017.

\bibitem{ber15book}
D.~P. Bertsekas, \emph{Convex optimization algorithms}.\hskip 1em plus 0.5em
  minus 0.4em\relax Athena Scientific, 2015.

\end{thebibliography}
\end{document}